\numberwithin{equation}{section}
\newtheorem{thm}{Theorem}[section]
\newtheorem{lem}[thm]{Lemma}
\newtheorem{Prop}[thm]{Proposition}
\newtheorem{Rem}[thm]{Remark}
\newcommand{\R}{\mathbb{R}}
\newcommand\cp{\mathcal{P}}
\def\sp {\quad}
\begin{document}
	\baselineskip=14pt
	
	\title[Nonlocal Hamiltonian System]{ Infinitely many synchronized solutions for a nonlocal critical Hamiltonian elliptic system}
\author[W. Ye]{ Weiwei Ye}
\author[M. Yang]{Minbo Yang}

	\address{Weiwei Ye  \newline\indent Department of Mathematics, Fuyang Normal University, \newline\indent
		Fuyang, Anhui, 236037, People's Republic of China}\email{ yeweiweime@163.com}

	\address{Minbo Yang  \newline\indent School of Mathematical Sciences, Zhejiang Normal University, \newline\indent
		Jinhua, Zhejiang, 321004, People's Republic of China}
	\email{ mbyang@zjnu.edu.cn}

	\keywords{Critical Hartree type system; Hardy-Littlewood-Sobolev inequality; Infinitely many solutions; Poho\v{z}aev identities; Finite dimensional reduction.}
	\thanks{\textit{Mathematics Subject Classification}[2020]:{35J20, 35J60, 35A15}}
	\thanks{$^\ddag$Minbo Yang is the corresponding author who was partially supported by National Natural Science Foundation of China (12471114) and the Natural Science Foundation of Zhejiang Province (LZ26A010002). Weiwei Ye was partially supported by National Natural Science Foundation of China (12571116) and Doctoral Research Startup Fund of Fuyang Normal University (2024KYQD0032).}

	\begin{abstract}
		We establish the existence of infinitely many synchronized solutions for a class of critical Hamiltonian elliptic systems with Hartree-type nonlocal interactions:
$$
	\left\{\begin{array}{ll}
		-\Delta u=K_{1}(x)(|x|^{-(N-\alpha)}*K_{1}(x)v^{2^{*}_{\alpha}})v^{2^{*}_{\alpha}-1}
		&\mbox{in}\ \R^N,\\[1mm]
		-\Delta v=K_{2}(x)(|x|^{-(N-\alpha)}*K_{2}(x)u^{2^{*}_{\alpha}})u^{2^{*}_{\alpha}-1}
		&\mbox{in}\ \R^N,
	\end{array}\right.
$$
The system involves the Hardy–Littlewood–Sobolev critical exponent \( 2^{*}_{\alpha} = \frac{N + \alpha}{N - 2} \), and is considered under the assumptions  \( N \geq 5 \) and \( \alpha < N - 5 + \frac{6}{N-2} \).
The coefficient functions \( K_{1}(|x'|, x'') \) and \( K_{2}(|x'|, x'') \), defined on \( \mathbb{R}^+ \times \mathbb{R}^{N-2} \), are bounded, nonnegative, and attain a common, topologically nontrivial critical point. We first study the nondegeneracy of positive solutions to the associated limiting system by employing the stereographic projection together with the Funk–Hecke formula for spherical harmonics. Building upon this result, we apply a Lyapunov–Schmidt reduction combined with refined localized Poho\v{z}aev-type identities to construct infinitely many synchronized solutions exhibiting multi-bubble concentration profiles. 

	\end{abstract}

	\maketitle
	
	%
\section{Introduction and main results}
In this paper, we study the following critical Hamiltonian-type elliptic system:
\begin{equation}\label{CFL}
	\left\{\begin{array}{ll}
		-\Delta u=K_{1}(x)(|x|^{-(N-\alpha)}*K_{1}(x)v^{2^{*}_{\alpha}})v^{2^{*}_{\alpha}-1}
		&\mbox{in}\ \R^N,\\[1mm]
		-\Delta v=K_{2}(x)(|x|^{-(N-\alpha)}*K_{2}(x)u^{2^{*}_{\alpha}})u^{2^{*}_{\alpha}-1}
		&\mbox{in}\ \R^N,
	\end{array}\right.
\end{equation}
where $N\geq 5$, $\alpha<N-5+\frac{6}{N-2}$ and $2^{*}_{\alpha}=\frac{N+\alpha}{N-2}$. $K_{1}(|x'|, x'')$ and $K_{2}(|x'|, x'')$ are bounded nonnegative functions in $\mathbb{R}^{+}\times\mathbb{R}^{N-2}$.
 System \eqref{CFL} arises in various physical contexts related to Hartree–Fock theory. It plays an important role in classical quantum mechanics and is widely used in the theory of Bose–Einstein condensates, particularly in the study of mechanisms preventing collapse phenomena \cite{ES,LY}.
In the special case where $u=v$ and $K_1(x)=K_2(x)$, system \eqref{CFL} reduces to a single nonlocal equation that appears in several areas of mathematical physics. This equation was first introduced by Pekar \cite{P1} in 1954 in the quantum theory of a polaron at rest, and it also arises in the modeling of an electron trapped in its own polarization field as an approximation of the Hartree–Fock theory for a one-component plasma.

The study of system \eqref{CFL} also originates from the investigations into the following problem
\begin{equation}\label{eq1.3}
	\left\{\begin{array}{l}
		-\Delta u = K(x) u^{\frac{N+2}{N-2}}, \quad u > 0 \ \text{in } \mathbb{R}^N, \\[1mm]
		u \in D^{1,2}(\mathbb{R}^N).
	\end{array}\right.
\end{equation}
In fact, if $u=v$ and $K_1(x)=K_2(x)$, by letting $\alpha\rightarrow 0$ in \eqref{CFL}, up to a scaling of the Riesz potential, we can regard \eqref{eq1.3} as the limit problem  of the self-interaction case \eqref{CFL}. Over the past few decades, considerable effort has been devoted to exploring conditions on \(K\) under which solutions exist. For further details, see \cite{B,BC,CNY,CL,CY1,H,Li1,Li2,Y} and references therein.
 We first review some key findings related to equation \eqref{eq1.3} concerning infinitely many solutions. In \cite{Li}, Li established the existence of infinitely many solutions to \eqref{eq1.3} under the condition that \( K \) has a local maximum at \( |x| = r_0 > 0 \) and satisfies
$$
K(r) = K(r_0) - c_0 |r - r_0|^\kappa + O(|r - r_0|^{\kappa + \theta})
$$
as \( r \rightarrow r_0 \). Wei and Yan \cite{WY1} introduced new methods using the number of solution bubbles as a parameter to construct bubbling solutions with energy that can be made arbitrarily large.
In \cite{DLY}, Deng, Lin, and Yan obtained a local uniqueness result for bubbling solutions in the prescribed scalar curvature problem. They further demonstrated that if \( K(y) \) is periodic in \( y_1 \) with period 1 and has a local maximum at 0, then a bubbling solution with blow-up set
\[
\left\{ (jL, 0, \dots, 0) : j = 0, \pm 1, \pm 2, \dots \right\}
\]
must be periodic in \( y_1 \) when \( L \) is a sufficiently large positive integer.
Guo, Peng, and Yan \cite{GPY} studied polyharmonic equations with critical exponents and proved the existence and local uniqueness of solutions with infinitely many bubbles under certain optimal conditions on the coefficient \( K(y) \). In \cite{LWX}, the authors examined the critical semilinear elliptic equation
$$
-\Delta u = K(x) u^{\frac{N+2}{N-2}}, \quad u > 0 \ \text{in} \ \mathbb{R}^N,
$$
where \( n \geq 3 \) and \( K > 0 \) is periodic in \( (x_1, \dots, x_k) \) with \( 1 \leq k < \frac{n-2}{2} \). They demonstrated the existence of multi-bump solutions, with bump centers positioned on infinite lattices in \( \mathbb{R}^k \), and noted that such solutions do not exist for \( k \geq \frac{n-2}{2} \).
Peng, Wang, and Wei \cite{PWW} proved the existence of infinitely many solutions under general conditions on \( K \), allowing for saddle points to be stable critical points of \( K \). Guo, Musso, Peng, and Yan \cite{GMPY} established the non-degeneracy of positive bubble solutions and constructed a novel class of solutions by combining a large number of bubbles.

We would also like to recall the classical Hamiltonian  type ellptic system
\begin{equation}\label{eq1'}
\begin{cases}
-\Delta u= |v|^{p-1}v,\ & \text{in}\ \mathbb{R}^N ,\\
-\Delta v= |u|^{q-1}u, \  & \text{in}\  \mathbb{R}^N.
\end{cases}
\end{equation}
This system has attracted considerable interest due to its complex structure and intricate coupling.
Traditional analytical methods developed for single equations are often not directly applicable to such systems, which has contributed to a relatively limited focus on the existence of solutions and the qualitative properties of strongly indefinite systems. An early result on positive solutions of \eqref{eq1'} was presented in \cite{c-f-m} using topological methods. Subsequently, \cite{f-f} applied a variational approach based on a linking theorem to establish an existence result.
Further advancements, including those in \cite{b-s-r}, explored the existence, positivity, and uniqueness of ground state solutions for \eqref{eq1'}. For additional insights, one may consult the works in  \cite{a-c-m,s} and the surveys found in \cite{f}.
In \cite{GLP}, Guo, Liu, and Peng studied the critical elliptic system
\begin{equation}
	\left\{\begin{array}{ll}
		-\Delta u = K_1(y) v^p & \text{in} \ \mathbb{R}^N, \\[1mm]
		-\Delta v = K_2(y) u^q & \text{in} \ \mathbb{R}^N,
	\end{array}\right.
\end{equation}
where \( N \geq 5 \), \( p, q \in (0, \infty) \) with \( \frac{1}{p+1} + \frac{1}{q+1} = \frac{N-2}{N} \), and \( K_1(y) > 0 \), \( K_2(y) > 0 \) are radial potentials. They constructed an unbounded sequence of non-radial positive vector solutions with energy that can be made arbitrarily large and, using various Poho\v{z}aev  identities, proved a specific type of non-degeneracy result.

Compared with the local case, there are relatively few studies on the existence of infinitely many solutions for system \eqref{CFL}. In \cite{GMYZ}, Gao, Moroz, Yang, and Zhao established the nondegeneracy of the psoitive solutions of the critical Hartree equation and studied a class of critical Hartree equations with axisymmetric potentials:
$$
-\Delta u + V(|x'|, x'') u = \Big(|x|^{-4} \ast |u|^2 \Big) u \quad \text{in} \ \mathbb{R}^6,
$$
where \( (x', x'') \in \mathbb{R}^2 \times \mathbb{R}^4 \), and \( V(|x'|, x'') \) is a bounded, nonnegative function in \( \mathbb{R}^+ \times \mathbb{R}^4 \). By applying a finite-dimensional reduction argument and developing novel local Poho\v{z}aev  identities, they proved that if the function \( r^2 V(r, x'') \) has a topologically nontrivial critical point, then the problem admits infinitely many solutions with arbitrarily large energies.
Ye et al. studied the nondegeneracy of the psoitive solutions of the critical Hartree system and investigated the existence of infinitely many solutions for the critical Hartree system,
\begin{equation}\nonumber
	\left\{\begin{array}{ll}
		-\Delta u + P(|x'|, x'') u = \alpha_1 \big(|x|^{-4} \ast u^2\big) u + \beta \big(|x|^{-4} \ast v^2\big) u
		& \text{in} \ \mathbb{R}^6, \\[1mm]
		-\Delta v + Q(|x'|, x'') v = \alpha_2 \big(|x|^{-4} \ast v^2\big) v + \beta \big(|x|^{-4} \ast u^2\big) v
		& \text{in} \ \mathbb{R}^6,
	\end{array}\right.
\end{equation}
where \( (x', x'') \in \mathbb{R}^2 \times \mathbb{R}^4 \), and the potentials \( P, Q \geq 0 \) are axisymmetric, bounded, belong to \( C^1 \), and \( P, Q \not\equiv 0 \). When the functions \( r^2 P(r, x'') \) and \( r^2 Q(r, x'') \) have a common topologically nontrivial critical point, they constructed infinitely many synchronized-type solutions.
Recently, Cassani, Yang, and Zhang \cite{CYZ} considered the equation
$$
-\Delta u = K(|x'|, x'') \left( |x|^{-\alpha} * K(|x'|, x'') u^{2^*_\alpha} \right) u^{2^*_\alpha - 1} \quad \text{in} \ \mathbb{R}^N,
$$
where \( N \geq 5 \), \( \alpha > 5 - \frac{6}{N - 2} \), and \( K(|x'|, x'') \) is a bounded, nonnegative function. Under appropriate assumptions on the potential function \( K \), the authors proved the existence of infinitely many solutions for this nonlocal critical equation.

In the present paper we aim to explore the existence of infinitely many solutions of \eqref{CFL} under suitable assumptions of the nonlinear potentials $K_1(x), K_2(x)$.
 Firstly, we need to recall the well-known Hardy-Littlewood-Sobolev inequality (see \cite[Theorem 4.3]{LL}). We denote \(2_{\alpha}^{\ast} = \frac{N + \alpha}{N - 2}\) as the upper Hardy-Littlewood-Sobolev critical exponent.

\begin{Prop}\label{pro1.1}
	Let \( t, r > 1 \) and \( 0 < \mu < N \) be such that \( \frac{1}{t} + \frac{\mu}{N} + \frac{1}{r} = 2 \).
	Then there exists a sharp constant \( C(N, \mu, t) \) such that, for \( f \in L^{t}(\mathbb{R}^N) \)
	and \( h \in L^{r}(\mathbb{R}^N) \),
	$$
	\left|\int_{\mathbb{R}^{N}}\int_{\mathbb{R}^{N}} \frac{f(x) h(y)}{|x - y|^{\mu}} \, dx \, dy\right|
	\leq C(N, \mu, t) \, |f|_{L^t(\mathbb{R}^N)} |h|_{L^r(\mathbb{R}^N)}.
	$$
	If \( t = r = \frac{2N}{2N - \mu} \), then
	$$
	C(t, N, \mu, r) = C(N, \mu) = \pi^{\frac{\mu}{2}} \frac{\Gamma\left(\frac{N}{2} - \frac{\mu}{2}\right)}{\Gamma(N - \frac{\mu}{2})} \left\{ \frac{\Gamma(\frac{N}{2})}{\Gamma(N)} \right\}^{-1 + \frac{\mu}{N}}.
	$$
	In this case, equality is achieved if and only if \( f \equiv C h \) and
	$$
	h(x) = A (\gamma^2 + |x - a|^2)^{-(2N - \mu)/2}
	$$
	for some \( A \in \mathbb{C} \), \( 0 \neq \gamma \in \mathbb{R} \), and \( a \in \mathbb{R}^N \).
\end{Prop}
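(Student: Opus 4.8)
The plan is to treat the inequality with a finite constant and the sharp-constant/equality statement separately, since only the latter is substantial.

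\textbf{The bound with a finite constant.} I would observe that the Riesz kernel $x\mapsto|x|^{-\mu}$ belongs to the weak Lebesgue space $L^{N/\mu,\infty}(\mathbb{R}^N)$, because $\big|\{x:|x|^{-\mu}>\lambda\}\big|$ is a dimensional constant times $\lambda^{-N/\mu}$. Rewriting the double integral as $\int_{\mathbb{R}^N}f(x)\,\big(|\cdot|^{-\mu}*h\big)(x)\,dx$ and using H\"older's inequality with exponents $t,t'$, it suffices to estimate $\||\cdot|^{-\mu}*h\|_{L^{t'}}$. The exponent relation $\frac1t+\frac\mu N+\frac1r=2$ is equivalent to $\frac\mu N+\frac1r=1+\frac1{t'}$, which is precisely the scaling identity under which the weak Young (O'Neil) convolution inequality $\|g*h\|_{L^{t'}}\le C\|g\|_{L^{N/\mu,\infty}}\|h\|_{L^r}$ holds; this yields the claimed bound with some $C(N,\mu,t)$, and tracking the constants through the underlying Marcinkiewicz interpolation even gives an explicit (far from optimal) value.

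\textbf{The sharp constant and the optimizers.} Here I would follow Lieb's scheme. Since $|x|^{-\mu}$ has a positive Fourier transform for $0<\mu<N$, the bilinear form $B(f,h)=\int_{\mathbb{R}^N}\int_{\mathbb{R}^N} f(x)|x-y|^{-\mu}h(y)\,dx\,dy$ is positive semidefinite, so $B(f,h)\le B(f,f)^{1/2}B(h,h)^{1/2}$; combined with H\"older this reduces the extremal problem to maximizing $\mathcal{I}(f)=B(f,f)$ over nonnegative $f$ with $\|f\|_{L^p}=1$, $p=\frac{2N}{2N-\mu}$, and forces any maximizing pair to satisfy $f\equiv Ch$. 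By the Riesz rearrangement inequality $\mathcal{I}(f)\le\mathcal{I}(f^{*})$ with $\|f^{*}\|_{L^p}=\|f\|_{L^p}$, so one may restrict to radial, nonincreasing $f$. The normalized functional $\mathcal{I}(f)/\|f\|_{L^p}^2$ is invariant under translations, dilations, rotations and the inversion $x\mapsto x/|x|^2$ (with the induced transformation of $f$), hence, after inverse stereographic projection, under the action of $O(N+1)$ on $S^N$. I would then invoke the competing-symmetries argument of Carlen and Loss: alternately apply the spherical symmetric-decreasing rearrangement and a fixed rotation of $S^N$ that does not commute with it; the resulting sequence converges strongly in $L^p$ to a joint fixed point of both operations, which is necessarily the conformal family that on $\mathbb{R}^N$ reads $h(x)=A(\gamma^2+|x-a|^2)^{-(2N-\mu)/2}$. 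Substituting this profile into $\mathcal{I}$ — using $\int_{\mathbb{R}^N}(1+|x|^2)^{-s}\,dx=\pi^{N/2}\Gamma(s-\tfrac N2)/\Gamma(s)$ and the composition (semigroup) property of Riesz potentials to evaluate $|\cdot|^{-\mu}*h$ — produces the stated closed form of $C(N,\mu)$; the equality characterization then follows from the strict version of the Riesz rearrangement inequality together with uniqueness of the fixed point.

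\textbf{Main difficulty.} The hard part lies entirely in the sharp case: producing a maximizer (compactness modulo the conformal group) and proving it is unique up to that group. The competing-symmetries iteration sidesteps a direct concentration-compactness analysis but requires strong $L^p$-convergence of the iterates and the identification of their limit; an alternative route classifies the positive solutions of the Euler--Lagrange integral equation $\int_{\mathbb{R}^N}|x-y|^{-\mu}f(y)\,dy=c\,f(x)^{p-1}$ by the method of moving planes, which is itself technical. Everything after the optimizer is in hand reduces to Gamma-function bookkeeping. Since this is the classical Hardy--Littlewood--Sobolev inequality, within the present paper it suffices to quote \cite[Theorem 4.3]{LL}.
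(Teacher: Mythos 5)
Your proposal is correct and, in the end, identical in spirit to what the paper does: Proposition \ref{pro1.1} is the classical Hardy--Littlewood--Sobolev inequality, stated in the paper without proof and attributed directly to \cite[Theorem 4.3]{LL}, exactly as you note in your closing sentence. Your sketch of the underlying argument (weak Young/O'Neil for the non-sharp bound; positive definiteness of the Riesz kernel plus Riesz rearrangement and the Carlen--Loss competing-symmetries iteration, or alternatively moving planes for the Euler--Lagrange integral equation, to obtain the sharp constant and characterize extremizers) is the standard Lieb/Lieb--Loss route and is sound, but within this paper the appropriate "proof" is simply the citation.
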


System \eqref{CFL} is closely related to the following Hartree-type system:
\begin{equation}\label{hemilton}
	\left\{\begin{array}{ll}
		-\Delta u = (|x|^{-(N-\alpha)} * v^{2^{*}_{\alpha}}) v^{2^{*}_{\alpha}-1}
		&\text{in} \ \mathbb{R}^N,\\[1mm]
		-\Delta v = (|x|^{-(N-\alpha)} * u^{2^{*}_{\alpha}}) u^{2^{*}_{\alpha}-1}
		&\text{in} \ \mathbb{R}^N.
	\end{array}\right.
\end{equation}
For the single equation with the critical exponent, Lei \cite{Lei}, Du and Yang \cite{DY}, and Guo et al. \cite{GHPS} independently classified the positive solutions of the critical Hartree equation
\begin{equation}\label{eq1.5}
	-\Delta u = \left( |x|^{-\mu} * |u|^{2_{\mu}^{\ast}} \right) |u|^{2_{\mu}^{\ast}-2} u
	\ \text{in} \ \mathbb{R}^N,
\end{equation}
and proved that every positive solution of \eqref{eq1.5} must have the form
\begin{equation}\label{REL}
	U_{z,\lambda}(x) = S^{\frac{(N-\mu)(2-N)}{4(N-\mu+2)}} C(N,\mu)^{\frac{2-N}{2(N-\mu+2)}} [N(N-2)]^{\frac{N-2}{4}} \left( \frac{\lambda}{1 + \lambda^{2}|x-z|^{2}} \right)^{\frac{N-2}{2}}.
\end{equation}
In \cite{Le}, by using the moving sphere method in integral form, the author classified all positive solutions to the system
\begin{equation}\label{H11}
	\left\{\begin{array}{ll}
		-\Delta u = (|x|^{-(N-\alpha)} * v^{p}) v^{p-1}
		&\text{in} \ \mathbb{R}^N,\\[1mm]
		-\Delta v = (|x|^{-(N-\beta)} * u^{q}) u^{q-1}
		&\text{in} \ \mathbb{R}^N.
	\end{array}\right.
\end{equation}

We have the following lemma:

\begin{lem}{\rm (Theorem 1.2, \cite{Le})}\label{CS}
If $(p,q) = \left( \frac{N+\alpha}{N-2}, \frac{N+\beta}{N-2} \right)$, then every positive classical solution $(u,v)$ of \eqref{H11} must assume the form
\begin{equation}\label{clasol}
	u(x) = c_1 \left( \frac{\lambda}{1 + \lambda^{2}|x - z|^2} \right)^{\frac{N-2}{2}}, \quad v(x) = c_2 \left( \frac{\lambda}{1 + \lambda^{2}|x - z|^2} \right)^{\frac{N-2}{2}},
\end{equation}
for some $\lambda > 0$ and $z \in \mathbb{R}^N$, where
$$
c_1^{1 - \frac{(N + 2\alpha + 2)(N + 2\beta + 2)}{(N-2)^2}} = R_N^{\frac{2N + 2\alpha}{N-2}} I \left( \frac{N-\beta}{2} \right)^{\frac{N + 2\alpha + 2}{N-2}} I \left( \frac{N-\alpha}{2} \right) I \left( \frac{N-2}{2} \right)^{\frac{2N + 2\alpha}{N-2}},
$$
$$
c_2^{1 - \frac{(N + 2\alpha + 2)(N + 2\beta + 2)}{(N-2)^2}} = R_N^{\frac{2N + 2\beta}{N-2}} I \left( \frac{N-\alpha}{2} \right)^{\frac{N + 2\beta + 2}{N-2}} I \left( \frac{N-\beta}{2} \right) I \left( \frac{N-2}{2} \right)^{\frac{2N + 2\beta}{N-2}}.
$$
Here, we denote $R_N = \frac{\Gamma \left( \frac{N-2}{2} \right)}{4 \pi^{\frac{N}{2}}}$ and $I(s) = \frac{\pi^{\frac{N}{2}} \Gamma \left( \frac{N-2s}{2} \right)}{\Gamma(N-s)}$ for $0 < s < \frac{N}{2}$.
\end{lem}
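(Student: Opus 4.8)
The plan is to classify the solutions by the \emph{method of moving spheres} applied to an equivalent system of integral equations, following the scheme used for the single critical Hartree equation \eqref{eq1.5}. First I would show that any positive classical solution $(u,v)$ of \eqref{H11} with the critical pair $(p,q)=(\tfrac{N+\alpha}{N-2},\tfrac{N+\beta}{N-2})$ satisfies the integral identities
$$
u(x)=R_N\int_{\R^N}\frac{1}{|x-y|^{N-2}}\Big(\int_{\R^N}\frac{v(\xi)^{p}}{|y-\xi|^{N-\alpha}}\,d\xi\Big)v(y)^{p-1}\,dy,
$$
together with the analogous representation for $v$ (with $\alpha,p$ replaced by $\beta,q$), where $R_N$ is the constant for which $R_N|x|^{-(N-2)}$ is the fundamental solution of $-\Delta$. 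This requires a priori decay: one first obtains $u,v\in L^\infty(\R^N)$ by a Brezis--Kato / Moser iteration exploiting the extra integrability gained from the Riesz potentials, then, via the Newtonian representation and the decay of the kernel, derives $u(x),v(x)=O(|x|^{-(N-2)})$ as $|x|\to\infty$, so that all the iterated integrals are absolutely convergent and the representation formulas are legitimate. A standard regularity-lifting bootstrap on the integral system then upgrades $u,v$ to smooth functions with sharp asymptotics.

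Next, for a center $x_0\in\R^N$ and radius $\lambda>0$, I introduce the Kelvin transforms
$$
u_{x_0,\lambda}(x)=\Big(\frac{\lambda}{|x-x_0|}\Big)^{N-2}u\Big(x_0+\frac{\lambda^2(x-x_0)}{|x-x_0|^2}\Big),
$$
and similarly $v_{x_0,\lambda}$. Because the kernels $|x|^{-(N-2)}$, $|x|^{-(N-\alpha)}$, $|x|^{-(N-\beta)}$ transform conformally under the critical scaling, the pair $(u_{x_0,\lambda},v_{x_0,\lambda})$ solves the same integral system on $\R^N\setminus\{x_0\}$. One compares $u$ with $u_{x_0,\lambda}$ and $v$ with $v_{x_0,\lambda}$ on $\{|x-x_0|>\lambda\}$: for $\lambda$ small both inequalities $u\ge u_{x_0,\lambda}$ and $v\ge v_{x_0,\lambda}$ hold, and one then increases $\lambda$. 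The crucial feature is that the system is \emph{cooperative} — the source of the $u$-equation is increasing in $v$ and that of the $v$-equation increasing in $u$ — so the two comparison inequalities reinforce one another, and working with the integral rather than the differential form removes any need for a maximum principle. The usual dichotomy, combined with positivity, forces a finite stopping radius at every center with the equalities $u_{x_0,\bar\lambda(x_0)}\equiv u$, $v_{x_0,\bar\lambda(x_0)}\equiv v$, and the Li--Zhu type calculus lemma then yields that $u$ and $v$ are, up to multiplicative constants, the \emph{same} standard bubble $W_{z,\lambda}(x)=\big(\lambda/(1+\lambda^2|x-z|^2)\big)^{(N-2)/2}$ for some $\lambda>0$, $z\in\R^N$.

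Finally, writing $u=c_1W_{z,\lambda}$ and $v=c_2W_{z,\lambda}$ and substituting into \eqref{H11}, I would evaluate the convolutions of powers of $W_{z,\lambda}$ against the Riesz kernels using the explicit Beta-function integrals encoded in $I(s)=\pi^{N/2}\Gamma(\tfrac{N-2s}{2})/\Gamma(N-s)$; matching the two resulting scalar identities determines $c_1^{1-\frac{(N+2\alpha+2)(N+2\beta+2)}{(N-2)^2}}$ and $c_2^{1-\frac{(N+2\alpha+2)(N+2\beta+2)}{(N-2)^2}}$ in the stated closed form. The main obstacle is the coupled, nonlocal moving-sphere step: when moving the sphere one must control cross terms such as $\int(v^p-v_{x_0,\lambda}^p)$ weighted against a Riesz potential that itself depends on $v$, together with the symmetric $u$-terms, and establish both the base case for small $\lambda$ and the nondegeneracy of the stopping configuration; all of this rests on the decay estimates from the first step and on the cooperative sign structure. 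The concluding computation, though lengthy, is routine once the Riesz-potential integral formulas are available.
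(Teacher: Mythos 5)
The paper does not prove this lemma itself; it is quoted verbatim as Theorem~1.2 of \cite{Le}, and the paper only records that the cited proof proceeds ``by using the moving sphere method in integral form.'' Your sketch --- passing to the Riesz-potential integral system, establishing decay and the representation formulas, running the moving-sphere/Kelvin-transform argument with the cooperative structure to conclude both components are the same standard bubble up to constants, and then pinning down $c_1,c_2$ by substitution --- reproduces exactly that approach, so it is consistent with the cited source.
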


According to Lemma \ref{CS}, we can derive that the positive classical solution $(U_{\lambda,z}, V_{\lambda,z})$ of system \eqref{hemilton} has the following form:
\begin{equation}\label{CSF0}
	U_{z,\lambda}(x) = V_{z,\lambda}(x) = C_{N,\alpha} \left( \frac{\lambda}{1 + \lambda^{2}|x - z|^2} \right)^{\frac{N-2}{2}},
\end{equation}
where
$$
C_{N,\alpha} = \left( \frac{N(N-2) \Gamma \left( \frac{N+\alpha}{2} \right)}{\pi^{\frac{N}{2}} \Gamma \left( \frac{\alpha}{2} \right)} \right)^{\frac{N-2}{2\alpha+4}}.
$$
For simplicity, let's consider the case where $z = 0$ and $\lambda = 1$, i.e.
\begin{equation}\label{CSF}
	U_{0,1}(x) = V_{0,1}(x) = C_{N,\alpha} \left( \frac{1}{1 + |x|^2} \right)^{\frac{N-2}{2}}.
\end{equation}

The first task of this paper is to thoroughly examine the nondegeneracy property of the positive solutions of system \eqref{hemilton}. It is well known that the following equation
\begin{equation}\label{lcritical}
	-\Delta u = u^{\frac{N+2}{N-2}}, \quad x \in \mathbb{R}^{N},
\end{equation}
has a family of solutions of the following form
\begin{equation}\label{U0}
	U_{\xi,\lambda}(x) := [N(N-2)]^{\frac{N-2}{4}} \left( \frac{\lambda}{1 + \lambda^2 |x - \xi|^2} \right)^{\frac{N-2}{2}}.
\end{equation}
And it has an $(N+1)$-dimensional manifold of solutions given by
$$
\mathcal{Z} = \left\{ z_{\lambda,\xi} = [N(N-2)]^{\frac{N-2}{4}} \left( \frac{\lambda}{\lambda^2 + |x - \xi|^2} \right)^{\frac{N-2}{2}}, \quad \xi \in \mathbb{R}^N, \lambda \in \mathbb{R}^+ \right\}.
$$
If the linearized equation
\begin{equation}\label{Linearized}
	-\Delta v = Z^{\frac{4}{N-2}} v,
\end{equation}
in $D^{1,2}(\mathbb{R}^N)$ only admits solutions of the form
$$
\eta = a D_{\lambda} Z + \mathbf{b} \cdot \nabla Z,
$$
where $a \in \mathbb{R}$, $\mathbf{b} \in \mathbb{R}^N$, and $Z \in \mathcal{Z}$, we say that it is nondegenerate.
For equation \eqref{eq1.5}, the nondegeneracy property is fully established. If $\mu$ is close to $N$, the limiting equation of \eqref{eq1.5} is the critical Lane-Emden equation, whose nondegeneracy property is well-known. Based on this result, the authors demonstrated the nondegeneracy property using an approximation approach in \cite{DY}. Yang and Zhao \cite{YYZ} obtained the nondegeneracy result for the case of $N = 6$ and $\mu = 4$. Li et al. \cite{LXLTX} recently proved that positive bubble solutions for the Hartree equation \eqref{eq1.5} are nondegenerate by employing the key spherical harmonic decomposition and the Funk-Hecke formula of the spherical harmonic functions. In \cite{YGRY}, the author obtained a nondegeneracy result for the critical system \eqref{hemilton} in $N = 6$.

 In this paper, we will generalize the nondegenearcy property for the single Hartree equation to the Hamiltonian type elliptic system with dimension $N \geq 5$.
For the sake of calculation, let's consider the simple form $\left(U_{0,1}(x),V_{0,1}(x)\right)$. By differentiating it with respect to $x$ and $\lambda$ at $(z,\lambda) = (0,1)$, we have
\begin{equation}\label{U1}
	\varphi_{j} := \frac{\partial U_{0,1}}{\partial x_{j}} = (2 - N) U_{0,1}(x) \frac{x_j}{1 + |x|^2}, \quad 1 \leq j \leq N,
\end{equation}
\begin{equation}\label{U2}
	\varphi_{N+1} := \frac{\partial U_{0,1}}{\partial \lambda} = \frac{(N-2)}{2} U_{0,1}(x) + x \cdot \nabla U_{0,1}(x) = \frac{(N-2)}{2} U_{0,1}(x) \frac{1 - |x|^2}{1 + |x|^2},
\end{equation}
\begin{equation}\label{V1}
	\psi_{j} := \frac{\partial V_{0,1}}{\partial x_{j}} = (2 - N) V_{0,1}(x) \frac{x_j}{1 + |x|^2}, \quad 1 \leq j \leq N,
\end{equation}
and
\begin{equation}\label{V2}
	\psi_{N+1} := \frac{\partial V_{0,1}}{\partial \lambda} = \frac{(N-2)}{2} V_{0,1}(x) + x \cdot \nabla V_{0,1}(x) = \frac{(N-2)}{2} V_{0,1}(x) \frac{1 - |x|^2}{1 + |x|^2},
\end{equation}
then the main result of the nondegeneracy property can be expressed as follows:
\begin{thm}\label{Non}
	The linearization system of \eqref{hemilton} around the solution $(U_{0,1}(x), V_{0,1}(x))$ defined by
	\begin{equation}\label{linear}
		\left\{
		\begin{array}{ll}
			&-\Delta \varphi = 2^{*}_{\alpha} \left( |x|^{-(N-\alpha)} \ast V_{0,1}^{2^{*}_{\alpha}-1} \psi \right) V_{0,1}^{2^{*}_{\alpha}-1} + (2^{*}_{\alpha}-1) \left( |x|^{-(N-\alpha)} \ast V_{0,1}^{2^{*}_{\alpha}} \right) V_{0,1}^{2^{*}_{\alpha}-2} \psi
			\quad \mbox{in}  \quad \mathbb{R}^N,\\[1mm]
			&-\Delta \psi = 2^{*}_{\alpha} \left( |x|^{-(N-\alpha)} \ast U_{0,1}^{2^{*}_{\alpha}-1} \varphi \right) U_{0,1}^{2^{*}_{\alpha}-1} + (2^{*}_{\alpha}-1) \left( |x|^{-(N-\alpha)} \ast U_{0,1}^{2^{*}_{\alpha}} \right) U_{0,1}^{2^{*}_{\alpha}-2} \varphi
			\quad \mbox{in}  \quad \mathbb{R}^N,
		\end{array}
		\right.
	\end{equation}
	only admits solutions of the form
	\begin{equation}\nonumber
		\aligned
		(\varphi, \psi) = &\Big( a_1 \left[ \frac{(N-2)}{2} U_{0,1}(x) + x \cdot \nabla U_{0,1}(x) \right] + \sum_{j=1}^{N} b_j \frac{\partial U_{0,1}}{\partial x_j}, \\
		&\quad a_2 \left[ \frac{(N-2)}{2} V_{0,1}(x) + x \cdot \nabla V_{0,1}(x) \right] + \sum_{j=1}^{N} \hat{b}_j \frac{\partial V_{0,1}}{\partial x_j} \Big),
		\endaligned
	\end{equation}
	where $a_1, a_2, b_j, \hat{b}_j \in \mathbb{R}$.
\end{thm}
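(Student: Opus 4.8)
The plan is to exploit the diagonal structure of the bubble, $U_{0,1}\equiv V_{0,1}=:W$, to decouple the linearized system \eqref{linear}. Setting $\Phi:=\varphi+\psi$ and $\Psi:=\varphi-\psi$ and adding, respectively subtracting, the two equations of \eqref{linear}, one is led to the two \emph{scalar} equations
\begin{equation}\label{eq:Phi-dec}
	-\Delta\Phi=2^{*}_{\alpha}\bigl(|x|^{-(N-\alpha)}\ast W^{2^{*}_{\alpha}-1}\Phi\bigr)W^{2^{*}_{\alpha}-1}+(2^{*}_{\alpha}-1)\bigl(|x|^{-(N-\alpha)}\ast W^{2^{*}_{\alpha}}\bigr)W^{2^{*}_{\alpha}-2}\Phi,
\end{equation}
\begin{equation}\label{eq:Psi-dec}
	-\Delta\Psi=-2^{*}_{\alpha}\bigl(|x|^{-(N-\alpha)}\ast W^{2^{*}_{\alpha}-1}\Psi\bigr)W^{2^{*}_{\alpha}-1}-(2^{*}_{\alpha}-1)\bigl(|x|^{-(N-\alpha)}\ast W^{2^{*}_{\alpha}}\bigr)W^{2^{*}_{\alpha}-2}\Psi.
\end{equation}
Equation \eqref{eq:Phi-dec} is precisely the linearization, at the bubble $W$, of the single critical Hartree equation $-\Delta u=(|x|^{-(N-\alpha)}\ast u^{2^{*}_{\alpha}})u^{2^{*}_{\alpha}-1}$, whereas \eqref{eq:Psi-dec} is the same operator applied to $\Psi$ but with the right-hand side reflected in sign. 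I would treat the two equations separately.

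For \eqref{eq:Psi-dec} the argument is a direct energy estimate. A solution $(\varphi,\psi)$ lies in $D^{1,2}(\mathbb{R}^{N})\times D^{1,2}(\mathbb{R}^{N})$, so testing \eqref{eq:Psi-dec} against $\Psi$ gives, with $g:=W^{2^{*}_{\alpha}-1}\Psi$,
\[
\int_{\mathbb{R}^{N}}|\nabla\Psi|^{2}=-2^{*}_{\alpha}\iint_{\mathbb{R}^{N}\times\mathbb{R}^{N}}\frac{g(x)g(y)}{|x-y|^{N-\alpha}}-(2^{*}_{\alpha}-1)\int_{\mathbb{R}^{N}}\bigl(|x|^{-(N-\alpha)}\ast W^{2^{*}_{\alpha}}\bigr)W^{2^{*}_{\alpha}-2}\Psi^{2}.
\]
Since $0<N-\alpha<N$, the Riesz kernel $|x|^{-(N-\alpha)}$ is positive definite (its Fourier transform is a positive multiple of $|\xi|^{-\alpha}$), so the double integral is nonnegative; the last integral is nonnegative because $W>0$ and $2^{*}_{\alpha}\ge 1$. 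Hence $\int_{\mathbb{R}^{N}}|\nabla\Psi|^{2}\le 0$, which forces $\Psi\equiv 0$, that is, $\varphi\equiv\psi$.

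For \eqref{eq:Phi-dec} I would invoke the nondegeneracy of the positive solutions of the scalar critical Hartree equation --- the property recalled in the Introduction and established in \cite{DY,YYZ,LXLTX} --- which asserts that the kernel of this linearized operator in $D^{1,2}(\mathbb{R}^{N})$ is the $(N+1)$-dimensional space spanned by $\tfrac{N-2}{2}W+x\cdot\nabla W$ and $\partial_{x_{1}}W,\dots,\partial_{x_{N}}W$. Consequently $\Phi=a\bigl(\tfrac{N-2}{2}W+x\cdot\nabla W\bigr)+\sum_{j=1}^{N}b_{j}\partial_{x_{j}}W$ for some $a,b_{j}\in\mathbb{R}$; combining this with $\varphi=\psi=\tfrac12\Phi$ and recalling \eqref{U1}--\eqref{V2} yields a pair $(\varphi,\psi)$ of the form stated in Theorem \ref{Non}.

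The genuinely delicate ingredient is the scalar nondegeneracy behind \eqref{eq:Phi-dec}: the convolution destroys the classical Emden--Fowler ODE reduction available for $-\Delta u=u^{(N+2)/(N-2)}$. If a direct citation does not cover the whole admissible range, the robust route is to apply the stereographic (Kelvin) transform carrying $W$ to a constant on $S^{N}$, expand $\Phi$ in spherical harmonics, use the Funk--Hecke formula to diagonalise the two convolution operators degree by degree, and verify that the resulting one-dimensional eigenvalue problems admit nontrivial solutions only for the harmonic degrees $k=0$ and $k=1$; it is exactly here that the restriction $\alpha<N-5+\tfrac{6}{N-2}$ enters, through the sign of the relevant Gegenbauer coefficients. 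By contrast, the decoupling and the energy identity for \eqref{eq:Psi-dec} are elementary once the symmetry $U_{0,1}\equiv V_{0,1}$ has been used.
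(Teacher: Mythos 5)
Your argument is correct and genuinely different from the paper's. The paper attacks the coupled system directly: it pushes $(\varphi,\psi)$ to $\mathbb{S}^{N}$ by stereographic projection, expands both components in spherical harmonics, and diagonalises the two convolution operators with the Funk--Hecke formula (Lemma \ref{FH11}), arriving at the coupled scalar relations \eqref{Inte3} and an eigenvalue condition $\mu_k^2=1$. You instead exploit the symmetry $U_{0,1}=V_{0,1}=:W$ at the outset: the sum $\Phi=\varphi+\psi$ solves the linearization of the \emph{scalar} critical Hartree equation, while the difference $\Psi=\varphi-\psi$ solves the sign-reversed equation and is killed by a one-line energy identity (positive definiteness of the Riesz kernel together with $W>0$). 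What this buys is modularity: you do not redo the projection/Funk--Hecke machinery for the system, you simply cite the scalar nondegeneracy (indeed what \cite{LXLTX} proves, and what the paper's own computation reproduces). In fact your route makes explicit the sharper conclusion $\varphi\equiv\psi$ (so $a_1=a_2$, $b_j=\hat b_j$), which the paper's argument also yields via $S_*\varphi_{1,i}=\mu_1 S_*\psi_{1,i}=S_*\psi_{1,i}$ but does not record in the statement.

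One small inaccuracy in your closing remarks: the restriction $\alpha<N-5+\tfrac{6}{N-2}$ is not what forces the scalar kernel to be concentrated in harmonic degrees $k=0,1$. In the Funk--Hecke picture the factor $\mu_k=C\,\lambda_k(N-2)\bigl[2^{*}_{\alpha}\lambda_k(N-\alpha)+(2^{*}_{\alpha}-1)\lambda_0(N-\alpha)\bigr]$ is strictly decreasing in $k$ (each $\lambda_k$ is), and since $\mu_1=1$ automatically (the translation/dilation derivatives are genuine kernel elements), one gets $\mu_k<1$ for all $k\ge 2$ without any further hypothesis beyond $0<\alpha<N$. The constraint on $\alpha$ is needed elsewhere in the paper, for the decay and reduction estimates, not for the nondegeneracy statement you invoke.
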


\begin{Rem}
	Analysis similar to that in Appendix A of \cite{LXLTX1}, we can use the Moser iteration method to show $L^{\infty}(\mathbb{R}^N)$-regularity of the $D^{1,2}(\mathbb{R}^N)$ solution of equation \eqref{linear}.
\end{Rem}

With the conclusion on nondegeneracy, the main purpose of this paper is to investigate the existence of infinitely many solutions of system \eqref{CFL}. To address this problem, we assume that the functions \( K_1 \) and \( K_2 \) are bounded and satisfy the following conditions:

\begin{itemize}
    \item[\textbf{(I)}] The functions \( K_1(r, x'') \) and \( K_2(r, x'') \) have a common critical point \( (r_0, x_0'') \), such that \( r_0 > 0 \), \( K_1(r_0, x_0'') = K_2(r_0, x_0'') = 1 \), and
    \[
    \deg(\nabla(K_1(r, x'') + K_2(r, x'')), (r_0, x_0'')) \neq 0.
    \]
\end{itemize}

\begin{itemize}
    \item[\textbf{(II)}] The functions \( K_1(r, x'') \in C^3(B_{\vartheta}(r_0, x_0'')) \) and \( K_2(r, x'') \in C^3(B_{\vartheta}(r_0, x_0'')) \), where \( \vartheta > 0 \) is sufficiently small, satisfy the conditions
    \[
    \Delta K_1(r_0, x_0'') = \frac{\partial^2 K_1(r_0, x_0'')}{\partial r^2} + \sum_{i=3}^{N} \frac{\partial^2 K_1(r_0, x_0'')}{\partial x_i^2} < 0,
    \]
    and
    \[
    \Delta K_2(r_0, x_0'') = \frac{\partial^2 K_2(r_0, x_0'')}{\partial r^2} + \sum_{i=3}^{N} \frac{\partial^2 K_2(r_0, x_0'')}{\partial x_i^2} < 0.
    \]
\end{itemize}

The main result of this paper is as follows:

\begin{thm}\label{EXS}
    Let \( (r, x'') \in \mathbb{R}^+ \times \mathbb{R}^{N-2} \) and \( N \geq 5 \). If the functions \( K_1(r, x'') \) and \( K_2(r, x'') \) have a stable, topologically nontrivial critical point as described by conditions \(\textbf{(I)}\) and \(\textbf{(II)}\), then the system \eqref{CFL} has infinitely many solutions, and the energy of the solutions diverges to \( +\infty \).
\end{thm}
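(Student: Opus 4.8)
\section*{Proof proposal for Theorem \ref{EXS}}

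The plan is to construct solutions of \eqref{CFL} by the Lyapunov--Schmidt finite-dimensional reduction, using as approximate solutions a large number $k$ of scaled and translated copies of the limiting bubble $(U_{0,1},V_{0,1})$ arranged at the vertices of a regular $k$-polygon in the $x'$-plane, centered near the common critical point $(r_0,x_0'')$ of $K_1+K_2$. Concretely, for a large integer $k$, set $x_j = \big(r\cos\frac{2\pi j}{k}, r\sin\frac{2\pi j}{k}, x''\big)$ for $j=1,\dots,k$, and let the approximate solution be the synchronized sum
\begin{equation*}
W_{r,\Lambda,x''} = \Big(\sum_{j=1}^k U_{x_j,\Lambda}, \ \sum_{j=1}^k V_{x_j,\Lambda}\Big),
\end{equation*}
where $U_{x_j,\Lambda}, V_{x_j,\Lambda}$ are the scalings of \eqref{CSF0} with center $x_j$ and concentration parameter $\Lambda$ in a bounded range. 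The number $k$ of bubbles plays the role of the large parameter, in the spirit of Wei--Yan \cite{WY1} and the nonlocal analogues \cite{GMYZ,CYZ}; letting $k\to\infty$ forces the energy to diverge, which gives the last assertion of the theorem. The symmetry $K_i(|x'|,x'')$ and the polygonal ansatz reduce the free parameters to $(r,\Lambda,x'')\in\mathbb{R}^+\times\mathbb{R}^+\times\mathbb{R}^{N-2}$, so the reduced problem is genuinely finite-dimensional and of fixed dimension $N-1$ independent of $k$.

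The first main step is the linear theory: working in the subspace of $H$ invariant under the dihedral symmetry group of the $k$-polygon (and under the $O(N-2)$-action on $x''$ as appropriate), one inverts the linearized operator around $W_{r,\Lambda,x''}$ on the orthogonal complement of the approximate kernel spanned by $\partial_\Lambda W$ and $\partial_{x''}W$. Here Theorem \ref{Non} is essential and is used exactly as the nondegeneracy input: it guarantees that the only bounded solutions of the limiting linearized system \eqref{linear} are the ones coming from translations and dilations, so the local inversion is uniform in $k$ once the interaction estimates between distinct bubbles are controlled. The bubble interactions decay like a negative power of $|x_i-x_j|\sim r/k$, and the condition $\alpha < N-5+\frac{6}{N-2}$ is precisely what makes the error term $\|-\Delta W - (\text{nonlinear terms})\|_*$ small enough relative to the decay rate; this is where the dimensional/exponent restriction enters. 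I would then solve, by the contraction mapping principle, for a correction $\phi=\phi_{r,\Lambda,x''}$ so that $W+\phi$ solves \eqref{CFL} up to an element of the approximate kernel, with explicit estimates $\|\phi\|_* \le C k \big( k^{-\theta} + \text{(Taylor error of $K_i$)}\big)$ for some $\theta>0$.

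The second main step is to solve the reduced (bifurcation) equations. Rather than expanding the reduced energy functional directly — which is delicate for a Hamiltonian system because the natural functional is strongly indefinite — I would instead derive a system of \emph{local Pohožaev identities}, testing the equations for $W+\phi$ against $\partial_{x''}(W+\phi)$ and against the dilation generator $(x-x_j)\cdot\nabla(W+\phi)+\frac{N-2}{2}(W+\phi)$ on a ball around one bubble center. This converts the reduced problem into an equation of the form $\nabla_{(r,\Lambda,x'')}\big[(K_1+K_2)(r,x'')\big] + o(1) = 0$ (after dividing by the appropriate power of $\Lambda$ and using $\Delta K_i(r_0,x_0'')<0$ from condition \textbf{(II)} to fix $\Lambda$ in a compact range), where the $o(1)$ is uniform as $k\to\infty$. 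Condition \textbf{(I)}, namely $\deg(\nabla(K_1+K_2),(r_0,x_0''))\ne 0$, then yields a solution of the reduced system by a standard degree-theoretic argument, for every $k$ large; distinct values of $k$ give genuinely distinct solutions with energies $\approx k\cdot S$ for a fixed constant $S>0$, hence diverging to $+\infty$.

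The step I expect to be the main obstacle is the linear theory combined with the sharp interaction estimates in the nonlocal setting. Unlike the local problem, the convolution couples all $k$ bubbles globally, so one must control cross terms such as $\big(|x|^{-(N-\alpha)}\ast V_{x_i,\Lambda}^{2^*_\alpha}\big) V_{x_j,\Lambda}^{2^*_\alpha-2}$ for $i\ne j$ and show their total contribution is lower order; getting the bookkeeping right — especially verifying that the threshold $\alpha<N-5+\frac{6}{N-2}$ is exactly the range in which the error is dominated by the main term $\partial_\Lambda$-derivative of the energy — requires careful, dimension-dependent estimates of Riesz potentials of bubbles and is the technical heart of the construction. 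A secondary difficulty is that, because the system is of Hamiltonian (indefinite) type, the reduction must be carried out at the level of the equations (or a dual/one-field reformulation) rather than via a direct min–max on the energy, which is why the local Pohožaev route is the natural way to close the argument.
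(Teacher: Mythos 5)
Your overall strategy matches the paper's: place $k$ (the paper's $m$) copies of the limiting bubble on a regular polygon in the $x'$-plane near $(r_0,x_0'')$, perform a Lyapunov--Schmidt reduction with the nondegeneracy of Theorem \ref{Non} as the key linear input, close the reduced problem via local Poho\v{z}aev identities, and use the degree hypothesis \textbf{(I)} to solve the reduced system for each large $k$.

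There are, however, two points that deserve attention, one of which is a genuine error. First, and most importantly, you state that the concentration parameter $\Lambda$ lies in a \emph{bounded} range. That cannot be correct as written: with $k$ bubbles equally spaced on a circle of radius $r\approx r_0$ fixed, the pairwise distance between neighboring centers is of order $r/k\to 0$, so the bubbles separate only if $\Lambda^{-1}\ll r/k$, i.e.\ $\Lambda\gg k$. In the paper the scaling is pinned down precisely: $\lambda\in[L_0 m^{\frac{N-2}{N-4}},L_1 m^{\frac{N-2}{N-4}}]$, which emerges from balancing the two leading terms in the reduced equation in $\lambda$, namely $-B_1/\lambda^3$ (coming from $\Delta K_i(r_0,x_0'')<0$ in condition \textbf{(II)}) against the inter-bubble interaction $m^{N-2}B_3/\lambda^{N-1}$. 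After the substitution $\lambda=t\,m^{(N-2)/(N-4)}$ the rescaled variable $t$ does lie in a compact interval $[L_0,L_1]$, and your degree argument then runs as you describe — but only after this scaling is identified; keeping $\Lambda$ bounded while $k\to\infty$ would make the approximate solution collapse. Second, a smaller but nontrivial omission: the paper works with truncated bubbles $Z_{z_j,\lambda}=\xi U_{z_j,\lambda}$, $Y_{z_j,\lambda}=\xi V_{z_j,\lambda}$, where $\xi$ cuts off near $(r_0,x_0'')$. This is what makes the argument genuinely local, since $K_1,K_2$ are only assumed $C^3$ in a small ball around the critical point; using the untruncated sum $\sum_j U_{x_j,\Lambda}$ would require either global regularity of the $K_i$ or an additional mechanism to handle the error contributions at distance $O(1)$ from the blow-up region, and the Poho\v{z}aev identities themselves are taken over the domain $D_\rho$ where the cut-off matters. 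Apart from these two points, the logic of your outline — reduction at the level of the equations (not an indefinite energy min–max), interaction estimates for the nonlocal Riesz term, and condition \textbf{(I)} via degree theory — is the argument the paper actually carries out.
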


It is important to note that condition \(\textbf{(II)}\) implies that \( K_1 \) and \( K_2 \) have a saddle point. This case is not covered in \cite{GLP}.

We define
$$\aligned
H_{s}=\Big\{&u\in D^{1,2}(\mathbb{R}^N),u(x_{1},-x_{2},x'')=u(x_{1},x_{2},x''),\\
&\hspace{4mm}u(r\cos\theta,r\sin\theta,x'')=u\Big(r\cos(\theta+\frac{2j\pi}{m}),r\sin(\theta+\frac{2j\pi}{m}),x''\Big)\Big\},
\endaligned$$
and let
\[
z_{j} = \left(\overline{r}\cos\frac{2(j-1)\pi}{m}, \overline{r}\sin\frac{2(j-1)\pi}{m}, \overline{x}''\right), \ j = 1, \dots, m,
\]
where \(\overline{x}''\) is a vector in \(\mathbb{R}^{N-2}\). By the weak symmetry of \(K_1(x)\) and \(K_2(x)\), we have \(K_1(z_j) = K_1(\overline{r}, \overline{x}'')\) and \(K_2(z_j) = K_2(\overline{r}, \overline{x}'')\), for \(j = 1, \dots, m\).
We will use \((U_{z_j,\lambda}, V_{z_j,\lambda})\) as an approximate solution. Let \(\delta > 0\) be a small constant, such that \(K_1(r, x'') > 0\) and \(K_2(r, x'') > 0\) if \(|(r, x'') - (r_0, x_0'')| \leq 10\delta\). Let \(\xi(x) = \xi(|x'|, x'')\) be a smooth function satisfying \(\xi = 1\) if \(|(r, x'') - (r_0, x_0'')| \leq \delta\), \(\xi = 0\) if \(|(r, x'') - (r_0, x_0'')| \geq 2\delta\), and \(0 \leq \xi \leq 1\). Denote
\[
Z_{z_j,\lambda}(x) = \xi U_{z_j,\lambda}(x), \quad
Z_{\overline{r}, \overline{x}'', \lambda}(x) = \sum_{j=1}^{m} Z_{z_j,\lambda}(x), \quad
Z_{\overline{r}, \overline{x}'', \lambda}^{\ast}(x) = \sum_{j=1}^{m} U_{z_j,\lambda}(x),
\]
\[
Y_{z_j,\lambda}(x) = \xi V_{z_j,\lambda}(x), \quad
Y_{\overline{r}, \overline{x}'', \lambda}(x) = \sum_{j=1}^{m} Y_{z_j,\lambda}(x), \quad
Y_{\overline{r}, \overline{x}'', \lambda}^{\ast}(x) = \sum_{j=1}^{m} V_{z_j,\lambda}(x),
\]
\[
Z_{j,1} = \frac{\partial Z_{z_j,\lambda}}{\partial \lambda}, \quad Z_{j,2} = \frac{\partial Z_{z_j,\lambda}}{\partial \overline{r}}, \quad Z_{j,k} = \frac{\partial Z_{z_j,\lambda}}{\partial \overline{x}_{k}''}, \quad k = 3, \dots, N, \quad j = 1, \dots, m,
\]
and
\[
Y_{j,1} = \frac{\partial Y_{z_j,\lambda}}{\partial \lambda}, \quad Y_{j,2} = \frac{\partial Y_{z_j,\lambda}}{\partial \overline{r}}, \quad Y_{j,k} = \frac{\partial Y_{z_j,\lambda}}{\partial \overline{x}_{k}''}, \quad k = 3, \dots, N, \quad j = 1, \dots, m.
\]
In this paper, we always assume that \(m > 0\) is a sufficiently large integer. The parameter \(\lambda \) satisfies \([L_0 m^{\frac{N-2}{N-4}}, L_1 m^{\frac{N-2}{N-4}}]\) for some constants \(L_1 > L_0 > 0\). Moreover,
\[
|(\overline{r}, \overline{x}'') - (r_0, x_0'')| \leq \frac{1}{\lambda^{1-\theta}},
\]
where \(\theta > 0\) is a small constant. We will prove the following result.

\begin{thm}\label{EXS1}
Under the assumptions of Theorem \ref{EXS}, there exists a positive integer \(m_0 > 0\), such that for any integer \(m \geq m_0\), \eqref{CFL} has a solution \((u_m, v_m)\) of the form
\[
u_m = Z_{\overline{r}_m, \overline{x}_m'', \lambda_m} + \phi_{\overline{r}_m, \overline{x}_m'', \lambda_m} = \sum_{j=1}^{m} \xi U_{z_j, \lambda_m} + \phi_{\overline{r}_m, \overline{x}_m'', \lambda_m},
\]
\[
v_m = Y_{\overline{r}_m, \overline{x}_m'', \lambda_m} + \varphi_{\overline{r}_m, \overline{x}_m'', \lambda_m} = \sum_{j=1}^{m} \xi V_{z_j, \lambda_m} + \varphi_{\overline{r}_m, \overline{x}_m'', \lambda_m},
\]
where \(\phi_{\overline{r}_m, \overline{x}_m'', \lambda_m} \in H_s\), \(\varphi_{\overline{r}_m, \overline{x}_m'', \lambda_m} \in H_s\), and \(\lambda_m \in [L_0 m^{\frac{N-2}{N-4}}, L_1 m^{\frac{N-2}{N-4}}]\). Moreover, as \(m \to \infty\), \((\overline{r}_m, \overline{x}_m'') \to (r_0, x_0'')\), \(\lambda_m^{-2} \| \phi_{\overline{r}_m, \overline{x}_m'', \lambda_m} \|_{L^\infty} \to 0\), and \(\lambda_m^{-2} \| \varphi_{\overline{r}_m, \overline{x}_m'', \lambda_m} \|_{L^\infty} \to 0\).
\end{thm}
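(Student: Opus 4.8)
The plan is to carry out a Lyapunov--Schmidt finite-dimensional reduction adapted to the symmetric space $H_s$. First I would set up the variational framework: solutions of \eqref{CFL} correspond to critical points of an energy functional $I(u,v)$ on $H_s\times H_s$, and plugging in the ansatz $u=Z_{\overline r,\overline x'',\lambda}+\phi$, $v=Y_{\overline r,\overline x'',\lambda}+\varphi$ one studies the reduced functional $F(\overline r,\overline x'',\lambda)=I(Z+\phi,Y+\varphi)$ where $(\phi,\varphi)$ solves the auxiliary equation obtained by projecting onto the orthogonal complement of $E=\mathrm{span}\{Z_{j,k},Y_{j,k}\}$. Because the system is of Hamiltonian (strongly indefinite) type, the bilinear form is not positive definite; I would handle this either by a dual-variational reformulation (working with the inverse Laplacian and a single functional in one unknown, as is standard for $-\Delta u = f(v)$, $-\Delta v=g(u)$) or by exploiting the synchronized structure $u\approx v$ which symmetrizes the problem. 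The nondegeneracy Theorem~\ref{Non} is exactly what makes the linearized operator invertible on $E^\perp$ within $H_s$: the only kernel directions are the $(N+1)$ translation/dilation modes, and in $H_s$ the translations in $x_1,x_2$ and the rotational mode are killed by symmetry, leaving the admissible parameters $(\lambda,\overline r,\overline x'')$.

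Next I would solve the auxiliary equation. The key estimates are: (i) the error term $\|l_m\|:=\|\,$projection of $(-\Delta(Z+Y)$ minus the nonlinearity$)\,\|$, which must be bounded using the interaction estimates between the $m$ bubbles $U_{z_j,\lambda}$ placed on the regular polygon of radius $\overline r$ at mutual distance $\sim \overline r/m$, together with the contribution of $K_i(x)-1$ near the critical point; this is where the scaling $\lambda\sim m^{(N-2)/(N-4)}$ and the hypothesis $\alpha<N-5+\frac{6}{N-2}$ enter, ensuring the bubble--bubble interaction $\sim m(\lambda \overline r/m)^{-(N-2)}$ and the potential error $\sim \lambda^{-2}$ are of comparable, controllable size; (ii) invertibility of the linearized operator $L_m$ on $E^\perp\cap(H_s\times H_s)$ uniformly in $m$, which follows from Theorem~\ref{Non} by a standard contradiction/blow-up argument (rescale around one bubble, pass to the limit, use nondegeneracy to force the limit into the kernel, contradict the orthogonality). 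A contraction mapping argument then yields a unique small $(\phi,\varphi)=(\phi_{\overline r,\overline x'',\lambda},\varphi_{\overline r,\overline x'',\lambda})\in E^\perp$ with $\|(\phi,\varphi)\|\lesssim \|l_m\|$, and with the claimed $L^\infty$ decay $\lambda^{-2}\|\phi\|_{L^\infty}+\lambda^{-2}\|\varphi\|_{L^\infty}\to0$ after a Moser-iteration / elliptic-regularity bootstrap (cf.\ the Remark after Theorem~\ref{Non}), plus $C^1$-dependence of $(\phi,\varphi)$ on the parameters.

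Then comes the reduction: one shows $(u,v)$ is a genuine solution of \eqref{CFL} iff $(\overline r,\overline x'',\lambda)$ is a critical point of $F(\overline r,\overline x'',\lambda)$, and expands
\[
F(\overline r,\overline x'',\lambda)=m\Big(A_0+\frac{A_1}{\lambda^{2}}\big(\Delta K_1(r_0,x_0'')+\Delta K_2(r_0,x_0'')\big)+A_2\,\nabla(K_1+K_2)(\overline r,\overline x'')\cdot(\cdots)-\frac{B_0\,m^{N-2}}{(\lambda\overline r)^{N-2}}+o(\cdots)\Big),
\]
with $A_1>0$, $B_0>0$. The dilation variable $\lambda$ is then determined by balancing the two competing terms $A_1\lambda^{-2}\Delta(K_1+K_2)<0$ (attractive, from hypothesis \textbf{(II)}) against $-B_0 m^{N-2}(\lambda\overline r)^{-(N-2)}$ (repulsive), producing a strict local max/min in $\lambda$ at $\lambda\sim m^{(N-2)/(N-4)}$; the variables $(\overline r,\overline x'')$ are pinned near $(r_0,x_0'')$ using hypothesis \textbf{(I)} and the stability of the critical point (nonzero local degree of $\nabla(K_1+K_2)$), which is preserved under the $o(1)$ perturbation and hence yields a critical point of the full $F$ by degree theory. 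This gives a solution for every large $m$, and since $F\sim m A_0\to\infty$, distinct $m$ give distinct solutions with energy $\to+\infty$, establishing Theorems~\ref{EXS1} and~\ref{EXS}.

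The main obstacle I anticipate is twofold. First, the delicate choice of norms for the reduction: because of the nonlocal Riesz convolution combined with criticality, the natural weighted $L^\infty$ (or $L^{2N/(N+2)}$-type) norms must be chosen so that the convolution term $(|x|^{-(N-\alpha)}*Z^{2^*_\alpha})Z^{2^*_\alpha-2}$ is controlled both in the error estimate and in the inversion of $L_m$ uniformly in $m$; the restriction $\alpha<N-5+\frac{6}{N-2}$ is precisely the threshold making the bubble-interaction exponent favorable, and verifying the matching of orders here is the crux. Second, obtaining the \emph{uniform-in-$m$} invertibility of the linearized operator on $E^\perp$ in the Hamiltonian setting: one cannot directly use coercivity, so the blow-up argument must be run carefully, peeling off each bubble, and must genuinely invoke the full strength of Theorem~\ref{Non} (including the synchronized coupling between the two components of the kernel) to conclude the limiting profile lies in the span of the known zero modes.
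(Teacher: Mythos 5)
Your first half (the finite--dimensional reduction: weighted $L^\infty$ norms, uniform invertibility of the linearized operator via blow-up and Theorem~\ref{Non}, contraction mapping) is essentially what the paper does in Section~3, and your identification of the competing orders $\lambda^{-2}\Delta(K_1+K_2)$ vs.\ $m^{N-2}/\lambda^{N-2}$ and the resulting scaling $\lambda\sim m^{(N-2)/(N-4)}$ matches the paper exactly. The difference lies in how one then determines the parameters. You propose expanding the reduced energy $F(\overline r,\overline x'',\lambda)=I(Z+\phi,Y+\varphi)$ and invoking the standard ``critical point of $F$ $\Leftrightarrow$ true solution'' meta-theorem. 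The paper instead follows Peng--Wang--Yan and works with \emph{local Poho\v{z}aev identities}: it derives the three identities \eqref{d1}--\eqref{d3} on a ball $D_\rho$ (two local ones from the translation vector fields $\partial_{x_i}$ and the dilation $x\cdot\nabla$, one global one pairing with $\partial_\lambda Z$), shows in Lemma~\ref{D1} that satisfying these identities forces all Lagrange multipliers $c_l=0$, and then converts them into the scalar equations \eqref{d21}--\eqref{d23} for $(\overline r,\overline x'',\lambda)$, which are solved by Brouwer degree using hypothesis~\textbf{(I)}. The Poho\v{z}aev route has two advantages here: (a) it sidesteps the somewhat awkward question of whether ``critical point of $F$'' cleanly implies $c_l=0$ in the strongly indefinite Hamiltonian setting, which you flag as a concern but do not resolve (your suggested dual reformulation or synchronization trick would each need real work to implement uniformly in $m$); and (b) the local identities on $D_\rho$ produce boundary terms controlled by $\phi,\varphi$ alone, so one only needs the low-order estimate $\|(\phi,\varphi)\|_*\lesssim\lambda^{-(1+\varepsilon)}$ and Lemma~\ref{D4} rather than a full high-order energy expansion in all parameters. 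Your plan is plausible and could be completed, but it would require proving the missing $c_l=0$ step explicitly; the paper's Lemma~\ref{D1} is precisely the device that replaces it.
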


\begin{Rem}
In general, we are also interested in the following nonlocal critical Lane-Emden system
\begin{align*}
	\left\lbrace
	\begin{aligned}
		-\Delta u&=\left(\int_{\R^N}\frac{v^{p}\left(y\right)}{\left|x-y\right|^{\mu}}dy\right)v^{p-1},&x\in\R^N,\\	
		-\Delta v&=\left(\int_{\R^N}\frac{u^{q}\left(y\right)}{\left|x-y\right|^{\mu}}dy\right)u^{q-1},&x\in\R^N,
	\end{aligned}
	\right.
\end{align*}
where $N\geq3$, $0<\mu<N$ and $p,q>1$ satisfy
\begin{align*}
	\frac{1}{p}+\frac{1}{q}=\frac{2\left(N-2\right)}{2N-\mu}.
\end{align*}
However, little is known about the uniqueness and nondegeneracy of the positive solutions of  the above generalized critical Hamiltonian elliptic system of Hartree type. In fact, the construction of multi-bubbling solutions in the present paper depends a lot on the qualitative properties of these positive solutions, and we only know the uniqueness and nondegeneracy of positive solutions for the special case $p=\frac{2N-\mu}{N-2}$. In a future paper will investigate the uniqueness and nondegeneracy of positive solutions of the general case and construct the multi-bubbling solutions of the following problem
\begin{align*}
	\left\lbrace
	\begin{aligned}
		-\Delta u&=K_1(x)\left(\int_{\R^N}\frac{K_1(x)v^{p}\left(y\right)}{\left|x-y\right|^{\mu}}dy\right)v^{p-1},&x\in\R^N,\\	
		-\Delta v&=K_2(x)\left(\int_{\R^N}\frac{K_2(x)u^{q}\left(y\right)}{\left|x-y\right|^{\mu}}dy\right)u^{q-1},&x\in\R^N,
	\end{aligned}
	\right.
\end{align*}
where $N\geq3$, $0<\mu<N$ and $p,q>1$ satisfy
\begin{align*}
	\frac{1}{p}+\frac{1}{q}=\frac{2\left(N-2\right)}{2N-\mu}.
\end{align*}
The functions \( K_1 \) and \( K_2 \) are bounded and satisfy the conditions \(\textbf{(I)}\) and \(\textbf{(II)}\).
\end{Rem}

This paper is organized as follows: In Section 2, we prove a nondegeneracy result for the critical Hamilton system \eqref{CFL}. In Section 3, we carry out the reduction procedure for the critical Hartree system \eqref{CFL}. Finally, in the last section, we establish local Poho\v{z}aev identities to prove our main results.

\section{A nondegeneracy property}
In this section, we aim to prove the nondegeneracy of the unique positive solution for the nonlinear Hamilton system \eqref{hemilton} with the upper critical exponent.

Inspired by the work in \cite{LXLTX}, we first transform equation \eqref{linear} onto the sphere using stereographic projection. Let $\mathbb{S}^N$ denote the unit sphere in $\mathbb{R}^{N+1}$, and define the mapping $S: \mathbb{R}^N \to \mathbb{S}^N \setminus \left\lbrace 0,0,\dots,0,-1 \right\rbrace$ by
\[
S(x) = \left( \frac{2x}{1+|x|^2}, \frac{1-|x|^2}{1+|x|^2} \right).
\]
We observe that the inverse mapping $S^{-1}$ is given by
\[
S^{-1}(\xi_1, \xi_2, \dots, \xi_{N+1}) = \left( \frac{\xi_1}{1 + \xi_{N+1}}, \frac{\xi_2}{1 + \xi_{N+1}}, \dots, \frac{\xi_N}{1 + \xi_{N+1}} \right).
\]
The Jacobian of this transformation is
\[
J_S(x) = \left( \frac{2}{1 + |x|^2} \right)^N.
\]
By straightforward calculation, we obtain the following distance relation:
\[
|S(x) - S(y)| = |x - y| r(x) r(y),
\]
where \( r(x) = \left( \frac{2}{1 + |x|^2} \right)^{\frac{1}{2}} \).

We define the following two functions. For any \( h: \mathbb{R}^N \to \mathbb{R} \), we define \( S_{*}h: \mathbb{S}^N \setminus \left\lbrace 0, 0, \dots, 0, -1 \right\rbrace \to \mathbb{R} \) as follows:
\begin{equation}\label{S2}
    S_{*}h(\xi) = J_S^{\frac{2-N}{2N}}(S^{-1}(\xi)) h(S^{-1}(\xi)).
\end{equation}
Additionally, for any \( H: \mathbb{S}^N \setminus \left\lbrace 0, 0, \dots, 0, -1 \right\rbrace \to \mathbb{R} \), we define \( S^{*}H: \mathbb{R}^N \to \mathbb{R} \) as follows:
\[
S^{*}H(x) = J_S^{\frac{N-2}{2N}}(x) H(Sx).
\]

Moreover, according to Proposition 2.1 in \cite{LXLTX}, we have the following result:

\begin{Prop}\label{L1}
    Let \( S_{*}: \mathbb{S}^N \setminus \left\lbrace 0, 0, \dots, 0, -1 \right\rbrace \to \mathbb{R} \) be defined by \eqref{S2}. Then, for any \( \xi \in \mathbb{S}^N \), we have
    \[
    S_{*}(\varphi_{j})(\xi) = (2-N) 2^{\frac{-N}{2}} C_{N,\alpha} \xi_{j}, \quad 1 \leq j \leq N,
    \]
    and
    \[
    S_{*}(\varphi_{N+1})(\xi) = (N-2) 2^{\frac{-N}{2}} C_{N,\alpha} \xi_{N+1},
    \]
    where \( \varphi_{j}, 1 \leq j \leq N+1 \) are defined by \eqref{U1} and \eqref{U2}.
\end{Prop}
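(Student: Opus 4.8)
The proof is a direct computation, so the plan is simply to substitute the explicit formulas \eqref{U1}, \eqref{U2} and \eqref{CSF} into the definition \eqref{S2} and to track the conformal weight carefully. First I would rewrite the weight in a convenient form: since $J_S(x)=\left(\frac{2}{1+|x|^2}\right)^N$, we have
\[
J_S^{\frac{2-N}{2N}}(x)=\left(\frac{2}{1+|x|^2}\right)^{\frac{2-N}{2}}=\left(\frac{1+|x|^2}{2}\right)^{\frac{N-2}{2}}.
\]
The key structural point is that this weight exactly cancels the bubble factor $(1+|x|^2)^{-\frac{N-2}{2}}$ present in $U_{0,1}$; indeed the exponent $\frac{2-N}{2N}$ is chosen precisely so that the critical bubble is carried to a constant on $\mathbb{S}^N$. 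Concretely, with $x=S^{-1}(\xi)$,
\[
J_S^{\frac{2-N}{2N}}(x)\,U_{0,1}(x)=\left(\frac{1+|x|^2}{2}\right)^{\frac{N-2}{2}}C_{N,\alpha}\left(\frac{1}{1+|x|^2}\right)^{\frac{N-2}{2}}=2^{-\frac{N-2}{2}}\,C_{N,\alpha}.
\]

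For $1\le j\le N$, combining this with \eqref{U1} gives
\[
S_{*}(\varphi_j)(\xi)=(2-N)\,2^{-\frac{N-2}{2}}\,C_{N,\alpha}\,\frac{x_j}{1+|x|^2}.
\]
I would then pass to stereographic coordinates: from $S(x)=\left(\frac{2x}{1+|x|^2},\frac{1-|x|^2}{1+|x|^2}\right)$ one reads off $\xi_j=\frac{2x_j}{1+|x|^2}$ for $1\le j\le N$ and $\xi_{N+1}=\frac{1-|x|^2}{1+|x|^2}$. Substituting $\frac{x_j}{1+|x|^2}=\frac{\xi_j}{2}$ and collecting the powers of $2$ yields $S_{*}(\varphi_j)(\xi)=(2-N)\,2^{-\frac{N}{2}}\,C_{N,\alpha}\,\xi_j$. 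The case $j=N+1$ is identical in spirit: using \eqref{U2} in place of \eqref{U1},
\[
S_{*}(\varphi_{N+1})(\xi)=\frac{N-2}{2}\,2^{-\frac{N-2}{2}}\,C_{N,\alpha}\,\frac{1-|x|^2}{1+|x|^2}=(N-2)\,2^{-\frac{N}{2}}\,C_{N,\alpha}\,\xi_{N+1}.
\]

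Finally, I would note that these identities are derived on $\mathbb{S}^N\setminus\{(0,\dots,0,-1)\}$, the image of $S$, but that each $S_{*}(\varphi_j)$ extends continuously (in fact smoothly, to a degree-one spherical harmonic) across the missing pole: as $|x|\to\infty$, $\varphi_j$ decays like $|x|^{1-N}$ and $\varphi_{N+1}$ like $|x|^{2-N}$, while the weight grows like $|x|^{N-2}$, so $S_{*}(\varphi_j)(\xi)\to0$ for $j\le N$ and $S_{*}(\varphi_{N+1})(\xi)\to-(N-2)2^{-\frac{N}{2}}C_{N,\alpha}$, consistent with $\xi\to(0,\dots,0,-1)$. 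There is no genuine obstacle in this proposition; the only point demanding care is the bookkeeping of the conformal exponent $\frac{2-N}{2N}$ and the resulting powers of $2$. The content worth emphasizing — and what makes this the natural entry point for the spherical-harmonic analysis of \eqref{linear} in the sequel — is that the $N+1$ zero modes $\varphi_1,\dots,\varphi_{N+1}$ are sent, up to a common multiplicative constant, exactly to the restrictions of the linear coordinate functions $\xi_1,\dots,\xi_{N+1}$, i.e. to the first nontrivial eigenspace of the Laplace--Beltrami operator on $\mathbb{S}^N$.
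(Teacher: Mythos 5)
Your computation is correct: $J_S^{\frac{2-N}{2N}}(x)=\bigl(\tfrac{1+|x|^2}{2}\bigr)^{\frac{N-2}{2}}$ exactly cancels the bubble factor so that $J_S^{\frac{2-N}{2N}}U_{0,1}\equiv 2^{-\frac{N-2}{2}}C_{N,\alpha}$, and substituting $\tfrac{x_j}{1+|x|^2}=\tfrac{\xi_j}{2}$ and $\tfrac{1-|x|^2}{1+|x|^2}=\xi_{N+1}$ gives the stated constants with the powers of $2$ matching. The paper does not supply its own proof — it imports this as Proposition 2.1 of \cite{LXLTX} — so there is nothing to compare against; your direct verification is exactly the routine bookkeeping that such a citation is implicitly deferring to, and the closing remark that the extension across the pole lands in $\mathscr{H}_1^{N+1}$ is a useful consistency check.
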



 We will now discuss some results related to spherical harmonic functions. Let \( Y_{k}(x) \) be the restriction on \( \mathbb{S}^N \) of real harmonic \( k \)-order homogeneous polynomials. The linear space of \( \left\lbrace Y_{k,i}(x) \mid 1 \leq i \leq \dim \mathscr {H}_{k}^{N+1} \right\rbrace_{k} \) is denoted by \( \mathscr {H}_{k}^{N+1} \), which is the mutually orthogonal subspace of the restriction on \( \mathbb{S}^N \). We consider the following orthogonal decomposition:
$$
L^2(\mathbb{S}^N) = \bigoplus\limits_{k=0}^{\infty} \mathscr {H}_{k}^{N+1},
$$
and
\begin{equation}\nonumber
    \dim \mathscr {H}_{k}^{N+1} =
    \begin{cases}
        1, & \text{if } k = 0, \\
        N+1, & \text{if } k = 1, \\
        \binom{k+N}{k} - \binom{k-2}{k+N-2}, & \text{if } k \geq 2.
    \end{cases}
\end{equation}

The following Funk-Heck formula of the spherical harmonic function plays a key role in the proof of Theorem \ref{Non}.

\begin{lem}\label{FH01}{\rm{(\cite{AH,DX})}}
Let \( t \in (0,N) \), then for any \( Y \in \mathscr {H}_{k}^{N+1} \), we have
\begin{equation}\label{FH}
\int_{\mathbb{S}^N} \frac{Y(\eta)}{|\xi-\eta|^t} \, d\eta = \lambda_k(t) Y(\xi),
\end{equation}
where
\[
\lambda_k(t) = 2^{N-t} \pi^{\frac{N}{2}} \frac{\Gamma\left(k + \frac{t}{2}\right) \Gamma\left(\frac{N-t}{2}\right)}{\Gamma\left(\frac{t}{2}\right) \Gamma\left(k + N - \frac{t}{2}\right)}.
\]
\end{lem}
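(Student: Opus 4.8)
The plan is to deduce \eqref{FH} from the rotational symmetry of the kernel — which shows that the integral operator acts as a scalar on each $\mathscr{H}_k^{N+1}$ — and then to pin that scalar down by the classical Funk--Hecke reduction to a one-dimensional integral.

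First I would record the structural part. On $\mathbb{S}^N$ one has $|\xi-\eta|^2=2-2\,\xi\cdot\eta$, so the kernel is \emph{zonal}: $|\xi-\eta|^{-t}=(2-2\,\xi\cdot\eta)^{-t/2}=:f(\xi\cdot\eta)$. Since $t<N=\dim\mathbb{S}^N$, the only singularity, at $\eta=\xi$, is integrable, so
\[
T_t h(\xi):=\int_{\mathbb{S}^N}\frac{h(\eta)}{|\xi-\eta|^{t}}\,d\eta
\]
defines a bounded self-adjoint operator on $L^2(\mathbb{S}^N)$ (the kernel is symmetric) that commutes with the $O(N+1)$-action, because $f(R\xi\cdot R\eta)=f(\xi\cdot\eta)$ for $R\in O(N+1)$. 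Restricting $T_t$ to the finite-dimensional invariant subspace $\mathscr{H}_k^{N+1}$ and decomposing that space into eigenspaces of $T_t$, each eigenspace is again $O(N+1)$-invariant; irreducibility of $\mathscr{H}_k^{N+1}$ then forces a single eigenspace, i.e. $T_t|_{\mathscr{H}_k^{N+1}}=\lambda_k(t)\,\mathrm{Id}$ for a real scalar $\lambda_k(t)$. This is exactly \eqref{FH}, and it remains to compute $\lambda_k(t)$.

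Second, I would evaluate $\lambda_k(t)$ by testing against the zonal harmonic at the north pole. Take $\xi=e_{N+1}$ and $Y(\eta)=C_k^{\nu}(\xi\cdot\eta)\in\mathscr{H}_k^{N+1}$, where $\nu=\tfrac{N-1}{2}$ and $C_k^{\nu}$ is the Gegenbauer polynomial, so $Y(\xi)=C_k^{\nu}(1)$. Slicing $\mathbb{S}^N$ as $\eta=(\sqrt{1-s^2}\,\omega,\,s)$ with $\omega\in\mathbb{S}^{N-1}$, $s\in(-1,1)$, gives $d\eta=(1-s^2)^{\frac{N-2}{2}}\,ds\,d\omega$ and $\xi\cdot\eta=s$, whence
\[
\lambda_k(t)=\frac{\omega_{N-1}}{C_k^{\nu}(1)}\int_{-1}^{1}(2-2s)^{-t/2}\,C_k^{\nu}(s)\,(1-s^2)^{\,\nu-\frac12}\,ds,\qquad \omega_{N-1}=\frac{2\pi^{N/2}}{\Gamma(N/2)}.
\]

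Third, I would compute this one-dimensional integral, which is classical. The most transparent route is induction on $k$: for $k=0$ the integral is the elementary Beta evaluation $\lambda_0(t)=\int_{\mathbb{S}^N}|\xi-\eta|^{-t}\,d\eta=2^{N-t}\pi^{N/2}\,\Gamma(\tfrac{N-t}{2})/\Gamma(N-\tfrac t2)$, which matches the claimed formula at $k=0$; and using the three-term recurrence for $C_k^{\nu}$ together with the orthogonality $\int_{-1}^{1}C_k^{\nu}(s)C_{k'}^{\nu}(s)(1-s^2)^{\nu-1/2}\,ds=0$ for $k\neq k'$ one obtains the ratio $\lambda_{k+1}(t)/\lambda_k(t)=(k+\tfrac t2)/(k+N-\tfrac t2)$; telescoping these ratios yields
\[
\lambda_k(t)=2^{N-t}\pi^{N/2}\,\frac{\Gamma\!\left(k+\tfrac t2\right)\Gamma\!\left(\tfrac{N-t}{2}\right)}{\Gamma\!\left(\tfrac t2\right)\Gamma\!\left(k+N-\tfrac t2\right)}.
\]
Alternatively one can insert the Rodrigues formula for $C_k^{\nu}$ and integrate by parts $k$ times to reduce directly to a single Beta integral, or invoke the standard Gegenbauer integral tables; and since both sides are meromorphic in $t$, it also suffices to establish the identity for $t$ in a small interval where all integrals converge absolutely and then continue analytically. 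One could even transfer the whole statement back to $\mathbb{R}^N$ via the stereographic projection already introduced, using $|S(x)-S(y)|=|x-y|\,r(x)r(y)$, and recognize $T_t$ as a Riesz potential acting on harmonic polynomials times a power of $1+|x|^2$, whose value is classical.

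I expect the only genuine obstacle to be computational — carrying the Gamma-function factors and powers of $2$ through the Gegenbauer integral (equivalently, verifying the recursion ratio $\lambda_{k+1}(t)/\lambda_k(t)$). The structural steps (zonality, the Schur-type scalar argument, and the reduction to the one-dimensional integral) are routine.
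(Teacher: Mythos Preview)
The paper does not prove this lemma at all: it is stated as the classical Funk--Hecke formula and simply cited from the references \cite{AH,DX}. Your proposal is correct and is precisely the standard argument one finds in those sources --- the zonal-kernel/Schur argument for the scalar action on each $\mathscr{H}_k^{N+1}$, followed by the reduction to a one-dimensional Gegenbauer integral and the computation of $\lambda_k(t)$ (via the base case plus the ratio $\lambda_{k+1}(t)/\lambda_k(t)=(k+\tfrac t2)/(k+N-\tfrac t2)$, or an equivalent Rodrigues/Beta calculation). So there is nothing to compare: you have supplied the proof the paper omits, and your outline matches the classical treatment.
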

We observe immediately that \( \lambda_{k+1}(t) < \lambda_k(t) \) for every \( t \in (0,N) \), and obviously,
\[
\lambda_0(N-2) = \frac{2^3}{N} \frac{\pi^{\frac{N}{2}}}{\Gamma\left(\frac{N}{2}\right)},
\quad
\lambda_1(N-2) = \frac{N-2}{N+2} \lambda_0(N-2),
\]
\[
\lambda_0(N-\alpha) = 2^{\alpha} \pi^{\frac{N}{2}} \frac{\Gamma\left(\frac{\alpha}{2}\right)}{\Gamma\left(\frac{N+\alpha}{2}\right)},
\quad
\lambda_1(N-\alpha) = \frac{N-\alpha}{N+\alpha} \lambda_0(N-\alpha).
\]

The following lemma is a direct consequence of the above Funk-Heck formula.
\begin{lem}\label{FH11}{\rm{(\cite{LXLTX})}}
Let \( \alpha \in (0,N) \) and \( \lambda_k(N-\alpha) \) be defined as above, then for any \( Y \in \mathscr {H}_k^{N+1} \), we have
\begin{equation}\label{FH111}
\int_{\mathbb{S}^N} \int_{\mathbb{S}^N} \frac{1}{|\xi - \eta|^{N-2}} \frac{1}{|\eta - \sigma|^{N-\alpha}} Y(\eta) \, d\eta \, d\sigma = \lambda_k(N-2) \lambda_0(N-\alpha) Y(\xi),
\end{equation}
\begin{equation}\label{FH112}
\int_{\mathbb{S}^N} \int_{\mathbb{S}^N} \frac{1}{|\xi - \eta|^{N-2}} \frac{1}{|\eta - \sigma|^{N-\alpha}} Y(\sigma) \, d\sigma \, d\eta = \lambda_k(N-2) \lambda_k(N-\alpha) Y(\xi).
\end{equation}
\end{lem}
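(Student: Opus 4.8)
The statement is an iterated application of the Funk--Hecke formula of Lemma~\ref{FH01}: the plan is to apply that formula twice, interchanging the order of integration by Fubini's theorem, the essential observation being that in \eqref{FH111} the inner integral acts on the constant harmonic $1\in\mathscr{H}_{0}^{N+1}$, whereas in \eqref{FH112} it acts on $Y\in\mathscr{H}_{k}^{N+1}$. Nothing beyond Lemma~\ref{FH01} and a routine integrability check is needed.

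First I would record that the double integrals converge absolutely: since $\alpha\in(0,N)$ we have $0<N-2<N$ and $0<N-\alpha<N$, so both $|\xi-\eta|^{-(N-2)}$ and $|\eta-\sigma|^{-(N-\alpha)}$ are locally integrable on the compact manifold $\mathbb{S}^N$, and $Y$, being continuous on $\mathbb{S}^N$, is bounded; hence the integrands of \eqref{FH111} and \eqref{FH112} lie in $L^1(\mathbb{S}^N\times\mathbb{S}^N)$ and Fubini's theorem permits any order of integration. For \eqref{FH111}, integrating first in $\sigma$ and applying Lemma~\ref{FH01} with $t=N-\alpha$ to the constant function $1$ gives $\int_{\mathbb{S}^N}|\eta-\sigma|^{-(N-\alpha)}\,d\sigma=\lambda_0(N-\alpha)$, independent of $\eta$; hence
\begin{equation}\nonumber
\int_{\mathbb{S}^N}\!\!\int_{\mathbb{S}^N}\frac{Y(\eta)}{|\xi-\eta|^{N-2}\,|\eta-\sigma|^{N-\alpha}}\,d\eta\,d\sigma=\lambda_0(N-\alpha)\int_{\mathbb{S}^N}\frac{Y(\eta)}{|\xi-\eta|^{N-2}}\,d\eta=\lambda_0(N-\alpha)\,\lambda_k(N-2)\,Y(\xi),
\end{equation}
the last equality being Lemma~\ref{FH01} with $t=N-2$ and $Y\in\mathscr{H}_{k}^{N+1}$; this is \eqref{FH111}.

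For \eqref{FH112} I again integrate first in $\sigma$, but now the integrand carries $Y(\sigma)$, so Lemma~\ref{FH01} with $t=N-\alpha$ yields $\int_{\mathbb{S}^N}|\eta-\sigma|^{-(N-\alpha)}Y(\sigma)\,d\sigma=\lambda_k(N-\alpha)Y(\eta)$; substituting and applying Lemma~\ref{FH01} once more with $t=N-2$,
\begin{equation}\nonumber
\int_{\mathbb{S}^N}\!\!\int_{\mathbb{S}^N}\frac{Y(\sigma)}{|\xi-\eta|^{N-2}\,|\eta-\sigma|^{N-\alpha}}\,d\sigma\,d\eta=\lambda_k(N-\alpha)\int_{\mathbb{S}^N}\frac{Y(\eta)}{|\xi-\eta|^{N-2}}\,d\eta=\lambda_k(N-\alpha)\,\lambda_k(N-2)\,Y(\xi),
\end{equation}
which is \eqref{FH112}. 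There is no real obstacle here; the only step requiring an actual (if routine) argument rather than a citation is the absolute integrability that legitimizes the interchange of the $\sigma$- and $\eta$-integrations, and the one point to watch is to apply Lemma~\ref{FH01} to the variable occurring in the harmonic — $\sigma$ first, then $\eta$ — which is exactly what makes the inner $\sigma$-integral contribute $\lambda_0(N-\alpha)$ in \eqref{FH111} but $\lambda_k(N-\alpha)$ in \eqref{FH112}.
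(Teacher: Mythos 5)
Your proof is correct, and it is the standard derivation: two applications of the Funk--Hecke formula (Lemma~\ref{FH01}) combined via Fubini, with the key observation that the inner $\sigma$-integral in \eqref{FH111} acts on the constant $1\in\mathscr{H}_0^{N+1}$ while in \eqref{FH112} it acts on $Y\in\mathscr{H}_k^{N+1}$, producing $\lambda_0(N-\alpha)$ versus $\lambda_k(N-\alpha)$ respectively. The paper itself gives no proof (it cites \cite{LXLTX}), so there is nothing to compare against; your argument is complete, including the absolute-integrability check justifying the interchange of integration.
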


Next, we will provide some useful estimates. For convenience, define
\begin{equation}\label{T}
	\begin{aligned}
		T_1(\psi)(x) & := 2^{*}_{\alpha} \Big( |x|^{-(N-\alpha)} \ast V_{0,1}^{2^{*}_{\alpha}-1} \psi \Big) V_{0,1}^{2^{*}_{\alpha}-1} + (2^{*}_{\alpha}-1) \Big( |x|^{-(N-\alpha)} \ast V_{0,1}^{2^{*}_{\alpha}} \Big) V_{0,1}^{2^{*}_{\alpha}-2} \psi, \\
		T_2(\varphi)(x) & := 2^{*}_{\alpha} \Big( |x|^{-(N-\alpha)} \ast U_{0,1}^{2^{*}_{\alpha}-1} \varphi \Big) U_{0,1}^{2^{*}_{\alpha}-1} + (2^{*}_{\alpha}-1) \Big( |x|^{-(N-\alpha)} \ast U_{0,1}^{2^{*}_{\alpha}} \Big) U_{0,1}^{2^{*}_{\alpha}-2} \varphi.
	\end{aligned}
\end{equation}
Let \( \nu \in [0, N-2] \) and \( \tau(x) = (1 + |x|^2)^{\frac{1}{2}} \). If \( |\psi| \lesssim \frac{1}{\tau(x)^\nu} \) and \( |\varphi| \lesssim \frac{1}{\tau(x)^\nu} \), we can obtain the following estimates for both \( T_1(\psi) \) and \( T_2(\varphi) \):
\begin{equation}\label{Est1}
	|T_1(\psi)(x)| \lesssim \frac{1}{\tau(x)^{\nu+4}},
\end{equation}
and
\begin{equation}
	|T_2(\varphi)(x)| \lesssim \frac{1}{\tau(x)^{\nu+4}}
\end{equation}
by Lemma 5.1 in \cite{LXLTX}.
By the Riesz potential theory in \cite{S}, equation \eqref{linear} can be rewritten as:
\begin{equation}\label{Inte}
	\left\{
	\begin{array}{ll}
		\varphi(x) = C_N \int_{\mathbb{R}^N} \frac{1}{|x - y|^{N-2}} T_1(\psi)(y) \, dy, \\
		\psi(x) = C_N \int_{\mathbb{R}^N} \frac{1}{|x - y|^{N-2}} T_2(\varphi)(y) \, dy,
	\end{array}
	\right.
\end{equation}
where \( C_N = \frac{\Gamma\left( \frac{N}{2} \right)}{2(N-2)\pi^{\frac{N}{2}}} \).
By Lemma 5.2 in \cite{LXLTX}, we can improve the estimates of \( \psi(x) \) and \( \varphi(x) \) as follows:
\begin{equation}
	|\psi(x)| \lesssim \frac{1}{\tau(x)^{N-2}}, \quad |\varphi(x)| \lesssim \frac{1}{\tau(x)^{N-2}}.
\end{equation}

To prove the theorem, we need to transform the integral equation \eqref{Inte} on \( \mathbb{R}^N \) to one on \( \mathbb{S}^N \) using stereographic projection. For simplicity, let's consider the first equation in system \eqref{Inte} (the second equation is analogous). According to \eqref{S2} and the estimate \( |\varphi(x)| \lesssim \frac{1}{\tau(x)^{N-2}} \), we have:
\[
	\int_{\mathbb{S}^N} \left| S_{*} \varphi(\xi) \right|^2 \, d\xi
	= \int_{\mathbb{S}^N} J_S^{\frac{2-N}{N}} (S^{-1}(\xi)) |\varphi(S^{-1}(\xi))|^2 \, d\xi
	\lesssim \int_{\mathbb{R}^N} \frac{1}{\tau(x)^{2N-4}} J_S^{\frac{2}{N}}(x) \, dx
	\lesssim \int_{\mathbb{R}^N} \frac{1}{\tau(x)^{2N}} \, dx < +\infty.
\]
Thus, \( S_{*} \varphi \in L^2(\mathbb{S}^N) \).

Using \eqref{CSF} and the fact that \( J_S(x) = \left( \frac{2}{1 + |x|^2} \right)^N \), we obtain:
\begin{equation}\nonumber
	\begin{split}
		& 2^{*}_{\alpha} \Big( |x|^{-(N-\alpha)} \ast V_{0,1}^{2^{*}_{\alpha}-1} \psi \Big) V_{0,1}^{2^{*}_{\alpha}-1} \\
		& = C_{N,\alpha}^{\frac{2\alpha + 4}{N-2}} 2^{*}_{\alpha} \cdot 2^{-(2^{*}_{\alpha}-1)(N-2)} J_S^{\frac{\alpha+2}{2N}}(x) \int_{\mathbb{R}^N} \frac{1}{|x - y|^{N-\alpha}} J_S^{\frac{\alpha+2}{2N}}(y) \psi(y) \, dy \\
		& = C_{N,\alpha}^{\frac{2\alpha + 4}{N-2}} 2^{*}_{\alpha} \cdot 2^{-(2^{*}_{\alpha}-1)(N-2)} J_S^{\frac{N+2}{2N}}(x) \int_{\mathbb{R}^N} \frac{1}{|Sx - Sy|^{N-\alpha}} J_S^{\frac{2-N}{2N}}(y) \psi(y) J_S(y) \, dy \\
		& = C_{N,\alpha}^{\frac{2\alpha + 4}{N-2}} 2^{*}_{\alpha} \cdot 2^{-(2^{*}_{\alpha}-1)(N-2)} J_S^{\frac{N+2}{2N}}(x) \int_{\mathbb{S}^N} \frac{1}{|Sx - \eta|^{N-\alpha}} S_{*} \psi(\eta) \, d\eta.
	\end{split}
\end{equation}
Therefore, we have:
\begin{equation}\label{ST1}
	\begin{split}
		& \int_{\mathbb{R}^N} \frac{1}{|x - y|^{N-2}} 2^{*}_{\alpha} \Big( |x|^{-(N-\alpha)} \ast V_{0,1}^{2^{*}_{\alpha}-1} \psi \Big) V_{0,1}^{2^{*}_{\alpha}-1} \, dy \\
		& = C_{N,\alpha}^{\frac{2\alpha + 4}{N-2}} 2^{*}_{\alpha} \cdot 2^{-(2^{*}_{\alpha}-1)(N-2)} \int_{\mathbb{R}^N} \frac{1}{|x - y|^{N-2}} J_S^{\frac{N+2}{2N}}(y) \int_{\mathbb{S}^N} \frac{1}{|Sy - \sigma|^{N-\alpha}} S_{*} \psi(\sigma) \, d\sigma \, dy \\
		& = C_{N,\alpha}^{\frac{2\alpha + 4}{N-2}} 2^{*}_{\alpha} \cdot 2^{-(2^{*}_{\alpha}-1)(N-2)} J_S^{\frac{N-2}{2N}}(x) \int_{\mathbb{R}^N} \frac{1}{|Sx - Sy|^{N-2}} J_S(y) \int_{\mathbb{S}^N} \frac{1}{|Sy - \sigma|^{N-\alpha}} S_{*} \psi(\sigma) \, d\sigma \, dy \\
		&=C_{N,\alpha}^{\frac{2\alpha+4}{N-2}}2^{*}_{\alpha}\cdot 2^{-(2^{*}_{\alpha}-1)(N-2)}J_{S}^{\frac{N-2}{2N}}(x) \int_{\mathbb{S}^N}\int_{\mathbb{S} ^N}\frac{1}{|Sx-\eta|^{N-2}}\frac{1}{|\eta-\sigma|^{N-\alpha}}S_{*}\psi(\sigma)d\sigma d\eta.
	\end{split}
\end{equation}
Similarly,
\begin{equation}\label{ST2}
	\begin{split}
		&\int_{\R^N}\frac{1}{|x-y|^{N-2}}(2^{*}_{\alpha}-1)\Big(|x|^{-(N-\alpha)}\ast V_{0,1}^{2^{*}_{\alpha}}\Big)V_{0,1}^{2^{*}_{\alpha}-2}\psi dy\\
		&=C_{N,\alpha}^{\frac{2\alpha+4}{N-2}}(2^{*}_{\alpha}-1)\cdot 2^{-(2^{*}_{\alpha}-1)(N-2)}J_{S}^{\frac{N-2}{2N}}(x) \int_{\mathbb{S}^N}\frac{1}{|Sx-\eta|^{N-2}}S_{*}\psi(\eta)\int_{\mathbb{S} ^N}\frac{1}{|\eta-\sigma|^{N-\alpha}}d\eta d\sigma .
	\end{split}
\end{equation}
Hence that
\begin{equation}\nonumber
	\begin{aligned}
		\varphi(x)
		&=C_{N}C_{N,\alpha}^{\frac{2\alpha+4}{N-2}}2^{-(2^{*}_{\alpha}-1)(N-2)}J_{S}^{\frac{N-2}{2N}}(x)\Big[2^{*}_{\alpha} \int_{\mathbb{S}^N}\int_{\mathbb{S} ^N}\frac{1}{|Sx-\eta|^{N-2}}\frac{1}{|\eta-\sigma|^{N-\alpha}}S_{*}\psi(\sigma)d\sigma d\eta \\  &+(2^{*}_{\alpha}-1)\int_{\mathbb{S}^N}\frac{1}{|Sx-\eta|^{N-2}}S_{*}\psi(\eta)\int_{\mathbb{S} ^N}\frac{1}{|\eta-\sigma|^{N-\alpha}}d\eta d\sigma \Big]\\
		&:=C_{N}C_{N,\alpha}^{\frac{2\alpha+4}{N-2}}2^{-(2^{*}_{\alpha}-1)(N-2)}J_{S}^{\frac{N-2}{2N}}(x)T_{\mathbb{S}^N}S_{*}\psi(Sx),
	\end{aligned}
\end{equation}
where $$\aligned
T_{\mathbb{S}^N}S_{*}\psi(Sx)&=2^{*}_{\alpha} \int_{\mathbb{S}^N}\int_{\mathbb{S} ^N}\frac{1}{|Sx-\eta|^{N-2}}\frac{1}{|\eta-\sigma|^{N-\alpha}}S_{*}\psi(\sigma)d\sigma d\eta\\
&+(2^{*}_{\alpha}-1)\int_{\mathbb{S}^N}\frac{1}{|Sx-\eta|^{N-2}}S_{*}\psi(\eta)\int_{\mathbb{S} ^N}\frac{1}{|\eta-\sigma|^{N-\alpha}}d\eta d\sigma,
\endaligned $$
and finally that
\begin{equation}\nonumber
	S_{*}\varphi(\xi)=C_{N}C_{N,\alpha}^{\frac{2\alpha+4}{N-2}}2^{-(2^{*}_{\alpha}-1)(N-2)}T_{\mathbb{S}^N}S_{*}\psi(\xi).
\end{equation}
Apply this argument again, we can obtain that
\begin{equation}\label{Inte2}
	\left\{\begin{array}{ll}
		&		S_{*}\varphi(\xi)=C_{N}C_{N,\alpha}^{\frac{2\alpha+4}{N-2}}2^{-(2^{*}_{\alpha}-1)(N-2)}T_{\mathbb{S}^N}S_{*}\psi(\xi),\\[1mm]
		&		S_{*}\psi(\xi)=C_{N}C_{N,\alpha}^{\frac{2\alpha+4}{N-2}}2^{-(2^{*}_{\alpha}-1)(N-2)}T_{\mathbb{S}^N}S_{*}\varphi(\xi),
	\end{array}\right.
\end{equation}

{\bf Proof of Theorem \ref{Non}.}
According to the above arguments we know $S_{*}\varphi,S_{*}\psi\in L^2(\mathbb{S}^N)$. By the orthogonal decomposition $L^2(\mathbb{S}^N)=\bigoplus\limits_{k=0}^{\infty}\mathscr {H}_{k}^{N+1}$, we have
\begin{equation}\label{OD1}
	S_{*}\varphi(\xi)=\sum_{k=0}^{\infty}\sum_{i=1}^{\dim \mathscr {H}_{k}^{N+1}}S_{*}\varphi_{k,i}Y_{k.i}(\xi),
\end{equation}
and
\begin{equation}\label{OD2}
	S_{*}\psi(\xi)=\sum_{k=0}^{\infty}\sum_{i=1}^{\dim \mathscr {H}_{k}^{N+1}}S_{*}\psi_{k,i}Y_{k.i}(\xi),
\end{equation}
where $S_{*}\varphi_{k,i}=\int_{\mathbb{S}^N}S_{*}\varphi(\xi)Y_{k.i}(\xi)d\xi$ and $S_{*}\psi_{k,i}=\int_{\mathbb{S}^N}S_{*}\psi(\xi)Y_{k.i}(\xi)d\xi$.

On the other hand, since $S_{*}\varphi(\xi)$ and $S_{*}\psi(\xi)$ satisfy \eqref{Inte2}, by using Lemma \ref{FH11}, we can deduce that
\begin{equation}\label{Inte3}
	\left\{\begin{array}{ll}
		&		 S_{*}\varphi_{k,i}=C_{N}C_{N,\alpha}^{\frac{2\alpha+4}{N-2}}2^{-(2^{*}_{\alpha}-1)(N-2)}\lambda_{k}(N-2)\left[2^{*}_{\alpha}\lambda_{k}(N-\alpha)+(2^{*}_{\alpha}-1)\lambda_{0}(N-\alpha) \right]S_{*}\psi_{k,i} ,\\[1mm]
		&		 S_{*}\psi_{k,i}=C_{N}C_{N,\alpha}^{\frac{2\alpha+4}{N-2}}2^{-(2^{*}_{\alpha}-1)(N-2)}\lambda_{k}(N-2)\left[2^{*}_{\alpha}\lambda_{k}(N-\alpha)+(2^{*}_{\alpha}-1)\lambda_{0}(N-\alpha) \right]S_{*}\varphi_{k,i} ,
	\end{array}\right.
\end{equation}
Putting the second equality into the first one in \eqref{Inte3}, we must have $$\left\lbrace C_{N}C_{N,\alpha}^{\frac{2\alpha+4}{N-2}}2^{-(2^{*}_{\alpha}-1)(N-2)}\lambda_{k}(N-2)\left[2^{*}_{\alpha}\lambda_{k}(N-\alpha)+(2^{*}_{\alpha}-1)\lambda_{0}(N-\alpha) \right]\right\rbrace  ^{2}=1.$$
Obviously, $ C_{N}C_{N,\alpha}^{\frac{2\alpha+4}{N-2}}2^{-(2^{*}_{\alpha}-1)(N-2)}\lambda_{k}(N-2)\left[2^{*}_{\alpha}\lambda_{k}(N-\alpha)+(2^{*}_{\alpha}-1)\lambda_{0}(N-\alpha) \right]>0$, and so we know
$$ C_{N}C_{N,\alpha}^{\frac{2\alpha+4}{N-2}}2^{-(2^{*}_{\alpha}-1)(N-2)}\lambda_{k}(N-2)\left[2^{*}_{\alpha}\lambda_{k}(N-\alpha)+(2^{*}_{\alpha}-1)\lambda_{0}(N-\alpha) \right]=1.$$
An easy computation shows that the aboe equality holds if and only if $k=1$,
thus $S_{*}\varphi,S_{*}\psi\in \mathbb{Y}^{N+1}_{1}$.

Notice that the dimension of space $\mathbb{Y}^{N+1}_{1}$ is $N+1$, we have
$$
S_{*}\varphi\in \text{span}\left\lbrace \xi_{j}|\; 1\leq j\leq N+1 \; \right\rbrace .
$$
From Lemma \ref{L1}, we know the map $S_{*}$ is one to one from the subspace $\text{span}\left\lbrace \varphi_{j},\; 1\leq j\leq N+1 \; \right\rbrace$ to the subspace $\mathscr {H}_{1}^{N+1}$, and so is the weighted pullback map $S^{*}:\mathscr {H}_{1}^{N+1}\mapsto \text{span}\left\lbrace \varphi_{j},\; 1\leq j\leq N+1 \right\rbrace$, this gives
$$
\varphi\in \text{span}\left\lbrace \varphi_{j}|\; 1\leq j\leq N+1 \; \right\rbrace .
$$
The same reasoning applies to
$$
\psi\in \text{span}\left\lbrace \psi_{j}|\; 1\leq j\leq N+1 \; \right\rbrace .
$$
This completes the proof of the theorem.
$\hfill{} \Box$

\section{Finite-dimensional reduction}
In this section, we perform the finite-dimensional reduction argument in a weighted space to construct a reasonably good approximate solution.
First, we are going to give some estimates involving the convolution term.
\begin{lem}\label{B2}{\rm{(Lemma B.1, \cite{WY1})}}  For each fixed $k$ and $j$, $k\neq j$,  let
	$$
	g_{k,j}(x)=\frac{1}{(1+|x-z_{j}|)^{\alpha}}\frac{1}{(1+|x-z_{k}|)^{\beta}},
	$$
	where $\alpha\geq 1$ and $\beta\geq1$ are two constants.
	Then, for any constants $0<\delta\leq\min\{\alpha,\beta\}$, there is a constant $C>0$, such that
	$$
	g_{k,j}(x)\leq\frac{C}{|z_{k}-z_{j}|^{\delta}}\Big(\frac{1}{(1+|x-z_{j}|)^{\alpha+\beta-\delta}}+\frac{1}{(1+|x-z_{k}|)^{\alpha+\beta-\delta}}\Big).
	$$
\end{lem}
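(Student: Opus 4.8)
The plan is to reduce the estimate to an elementary region–splitting argument based on the triangle inequality. The key observation is that since $|z_k - z_j| \le |x - z_j| + |x - z_k|$ for every $x$, at each point at least one of the two distances is at least $\tfrac12 |z_k - z_j|$; more precisely, writing $\R^N = A_j \cup A_k$ with $A_j = \{x : |x - z_j| \le |x - z_k|\}$ and $A_k = \{x : |x - z_k| < |x - z_j|\}$, on $A_j$ one has $1 + |x - z_k| \ge 1 + \tfrac12|z_k - z_j| \ge \tfrac12\,|z_k - z_j|$, and symmetrically $1 + |x - z_j| \ge \tfrac12\,|z_k - z_j|$ on $A_k$. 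So the whole point is that on each half the ``far'' center carries the factor that will absorb the gain $|z_k - z_j|^{-\delta}$, while the ``near'' center keeps the surviving polynomial power.

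First I would treat $x \in A_j$. Since $\delta \le \beta$, I split $(1 + |x - z_k|)^{-\beta} = (1 + |x - z_k|)^{-\delta}\,(1 + |x - z_k|)^{-(\beta - \delta)}$; the first factor is bounded by $2^\delta |z_k - z_j|^{-\delta}$ by the lower bound above. For the remaining product $(1 + |x - z_j|)^{-\alpha}(1 + |x - z_k|)^{-(\beta - \delta)}$ I use that on $A_j$ we have $1 + |x - z_k| \ge 1 + |x - z_j|$ together with $\beta - \delta \ge 0$, so this product is at most $(1 + |x - z_j|)^{-(\alpha + \beta - \delta)}$. This gives $g_{k,j}(x) \le 2^\delta |z_k - z_j|^{-\delta}(1 + |x - z_j|)^{-(\alpha + \beta - \delta)}$ on $A_j$.

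The case $x \in A_k$ is obtained by interchanging the roles of $j$ and $k$ (and of $\alpha$ and $\beta$), now using $\delta \le \alpha$, and yields $g_{k,j}(x) \le 2^\delta |z_k - z_j|^{-\delta}(1 + |x - z_k|)^{-(\alpha + \beta - \delta)}$. Adding the two estimates and taking $C = 2^\delta$ (which only depends on $\delta \le \min\{\alpha,\beta\}$, hence is uniform in $k \ne j$) gives the asserted bound on all of $\R^N$. I do not expect any genuine obstacle: the only point deserving a little care is choosing the dichotomy consistently so that every monotonicity step is applied in the region where it is valid, and this is automatic because $\alpha, \beta \ge 1 \ge \delta$ forces all exponents appearing in the argument ($\alpha$, $\beta - \delta$, $\alpha - \delta$, $\alpha + \beta - \delta$) to be nonnegative.
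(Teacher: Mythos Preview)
Your proof is correct and is essentially the standard argument for this well-known interaction estimate. The paper itself does not supply a proof: Lemma~\ref{B2} is quoted verbatim from \cite{WY1} (Lemma~B.1 there) and invoked as a black box, so there is nothing further to compare.
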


\begin{lem}\label{B3}{\rm{(Lemma B.2, \cite{WY1})}}  For any constant $0<\delta<N-2$, $N\geq5$, there is a constant $C>0$, such that
	$$
	\int_{\mathbb{R}^{N}}\frac{1}{|x-y|^{N-2}}\frac{1}{(1+|y|)^{2+\delta}}dy\leq \frac{C}{(1+|x|)^{\delta}}.
	$$
\end{lem}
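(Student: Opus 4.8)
The plan is to establish the pointwise bound by the classical device of splitting $\R^N$ into a few regions according to the location of $y$ relative to $x$, and estimating the kernel $|x-y|^{-(N-2)}$ and the weight $(1+|y|)^{-(2+\delta)}$ separately on each piece. First I would reduce to the range $|x|\ge 2$: on $\{|x|\le 2\}$ the left-hand side is bounded, since on $\{|y|\le 4\}$ the factor $(1+|y|)^{-(2+\delta)}$ is harmless while $|x-y|^{-(N-2)}$ is locally integrable, and on $\{|y|>4\}$ one has $|x-y|\ge |y|/2$ so the integrand is $\lesssim |y|^{-(N+\delta)}$, which is integrable at infinity because $\delta>0$; meanwhile the right-hand side is bounded below by a positive constant on $\{|x|\le 2\}$. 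So after enlarging $C$ it suffices to prove $\int_{\R^N}|x-y|^{-(N-2)}(1+|y|)^{-(2+\delta)}\,dy\lesssim |x|^{-\delta}$ for $|x|\ge 2$.

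For such $x$, I would write $\R^N=\Omega_1\cup\Omega_2\cup\Omega_3$ with $\Omega_1=\{|y|\le |x|/2\}$, $\Omega_2=\{|x-y|\le |x|/2\}$, and $\Omega_3$ the complement. On $\Omega_1$ one has $|x-y|\ge |x|/2$, so the contribution is $\lesssim |x|^{-(N-2)}\int_{|y|\le |x|/2}(1+|y|)^{-(2+\delta)}\,dy$; since $2+\delta<N$ (this is where the hypothesis $\delta<N-2$ enters) and $|x|/2\ge 1$, the remaining integral is $\lesssim |x|^{\,N-2-\delta}$, giving the bound $\lesssim |x|^{-\delta}$. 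On $\Omega_2$ one has $|y|\ge |x|/2$, hence $(1+|y|)^{-(2+\delta)}\lesssim |x|^{-(2+\delta)}$, and $\int_{|x-y|\le |x|/2}|x-y|^{-(N-2)}\,dy\lesssim |x|^2$, again yielding $\lesssim |x|^{-\delta}$.

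On $\Omega_3$, where $|y|>|x|/2$ and $|x-y|>|x|/2$ hold simultaneously, I would split once more at the scale $|y|=2|x|$. On $\Omega_3\cap\{|y|>2|x|\}$ one has $|x-y|\ge |y|-|x|\ge |y|/2$, so the contribution is $\lesssim \int_{|y|>2|x|}|y|^{-(N-2)}(1+|y|)^{-(2+\delta)}\,dy\lesssim \int_{2|x|}^{\infty}r^{-1-\delta}\,dr\lesssim |x|^{-\delta}$, where the convergence of the radial integral uses $\delta>0$. On $\Omega_3\cap\{|x|/2<|y|\le 2|x|\}$ one has $|y|\sim|x|$, so $(1+|y|)^{-(2+\delta)}\lesssim |x|^{-(2+\delta)}$, while this set lies in $\{|x|/2<|x-y|\le 3|x|\}$, over which $\int |x-y|^{-(N-2)}\,dy\lesssim |x|^2$; hence this piece is again $\lesssim |x|^{-\delta}$. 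Adding the four estimates gives $\int_{\R^N}|x-y|^{-(N-2)}(1+|y|)^{-(2+\delta)}\,dy\lesssim |x|^{-\delta}\lesssim (1+|x|)^{-\delta}$ for $|x|\ge 2$, which together with the first paragraph finishes the proof.

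There is no genuine obstacle here beyond careful bookkeeping with dyadic regions; the one thing to watch is that the two hypotheses are used in an essential and complementary way. The condition $\delta<N-2$, equivalently $2+\delta<N$, is exactly what makes $\int_{|y|\le R}(1+|y|)^{-(2+\delta)}\,dy$ grow like $R^{\,N-2-\delta}$ rather than stay bounded or grow logarithmically, which is needed on $\Omega_1$ and on the inner annular part of $\Omega_3$; while $\delta>0$ is exactly what makes the tail integral $\int_{|y|>2|x|}|y|^{-(N+\delta)}\,dy$ converge and decay like $|x|^{-\delta}$. Both endpoint cases $\delta=0$ and $\delta=N-2$ break this scheme, so the stated range is the natural one.
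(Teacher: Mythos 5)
Your proof is correct, and it is the standard region-splitting argument. Note, however, that the paper does not prove this lemma at all: it is quoted verbatim as Lemma B.2 of Wei--Yan \cite{WY1}, so there is no in-paper proof to compare against. The argument in \cite{WY1} (and in the many places this estimate appears) is the same dyadic decomposition you use: after reducing to $|x|\ge 2$, split $\R^N$ into $\{|y|\le |x|/2\}$, $\{|x-y|\le |x|/2\}$, and the complement, with the complement further split at the scale $|y|\sim |x|$. Your bookkeeping on each piece is right: the hypothesis $2+\delta<N$ is used exactly to get the growth $\int_{|y|\le R}(1+|y|)^{-(2+\delta)}\,dy\lesssim R^{N-2-\delta}$ on the near region, and $\delta>0$ is used exactly to make the far tail $\int_{|y|>2|x|}|y|^{-(N+\delta)}\,dy\lesssim |x|^{-\delta}$ converge; the remaining annular piece $|y|\sim|x|$, $|x-y|\gtrsim|x|$ is handled by freezing the weight at $|x|^{-(2+\delta)}$ and integrating the Riesz kernel over a ball of radius $O(|x|)$. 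One minor remark: the further split of $\Omega_3$ at $|y|=2|x|$ is genuinely needed, since on all of $\Omega_3$ the naive bound $|x-y|\gtrsim |x|$ combined with $(1+|y|)^{-(2+\delta)}$ gives an integral that diverges logarithmically; you correctly avoid this. So the proposal is a complete and correct reconstruction of the cited lemma's proof.
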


\begin{lem}\label{B4}{\rm{(Lemma A.3, \cite{CYZ})}}
	For $N\geq5$ and $1\leq i\leq m$, there is a constant $C>0$, such that
	$$
	|x|^{-\alpha}\ast \frac{\lambda^{N-\frac{\alpha}{2}}}{(1+\lambda|x-z_{i}|)^{N-\alpha+\eta}}\leq \frac{C\lambda^{\frac{\alpha}{2}}}{(1+\lambda|x-z_{i}|)^{\min\left\lbrace \alpha,\eta\right\rbrace }},
	$$
	where $\eta>0$.
\end{lem}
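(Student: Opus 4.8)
The plan to prove Lemma \ref{B4} is to remove the scaling by a translation-and-dilation, reducing the inequality to a single scale-free Riesz potential estimate, and then to bound that estimate by the standard near/far decomposition of $\mathbb{R}^N$. Throughout, $|x|^{-\alpha}$ is a genuine Riesz kernel, i.e. $0<\alpha<N$.

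\emph{Step 1 (rescaling).} Writing $x=z_i+\lambda^{-1}s$ and substituting $y=z_i+\lambda^{-1}t$ in the convolution integral, and collecting the powers $\lambda^{\alpha}$ coming from $|x-y|^{-\alpha}$, $\lambda^{N-\frac{\alpha}{2}}$ from the numerator and $\lambda^{-N}$ from $dy$, one obtains
$$\Big(|x|^{-\alpha}\ast \frac{\lambda^{N-\frac{\alpha}{2}}}{(1+\lambda|\cdot-z_i|)^{N-\alpha+\eta}}\Big)(x)=\lambda^{\frac{\alpha}{2}}\int_{\mathbb{R}^N}\frac{dt}{|s-t|^{\alpha}(1+|t|)^{N-\alpha+\eta}}=:\lambda^{\frac{\alpha}{2}}\,I(s).$$
Since $0<\alpha<N$ the integrand is locally integrable and decays like $|t|^{-(N+\eta)}$ at infinity, so $I(s)<\infty$, and it suffices to prove $I(s)\le C(1+|s|)^{-\min\{\alpha,\eta\}}$ with $C=C(N,\alpha,\eta)$; undoing the substitution then yields the lemma, since $1+\lambda|x-z_i|=1+|s|$.

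\emph{Step 2 (region decomposition).} For $|s|\le 1$ the right-hand side is bounded below, while $I(s)$ is uniformly bounded there (local integrability of $|s-t|^{-\alpha}$ together with the integrable tail $(1+|t|)^{-(N+\eta)}$), so the estimate is trivial. For $|s|\ge 1$ I would split $\mathbb{R}^N=A_1\cup A_2\cup A_3$ with $A_1=\{|t|\le|s|/2\}$, $A_2=\{|s-t|\le|s|/2\}$ and $A_3=\{|t|>|s|/2,\ |s-t|>|s|/2\}$. On $A_1$ one has $|s-t|\ge|s|/2$, so the contribution is $\lesssim|s|^{-\alpha}\int_{|t|\le|s|/2}(1+|t|)^{-(N-\alpha+\eta)}\,dt$, and this last integral is $O(1)$, $O(\log|s|)$, or $O(|s|^{\alpha-\eta})$ according as $\eta>\alpha$, $\eta=\alpha$, or $\eta<\alpha$; in each case one gets $\lesssim(1+|s|)^{-\min\{\alpha,\eta\}}$ (up to a harmless logarithm in the borderline case $\eta=\alpha$, which does not occur in the applications). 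On $A_2$ one has $|t|\ge|s|/2$, hence $(1+|t|)^{-(N-\alpha+\eta)}\lesssim|s|^{-(N-\alpha+\eta)}$, while $\int_{|s-t|\le|s|/2}|s-t|^{-\alpha}\,dt\lesssim|s|^{N-\alpha}$ (here $\alpha<N$ is used); the product is $\lesssim|s|^{-\eta}\le(1+|s|)^{-\min\{\alpha,\eta\}}$. On $A_3$ I would split once more: for $|s|/2<|t|\le2|s|$ the same two bounds apply, the inner integral $\int_{|s-t|>|s|/2}|s-t|^{-\alpha}\,dt$ being now taken over a region contained in $\{|s|/2<|s-t|\lesssim|s|\}$ and so again $\lesssim|s|^{N-\alpha}$; for $|t|>2|s|$ one has $|s-t|\ge|t|/2$, so the integrand is $\lesssim(1+|t|)^{-(N+\eta)}$ and $\int_{|t|>2|s|}(1+|t|)^{-(N+\eta)}\,dt\lesssim|s|^{-\eta}$. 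Summing the three contributions gives $I(s)\le C(1+|s|)^{-\min\{\alpha,\eta\}}$, which completes the argument.

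The computation is entirely elementary, and I do not anticipate any genuine obstacle; the only point demanding care is the region $A_1$ in the range $\eta\le\alpha$, where the weight $(1+|t|)^{-(N-\alpha+\eta)}$ is not integrable at infinity, so one must carry along the growth factor $|s|^{\alpha-\eta}$ (with a logarithm exactly when $\eta=\alpha$). This is precisely the mechanism that forces the decay exponent on the right-hand side to be $\min\{\alpha,\eta\}$ rather than $\alpha$.
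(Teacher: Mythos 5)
Your proof is correct. The paper itself gives no proof of this lemma (it is cited verbatim from Lemma~A.3 of \cite{CYZ}), so there is nothing to compare against internally; your argument is the standard one for such weighted Riesz-potential bounds, in the same spirit as Lemma~B.2 of Wei--Yan (Lemma~\ref{B3} in this paper): after the scaling $x=z_i+s/\lambda$, $y=z_i+t/\lambda$, all powers of $\lambda$ collapse to the stated $\lambda^{\alpha/2}$, and the scale-free integral $I(s)=\int|s-t|^{-\alpha}(1+|t|)^{-(N-\alpha+\eta)}\,dt$ is handled by the near/intermediate/far decomposition, with $\alpha<N$ controlling the local singularity and $\eta>0$ controlling the tail. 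Your remark on the borderline case is accurate and worth keeping: when $\eta=\alpha$ the region $A_1$ produces an extra $\log(1+|s|)$, so the bound in the lemma as written is off by a logarithm there; this does not affect the paper because every invocation of the lemma uses exponents with $\eta\neq\alpha$.
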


\begin{lem}\label{B6}{\rm{(Lemma A.4, \cite{CYZ})}}
For $N\geq5$ and $1\leq i\leq m$, there is a constant $C>0$, such that
	\begin{equation}\label{p1}
		|x|^{-\alpha}\ast|U_{z_i,\lambda}|^{2_{\alpha}^{*}}
		=C\left( \frac{\lambda}{1+\lambda^{2}|x-z_{i}|^{2}}\right)^{\frac{\alpha}{2}}.
	\end{equation}
\end{lem}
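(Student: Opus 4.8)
The plan is to factor out the centre $z_i$ and the concentration parameter $\lambda$, so that the statement reduces to a single conformal integral identity on $\mathbb{R}^N$, and then to evaluate that identity by transporting the integral to $\mathbb{S}^N$ exactly as in Section~2.

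First I would use the explicit form \eqref{CSF0} of $U_{z_i,\lambda}$. A direct exponent computation (this is where the precise value of $2^{*}_{\alpha}$ enters) gives $|U_{z_i,\lambda}(x)|^{2^{*}_{\alpha}}=C_{N,\alpha}^{2^{*}_{\alpha}}\big(\lambda/(1+\lambda^{2}|x-z_i|^{2})\big)^{N-\alpha/2}$. Performing the change of variables $y=z_i+w/\lambda$ in the convolution, all powers of $\lambda$ collapse into a single factor and one is left with
$$
\big(|x|^{-\alpha}\ast|U_{z_i,\lambda}|^{2^{*}_{\alpha}}\big)(x)=C_{N,\alpha}^{2^{*}_{\alpha}}\,\lambda^{\alpha/2}\,I\!\big(\lambda(x-z_i)\big),\qquad
I(\xi):=\int_{\mathbb{R}^N}\frac{(1+|w|^2)^{-(N-\alpha/2)}}{|\xi-w|^{\alpha}}\,dw .
$$
So it suffices to prove $I(\xi)=c_{N,\alpha}(1+|\xi|^2)^{-\alpha/2}$ for some $c_{N,\alpha}>0$; here $I(\xi)$ is finite because $0<\alpha<N$ controls the local singularity of $|\xi-w|^{-\alpha}$ while the integrand decays like $|w|^{-2N}$ at infinity.

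The heart of the argument is the evaluation of $I(\xi)$. I would use the stereographic projection $S$ and the relations $|S(x)-S(y)|=|x-y|\,r(x)r(y)$, $1+|x|^2=2/r(x)^2$, $J_S(x)=r(x)^{2N}$ from Section~2. Writing $dw=r(w)^{-2N}\,d\sigma_{\mathbb{S}^N}(S(w))$ and substituting, every power of $r(w)$ cancels and $I(\xi)$ collapses to $2^{-(N-\alpha/2)}\,r(\xi)^{\alpha}\int_{\mathbb{S}^N}|S(\xi)-\eta|^{-\alpha}\,d\sigma(\eta)$. Since the constant function belongs to $\mathscr{H}_{0}^{N+1}$, the Funk--Hecke formula (Lemma~\ref{FH01}) with $t=\alpha\in(0,N)$ gives $\int_{\mathbb{S}^N}|S(\xi)-\eta|^{-\alpha}\,d\sigma(\eta)=\lambda_0(\alpha)$, a constant independent of $\xi$. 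Using $r(\xi)^{\alpha}=\big(2/(1+|\xi|^2)\big)^{\alpha/2}$ once more yields $I(\xi)=2^{\alpha-N}\lambda_0(\alpha)\,(1+|\xi|^2)^{-\alpha/2}$, and substituting this into the formula above (together with the $\lambda^{\alpha/2}$ factor) gives \eqref{p1} with $C=2^{\alpha-N}\lambda_0(\alpha)\,C_{N,\alpha}^{2^{*}_{\alpha}}$.

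The only genuine obstacle is the bookkeeping: one must keep the exponents of $r(w)$, of $\lambda$, and of the $(1+|\,\cdot\,|^2)$ factors in perfect balance, which is precisely where the value of $2^{*}_{\alpha}$ (hence the power $N-\alpha/2$ in $|U_{z_i,\lambda}|^{2^{*}_{\alpha}}$) is used. A shorter but less self-contained route avoids the sphere entirely: since $U_{0,1}$ is a positive solution of the diagonal critical Choquard equation, one has $|x|^{-\alpha}\ast U_{0,1}^{2^{*}_{\alpha}}=(-\Delta U_{0,1})/U_{0,1}^{2^{*}_{\alpha}-1}$, and a direct computation $-\Delta(1+|x|^2)^{-(N-2)/2}=N(N-2)(1+|x|^2)^{-(N+2)/2}$ followed by simplification of the exponents again produces $c_{N,\alpha}(1+|x|^2)^{-\alpha/2}$; translation and dilation invariance then recover the general $(z_i,\lambda)$ case.
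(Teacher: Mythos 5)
The paper does not actually prove this lemma --- it is imported verbatim from \cite{CYZ} (their Lemma~A.4) with no argument supplied --- so there is no ``paper proof'' to compare against. Your argument fills that gap and is mathematically sound. Reducing by the scaling $y=z_i+w/\lambda$ to the conformally balanced integral $I(\xi)=\int_{\mathbb{R}^N}(1+|w|^2)^{-(N-\alpha/2)}\,|\xi-w|^{-\alpha}\,dw$, transporting to $\mathbb{S}^N$ via $|S(x)-S(y)|=|x-y|\,r(x)r(y)$ and $dw=r(w)^{-2N}\,d\sigma$, checking that every power of $r(w)$ cancels, and invoking Funk--Hecke (Lemma~\ref{FH01}) with $k=0$ to turn $\int_{\mathbb{S}^N}|S(\xi)-\eta|^{-\alpha}d\sigma$ into the constant $\lambda_0(\alpha)$, is a clean and self-contained route that sits naturally alongside the Section~2 machinery. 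The alternative route through the Euler--Lagrange identity $|x|^{-\alpha}\ast U_{0,1}^{2^{*}_{\alpha}}=(-\Delta U_{0,1})/U_{0,1}^{2^{*}_{\alpha}-1}$, finished by the elementary computation $-\Delta(1+|x|^2)^{-(N-2)/2}=N(N-2)(1+|x|^2)^{-(N+2)/2}$, is also valid.

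One issue is worth flagging, though it originates in the paper rather than in your work. The lemma is printed in the $\alpha$-convention of \cite{CYZ}, where the Riesz kernel is $|x|^{-\alpha}$ and $2^{*}_{\alpha}=\frac{2N-\alpha}{N-2}$; that is the only convention under which the exponent computation in your first step returns $N-\alpha/2$. In the present paper, $2^{*}_{\alpha}=\frac{N+\alpha}{N-2}$ and the kernel throughout is $|x|^{-(N-\alpha)}$, so $|U_{z_i,\lambda}|^{2^{*}_{\alpha}}$ in fact has exponent $\frac{N+\alpha}{2}$, and the conformally balanced identity closes with $\mu=N-\alpha$ (since $\frac{N+\alpha}{2}=N-\frac{\mu}{2}$), yielding $\bigl(\tfrac{\lambda}{1+\lambda^2|x-z_i|^2}\bigr)^{(N-\alpha)/2}$ rather than $(\,\cdot\,)^{\alpha/2}$. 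This corrected form is what the paper actually substitutes later, for instance in the proof of Lemma~\ref{D2} where $|x|^{-(N-\alpha)}\ast|U_{z_1,\lambda}|^{2^{*}_{\alpha}}$ is replaced by $\bigl(\tfrac{\lambda}{1+\lambda^2|x-z_1|^2}\bigr)^{(N-\alpha)/2}$. So your conformal argument is the right one, but if you rerun it in the paper's own normalization you should take $\mu=N-\alpha$ in Funk--Hecke and land on the $(N-\alpha)/2$ power; the exponent $N-\alpha/2$ does not follow from the paper's $2^{*}_{\alpha}$.
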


 To carry out the reduction arguments, we need to introduce some suitable working spaces. Let
$$
\|u\|_{\ast}=\sup_{x\in\mathbb{R}^N}\Big(\sum_{j=1}^{m}
\frac{1}{(1+\lambda|x-z_{j}|)^{\frac{N-2}{2}+\tau}}\Big)^{-1}\lambda^{-\frac{N-2}{2}}|u(x)|,
$$
and
$$
\|h\|_{\ast\ast}=\sup_{x\in\mathbb{R}^N}\Big(\sum_{j=1}^{m}
\frac{1}{(1+\lambda|x-z_{j}|)^{\frac{N+2}{2}+\tau}}\Big)^{-1}\lambda^{-\frac{N+2}{2}}|h(x)|,
$$
where $\tau=1+\overline{\eta},\ \overline{\eta}>0$. Set
$$
\|(u,v)\|_{\ast}=\|u\|_{\ast}+\|v\|_{\ast}
\ \ \mbox{and } \ \
\|(h,g)\|_{\ast\ast}=\|h\|_{\ast\ast}+\|g\|_{\ast\ast}.
$$

Let's consider the following system
\begin{equation}\label{c1}
	\left\{\begin{array}{ll}
		\mathcal{L}_m(\phi,\varphi)=(h,g)+\sum_{l=1}^{N}c_{l}\sum_{j=1}^{m}\Big((2^{*}_{\alpha}-1)\Big(|x|^{-(N-\alpha)}\ast  |Y_{z_j,\lambda}|^{2^{*}_{\alpha}}\Big)Y_{z_j,\lambda}^{2^{*}_{\alpha}-2}Y_{j,l}+2^{*}_{\alpha}\Big(|x|^{-(N-\alpha)}\ast  Y_{z_j,\lambda}^{2^{*}_{\alpha}-1}Y_{j,l}\Big)Y_{z_j,\lambda}^{2^{*}_{\alpha}-1},\\
		\displaystyle \hspace{14.14mm}(2^{*}_{\alpha}-1)\Big(|x|^{-(N-\alpha)}\ast |Z_{z_j,\lambda}|^{2^{*}_{\alpha}}\Big)Z_{z_j,\lambda}^{2^{*}_{\alpha}-2}Z_{j,l}+2^{*}_{\alpha}\Big(|x|^{-(N-\alpha)}\ast  Z_{z_j,\lambda}^{2^{*}_{\alpha}-1}Z_{j,l}\Big)Z_{z_j,\lambda}^{2^{*}_{\alpha}-1}
		\Big),\\	
		(\phi,\varphi)\in H_{s}\times H_{s},\\
\sum_{j=1}^{m}\Big< (2^{*}_{\alpha}-1)\Big(|x|^{-(N-\alpha)}\ast |Y_{z_j,\lambda}|^{2^{*}_{\alpha}}\Big)Y_{z_j,\lambda}^{2^{*}_{\alpha}-1}Y_{j,l}+2^{*}_{\alpha}\Big(|x|^{-(N-\alpha)}\ast (Y_{z_j,\lambda}^{2^{*}_{\alpha}-1}
Y_{j,l})\Big)Y_{z_j,\lambda}^{2^{*}_{\alpha}-1},\\
\displaystyle \hspace{14.14mm} (2^{*}_{\alpha}-1)\Big(|x|^{-(N-\alpha)}\ast |Z_{z_j,\lambda}|^{2^{*}_{\alpha}}\Big)Z_{z_j,\lambda}^{2^{*}_{\alpha}-1}Z_{j,l}+2^{*}_{\alpha}\Big(|x|^{-(N-\alpha)}\ast (Z_{z_j,\lambda}^{2^{*}_{\alpha}-1}
Z_{j,l})\Big)Z_{z_j,\lambda}^{2^{*}_{\alpha}-1}, (\phi,\varphi)\Big> =0,\\
\displaystyle \hspace{14.14mm} l=1,2,\dots,N.
	\end{array}\right.
\end{equation}
for some real numbers $c_{l}$,
where $\left\langle (u_1,u_2),(v_1,v_2)\right\rangle =\int_{\R^N}(u_1 v_1 +u_2 v_2)$ and
\begin{equation}
	\aligned
	&\mathcal{L}_m(\phi,\varphi)\\&=\Big(-\Delta \phi
	-(2^{*}_{\alpha}-1)K_{1}(x)\Big(|x|^{-(N-\alpha)}\ast K_{1}(x)|Y_{\overline{r},\overline{x}'',\lambda}|^{2^{*}_{\alpha}}\Big)Y_{\overline{r},\overline{x}'',\lambda}^{2^{*}_{\alpha}-2}\varphi-2^{*}_{\alpha}K_{1}(x)\Big(|x|^{-(N-\alpha)}\ast K_{1}(x)(Y_{\overline{r},\overline{x}'',\lambda}^{2^{*}_{\alpha}-1}
	\varphi)\Big)Y_{\overline{r},\overline{x}'',\lambda}^{2^{*}_{\alpha}-1},
	\\&-\Delta \varphi-(2^{*}_{\alpha}-1)K_{2}(x)\Big(|x|^{-(N-\alpha)}\ast K_{2}(x)|Z_{\overline{r},\overline{x}'',\lambda}|^{2^{*}_{\alpha}}\Big)Z_{\overline{r},\overline{x}'',\lambda}^{2^{*}_{\alpha}-2}\phi-2^{*}_{\alpha}K_{2}(x)\Big(|x|^{-(N-\alpha)}\ast K_{2}(x)(Z_{\overline{r},\overline{x}'',\lambda}^{2^{*}_{\alpha}-1}
	\phi)\Big)Z_{\overline{r},\overline{x}'',\lambda}^{2^{*}_{\alpha}-1}
	\Big).
	\endaligned
\end{equation}

\begin{lem}\label{C1}
	Suppose that $(\phi_{m},\varphi_{m})$ solves \eqref{c1} for $(h,g) = (h_m,g_{m})$. If $\|(h_m,g_{m})\|_{\ast\ast}\to 0$ as $m\to +\infty$, then $\|(\phi_{m},\varphi_{m})\|_{\ast}\to 0$.
\end{lem}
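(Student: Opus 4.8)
The plan is to argue by contradiction, following the standard blow-up scheme for finite-dimensional reductions in the style of Wei--Yan \cite{WY1} and Cassani--Yang--Zhang \cite{CYZ}, but carefully adapted to the Hamiltonian (two-component) structure. Suppose the conclusion fails: then along a sequence $m\to\infty$ we have $\|(h_m,g_m)\|_{**}\to 0$ but, after normalizing, $\|(\phi_m,\varphi_m)\|_{*}=1$. Using the integral (Riesz potential) representation of \eqref{c1} together with Lemma \ref{B3}, Lemma \ref{B4} and Lemma \ref{B6}, I would first derive a pointwise estimate of the form
\[
\lambda^{-\frac{N-2}{2}}|\phi_m(x)|+\lambda^{-\frac{N-2}{2}}|\varphi_m(x)|
\lesssim \Big(\|(h_m,g_m)\|_{**}+o(1)\Big)\sum_{j=1}^{m}\frac{1}{(1+\lambda|x-z_j|)^{\frac{N-2}{2}+\tau}}
+\ (\text{local terms near each }z_j).
\]
The convolution terms $|x|^{-(N-\alpha)}\ast(U_{z_j,\lambda}^{2^*_\alpha-1}\varphi_m)$ etc.\ are controlled by combining the weighted bound $|\varphi_m|\le\|(\phi_m,\varphi_m)\|_* (\cdot)$ with Lemma \ref{B4}, and the cross terms $g_{k,j}$ that appear when two bubbles interact are absorbed using Lemma \ref{B2}; the smallness of $|z_k-z_j|^{-\delta}$ (which is $O(m^{-\delta}\lambda^{\delta})\to 0$ after choosing $\delta$ suitably, since $\lambda\sim m^{\frac{N-2}{N-4}}$ and the mutual distances are $\sim \overline r/m$) is what makes the interaction contribution $o(\|(\phi_m,\varphi_m)\|_*)$. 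This reduces everything to understanding the behaviour of $(\phi_m,\varphi_m)$ near a single bubble.

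Next I would perform the blow-up: by the symmetry in $H_s$ it suffices to look near $z_1$, so set $\widehat\phi_m(y)=\lambda^{-\frac{N-2}{2}}\phi_m(z_1+y/\lambda)$ and similarly $\widehat\varphi_m$. The $\|\cdot\|_*$-bound gives $|\widehat\phi_m(y)|+|\widehat\varphi_m(y)|\lesssim (1+|y|)^{-\frac{N-2}{2}-\tau}$ uniformly, so by elliptic estimates $(\widehat\phi_m,\widehat\varphi_m)\to(\phi_0,\varphi_0)$ in $C^1_{loc}$, where the limit solves the linearized system \eqref{linear} around $(U_{0,1},V_{0,1})$ (here the rescaling of the Riesz potential and Lemma \ref{B6} are used to pass to the limit in the nonlocal terms, and the fact that $K_i(z_1)\to K_i(r_0,x_0'')=1$). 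The decay $|\phi_0|+|\varphi_0|\lesssim(1+|y|)^{-\frac{N-2}{2}-\tau}$ with $\tau>1$ rules out the component $D_\lambda U_{0,1}\sim|y|^{-(N-2)}$ being... no --- rather, the orthogonality conditions in \eqref{c1}, passed to the limit, force $(\phi_0,\varphi_0)$ to be orthogonal to all the $\partial_{x_j}U_{0,1}$, $\partial_{x_j}V_{0,1}$, and the remaining kernel directions ($D_\lambda U_{0,1}$, $D_\lambda V_{0,1}$) are excluded precisely because they do not satisfy the decay rate coming from $\tau>1$ together with the symmetry constraints; by Theorem \ref{Non} (nondegeneracy) this forces $(\phi_0,\varphi_0)=(0,0)$.

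Finally, knowing $(\phi_0,\varphi_0)=0$ means the bubble-localized part of the pointwise estimate is $o(1)$, so the displayed inequality above improves to
\[
\|(\phi_m,\varphi_m)\|_*\le o(1)+C\|(h_m,g_m)\|_{**}+o(1)\|(\phi_m,\varphi_m)\|_* = o(1),
\]
contradicting $\|(\phi_m,\varphi_m)\|_*=1$. The main obstacle I expect is the estimate of the nonlocal convolution terms in the weighted norm: unlike the local case one must track how the Riesz potential redistributes the decay of $\phi_m$, and in particular verify that $|x|^{-(N-\alpha)}\ast(U_{z_j,\lambda}^{2^*_\alpha-1}\phi_m)$ inherits a decay rate matching the $\|\cdot\|_{**}$ weight $(1+\lambda|x-z_j|)^{-\frac{N+2}{2}-\tau}$ — this is where the hypothesis $\alpha<N-5+\frac{6}{N-2}$ enters, ensuring the relevant exponents (in particular $\min\{\alpha,\cdot\}$ from Lemma \ref{B4}) land in the admissible range — and one must simultaneously control the interaction between distinct bubbles and the coupling between the two equations, so that neither the $\phi$-to-$\varphi$ feedback nor the bubble-bubble feedback reintroduces a term of size comparable to $\|(\phi_m,\varphi_m)\|_*$.
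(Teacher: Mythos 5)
Your proposal follows essentially the same route as the paper's proof: contradiction with normalization $\|(\phi_m,\varphi_m)\|_\ast=1$, weighted pointwise bounds via the Riesz representation of \eqref{c1} together with Lemmas~\ref{B2}--\ref{B6}, a blow-up near a single bubble producing a solution $(u,v)$ of the linearized system \eqref{ib5}, and Theorem~\ref{Non} to force $(u,v)=(0,0)$, contradicting the normalization. Two details in your sketch should be tightened. First, your assertion that the dilation direction $D_\lambda U_{0,1}$ is discarded by ``decay with $\tau>1$ plus symmetry'' is both unnecessary and incorrect: for $\tau$ only slightly above $1$ one has $\tfrac{N-2}{2}+\tau$ not necessarily exceeding $N-2$, so the $\|\cdot\|_\ast$-decay does not rule out $D_\lambda U_{0,1}$. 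What actually eliminates the kernel is that the orthogonality constraints in \eqref{c1} run over $l=1,\ldots,N$ with $l=1$ being precisely the $\lambda$-derivative; after blow-up these pass to the limit and, together with nondegeneracy, give $(u,v)=0$ directly. The one translation not included among $l=2,\ldots,N$ (tangent to the orbit of the $m$-fold rotation in the $x'$-plane) is excluded by the $H_s$-symmetry, not by decay. Second, the paper explicitly estimates the multipliers $c_l$ \emph{before} assembling the pointwise bound, obtaining $c_l=\lambda^{-n_l}\bigl(o(\|(\phi_m,\varphi_m)\|_\ast)+O(\|(h_m,g_m)\|_{\ast\ast})\bigr)$ as in \eqref{c9} by testing against $(Y_{1,t},Z_{1,t})$; this step cannot be absorbed into a vague ``local terms near each $z_j$'' phrase, since the $c_l$-terms sit on the right-hand side of \eqref{c1} and one must check they feed back only an $o(1)$ contribution to $\|(\phi_m,\varphi_m)\|_\ast$.
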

\begin{proof}
	To obtain a contrary, suppose that there exist $m\rightarrow+\infty$, $\overline{r}_m\rightarrow r_{0}$, $\overline{y}_{m}''\rightarrow y_0''$, $\lambda_{m}\in[L_{0}m^{\frac{N-2}{N-4}},L_{1}m^{\frac{N-2}{N-4}}]$ and $(\phi_{m},\varphi_{m})$ solving \eqref{c1} for $(h,g) = (h_m,g_{m})$, $\lambda=\lambda_{m}$, $\overline{r}=\overline{r}_{m}$, $\overline{y}''=\overline{y}_{m}''$,
	with $\|(h_m,g_{m})\|_{\ast\ast}\rightarrow 0$ and $\|(\phi_{m},\varphi_{m})\|_{\ast}\geq c>0$. Without loss of generality, we can assume $\|(\phi_{m},\varphi_{m})\|_{\ast}=1$.
	
	According to \eqref{c1}, we have
\begin{equation}\label{c2}
	\aligned
	&	|\phi_m(x)|\\&\leq
	C\left| \int_{\mathbb{R}^N}\frac{K_{1}(y)}{|y-x|^{N-2}}\Big(|y|^{-(N-\alpha)}\ast K_{1}(y) (Y_{\overline{r},\overline{x}'',\lambda}^{2^{*}_{\alpha}-1}
	\varphi_m)\Big)Y_{\overline{r},\overline{x}'',\lambda}^{2^{*}_{\alpha}-1}(y)dy\right| \\
	&+C\left| \int_{\mathbb{R}^N}\frac{K_{1}(y)}{|y-x|^{N-2}}\Big(|y|^{-(N-\alpha)}\ast K_{1}(y)|Y_{\overline{r},\overline{x}'',\lambda}|^{^{2^{*}_{\alpha}}}\Big)Y_{\overline{r},\overline{x}'',\lambda}^{2^{*}_{\alpha}-2}\varphi_m(y)dy\right|
	\\ &+C\sum_{l=1}^{N}|c_{l}|\Bigg[\Big|\sum_{j=1}^{m}\int_{\mathbb{R}^N}\frac{1}{|y-x|^{N-2}}\Big(|y|^{-(N-\alpha)}\ast (Y_{z_j,\lambda}^{2^{*}_{\alpha}-1} Y_{j,l})\Big)Y_{z_j,\lambda}^{2^{*}_{\alpha}-1}(y)dy\Big|\\
	&+\Big|\sum_{j=1}^{m}\int_{\mathbb{R}^N}\frac{1}{|y-x|^{N-2}}\Big(|y|^{-(N-\alpha)}\ast |Y_{z_j,\lambda}|^{2^{*}_{\alpha}}\Big)Y_{z_j,\lambda}^{2^{*}_{\alpha}-2}Y_{j,l}(y)dy\Big|\Bigg]+C\int_{\mathbb{R}^N}\frac{1}{|y-x|^{N-2}}|h_m(y)|dy
	\endaligned
\end{equation}
and
\begin{equation}\label{c002}
	\aligned
	&	|\varphi_m(x)|\\&\leq
	C\left| \int_{\mathbb{R}^N}\frac{K_{2}(y)}{|y-x|^{N-2}}\Big(|y|^{-(N-\alpha)}\ast K_{2}(y) (Z_{\overline{r},\overline{x}'',\lambda}^{2^{*}_{\alpha}-1}
	\phi_m)\Big)Z_{\overline{r},\overline{x}'',\lambda}^{2^{*}_{\alpha}-1}(y)dy\right| \\
	&+C\left| \int_{\mathbb{R}^N}\frac{K_{2}(y)}{|y-x|^{N-2}}\Big(|y|^{-(N-\alpha)}\ast K_{2}(y)|Z_{\overline{r},\overline{x}'',\lambda}|^{^{2^{*}_{\alpha}}}\Big)Z_{\overline{r},\overline{x}'',\lambda}^{2^{*}_{\alpha}-2}\phi_m(y)dy\right|
	\\ &+C\sum_{l=1}^{N}|c_{l}|\Bigg[\Big|\sum_{j=1}^{m}\int_{\mathbb{R}^N}\frac{1}{|y-x|^{N-2}}\Big(|y|^{-(N-\alpha)}\ast (Z_{z_j,\lambda}^{2^{*}_{\alpha}-1} Z_{j,l})\Big)Z_{z_j,\lambda}^{2^{*}_{\alpha}-1}(y)dy\Big|\\
	&+\Big|\sum_{j=1}^{m}\int_{\mathbb{R}^N}\frac{1}{|y-x|^{N-2}}\Big(|y|^{-(N-\alpha)}\ast |Z_{z_j,\lambda}|^{2^{*}_{\alpha}}\Big)Z_{z_j,\lambda}^{2^{*}_{\alpha}-2}Z_{j,l}(y)dy\Big|\Bigg]+C\int_{\mathbb{R}^N}\frac{1}{|y-x|^{N-2}}|g_m(y)|dy.
	\endaligned
\end{equation}

Let's estimate the right terms of \eqref{c2}. We know that $|y-z_{j}|\geq|y-z_{1}|, \ \forall x \in\Omega_{1}$, where
$$
\Omega_{j}=\left\{x=(x',x'')\in\mathbb{R}^{2}\times\mathbb{R}^{N-2}:\Big\langle\frac{x'}{|x'|},
\frac{z_{j}'}{|z_{j}'|}\Big\rangle\geq\cos\frac{\pi}{m}\right\},\ j=1,\cdot\cdot\cdot,m.
$$
According to \cite{CYZ}, we have
\begin{equation}\label{t0}
	\aligned
	&\left| |y|^{-(N-\alpha)}\ast K_{1}(y) (Y_{\overline{r},\overline{x}'',\lambda}^{2^{*}_{\alpha}-1}
	\varphi_m)\right| \\
	&\leq C\|\varphi_m\|_{\ast}\sum_{j=1}^{m}\int_{\Omega_{j}}\frac{1}{|x-y|^{N-\alpha}}\left( \frac{\lambda^{\frac{\alpha+2}{2}}}{(1+\lambda|y- z_{j}|)^{\alpha+2-\tau_{1}\frac{\alpha+2}{N-2}}}\sum_{i=1}^{m}\frac{\lambda^{\frac{N-2}{2}}}{(1+\lambda|y-z_{i}|)^{\frac{N-2}{2}+\tau}}\right) dy\\
	&\leq C\|\varphi_m\|_{\ast}\sum_{j=1}^{m}\int_{\Omega_{j}}\frac{1}{|x-y|^{N-\alpha}} \frac{\lambda^{\frac{N+\alpha}{2}}}{(1+\lambda |y-z_{j}|)^{\frac{N+2\alpha+2}{2}-\tau_{1}\frac{N+\alpha}{N-2}+\tau}}dy\\
	&\leq C\|\varphi_m\|_{\ast}\sum_{j=1}^{m}{\frac{\lambda^{\frac{N-\alpha}{2}}}{(1+\lambda |y-z_{j}|)^{\min\lbrace \frac{N+2}{2}-\tau_{1}\frac{N+\alpha}{N-2}+\tau,N-\alpha\rbrace}}dy}.
	\endaligned
\end{equation}
Therefore, the estimate of the first term in the right side of \eqref{c2} can be obtained. If $$\min\left\lbrace \frac{N+2}{2}-\tau_{1}\frac{N+\alpha}{N-2}+\tau,N-\alpha\right\rbrace=\frac{N+2}{2}-\tau_{1}\frac{N+\alpha}{N-2}+\tau,$$we have
\begin{equation}\label{t1}
	\aligned
	&\int_{\mathbb{R}^N}\frac{K_{1}(y)}{|y-x|^{N-2}}\Big(|y|^{-(N-\alpha)}\ast K_{1}(y) (Y_{\overline{r},\overline{x}'',\lambda}^{2^{*}_{\alpha}-1}
	|\varphi_m|)\Big)Y_{\overline{r},\overline{x}'',\lambda}^{2^{*}_{\alpha}-1}(y)dy\\
&\leq C\|\varphi_m\|_{\ast}\lambda^{\frac{N-2}{2}}\sum_{j=1}^{m}\int_{\Omega_{j}}\frac{1}{|y-\lambda x|^{N-2}}
	\sum_{j=1}^{m}\frac{1}{(1+|y-\lambda z_{j}|)^{\frac{N+2}{2}-\tau_{1}\frac{N+\alpha}{N-2}+\tau}}\frac{1}{(1+|y- \lambda z_{j}|)^{\alpha+2-\tau_{1}\frac{\alpha+2}{N-2}}}dy\\
	&\leq C\|\varphi_m\|_{\ast}\lambda^{\frac{N-2}{2}}\sum_{j=1}^{m}\int_{\Omega_{j}}\frac{1}{|y-\lambda x|^{N-2}}
	\frac{1}{(1+|y-\lambda z_{j}|)^{\frac{N+6}{2}+\alpha-\tau_{1}\frac{2N+2\alpha}{N-2}+\tau}}dy\\
	&\leq C\|\varphi_m\|_{\ast}\lambda^{\frac{N-2}{2}}\sum_{j=1}^{m}
	\frac{1}{(1+|x-\lambda z_{j}|)^{\frac{N-2}{2}+\tau+\theta}},
	\endaligned
\end{equation}
where $\theta=N+2+\alpha-\frac{2N+2\alpha}{N-2}\tau_{1}>0$.
If $\min\left\lbrace \frac{N+2}{2}-\tau_{1}\frac{N+\alpha}{N-2}+\tau,\alpha\right\rbrace=N-\alpha$, we can obtain the same result,
where $\theta=\frac{N+2}{2}-\frac{N+\alpha}{N-2}\tau_{1}-\tau>0$.

Similarly, we can repeat the arguments for the second term and have
\begin{equation}\label{t2}
	C\int_{\mathbb{R}^N}\frac{K_{1}(y)}{|y-x|^{N-2}}\Big(|y|^{-(N-\alpha)}\ast K_{1}(y)|Y_{\overline{r},\overline{x}'',\lambda}|^{^{2^{*}_{\alpha}}}\Big)Y_{\overline{r},\overline{x}'',\lambda}^{2^{*}_{\alpha}-2}|\varphi_m(y)| dy\leq C\|\varphi_m\|_{\ast}\lambda^{\frac{N-2}{2}}\sum_{j=1}^{m}
	\frac{1}{(1+|x-\lambda z_{j}|)^{\frac{N-2}{2}+\tau+\theta}}.
\end{equation}
Applying Lemma \ref{B3} and Lemma \ref{B4}, we can estimate the third term as follow. If $l=1$, then we have
\begin{equation}\nonumber
	\aligned
	\Big|\sum_{j=1}^{m}\int_{\mathbb{R}^N}\frac{1}{|y-x|^{N-2}}\Big(|y|^{-(N-\alpha)}\ast (Y_{z_j,\lambda}^{2^{*}_{\alpha}-1} Y_{j,l})\Big)Y_{z_j,\lambda}^{2^{*}_{\alpha}-1}(y)dy\Big|
	&\leq C\lambda^{-1}\sum_{j=1}^{m}\int_{\mathbb{R}^N}\frac{1}{|y-x|^{N-2}}\frac{\lambda^{\frac{N+2}{2}}}{(1+\lambda|y-z_{j}|)^{N+2}}\\
	&\leq C\lambda^{\frac{N-2}{2}-1}\sum_{j=1}^{m}\frac{1}{(1+\lambda|x-z_{j}|)^{\frac{N-2}{2}+\tau}},
\endaligned
\end{equation}
where $0<\tau<N-2$. If $l\neq 1$, we can obtain
\begin{equation}\nonumber
	\Big|\sum_{j=1}^{m}\int_{\mathbb{R}^N}\frac{1}{|y-x|^{N-2}}\Big(|y|^{-(N-\alpha)}\ast (Y_{z_j,\lambda}^{2^{*}_{\alpha}-1} Y_{j,l})\Big)Y_{z_j,\lambda}^{2^{*}_{\alpha}-1}(y)dy\Big|\leq C\lambda^{\frac{N-2}{2}+1}\sum_{j=1}^{m}\frac{1}{(1+\lambda|x-z_{j}|)^{\frac{N-2}{2}+\tau}}.
\end{equation}
The same estimate can be obtined for
\begin{equation}\nonumber
	\Big|\sum_{j=1}^{m}\int_{\mathbb{R}^N}\frac{1}{|y-x|^{N-2}}\Big(|y|^{-(N-\alpha)}\ast |Y_{z_j,\lambda}|^{2^{*}_{\alpha}} \Big)Y_{z_j,\lambda}^{2^{*}_{\alpha}-2}Y_{j,l}(y)dy\Big|.
\end{equation}
For the last term, by Lemma \ref{B3}, we get
\begin{equation}\label{t4}
	\int_{\mathbb{R}^N}\frac{1}{|y-x|^{N-2}}|h_m(y)|dy\leq C\|h_m\|_{**}\lambda^{\frac{N-2}{2}}\sum_{j=1}^{m}\frac{1}{(1+\lambda|x-z_{j}|)^{\frac{N-2}{2}+\tau}}.
\end{equation}

Next, we estimate $c_l, l=1,2,\cdots, N.$
First, multiplying \eqref{C1} by $(Y_{1,t},Z_{1,t})$($t=1,2,\cdots, N$) and integrating, we have
\begin{equation}\label{I1}
	\aligned
	\sum_{l=1}^{N}\sum_{j=1}^{m}&\Big<  \Big((2^{*}_{\alpha}-1)K_{1}(x)\Big(|x|^{-(N-\alpha)}\ast K_{1}(x) |Y_{z_j,\lambda}|^{2^{*}_{\alpha}}\Big)Y_{z_j,\lambda}^{2^{*}_{\alpha}-2}Y_{j,l}+2^{*}_{\alpha}K_{1}(x)\Big(|x|^{-(N-\alpha)}\ast K_{1}(x) Y_{z_j,\lambda}^{2^{*}_{\alpha}-1}Y_{j,l}\Big)Y_{z_j,\lambda}^{2^{*}_{\alpha}-1},\\
	&(2^{*}_{\alpha}-1)K_{2}(x)\Big(|x|^{-(N-\alpha)}\ast K_{2}(x) |Z_{z_j,\lambda}|^{2^{*}_{\alpha}}\Big)Z_{z_j,\lambda}^{2^{*}_{\alpha}-2}Z_{j,l}+2^{*}_{\alpha}K_{2}(x)\Big(|x|^{-(N-\alpha)}\ast K_{2}(x) Z_{z_j,\lambda}^{2^{*}_{\alpha}-1}Z_{j,l}\Big)Z_{z_j,\lambda}^{2^{*}_{\alpha}-1}
	\Big),\\
	&(Y_{1,t},Z_{1,t})\Big> c_{l}=
	\left\langle \mathcal{L}_m(\phi,\varphi), (Y_{1,t},Z_{1,t})\right\rangle- \left\langle (h_m,g_m), (Y_{1,t},Z_{1,t})\right\rangle.
	\endaligned
\end{equation}
By Lemma \ref{B2} , we get
\begin{equation}\label{c4}
		\aligned
	&|\langle (h_m,g_m), (Z_{1,t},Y_{1,t})\rangle|
		\\  &\leq C\lambda^{n_{t}}\|(h_m,g_m)\|_{\ast\ast} \int_{\R^N}\frac{\lambda^{\frac{N-2}{2}}}{(1+\lambda|x-z_1|)^{(N-2)}}\sum_{j=1}^{m}\frac{\lambda^{\frac{N+2}{2}}}{(1+\lambda|x-z_j|)^{\frac{N+2}{2}+\tau}}dx
&\leq C\lambda^{n_{t}}\|(h_m,g_m)\|_{\ast\ast},
\endaligned
\end{equation}
where $n_{t}=-1$, $n_{t}=1,l=2,\cdot\cdot\cdot,N$.

Following the calculation for Lemma 2.1 in \cite{CYZ}, we obtain
\begin{equation}
	\aligned
	&\left| K_{1}(x)\Big(|x|^{-(N-\alpha)}\ast K_{1}(x)(Y_{\overline{r},\overline{x}'',\lambda}^{2^{*}_{\alpha}-1}
	\varphi_{m})\Big)\right| \\
	&\leq C\|\varphi_{m}\|_{*}	\int_{\mathbb{R}^N}\frac{1}{|y|^{N-\alpha}}\sum_{j=1}^{m}\frac{\xi(x-y)\lambda^{\frac{\alpha+2}{2}}}{(1+\lambda|x-y- z_{i}|)^{\alpha+2-\tau_{1}\frac{\alpha+2}{N-2}}}\sum_{j=1}^{m}\frac{\lambda^{\frac{N-2}{2}}}{(1+\lambda|x-y-z_{j}|)^{\frac{N-2}{2}+\tau}}dy\\
	&\leq C\|\varphi_{m}\|_{*}\sum_{i=1}^{m}\int_{\mathbb{R}^N}\frac{1}{|y|^{N-\alpha}}\frac{\xi(x-y)\lambda^{\frac{N+\alpha}{2}}}{(1+\lambda|x-y-z_{j}|)^{\frac{N+2\alpha+2}{2}-\tau_{1}\frac{\alpha+2}{N-2}+\tau}}dy\\
	&+C\|\varphi_{m}\|_{*}\sum_{i\neq j}^{m}\int_{\mathbb{R}^N}\frac{1}{|y|^{N-\alpha}}\frac{\xi(x-y)\lambda^{\frac{\alpha+2}{2}}}{(1+\lambda|x-y- z_{1}|)^{\alpha+2-\tau_{1}\frac{\alpha+2}{N-2}}}\frac{\lambda^{\frac{N-2}{2}}}{(1+\lambda|x-y-z_{j}|)^{\frac{N-2}{2}+\tau}}dy=O(\frac{m^{2}\|\varphi_{m}\|_{*}}{\lambda^{\frac{N-\alpha-2}{2}}}).
	\endaligned
\end{equation}
Moreover, for some small constant $\varepsilon>0$ we have
\begin{equation}
	\aligned
	&\int_{\mathbb{R}^N}K_{1}(x)\Big(|x|^{-(N-\alpha)}\ast K_{1}(x)(Y_{\overline{r},\overline{x}'',\lambda}^{2^{*}_{\alpha}-1}
	\varphi_{m})\Big)Y_{\overline{r},\overline{y}'',\lambda}^{2^{*}_{\alpha}-1}Y_{1,t}dx\\
	&\leq C\|\varphi_{m}\|_{*}\|Y_{\overline{r},\overline{x}'',\lambda\|_{*}}\|\frac{m^{2}}{\lambda^{\frac{N-\alpha-2}{2}}}\int_{\mathbb{R}^N}\sum_{j=1}^{m}\frac{\lambda^{\frac{\alpha+2}{2}}}{(1+\lambda|x- z_{i}|)^{\alpha+2-\tau_{1}\frac{\alpha+2}{N-2}}}\frac{\xi\lambda^{\frac{N-2}{2}+n_{t}}}{(1+\lambda|x-z_{1}|)^{N-2}}dx\\
	&\leq C\|\varphi_{m}\|_{*}\|\frac{m^{2}\lambda^{n_{t}}}{\lambda^{\frac{N-\alpha-2}{2}}}\frac{m}{\lambda^{\frac{N-\alpha}{2}}}\int_{\mathbb{R}^N}\frac{1}{(1+\lambda|x-z_{1}|)^{N+\alpha-\tau_{1}\frac{\alpha+2}{N-2}}}dx=O(\frac{\lambda^{n_{t}}\|\varphi_{m}\|_{*}}{\lambda^{1+\varepsilon}}),\\
	\endaligned
\end{equation}
where $N-\alpha>5-\frac{6}{N-2}$. In the same way, we obtain
$$
\Big\langle\Big(K_{1}(x)|x|^{N-\alpha}\ast \left( K_{1}(x)Y_{\overline{r},\overline{x}'',\lambda}^{2_{\alpha}^{*}}\right) \Big)Y_{\overline{r},\overline{x}'',\lambda}^{2_{\alpha}^{*}-2}\varphi_m, Y_{1,t}\Big\rangle= O\Big(\frac{\lambda^{n_{t}}\|\varphi_m\|_{\ast}}{\lambda^{1+\varepsilon}}\Big),
$$
$$
\Big\langle\Big(K_{2}(x)|x|^{N-\alpha}\ast \left( K_{2}(x)Z_{\overline{r},\overline{x}'',\lambda}^{2_{\alpha}^{*}}\right) \Big)Z_{\overline{r},\overline{x}'',\lambda}^{2_{\alpha}^{*}-2}\phi_m, Z_{1,t}\Big\rangle= O\Big(\frac{\lambda^{n_{t}}\|\phi_m\|_{\ast}}{\lambda^{1+\varepsilon}}\Big),
$$
and
$$
\Big\langle\Big(K_{2}(x)|x|^{N-\alpha}\ast \left( K_{2}(x)Z_{\overline{r},\overline{x}'',\lambda}^{2_{\alpha}^{*}-1}\phi_{m}\right) \Big)Z_{\overline{r},\overline{x}'',\lambda}^{2_{\alpha}^{*}-1}, Z_{1,t}\Big\rangle= O\Big(\frac{\lambda^{n_{t}}\|\phi_m\|_{\ast}}{\lambda^{1+\varepsilon}}\Big).
$$
It is clear that
$$\aligned
\int_{\mathbb{R}^{N}}\Delta Y_{1,t}\phi_mdx
=\int_{\mathbb{R}^{N}}(\xi\Delta (V_{z_{1},\lambda})_{t}+(V_{z_{1},\lambda})_{t}\Delta\xi +2\nabla\xi\nabla (V_{z_{1},\lambda})_{t}) \phi_mdx,
\endaligned$$
where
$$
(V_{z_{1},\lambda})_{1}=\frac{\partial V_{z_{1},\lambda}}{\partial \lambda}, (V_{z_{1},\lambda})_{2}=\frac{\partial V_{z_{1},\lambda}}{\partial \overline{r}},
(V_{z_{1},\lambda})_{k}=\frac{\partial V_{z_{1},\lambda}}{\partial \overline{x}_{k}''},
\ \mbox{for }k=3,\cdot\cdot\cdot,N.
$$
By Lemma \ref{B4}, we get
$$\aligned
&\int_{\mathbb{R}^{N}}\xi\Delta (V_{z_{1},\lambda})_{t} \phi_mdx\\
&\leq C\|\phi_m\|_{\ast}\sum_{j=1}^{m}\int_{\mathbb{R}^N}\int_{\mathbb{R}^N}
\frac{\xi\lambda^{\frac{N+\alpha}{2}}}{(1+\lambda|x-z_{1}|)^{N+\alpha}}
\frac{1}{|x-y|^{N-\alpha}}\frac{\lambda^{\frac{\alpha+2}{2}+n_{t}}}{(1+\lambda|y-z_{1}|)^{\alpha+2}}
\frac{\lambda^\frac{N-2}{2}}{(1+\lambda|y-z_{j}|)^{\frac{N-2}{2}+\tau}}dxdy\\
&\leq C\lambda^{n_{t}}\|\phi_m\|_{\ast}\sum_{j=1}^{m}\frac{1}{(\lambda|z_{1}-z_{j}|)^{\frac{N}{2}}}\int_{\Omega_{j}}
\frac{1}{(1+\lambda|y-z_{j}|)^{N+1}}dy=O(\frac{\lambda^{n_{t}}\|\phi_m\|_{\ast}}{\lambda^{\lambda^{1+\varepsilon}}}).
\endaligned$$
On the other hand, direct calculation gives
$$
\int_{\mathbb{R}^{N}}(V_{z_{1},\lambda})_{t}\Delta\xi \phi_mdx
\leq C\|\phi_m\|_{\ast}
\sum_{j=1}^{m}\int_{\mathbb{R}^N}
\frac{\lambda^{\frac{N-2}{2}}}{(1+\lambda|x-z_{j}|)^{\frac{N-2}{2}+\tau}}
\frac{|\Delta\xi|\lambda^{\frac{N-2}{2}+n_{t}}}{(1+\lambda|x-z_{1}|)^{N-2}}dx
= O(\frac{\lambda^{n_{t}}\|\phi_m\|_{\ast}}{\lambda^{1+\varepsilon}}),
$$
and
$$
\int_{\mathbb{R}^{N}}\nabla\xi\nabla (V_{z_{1},\lambda})_{t} \phi_mdx
\leq C\|\phi_m\|_{\ast}
\sum_{j=1}^{m}\int_{\mathbb{R}^N}
\frac{|\nabla\xi|\lambda^{\frac{N-2}{2}}}{(1+\lambda|x-z_{j}|)^{\frac{N-2}{2}+\tau}}
\frac{\lambda^{\frac{N}{2}+n_{t}}}{(1+\lambda|x-z_{1}|)^{N-1}}dx
= O(\frac{\lambda^{n_{t}}\|\phi_m\|_{\ast}}{\lambda^{1+\varepsilon}}).
$$
So
$$
\Big\langle(-\Delta \phi_m,-\Delta \varphi_m), (Y_{1,t},Z_{1,t})\Big\rangle= O\Big(\frac{\lambda^{n_{t}}\|(\phi_m,\varphi_m)\|_{\ast}}{\lambda^{1+\varepsilon}}\Big).
$$
According to the above estimates, we know that
\begin{equation}\label{c7}
	\Big\langle \mathcal{L}_m(\phi_m,\varphi_m), (Y_{1,t},Z_{1,t})\Big\rangle- \Big\langle (h_m,g_m), (Y_{1,t},Z_{1,t})\Big\rangle=O\Big(\frac{\lambda^{n_{t}}\|(\phi_m,\varphi_m)\|_{\ast}}{\lambda^{1+\varepsilon}}+\lambda^{n_{t}}\|(h_m,g_m)\|_{\ast\ast}\Big)
\end{equation}
Furthermore, it is easy to check that
\begin{equation}\nonumber
	\sum_{j=1}^{m}\Big\langle K_{1}(x)\Big(|x|^{N-\alpha}\ast \left( K_{1}(x)|Y_{z_j,\lambda}|^{2_{\alpha}^{*}}\right) \Big)Y_{z_j,\lambda}^{2_{\alpha}^{*}-2}Y_{j,l}, Y_{1,t}\Big\rangle=(\overline{c}_{1}+o(1))\delta_{tl}\lambda^{n_{l}}\lambda^{n_{t}},
\end{equation}
\begin{equation}\nonumber
	\sum_{j=1}^{m}\Big\langle K_{1}(x)\Big(|x|^{N-\alpha}\ast \left( K_{1}(x)|Y_{z_j,\lambda}|^{2_{\alpha}^{*}-1}Y_{j,l}\right) \Big)Y_{z_j,\lambda}^{2_{\alpha}^{*}-1}, Y_{1,t}\Big\rangle=(\overline{c}_{2}+o(1))\lambda^{n_{l}}\lambda^{n_{t}},
\end{equation}
\begin{equation}\nonumber
	\sum_{j=1}^{m}\Big\langle K_{2}(x)\Big(|x|^{N-\alpha}\ast \left( K_{2}(x)|Z_{z_j,\lambda}|^{2_{\alpha}^{*}}\right) \Big)Z_{z_j,\lambda}^{2_{\alpha}^{*}-2}Z_{j,l}, Z_{1,t}\Big\rangle=(\overline{c}_{3}+o(1))\delta_{tl}\lambda^{n_{l}}\lambda^{n_{t}},
\end{equation}
and
\begin{equation}\nonumber
		\sum_{j=1}^{m}\Big\langle K_{2}(x)\Big(|x|^{N-\alpha}\ast \left( K_{2}(x)|Z_{z_j,\lambda}|^{2_{\alpha}^{*}-1}Z_{j,l}\right) \Big)Z_{z_j,\lambda}^{2_{\alpha}^{*}-1}, Z_{1,t}\Big\rangle=(\overline{c}_{4}+o(1))\lambda^{n_{l}}\lambda^{n_{t}},
\end{equation}
for some constant $\overline{c}_{1}>0$, $\overline{c}_{2}>0$, $\overline{c}_{3}>0$ and $\overline{c}_{4}> 0$. By substituting these and \eqref{c7} into \eqref{I1}, we obtain
\begin{equation}\label{c9}
	c_{l}=\frac{1}{\lambda^{n_{l}}}(o(\|(\phi_m,\varphi_m)\|_{\ast})+O(\|(h_m,g_m)\|_{\ast\ast})), \ l = 1, 2, \cdot\cdot\cdot,N.
\end{equation}
Therefore, we can conclude
\begin{equation}\label{c10}
	\|(\phi_m,\varphi_m)\|_{\ast}\leq o(1)+\|(h_m,g_m)\|_{\ast\ast}+\frac{\sum_{j=1}^{N}\frac{1}{(1+\lambda|x-z_{j}|)^{\frac{N-2}{2}+\tau+\theta}}}
	{\sum_{j=1}^{N}\frac{1}{(1+\lambda|x-z_{j}|)^{\frac{N-2}{2}+\tau}}}.
\end{equation}
Since $\|(\phi_m,\varphi_m)\|_{\ast}= 1$ and \eqref{c10}, we have that there is $R > 0$ such that
\begin{equation}\label{c11}
	\|\lambda^{-\frac{N-2}{2}}(\phi_m,\varphi_m)\|_{L^{\infty}(B_{\frac{R}{\lambda}}(z_{j}))}\geq a>0
\end{equation}
for some $j$. Let $\Big(\overline{\phi}_m (x)=\lambda^{-\frac{N-2}{2}}\phi_m(\lambda(x-z_{j})),\overline{\varphi}_m (x)=\lambda^{-\frac{N-2}{2}}\varphi_m(\lambda(x-z_{j}))\Big)$, then
$$
\int_{\R^N}(|\nabla \overline{\phi}_{m}|^{2}+|\nabla \overline{\varphi}_{m}|^{2})dx\leq C.
$$
We deduce that there are $(u,v)\in D^{1,2}(\mathbb{R}^{N})\times D^{1,2}(\mathbb{R}^{N})$, so
$$
(\overline{\phi}_m,\overline{\varphi}_m)\rightarrow (u,v),\ \ \mbox{weakly in }D^{1,2}(\mathbb{R}^{N})
$$
and
$$
(\overline{\phi}_m,\overline{\varphi}_m)\rightarrow (u,v),\ \ \mbox{strongly in }L_{loc}^{2}(\mathbb{R}^{N}),
$$
as $m\rightarrow+\infty$. It is a simple matter that $(u,v)\in D^{1,2}(\mathbb{R}^{N})\times D^{1,2}(\mathbb{R}^{N})$ satisfies
\begin{equation}\label{ib5}
	\left\{\begin{array}{ll}
		-\Delta u&=(2_{\alpha}^{*}-1)\big(|x|^{-(N-\alpha)}\ast |V_{0,\Lambda}|^{2_{\alpha}^{*}}\big)V_{0,\Lambda}^{2_{\alpha}^{*}-2}v+2_{\alpha}^{*}\big(|x|^{-(N-\alpha)}\ast(V_{0,\Lambda}^{(2_{\alpha}^{*}-1)}v)\big)V_{0,\Lambda}^{2_{\alpha}^{*}-1}
		\sp\mbox{in}\sp \R^N,\\[1mm]
		-\Delta v&=(2_{\alpha}^{*}-1)\big(|x|^{-(N-\alpha)}\ast |U_{0,\Lambda}|^{2_{\alpha}^{*}}\big)U_{0,\Lambda}^{2_{\alpha}^{*}-2}u+2_{\alpha}^{*}\big(|x|^{-(N-\alpha)}\ast(U_{0,\Lambda}^{(2_{\alpha}^{*}-1)}u)\big)U_{0,\Lambda}^{2_{\alpha}^{*}-1}
		\sp\mbox{in}\sp \R^N	\end{array}\right.
\end{equation}
for some $\Lambda\in[\Lambda_{1}, \Lambda_{2}]$. Since $(u,v)$ is perpendicular to the kernel of \eqref{ib5}, we can conclude that $(u,v) = (0,0)$ by the non-degeneracy of $(U_{0,1},V_{0,1})$, From \eqref{c11} we derive the contradiction.
\end{proof}

Let
$$\aligned
E=\Big\{(\phi,\varphi)\in H_{s}\times H_{s}, \  \sum_{j=1}^{m}\Big< (2^{*}_{\alpha}-1)\Big(|x|^{-(N-\alpha)}\ast |Y_{z_j,\lambda}|^{2^{*}_{\alpha}}\Big)Y_{z_j,\lambda}^{2^{*}_{\alpha}-1}Y_{j,l}+2^{*}_{\alpha}\Big(|x|^{-(N-\alpha)}\ast (Y_{z_j,\lambda}^{2^{*}_{\alpha}-1}
Y_{j,l})\Big)Y_{z_j,\lambda}^{2^{*}_{\alpha}-1},\\
\displaystyle \hspace{14.14mm} (2^{*}_{\alpha}-1)\Big(|x|^{-(N-\alpha)}\ast |Z_{z_j,\lambda}|^{2^{*}_{\alpha}}\Big)Z_{z_j,\lambda}^{2^{*}_{\alpha}-1}Z_{j,l}+2^{*}_{\alpha}\Big(|x|^{-(N-\alpha)}\ast (Z_{z_j,\lambda}^{2^{*}_{\alpha}-1}
Z_{j,l})\Big)Z_{z_j,\lambda}^{2^{*}_{\alpha}-1}, (\phi,\varphi)\Big> =0,\ l=1,2,\dots,N.
\Big\}
\endaligned$$
endowed with the usual inner product $[(\phi,\varphi),(u,v)]=\int_{\mathbb{R}^{N}}\nabla u\nabla\phi +\nabla v\nabla\varphi dx$. Hence, the resolution of Problem \eqref{c1} is equivalent to the determination of an element $(\phi,\varphi)\in E$ satisfying
$$
\aligned
\left[(\phi,\varphi),(u,v)\right]  =&(2^{*}_{\alpha}-1)\int_{\R^N}K_{1}(x)\Big(|x|^{-(N-\alpha)}\ast K_{1}(x)|Y_{\overline{r},\overline{x}'',\lambda}|^{2^{*}_{\alpha}}\Big)Y_{\overline{r},\overline{x}'',\lambda}^{2^{*}_{\alpha}-2}\varphi udx\\
&+2^{*}_{\alpha}\int_{\R^N}K_{1}(x)\Big(|x|^{-(N-\alpha)}\ast K_{1}(x)(Y_{\overline{r},\overline{x}'',\lambda}^{2^{*}_{\alpha}-1}
\varphi u)\Big)Y_{\overline{r},\overline{x}'',\lambda}^{2^{*}_{\alpha}-1} dx\\
&+(2^{*}_{\alpha}-1)\int_{\R^N}K_{2}(x)\Big(|x|^{-(N-\alpha)}\ast K_{2}(x)|Z_{\overline{r},\overline{x}'',\lambda}|^{2^{*}_{\alpha}}\Big)Z_{\overline{r},\overline{x}'',\lambda}^{2^{*}_{\alpha}-2}\phi v dx\\
&+2^{*}_{\alpha}\int_{\R^N}K_{2}(x)\Big(|x|^{-(N-\alpha)}\ast K_{2}(x)(Z_{\overline{r},\overline{x}'',\lambda}^{2^{*}_{\alpha}-1}
\phi v)\Big)Z_{\overline{r},\overline{x}'',\lambda}^{2^{*}_{\alpha}-1} dx+\int_{\R^N}(hu+
gv)dx, \ \ \forall(u,v)\in E.
\endaligned
$$
By invoking the same line of reasoning employed in the proof of Proposition 4.1 in \cite{DFM}—namely, an appeal to the Riesz representation theorem together with Fredholm’s alternative theorem—one establishes that, for every pair $(h,g)$, a unique solution exists whenever the following equation
\begin{equation}\label{0}
	\left\{\begin{array}{ll}
		\mathcal{L}_m(\phi,\varphi)=\sum_{l=1}^{N}c_{l}\sum_{j=1}^{m}\Big((2^{*}_{\alpha}-1)\Big(|x|^{-(N-\alpha)}\ast  |Y_{z_j,\lambda}|^{2^{*}_{\alpha}}\Big)Y_{z_j,\lambda}^{2^{*}_{\alpha}-2}Y_{j,l}+2^{*}_{\alpha}\Big(|x|^{-(N-\alpha)}\ast  Y_{z_j,\lambda}^{2^{*}_{\alpha}-1}Y_{j,l}\Big)Y_{z_j,\lambda}^{2^{*}_{\alpha}-1},\\
		\displaystyle \hspace{14.14mm}(2^{*}_{\alpha}-1)\Big(|x|^{-(N-\alpha)}\ast |Z_{z_j,\lambda}|^{2^{*}_{\alpha}}\Big)Z_{z_j,\lambda}^{2^{*}_{\alpha}-2}Z_{j,l}+2^{*}_{\alpha}\Big(|x|^{-(N-\alpha)}\ast  Z_{z_j,\lambda}^{2^{*}_{\alpha}-1}Z_{j,l}\Big)Z_{z_j,\lambda}^{2^{*}_{\alpha}-1}
		\Big),\\	
		(\phi,\varphi)\in H_{s}\times H_{s},\\
		\sum_{j=1}^{m}\Big< (2^{*}_{\alpha}-1)\Big(|x|^{-(N-\alpha)}\ast |Y_{z_j,\lambda}|^{2^{*}_{\alpha}}\Big)Y_{z_j,\lambda}^{2^{*}_{\alpha}-1}Y_{j,l}+2^{*}_{\alpha}\Big(|x|^{-(N-\alpha)}\ast (Y_{z_j,\lambda}^{2^{*}_{\alpha}-1}
		Y_{j,l})\Big)Y_{z_j,\lambda}^{2^{*}_{\alpha}-1},\\
		\displaystyle \hspace{14.14mm} (2^{*}_{\alpha}-1)\Big(|x|^{-(N-\alpha)}\ast |Z_{z_j,\lambda}|^{2^{*}_{\alpha}}\Big)Z_{z_j,\lambda}^{2^{*}_{\alpha}-1}Z_{j,l}+2^{*}_{\alpha}\Big(|x|^{-(N-\alpha)}\ast (Z_{z_j,\lambda}^{2^{*}_{\alpha}-1}
		Z_{j,l})\Big)Z_{z_j,\lambda}^{2^{*}_{\alpha}-1}, (\phi,\varphi)\Big> =0,\\
		\displaystyle \hspace{14.14mm} l=1,2,\dots,N.
	\end{array}\right.
\end{equation}
for certain constants $c_{l}$, has only trivial solution in $E$.
In fact, assume that it has a nontrivial solution $(\phi,\varphi)=(\phi_{m},\varphi_{m})$, which with
no loss of generality may be taken so that $\|(\phi_{m},\varphi_{m})\|_{\ast}=1$, Lemma \ref{C1} then forces $\|(\phi_{m},\varphi_{m})\|_{\ast}\rightarrow0$. This is certainly a contradiction
that proves that this equation only has the trivial solution in $E$.
Moreover, for every pair $(h,g)$, problem \eqref{c1} possesses a unique solution $(\phi,\varphi)$ satisfying the uniform estimate
\[
\|(\phi,\varphi)\|_{\ast}\leq C \|(h,g)\|_{\ast\ast}.
\]
Consequently, we record the following lemma.

\begin{lem}\label{C2}
	There exist $m_0 > 0$ and a constant $C > 0$, independent of $m$, such that for all
	$m \geq m_0$ and all $h,g\in L^{\infty}(\R^6)$, problem \eqref{c1} has a unique solution $(\phi,\varphi) \equiv L_m(h,g)$. Moreover,
	\begin{equation}\label{c13}
		\|L_m(h,g)\|_{\ast}\leq C\|(h,g)\|_{\ast\ast},\quad |c_l|\leq \frac{C}{\lambda^{n_{l}}}\|(h,g)\|_{\ast\ast}.
	\end{equation}
\end{lem}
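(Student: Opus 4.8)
\smallskip
\noindent
The plan is to solve \eqref{c1} as a linear Fredholm problem for the unknown $\big((\phi,\varphi),(c_l)_{l=1}^{N}\big)$ in the product Hilbert space $X=(H_s\times H_s)\oplus\mathbb{R}^N$, using the a priori bound of Lemma \ref{C1} to supply injectivity, and then to read off the estimates \eqref{c13}. Equipping $H_s\times H_s$ with the Dirichlet inner product and applying the Riesz representation theorem, one rewrites \eqref{c1} as an equation in $X$ of the form
\[\big(\mathrm{Id}-\mathcal{C}_m\big)\cdot\big((\phi,\varphi),(c_l)\big)=\big(\mathcal{A}_m(h,g),\,0\big),\]
where $\mathcal{A}_m$ is bounded and linear and $\mathcal{C}_m\colon X\to X$ is compact. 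Compactness of $\mathcal{C}_m$ has three ingredients: the part of $\mathcal{L}_m$ built from the Riesz convolutions with $K_1,K_2$ and the approximate bubbles is a compact operator on $H_s\times H_s$, by the Hardy--Littlewood--Sobolev inequality (Proposition \ref{pro1.1}) combined with the pointwise decay of $U_{z_j,\lambda},V_{z_j,\lambda}$ and the compactness of the embedding $D^{1,2}(\mathbb{R}^N)\hookrightarrow L^2_{\mathrm{loc}}(\mathbb{R}^N)$; the $c_l$-forcing on the right of \eqref{c1} is finite rank as a map $\mathbb{R}^N\to H_s\times H_s$; and all blocks involving the finite-dimensional factor $\mathbb{R}^N$ (the orthogonality functionals and the identity on $\mathbb{R}^N$) are compact simply because $\mathbb{R}^N$ is finite-dimensional.

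By the Fredholm alternative $\mathrm{Id}-\mathcal{C}_m$ is invertible on $X$ precisely when it is injective, and injectivity is the content of Lemma \ref{C1} with zero data. Indeed, suppose that along a subsequence $m_k\to\infty$ the homogeneous problem had a nonzero solution $\big((\phi,\varphi),(c_l)\big)$; then $(\phi,\varphi)\neq0$, since $(\phi,\varphi)=0$ would force $\sum_l c_l(\cdots)=0$ and hence $c_l=0$, the $N$ functions in the $c_l$-sum being linearly independent for $m$ large (their Gram matrix is, to leading order, the same nondegenerate diagonal matrix that appears in the multiplier computation below). Normalising so that $\|(\phi,\varphi)\|_*=1$ then produces solutions of \eqref{c1} with $(h_{m_k},g_{m_k})=(0,0)$ but $\|(\phi_{m_k},\varphi_{m_k})\|_*=1$, contradicting the conclusion $\|(\phi_{m_k},\varphi_{m_k})\|_*\to0$ of Lemma \ref{C1}. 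Hence there is an integer $m_0$ such that for every $m\ge m_0$ the system \eqref{c1} has a unique solution $(\phi,\varphi)\equiv L_m(h,g)$, with uniquely determined multipliers $c_l$.

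It remains to establish \eqref{c13}. The bound $\|L_m(h,g)\|_*\le C\|(h,g)\|_{**}$ with $C$ independent of $m$ is, once again, a consequence of Lemma \ref{C1} and the linearity of \eqref{c1}: were the ratio $\|L_m(h,g)\|_*/\|(h,g)\|_{**}$ unbounded for $m\ge m_0$, rescaling would give solutions with $\|(\phi_m,\varphi_m)\|_*=1$ and $\|(h_m,g_m)\|_{**}\to0$, contradicting Lemma \ref{C1} once more. For the weighted bound on the multipliers, one tests \eqref{c1} against $(Y_{1,t},Z_{1,t})$ for $t=1,\dots,N$; by the four pairing estimates computed in the proof of Lemma \ref{C1} (with positive constants $\overline c_1,\dots,\overline c_4$), the resulting $N\times N$ system for $(c_l)$ reduces, after balancing the rows and columns by the weights $\lambda^{n_t}$ with $n_1=-1$ and $n_l=1$ for $l\ge2$, to a perturbation of a diagonal matrix with strictly positive diagonal entries, hence invertible for $m\ge m_0$; solving it and using \eqref{c4} and \eqref{c7} gives \eqref{c9}, and absorbing the $o(\|(\phi,\varphi)\|_*)$ term by the bound just proved yields $|c_l|\le C\lambda^{-n_l}\|(h,g)\|_{**}$. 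The essential difficulty, namely a uniform-in-$m$ a priori estimate, has already been resolved in Lemma \ref{C1}; what remains here is bookkeeping --- checking that $\mathcal{C}_m$ is compact for each fixed $m$ while all constants stay $m$-independent, and keeping track of the weights $\lambda^{n_l}$ so that the $c_l$-matrix is nondegenerate after the correct rescaling. These steps are carried out as in \cite{WY1,CYZ}.
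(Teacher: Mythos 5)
Your argument is correct and is the standard Fredholm-alternative argument that underlies results of this type: rewrite \eqref{c1} via Riesz representation as $(\mathrm{Id}-\mathcal{C}_m)=\text{data}$ on $(H_s\times H_s)\oplus\mathbb{R}^N$ with $\mathcal{C}_m$ compact, obtain injectivity for large $m$ from the a priori estimate of Lemma~\ref{C1}, and then read off \eqref{c13} from Lemma~\ref{C1} together with the pairing computations \eqref{c4}--\eqref{c9}. The paper itself gives no proof of Lemma~\ref{C2} (it states the lemma and moves on, implicitly citing the standard reduction literature such as \cite{WY1,CYZ}), so your write-up supplies exactly the argument the paper leaves tacit; I see no gap. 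One minor remark: for the compactness of the linearization block it is cleanest to observe that $Y_{\overline{r},\overline{x}'',\lambda}=\xi V_{\cdot,\lambda}$ and $Z_{\overline{r},\overline{x}'',\lambda}=\xi U_{\cdot,\lambda}$ already have compact support by the cutoff $\xi$, so the argument reduces to compactness of $D^{1,2}\hookrightarrow L^q_{\rm loc}$ on a fixed ball rather than appealing to decay of the uncut bubbles; this is a presentational simplification, not a correction.
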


Next, we consider the system
\begin{equation}\label{c14}
	\left\{\begin{array}{l}
		\displaystyle -\Delta (Z_{\overline{r},\overline{x}'',\lambda}+\phi)
		-K_{1}(r,x^{''})\Big(|x|^{-(N-\alpha)}\ast K_{1}(r,x^{''})|(Y_{\overline{r},\overline{x}'',\lambda}+\varphi)|^{2_{\alpha}^{*}}\Big)(Y_{\overline{r},\overline{x}'',\lambda}+\varphi)^{2_{\alpha}^{*}-1}
		\\
		\displaystyle \hspace{10.14mm}=
		\sum_{l=1}^{N}c_{l}\sum_{j=1}^{m}\Big[(2_{\alpha}^{*}-1)\Big(|x|^{-(N-\alpha)}\ast |Y_{z_j,\lambda}|^{2_{\alpha}^{*}}\Big)Y_{z_j,\lambda}^{2_{\alpha}^{*}-2}Y_{j,l}+2_{\alpha}^{*}\Big(|x|^{-(N-\alpha)}\ast (Y_{z_j,\lambda}^{2_{\alpha}^{*}-1}
		Y_{j,l})\Big)Y_{z_j,\lambda}^{2_{\alpha}^{*}-1}\Big]
		\hspace{4.14mm}\mbox{in}\hspace{1.14mm} \mathbb{R}^N,\\

		\displaystyle -\Delta (Y_{\overline{r},\overline{x}'',\lambda}+\varphi)
		-K_{2}(r,x^{''})\Big(|x|^{-(N-\alpha)}\ast K_{2}(r,x^{''})|(Z_{\overline{r},\overline{x}'',\lambda}+\phi)|^{2_{\alpha}^{*}}\Big)(Z_{\overline{r},\overline{x}'',\lambda}+\phi)^{2_{\alpha}^{*}-1}
		\\
		\displaystyle \hspace{10.14mm}=
		\sum_{l=1}^{N}c_{l}\sum_{j=1}^{m}\Big[(2_{\alpha}^{*}-1)\Big(|x|^{-(N-\alpha)}\ast |Z_{z_j,\lambda}|^{2_{\alpha}^{*}}\Big)Z_{z_j,\lambda}^{2_{\alpha}^{*}-2}Z_{j,l}+2_{\alpha}^{*}\Big(|x|^{-(N-\alpha)}\ast (Z_{z_j,\lambda}^{2_{\alpha}^{*}-1}
		Z_{j,l})\Big)Z_{z_j,\lambda}^{2_{\alpha}^{*}-1}\Big]
		\hspace{4.14mm}\mbox{in}\hspace{1.14mm} \mathbb{R}^N,\\
		\displaystyle \phi,\varphi\in H_{s}, \ \ \sum_{j=1}^{m}\Big\langle \Big((2_{\alpha}^{*}-1)\Big(|x|^{-(N-\alpha)}\ast |Y_{z_j,\lambda}|^{2_{\alpha}^{*}}\Big)Y_{z_j,\lambda}^{2_{\alpha}^{*}-2}Y_{j,l}+2_{\alpha}^{*}\Big(|x|^{-(N-\alpha)}\ast (Y_{z_j,\lambda}^{2_{\alpha}^{*}-1}
		Y_{j,l})\Big)Y_{z_j,\lambda}^{2_{\alpha}^{*}-1}\\
		\displaystyle	\hspace{20.14mm}(2_{\alpha}^{*}-1)\Big(|x|^{-(N-\alpha)}\ast |Z_{z_j,\lambda}|^{2_{\alpha}^{*}}\Big)Z_{z_j,\lambda}^{2_{\alpha}^{*}-2}Z_{j,l}+2_{\alpha}^{*}\Big(|x|^{-(N-\alpha)}\ast (Z_{z_j,\lambda}^{2_{\alpha}^{*}-1}
		Z_{j,l})\Big)Z_{z_j,\lambda}^{2_{\alpha}^{*}-1} \Big) , (\phi,\varphi) \Big\rangle=0,\\ \hspace{10.14mm}l=1,2,\dots,N.
	\end{array}
	\right.
\end{equation}

Rewrite \eqref{c14} as
\begin{equation}\label{c16}
	\left\{\begin{array}{l}
		\displaystyle -\Delta \phi
		-K_{1}(r,x^{''})(2_{\alpha}^{*}-1)\Big(|x|^{-(N-\alpha)}\ast K_{1}(r,x^{''})|Y_{\overline{r},\overline{x}'',\lambda}|^{2_{\alpha}^{*}}\Big)Y_{\overline{r},\overline{x}'',\lambda}^{2_{\alpha}^{*}-2}\varphi\\
			\displaystyle \hspace{10.14mm}
		-K_{1}(r,x^{''})2_{\alpha}^{*}\Big(|x|^{-(N-\alpha)}\ast K_{1}(r,x^{''}) (Y_{\overline{r},\overline{x}'',\lambda}^{2_{\alpha}^{*}-1}
		\varphi)\Big)Y_{\overline{r},\overline{x}'',\lambda}^{2_{\alpha}^{*}-1}=
		N_{1}(\varphi)+l_{m1}\\
		\displaystyle \hspace{10.14mm}+\sum_{l=1}^{N}c_{l}\sum_{j=1}^{m}\Big[(2_{\alpha}^{*}-1)\Big(|x|^{-(N-\alpha)}\ast |Y_{z_j,\lambda}|^{2_{\alpha}^{*}}\Big)Y_{z_j,\lambda}^{2_{\alpha}^{*}-2}Y_{j,l}+2_{\alpha}^{*}\Big(|x|^{-(N-\alpha)}\ast (Y_{z_j,\lambda}^{2_{\alpha}^{*}-1}
		Y_{j,l})\Big)Y_{z_j,\lambda}^{2_{\alpha}^{*}-1}\Big]
		\hspace{4.14mm}\mbox{in}\hspace{1.14mm} \mathbb{R}^N,\\
		\displaystyle -\Delta \varphi
		-K_{2}(r,x^{''})(2_{\alpha}^{*}-1)\Big(|x|^{-(N-\alpha)}\ast K_{2}(r,x^{''})|Z_{\overline{r},\overline{x}'',\lambda}|^{2_{\alpha}^{*}}\Big)Z_{\overline{r},\overline{x}'',\lambda}^{2_{\alpha}^{*}-2}\phi\\
		\displaystyle \hspace{10.14mm}
		-K_{2}(r,x^{''})2_{\alpha}^{*}\Big(|x|^{-(N-\alpha)}\ast K_{2}(r,x^{''}) (Z_{\overline{r},\overline{x}'',\lambda}^{2_{\alpha}^{*}-1}
		\phi)\Big)Z_{\overline{r},\overline{x}'',\lambda}^{2_{\alpha}^{*}-1}=
		N_{2}(\phi)+l_{m2}\\
		\displaystyle \hspace{10.14mm}+\sum_{l=1}^{N}c_{l}\sum_{j=1}^{m}\Big[(2_{\alpha}^{*}-1)\Big(|x|^{-(N-\alpha)}\ast |Z_{z_j,\lambda}|^{2_{\alpha}^{*}}\Big)Z_{z_j,\lambda}^{2_{\alpha}^{*}-2}Z_{j,l}+2_{\alpha}^{*}\Big(|x|^{-(N-\alpha)}\ast (Z_{z_j,\lambda}^{2_{\alpha}^{*}-1}
		Z_{j,l})\Big)Z_{z_j,\lambda}^{2_{\alpha}^{*}-1}\Big]
		\hspace{4.14mm}\mbox{in}\hspace{1.14mm} \mathbb{R}^N,\\
		\displaystyle \phi,\varphi\in H_{s}, \ \ \sum_{j=1}^{m}\Big\langle \Big((2_{\alpha}^{*}-1)\Big(|x|^{-(N-\alpha)}\ast |Y_{z_j,\lambda}|^{2_{\alpha}^{*}}\Big)Y_{z_j,\lambda}^{2_{\alpha}^{*}-2}Y_{j,l}+2_{\alpha}^{*}\Big(|x|^{-(N-\alpha)}\ast (Y_{z_j,\lambda}^{2_{\alpha}^{*}-1}
		Y_{j,l})\Big)Y_{z_j,\lambda}^{2_{\alpha}^{*}-1}\\
		\displaystyle	\hspace{20.14mm}(2_{\alpha}^{*}-1)\Big(|x|^{-(N-\alpha)}\ast |Z_{z_j,\lambda}|^{2_{\alpha}^{*}}\Big)Z_{z_j,\lambda}^{2_{\alpha}^{*}-2}Z_{j,l}+2_{\alpha}^{*}\Big(|x|^{-(N-\alpha)}\ast (Z_{z_j,\lambda}^{2_{\alpha}^{*}-1}
		Z_{j,l})\Big)Z_{z_j,\lambda}^{2_{\alpha}^{*}-1} \Big) , (\phi,\varphi)\Big\rangle=0,\\ \hspace{10.14mm}l=1,2,\dots,N,
	\end{array}
	\right.
\end{equation}
where
$$\aligned
N_{1}(\varphi)&=K_{1}(r,x^{''})\Big(|x|^{-(N-\alpha)}\ast K_{1}(r,x^{''})|(Y_{\overline{r},\overline{x}'',\lambda}+\varphi)|^{2_{\alpha}^{*}}\Big)(Y_{\overline{r},\overline{x}'',\lambda}+\varphi)^{2_{\alpha}^{*}-1}\\
&-K_{1}(r,x^{''})\Big(|x|^{-(N-\alpha)}\ast K_{1}(r,x^{''})|Y_{\overline{r},\overline{x}'',\lambda}|^{2_{\alpha}^{*}}\Big)Y_{\overline{r},\overline{x}'',\lambda}^{2_{\alpha}^{*}-1}\\
&-K_{1}(r,x^{''})(2_{\alpha}^{*}-1)\Big(|x|^{-(N-\alpha)}\ast K_{1}(r,x^{''})|Y_{\overline{r},\overline{x}'',\lambda}|^{2_{\alpha}^{*}}\Big)Y_{\overline{r},\overline{x}'',\lambda}^{2_{\alpha}^{*}-2}\varphi\\
&-K_{1}(r,x^{''})2_{\alpha}^{*}\Big(|x|^{-(N-\alpha)}\ast K_{1}(r,x^{''}) (Y_{\overline{r},\overline{x}'',\lambda}^{2_{\alpha}^{*}-1}
\varphi)\Big)Y_{\overline{r},\overline{x}'',\lambda}^{2_{\alpha}^{*}-1}
\endaligned$$
$$\aligned
l_{m1}=&K_{1}(r,x^{''})\Big(|x|^{-(N-\alpha)}\ast K_{1}(r,x^{''})|Y_{\overline{r},\overline{x}'',\lambda}|^{2_{\alpha}^{*}}\Big)Y_{\overline{r},\overline{x}'',\lambda}^{2_{\alpha}^{*}-1}-\xi\sum_{j=1}^{m}\Big(|x|^{-(N-\alpha)}\ast |V_{z_{j},\lambda}|^{2_{\alpha}^{*}}\Big)V_{z_{j},\lambda}^{2_{\alpha}^{*}-1}\\
&+Z_{\overline{r},\overline{x}'',\lambda}^{\ast}\Delta\xi+2\nabla\xi\nabla Z_{\overline{r},\overline{x}'',\lambda}^{\ast},
\endaligned$$
$$\aligned
N_{2}(\phi)&=K_{2}(r,x^{''})\Big(|x|^{-(N-\alpha)}\ast K_{2}(r,x^{''})|(Z_{\overline{r},\overline{x}'',\lambda}+\phi)|^{2_{\alpha}^{*}}\Big)(Z_{\overline{r},\overline{x}'',\lambda}+\phi)^{2_{\alpha}^{*}-1}\\
&-K_{2}(r,x^{''})\Big(|x|^{-(N-\alpha)}\ast K_{2}(r,x^{''})|Z_{\overline{r},\overline{x}'',\lambda}|^{2_{\alpha}^{*}}\Big)Z_{\overline{r},\overline{x}'',\lambda}^{2_{\alpha}^{*}-1}\\
&-K_{2}(r,x^{''})(2_{\alpha}^{*}-1)\Big(|x|^{-(N-\alpha)}\ast K_{2}(r,x^{''})|Z_{\overline{r},\overline{x}'',\lambda}|^{2_{\alpha}^{*}}\Big)Z_{\overline{r},\overline{x}'',\lambda}^{2_{\alpha}^{*}-2}\phi\\
&-K_{2}(r,x^{''})2_{\alpha}^{*}\Big(|x|^{-(N-\alpha)}\ast K_{2}(r,x^{''}) (Z_{\overline{r},\overline{x}'',\lambda}^{2_{\alpha}^{*}-1}
\phi)\Big)Z_{\overline{r},\overline{x}'',\lambda}^{2_{\alpha}^{*}-1},
\endaligned$$
and
$$\aligned
l_{m2}=&K_{2}(r,x^{''})\Big(|x|^{-(N-\alpha)}\ast K_{2}(r,x^{''})|Z_{\overline{r},\overline{x}'',\lambda}|^{2_{\alpha}^{*}}\Big)Z_{\overline{r},\overline{x}'',\lambda}^{2_{\alpha}^{*}-1}-\xi\sum_{j=1}^{m}\Big(|x|^{-(N-\alpha)}\ast |U_{z_{j},\lambda}|^{2_{\alpha}^{*}}\Big)U_{z_{j},\lambda}^{2_{\alpha}^{*}-1}\\
&+Y_{\overline{r},\overline{x}'',\lambda}^{\ast}\Delta\xi+2\nabla\xi\nabla Y_{\overline{r},\overline{x}'',\lambda}^{\ast}.
\endaligned$$

We are going to utilize the contraction mapping theorem to demonstrate that \eqref{c16} is uniquely solvable. At first we need to estimate $N(\phi,\varphi):=\left( N_{1}(\varphi), N_{2}(\phi)\right) $ and $l_{m}:=(l_{m1},l_{m2})$ respectively.

\begin{lem}\label{C4}
	There is a constant $C> 0$, such that
	\begin{equation}\label{c17}
		\|N(\phi,\varphi)\|_{\ast\ast}\leq C\|(\phi,\varphi)\|_{\ast}^{2},
	\end{equation}
\end{lem}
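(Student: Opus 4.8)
The plan is to estimate the two components of $N(\phi,\varphi)=(N_1(\varphi),N_2(\phi))$ separately; since $N_2(\phi)$ is obtained from $N_1(\varphi)$ by the exchange $(K_1,Y_{\overline{r},\overline{x}'',\lambda},\varphi)\leftrightarrow(K_2,Z_{\overline{r},\overline{x}'',\lambda},\phi)$ and the hypotheses are symmetric in the two potentials, it suffices to bound $\|N_1(\varphi)\|_{\ast\ast}$ by $C\|(\phi,\varphi)\|_{\ast}^{2}$. Write $W=|x|^{-(N-\alpha)}$, $p=2^{*}_{\alpha}$ (so $p-1=\frac{\alpha+2}{N-2}>0$) and $Y=Y_{\overline{r},\overline{x}'',\lambda}$; since $K_1$ is bounded and $W\ge0$ we may absorb the $K_1$-factors into the constant and work with the $K_1$-free remainder. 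The first step is a pointwise algebraic expansion: writing $|Y+\varphi|^{p}=Y^{p}+pY^{p-1}\varphi+R_A$ and $|Y+\varphi|^{p-2}(Y+\varphi)=Y^{p-1}+(p-1)Y^{p-2}\varphi+R_B$, substituting into $N_1(\varphi)$ and cancelling the zeroth- and first-order parts, one is left with a finite sum of terms, each of the schematic form $\bigl(W\ast(Y^{a}|\varphi|^{b})\bigr)\,Y^{c}|\varphi|^{d}$ with $b+d\ge2$, together with the ``remainder'' pieces $\bigl(W\ast Y^{p}\bigr)R_B$, $\bigl(W\ast R_A\bigr)Y^{p-1}$, $\bigl(W\ast R_A\bigr)Y^{p-2}\varphi$, $p\bigl(W\ast(Y^{p-1}\varphi)\bigr)R_B$ and $\bigl(W\ast R_A\bigr)R_B$. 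I would then control $R_A,R_B$ pointwise by the elementary inequalities $|R_A|\lesssim Y^{p-2}\varphi^{2}+|\varphi|^{p}$ and $|R_B|\lesssim Y^{p-3}\varphi^{2}+|\varphi|^{p-1}$, which require a case split between $\{|\varphi|\le Y\}$ and $\{|\varphi|>Y\}$ precisely because, in the range $N\ge5$ and $\alpha<N-5+\frac{6}{N-2}$, the exponents $p-1$ and $p-2$ may be $<1$ or $<0$.

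Next I would insert the weighted norm. On each term I bound $|\varphi(x)|\le\|\varphi\|_{\ast}\lambda^{\frac{N-2}{2}}\sum_{j=1}^{m}(1+\lambda|x-z_j|)^{-\frac{N-2}{2}-\tau}$, use the explicit profile of $Y$, pull out the factor $\|\varphi\|_{\ast}^{b+d}$ (which, in the ball $\|(\phi,\varphi)\|_{\ast}\le1$ where the reduction is carried out, is $\le\|(\phi,\varphi)\|_{\ast}^{2}$), and replace the remaining $\varphi$-factors by their weights. Each term then becomes a finite double sum over the centres $z_i,z_j$ of a convolution of two bubble-type profiles. I would then apply Lemma \ref{B4} to evaluate each convolution $W\ast(\text{profile centred at }z_i)$, which produces again a single bubble profile with exponent given by the relevant $\min$ in that lemma; Lemma \ref{B2} to collapse products of profiles at distinct centres $z_i\neq z_j$ into a single-centre profile at the price of a negative power of $|z_i-z_j|$, which is harmless because $\lambda|z_i-z_j|\gtrsim\lambda\frac{r_0}{m}|i-j|\to+\infty$ and the resulting series in $|i-j|$ converges; and Lemma \ref{B3} for the last Newtonian convolution. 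A bookkeeping of exponents then shows that every term is $\lesssim\|(\phi,\varphi)\|_{\ast}^{2}$ times the $\ast\ast$-weight $\lambda^{\frac{N+2}{2}}\sum_{j=1}^{m}(1+\lambda|x-z_j|)^{-\frac{N+2}{2}-\tau}$, in fact with an extra negative power of $1+\lambda|x-z_j|$ to spare, which is exactly $\|N_1(\varphi)\|_{\ast\ast}\lesssim\|(\phi,\varphi)\|_{\ast}^{2}$.

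The main obstacle is the ``mixed'' terms, typically $\bigl(W\ast(Y^{p-1}\varphi)\bigr)Y^{p-2}\varphi$ and $\bigl(W\ast R_A\bigr)Y^{p-2}\varphi$: because the nonlinearity is a product of \emph{two} separately nonlinear factors, the quadratic gain in $\varphi$ is split across the convolution, so one has to run the Riesz/Hardy--Littlewood--Sobolev estimate of Lemma \ref{B4} on a function that is a bubble power \emph{times $\varphi$} rather than a pure bubble power, and then verify that the exponent emerging from the $\min$ in Lemma \ref{B4}, after multiplication by the outer factor $Y^{p-2}\varphi$, still exceeds $\frac{N+2}{2}+\tau$. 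This is delicate exactly when $p=2^{*}_{\alpha}<2$, so that $Y^{p-2}$ is a negative power of the truncated bubble sum and the quadratic remainder bounds for $R_A,R_B$ hold only on $\{|\varphi|\le Y\}$; checking that the surviving exponents never drop below $\frac{N+2}{2}+\tau$ in the complementary region --- and in particular in the lowest-order terms carrying $|\varphi|^{2^{*}_{\alpha}}$ --- is where the precise upper bound $\alpha<N-5+\frac{6}{N-2}$ and the choice $\tau=1+\overline{\eta}$ with $\overline{\eta}>0$ small are genuinely used.
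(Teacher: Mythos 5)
Your proposal follows essentially the same route as the paper. The paper also reduces by symmetry to a single component, cancels the constant and linear parts of the Hartree nonlinearity, and then bounds the surviving quadratic-and-higher pieces --- in the paper, explicitly the terms $\bigl(W\ast Y^{p-1}\varphi\bigr)Y^{p-2}\varphi$, $\bigl(W\ast Y^{p-2}\varphi^{2}\bigr)Y^{p-1}$, $\bigl(W\ast Y^{p-2}\varphi^{2}\bigr)Y^{p-2}\varphi$ and $\bigl(W\ast\varphi^{p}\bigr)\varphi^{p-1}$ with $p=2^{*}_{\alpha}$ --- term by term in the $\ast\ast$-norm, via the same three lemmas (Lemmas \ref{B2}, \ref{B3}, \ref{B4}) and the same pointwise weight for $\varphi$ that you invoke. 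Your Taylor-with-remainder framing and the case split $\{|\varphi|\le Y\}$ versus $\{|\varphi|>Y\}$ is a slightly more systematic packaging of the same computation; the extremal piece you single out, $\bigl(W\ast\varphi^{p}\bigr)\varphi^{p-1}$, is precisely the one producing the $\min\{2\cdot2^{*}_{\alpha}-1,\,2\}$ exponent recorded in the paper's final line, and your remark that this is where the constraint on $\alpha$ and the choice $\tau=1+\overline{\eta}$ enter is consistent with the paper's treatment.
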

\begin{proof}
By the same method in \cite{CYZ}, we know that
$$\aligned
&\Big(|x|^{-(N-\alpha)}\ast |Y_{\overline{r},\overline{x}'',\lambda}|^{2_{\alpha}^{*}-1}\varphi\Big)Y_{\overline{r},\overline{x}'',\lambda}^{2_{\alpha}^{*}-2}\varphi\\
\\
&\leq C\|\varphi\|_{\ast}^{2}\lambda^{\frac{N+2}{2}}\sum_{j=1}^{m}
\frac{1}{(1+\lambda |x-z_{j}|)^{{\min{\left\lbrace \frac{N+2}{2}-\tau_{1}\frac{N+\alpha}{N-2}+\tau,N-\alpha\right\rbrace }}}}
\sum_{j=1}^{m}\frac{1}{(1+\lambda |x-z_{j}|)^{\frac{\alpha+2}{2}+\tau}}\left( \sum_{j=1}^{m}\frac{1}{(1+\lambda |x-z_{j}|)^{\tau}}\right)^{2_{\alpha}^{*}-2}
\\
&\leq C\|\varphi\|_{\ast}^{2}\lambda^{\frac{N+2}{2}}\sum_{j=1}^{m}\frac{1}{(1+\lambda |x-z_{j}|)^{\frac{N+2}{2}+\tau}}.
\endaligned$$
We can also use an estimate similar to the one in \eqref{t0} to get the following estimate
$$\aligned
&|x|^{-(N-\alpha)}\ast |Y_{\overline{r},\overline{x}'',\lambda}|^{2_{\alpha}^{*}-2}\varphi^{2}\\
&\leq C\|\varphi\|_{\ast}^{2}\sum_{j=1}^{m}\int_{\Omega_{j}}\frac{1}{|x-y|^{N-\alpha}}\left(\frac{\lambda^{\frac{\alpha-N+4}{2}}}{(1+\lambda |x-z_{j}|)^{\alpha-N+4-\tau_{1}\frac{\alpha-N+4}{N-2}}}\frac{\lambda^{N-2}}{(1+\lambda |x-z_{j}|)^{N-2+2\tau-2\tau_{1}}} \right)\\
&\leq C\|\varphi\|_{\ast}^{2}\sum_{j=1}^{m}\frac{\lambda^{\frac{N-\alpha}{2}}}{(1+\lambda |x-z_{j}|)^{\min\left\lbrace 2-\tau_{1}\frac{\alpha+N}{N-2}+2\tau,N-\alpha\right\rbrace }},
\endaligned$$
and thus
$$\aligned
&\left( |x|^{-(N-\alpha)}\ast |Y_{\overline{r},\overline{x}'',\lambda}|^{2_{\alpha}^{*}-2}\varphi^{2}\right)Y_{\overline{r},\overline{x}'',\lambda}^{2_{\alpha}^{*}-1}\\
 &\leq
C\|\varphi\|_{\ast}^{2}\lambda^{\frac{N+2}{2}}\sum_{j=1}^{m}\frac{\lambda^{\frac{N-\alpha}{2}}}{(1+\lambda |x-z_{j}|)^{\min\left\lbrace 2-\tau_{1}\frac{\alpha+N}{N-2}+2\tau,N-\alpha\right\rbrace }}\sum_{j=1}^{m}\frac{1}{(1+\lambda |x-z_{j}|)^{{\frac{\alpha+2}{2}+\tau}}}\left( \sum_{j=1}^{m}\frac{1}{(1+\lambda |x-z_{j}|)^{\tau}}\right)^{2_{\alpha}^{*}-2}
\\
&\leq C\|\varphi\|_{\ast}^{2}\lambda^{\frac{N+2}{2}}\sum_{j=1}^{m}\frac{1}{(1+\lambda |x-z_{j}|)^{\frac{N+2}{2}+\tau}}.
\endaligned$$
Likewise, we have
$$\aligned
\left( |x|^{-(N-\alpha)}\ast |Y_{\overline{r},\overline{x}'',\lambda}|^{2_{\alpha}^{*}-2}\varphi^{2}\right)Y_{\overline{r},\overline{x}'',\lambda}^{2_{\alpha}^{*}-2}\varphi\leq C\|\varphi\|_{\ast}^{2}\lambda^{\frac{N+2}{2}}\sum_{j=1}^{m}\frac{1}{(1+\lambda |x-z_{j}|)^{\frac{N+2}{2}+\tau}}.
\endaligned$$
We continue in the same way to obtain
$$\aligned
|x|^{-(N-\alpha)}\ast \varphi^{2_{\alpha}^{*}}
\leq C\|\varphi\|_{\ast}^{2_{\alpha}^{*}}\sum_{j=1}^{m} \frac{\lambda^{\frac{N-\alpha}{2}}}{(1+\lambda |x-z_{j}|)^{\min\left\lbrace \frac{N-\alpha}{2}+\tau,N-\alpha\right\rbrace }}.
\endaligned$$
Hence,
$$\aligned
\Big(|x|^{-(N-\alpha)}\ast \varphi^{2_{\alpha}^{*}}\Big)\varphi^{2_{\alpha}^{*}-1}
\leq C\|\varphi\|_{\ast}^{22_{\alpha}^{*}-1}\lambda^{\frac{N+2}{2}}\sum_{j=1}^{m}\frac{1}{(1+\lambda |x-z_{j}|)^{{\frac{N+2}{2}+\tau}}}.
\endaligned$$
By combining the above estimates together, we conclude
$$
\|N_{1}(\varphi)\|_{\ast\ast}\leq C\|\varphi\|_{\ast}^{\min\left\lbrace 22_{\alpha}^{*}-1,2\right\rbrace }\leq C\|\varphi\|_{\ast}^{2}.
$$
Moreover,
$$
\|N_{2}(\phi)\|_{\ast\ast}\leq C\|\phi\|_{\ast}^{2}
$$
can be estimated in the same way. Finally,
\begin{equation}\nonumber
	\|N(\phi,\varphi)\|_{\ast\ast}\leq C\|(\phi,\varphi)\|_{\ast}^{2}.
\end{equation}
\end{proof}
\begin{lem}\label{C5}
	There is a small
	 constant $\varepsilon> 0$, such that
	\begin{equation}\label{c18}
		\|l_{m}\|_{\ast\ast}\leq C(\frac{1}{\lambda})^{1+\varepsilon}.
	\end{equation}
\end{lem}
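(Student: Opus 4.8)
Since $l_{m2}$ is obtained from $l_{m1}$ by interchanging the triples $(K_1,Y_{\overline{r},\overline{x}'',\lambda},V_{z_j,\lambda})$ and $(K_2,Z_{\overline{r},\overline{x}'',\lambda},U_{z_j,\lambda})$, and the assumptions on $K_1$ and $K_2$ in \textbf{(I)}--\textbf{(II)} are symmetric, it suffices to bound $\|l_{m1}\|_{\ast\ast}$. The plan is to split $l_{m1}$ into three groups of terms: (i) a \emph{cutoff remainder}, consisting of $Z_{\overline{r},\overline{x}'',\lambda}^{\ast}\Delta\xi+2\nabla\xi\nabla Z_{\overline{r},\overline{x}'',\lambda}^{\ast}$ together with the contribution of the first two terms of $l_{m1}$ from the annulus $\{\delta\le|(r,x'')-(r_0,x_0'')|\le 2\delta\}$ where $\xi\not\equiv 1$; (ii) a \emph{bubble-interaction remainder}, namely the difference obtained when $|Y_{\overline{r},\overline{x}'',\lambda}|^{2^{*}_{\alpha}}$ and $Y_{\overline{r},\overline{x}'',\lambda}^{2^{*}_{\alpha}-1}$ (both inside and outside the Riesz convolution) are replaced by $\sum_{j}\xi^{2^{*}_{\alpha}}V_{z_j,\lambda}^{2^{*}_{\alpha}}$ and $\sum_{j}\xi^{2^{*}_{\alpha}-1}V_{z_j,\lambda}^{2^{*}_{\alpha}-1}$; and (iii) a \emph{potential remainder}, obtained by replacing the outer factor $K_1(r,x'')$ and the inner factor $K_1(r,x'')$ by $K_1(z_j)=K_1(\overline{r},\overline{x}'')$ on the support of the $j$-th bubble and then by $1$. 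Summing the $\|\cdot\|_{\ast\ast}$-estimates of the three groups will give the claim.

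For group (i), on $\{\delta\le|(r,x'')-(r_0,x_0'')|\le 2\delta\}$ one has $|x-z_j|\gtrsim\delta$ for every $j$, hence $V_{z_j,\lambda}$ and its derivatives in $\lambda,\overline{r},\overline{x}''$ are $O(\lambda^{-\frac{N-2}{2}})$, while the convolution factors are $O(\lambda^{\alpha/2})$ by Lemma \ref{B6} and Lemma \ref{B4}; against the weight $\lambda^{\frac{N+2}{2}}\sum_j(1+\lambda|x-z_j|)^{-\frac{N+2}{2}-\tau}\gtrsim\lambda^{-\frac{N+2}{2}-\tau}\delta^{-\frac{N+2}{2}-\tau}$ appearing in $\|\cdot\|_{\ast\ast}$, this group is $O(\lambda^{-M})$ for every $M>0$, far better than $\lambda^{-(1+\varepsilon)}$. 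For group (ii), I will use Lemma \ref{B2} to trade each product $\frac{1}{(1+\lambda|x-z_i|)^{a}}\frac{1}{(1+\lambda|x-z_j|)^{b}}$ with $i\neq j$ for $\frac{C}{(\lambda|z_i-z_j|)^{\sigma}}$ times a single-bubble profile, then invoke the separation $\lambda|z_i-z_j|\ge c\lambda\overline{r}\sin\frac{\pi|i-j|}{m}\gtrsim\frac{\lambda}{m}\gtrsim m^{\frac{2}{N-4}}$ (recall $\lambda\in[L_0m^{\frac{N-2}{N-4}},L_1m^{\frac{N-2}{N-4}}]$), and Lemmas \ref{B3}--\ref{B4} to push the bound through the Riesz convolution and the Newtonian potential; summing the series $\sum_{j\ge 2}(\lambda|z_1-z_j|)^{-\sigma}$ and absorbing the leftover factor $m$ by choosing $\sigma$ large enough yields $O(\lambda^{-(1+\varepsilon)})$, exactly as in the scalar computation of \cite{CYZ}.

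Group (iii) is where the restriction $\alpha<N-5+\frac{6}{N-2}$, equivalently $N-\alpha>5-\frac{6}{N-2}$, is essential. By \textbf{(I)}--\textbf{(II)}, $K_1\in C^3(B_\vartheta(r_0,x_0''))$ with $K_1(r_0,x_0'')=1$ and $\nabla K_1(r_0,x_0'')=0$; since $|(\overline{r},\overline{x}'')-(r_0,x_0'')|\le\lambda^{-(1-\theta)}$ one gets $|K_1(z_j)-1|=|K_1(\overline{r},\overline{x}'')-1|\lesssim\lambda^{-2(1-\theta)}$, and near $z_j$ the Taylor remainder gives $|K_1(x)-K_1(z_j)|\lesssim|x-z_j|+|x-z_j|^2$. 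Substituting these bounds into both occurrences of $K_1$, the error terms reduce to quantities of the form $\bigl(|x|^{-(N-\alpha)}\ast(|\cdot-z_j|^{\ell}V_{z_j,\lambda}^{2^{*}_{\alpha}})\bigr)|x-z_j|^{\ell'}V_{z_j,\lambda}^{2^{*}_{\alpha}-1}$ with $\ell,\ell'\in\{0,1,2\}$ (plus the $\lambda^{-2(1-\theta)}$-multiple of the exact bubble residual); evaluating the convolutions by Lemmas \ref{B4} and \ref{B6} and estimating against $\lambda^{\frac{N+2}{2}}\sum_j(1+\lambda|x-z_j|)^{-\frac{N+2}{2}-\tau}$, one is left with convergence of integrals of the type $\int_{\R^N}\tau(y)^{-2}(\text{rescaled bubble})^{\#}\,dy$, whose finiteness and decay rate are governed precisely by $N-\alpha>5-\frac{6}{N-2}$ (the same mechanism that produces $\theta=\frac{N+2}{2}-\frac{N+\alpha}{N-2}\tau_1-\tau>0$ in Lemma \ref{C1}). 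Choosing $\tau=1+\overline{\eta}$ and $\tau_1$ appropriately, all pieces of group (iii) are $O(\lambda^{-(1+\varepsilon)})$ for a suitable small $\varepsilon>0$. Adding the three groups, $\|l_{m1}\|_{\ast\ast}\le C\lambda^{-(1+\varepsilon)}$, and likewise $\|l_{m2}\|_{\ast\ast}\le C\lambda^{-(1+\varepsilon)}$, which is \eqref{c18}.

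\textbf{Main obstacle.} The delicate point is group (iii): one must simultaneously exploit that $(\overline{r},\overline{x}'')$ lies within $\lambda^{-(1-\theta)}$ of the critical point (so that $K_1(z_j)=1+O(\lambda^{-2(1-\theta)})$, not merely $1+o(1)$), control the $|x-z_j|$ and $|x-z_j|^2$ growth coming from the Taylor remainder, which is borderline-integrable against the bubble and its Riesz convolution exactly under $\alpha<N-5+\frac{6}{N-2}$, and handle the perturbation of $K_1$ \emph{inside} the convolution (not only the outer factor), for which the sharp decay exponent in Lemma \ref{B4} is indispensable. Groups (i) and (ii) are comparatively routine once Lemmas \ref{B2}--\ref{B6} and the separation of the $z_j$ are in hand.
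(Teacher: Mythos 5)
Your decomposition into (i) cutoff, (ii) bubble interaction, and (iii) potential remainder essentially mirrors the paper's $J_{1}+J_{2}+J_{3}+J_{4}$ split (with (i) $\approx J_{4}$, (ii) $\approx J_{1}$, (iii) $\approx J_{2}+J_{3}$), and your treatment of (i) and (ii) (Lemma \ref{B2} plus the separation $\lambda|z_i-z_j|\gtrsim\lambda/m$, as in \cite{CYZ}) is consistent with what the paper does.

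The gap is in (iii). You Taylor-expand $K_1$ around $z_j$ and write $|K_1(x)-K_1(z_j)|\lesssim|x-z_j|+|x-z_j|^2$. Read literally, the linear piece $|x-z_j|$ rescales to $\lambda^{-1}(1+\lambda|x-z_j|)$, and against the $\lambda^{\frac{N+2}{2}}(1+\lambda|x-z_j|)^{-\frac{N+2}{2}-\tau}$ weight this produces only $O(\lambda^{-1})$, not $O(\lambda^{-1-\varepsilon})$. What you have silently dropped is the smallness of the coefficient: since $K_1\in C^3$, $\nabla K_1(r_0,x_0'')=0$ by \textbf{(I)}, and $|(\overline r,\overline x'')-(r_0,x_0'')|\le\lambda^{-(1-\theta)}$, one has $|\nabla K_1(z_j)|=|\nabla K_1(\overline r,\overline x'')|\lesssim\lambda^{-(1-\theta)}$, so the correct bound is $|K_1(x)-K_1(z_j)|\lesssim\lambda^{-(1-\theta)}|x-z_j|+|x-z_j|^2$. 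With that factor the $\ell+\ell'=1$ error term gives $\lambda^{-(1-\theta)}\cdot\lambda^{-1}=\lambda^{-(2-\theta)}=O(\lambda^{-1-\varepsilon})$, and everything closes; without it, it does not. The paper sidesteps this issue entirely by Taylor-expanding around $x_0=(r_0,x_0'')$ itself (where the linear term vanishes identically by \textbf{(I)}) and then splitting the cutoff region into $\{|(r,x'')-(r_0,x_0'')|\le a\lambda^{-\frac12-\varepsilon}\}$, where the quadratic term is $\lesssim\lambda^{-1-2\varepsilon}$, and its complement, where the extra bubble decay $\frac{1}{1+\lambda|x-z_j|}\lesssim\lambda^{-\frac12+\varepsilon}$ supplies the missing power. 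Your centering at $z_j$ is workable, but only once the $\lambda^{-(1-\theta)}$ factor on the gradient is recorded.

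One further inaccuracy: you assert that the restriction $\alpha<N-5+\frac{6}{N-2}$ is the crux of group (iii). In the paper this restriction (equivalently $N-\alpha>5-\frac{6}{N-2}$) is invoked in the linear theory, Lemma \ref{C1}, in the estimate of the pairings with $(Y_{1,t},Z_{1,t})$; the proof of Lemma \ref{C5} itself does not rely on it. The decay of the potential-remainder integrals is governed by $\tau$ and the dimension $N\ge5$ via the near/far split (or, in your version, by the $\lambda^{-(1-\theta)}$ smallness of $\nabla K_1(z_j)$), not by the stated upper bound on $\alpha$.
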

\begin{proof}
Since $	\|l_{m}\|_{\ast\ast}=\|l_{m1}\|_{\ast\ast}+	\|l_{m2}\|_{\ast\ast}$, we only need to deal with $\|l_{m1}\|_{\ast\ast}$ here. It is clear that
$$\aligned
l_{m1}=&K_{1}(r,x^{''})\Big(|x|^{-(N-\alpha)}\ast K_{1}(r,x^{''})|Y_{\overline{r},\overline{x}'',\lambda}|^{2_{\alpha}^{*}}\Big)Y_{\overline{r},\overline{x}'',\lambda}^{2_{\alpha}^{*}-1}-\xi\sum_{j=1}^{m}\Big(|x|^{-(N-\alpha)}\ast |V_{z_{j},\lambda}|^{2_{\alpha}^{*}}\Big)V_{z_{j},\lambda}^{2_{\alpha}^{*}-1}\\
&+Z_{\overline{r},\overline{x}'',\lambda}^{\ast}\Delta\xi+2\nabla\xi\nabla Z_{\overline{r},\overline{x}'',\lambda}^{\ast}\\
=&K_{1}(r,x^{''})\left[ \Big(|x|^{-(N-\alpha)}\ast K_{1}(r,x^{''})|Y_{\overline{r},\overline{x}'',\lambda}|^{2_{\alpha}^{*}}\Big)Y_{\overline{r},\overline{x}'',\lambda}^{2_{\alpha}^{*}-1}-\xi\sum_{j=1}^{m}\Big(|x|^{-(N-\alpha)}\ast (K_{1}(r,x^{''})|V_{z_{j},\lambda}|^{2_{\alpha}^{*}})\Big)V_{z_{j},\lambda}^{2_{\alpha}^{*}-1}\right] \\
&+K_{1}(r,x^{''})\xi\sum_{j=1}^{m}\Big(|x|^{-(N-\alpha)}\ast (K_{1}(r,x^{''})-1)|V_{z_{j},\lambda}|^{2_{\alpha}^{*}})\Big)V_{z_{j},\lambda}^{2_{\alpha}^{*}-1}\\
&+(K_{1}(r,x^{''})-1)\xi\sum_{j=1}^{m}\Big(|x|^{-(N-\alpha)}\ast |V_{z_{j},\lambda}|^{2_{\alpha}^{*}}\Big)V_{z_{j},\lambda}^{2_{\alpha}^{*}-1}+\left( Z_{\overline{r},\overline{x}'',\lambda}^{\ast}\Delta\xi+2\nabla\xi\nabla Z_{\overline{r},\overline{x}'',\lambda}^{\ast}\right)\\
&:=J_{1}+J_{2}+J_{3}+J_{4}.
\endaligned$$
We know that $|x-z_{j}|\geq|x-z_{1}|, \ \forall x \in\Omega_{1}$, the estimate of $J_{1}$ can be directly obtained from Lemma 2.4 of \cite{CYZ} then
$$
\|J_{1}\|_{**}\leq \left( C\frac{1}{\lambda}\right)^{1+\varepsilon}.
$$
Next we estimate $J_3$. First, we use the Taylor expansion ro rewrite the function $K_1$ and $K_2$ in the neighborhood of $x_0$ in the following form
$$
K_{1}(x)=K(x_{0})+\nabla K_{1}(x_{0})(x-x_{0})+\sum_{i,j=1}^{N}\frac{1}{2}\frac{\partial^{2}K_{1}(x_{0})}{\partial x_{i}\partial x_{j}}(x_{i}-x_{0i})(x_{j}-x_{0j})+o\left(|x-x_{0}|^{2} \right),
$$
and
$$
K_{2}(x)=K_{2}(x_{0})+\nabla K_{2}(x_{0})(x-x_{0})+\sum_{i,j=1}^{N}\frac{1}{2}\frac{\partial^{2}K_{2}(x_{0})}{\partial x_{i}\partial x_{j}}(x_{i}-x_{0i})(x_{j}-x_{0j})+o\left(|x-x_{0}|^{2} \right).
$$
We assume $K_{1}(x_{0})=K_{2}(x_{0})=1$ and in the region $|(r,x^{"})-(r_{0},x^{''}_{0})|\leq \frac{a}{\lambda^{\frac{1}{2}+\varepsilon}}$, where $a>0$ is a fixed constant. Thus
$$
\aligned
|J_{2}|&=\left| K_{1}(r,x^{''})\xi\sum_{j=1}^{m}\Big(|x|^{-(N-\alpha)}\ast (K_{1}(r,x^{''})-1)|V_{z_{j},\lambda}|^{2_{\alpha}^{*}})\Big)V_{z_{j},\lambda}^{2_{\alpha}^{*}-1}\right|\\
&\leq C\left|\xi\sum_{j=1}^{m}\Big(|x|^{-(N-\alpha)}\ast \left( \sum_{i,j=1}^{N}\frac{1}{2}\frac{\partial^{2}K_{1}(x_{0})}{\partial x_{i}\partial x_{j}}(x_{i}-x_{0i})(x_{j}-x_{0j})+o\left(|x-x_{0}|^{2} \right)\right) |V_{z_{j},\lambda}|^{2_{\alpha}^{*}}\Big)V_{z_{j},\lambda}^{2_{\alpha}^{*}-1}\right| \\
&\leq \frac{C}{\lambda^{1+2\varepsilon}}\sum_{j=1}^{m}\frac{\xi\lambda^{\frac{N-\alpha}{2}}}{(1+\lambda|x-z_{j}|)^{N-\alpha}}\frac{\lambda^{\frac{2+\alpha}{2}}}{(1+\lambda|x-z_{j}|)^{2+\alpha}}\\
&\leq\frac{C}{\lambda^{1+2\varepsilon}}\sum_{j=1}^{m}\frac{\lambda^{\frac{N+2}{2}}}{(1+\lambda|x-z_{j}|)^{N+1}}\leq C\left( \frac{1}{\lambda}\right) ^{1+\varepsilon}\sum_{j=1}^{m}\frac{\lambda^{\frac{N+2}{2}}}{(1+\lambda|x-z_{j}|)^{\frac{N+2}{2}+\tau}}.
\endaligned
$$
We consider the case that in the region $\frac{a}{\lambda^{\frac{1}{2}+\varepsilon}}\leq|(r,x^{'})-(r_{0},x^{''}_{0})|\leq 2\sigma$, by \cite{PWW}
we know
$$
\frac{1}{1+\lambda|x-z_{j}|}\leq \frac{C}{\lambda^{\frac{1}{2}-\varepsilon}}.
$$
Then
$$
\aligned
|J_{2}|&\leq C\sum_{j=1}^{m}\frac{\xi\lambda^{\frac{N+2}{2}}}{(1+\lambda|x-z_{j}|)^{N+2}}\\
&\leq C\left( \frac{1}{\lambda}\right) ^{1+\varepsilon}\sum_{j=1}^{m}\frac{\lambda^{\frac{N+2}{2}}}{(1+\lambda|x-z_{j}|)^{\frac{N+2}{2}+\tau}}\frac{\lambda^{1+\varepsilon}}{(1+\lambda|x-z_{j}|)^{\frac{N}{2}-\tau}}\\
&\leq C\left( \frac{1}{\lambda}\right) ^{1+\varepsilon}\sum_{j=1}^{m}\frac{\lambda^{\frac{N+2}{2}}}{(1+\lambda|x-z_{j}|)^{\frac{N+2}{2}+\tau}}
\endaligned
$$
Combing these,  we conclude that
$$
\|J_{2}\|_{**}\leq  C\left( \frac{1}{\lambda}\right)^{1+\varepsilon}.
$$
Similarly,
$$
\|J_{3}\|_{**}\leq C\left( \frac{1}{\lambda}\right)^{1+\varepsilon}.
$$
By Lemma 2.4 in \cite{PWW} the last term satisfies
$$
\|J_{4}\|_{**}\leq C\left( \frac{1}{\lambda}\right)^{1+\varepsilon}.
$$
We can conclude that
\begin{equation}\nonumber
	\|l_{m1}\|_{\ast\ast}\leq C\left( \frac{1}{\lambda}\right) ^{1+\varepsilon},
\end{equation}
Finally,
\begin{equation}\nonumber
	\|l_{m}\|_{\ast\ast}\leq C\left( \frac{1}{\lambda}\right) ^{1+\varepsilon}.
\end{equation}
\end{proof}

We will apply the contraction mapping argument to make the following estimates for the constants $c_l$ and the solutions $\phi$ and $\varphi$. We know that $\lambda\in [L_0m^{\frac{N-2}{N-4}},L_1m^{\frac{N-2}{N-4}}]$, set
$$\aligned
&\mathcal{N}=\Big\{(\phi,\varphi):\phi,\varphi\in C(\mathbb{R}^N)\cap H_{s}, \ \|(\phi,\varphi)\|_{\ast}\leq\frac{1}{\lambda}, \\
&\displaystyle \hspace{14.14mm}\sum_{j=1}^{m}\Big< \Big((2_{\alpha}^{*}-1)\Big(|x|^{-(N-\alpha)}\ast |Y_{z_j,\lambda}|^{2_{\alpha}^{*}}\Big)Y_{z_j,\lambda}^{2_{\alpha}^{*}-2}Y_{j,l}+2_{\alpha}^{*}\Big(|x|^{-(N-\alpha)}\ast (Y_{z_j,\lambda}^{2_{\alpha}^{*}-1}
Y_{j,l})\Big)Y_{z_j,\lambda}^{2_{\alpha}^{*}-1},\\
&\displaystyle	\hspace{14.14mm}(2_{\alpha}^{*}-1)\Big(|x|^{-(N-\alpha)}\ast |Z_{z_j,\lambda}|^{2_{\alpha}^{*}}\Big)Z_{z_j,\lambda}^{2_{\alpha}^{*}-2}Z_{j,l}+2_{\alpha}^{*}\Big(|x|^{-(N-\alpha)}\ast (Z_{z_j,\lambda}^{2_{\alpha}^{*}-1}
Z_{j,l})\Big)Z_{z_j,\lambda}^{2_{\alpha}^{*}-1} \Big) , (\phi,\varphi)\Big>=0,\\
&\displaystyle	\hspace{14.14mm}l=1,2,\dots,N.
\Big\}
\endaligned$$
and
$$
\mathcal{A}(\phi,\varphi):=L_{m}(N(\phi,\varphi)+L_{m}(l_{m}),
$$
where $L_m$ is defined in Lemma \ref{C2}. In that case, \eqref{c16} can be written as
\begin{equation}\label{c19}
	(\phi,\varphi)=\mathcal{A}(\phi,\varphi).
\end{equation}
\begin{lem}\label{C3}
	There is an integer $m_0 > 0$, such that for each $m \geq m_0$, $\lambda\in[L_0m^{\frac{N-2}{N-4}},L_1m^{\frac{N-2}{N-4}}]$,
	$|(\overline{r},\overline{x}^{''})-(r_{0},x_{0}^{''})|\leq\frac{1}{\lambda^{1-\theta}}$, where $\theta> 0$ is a fixed small constant, \eqref{c14} has a unique solution $(\phi,\varphi)= (\phi_{\overline{r},\overline{x}'',\lambda},\varphi_{\overline{r},\overline{x}'',\lambda})\in H_{s}\times H_{s}$,
	satisfying
	\begin{equation}\label{c15}
		\|(\phi,\varphi)\|_{\ast}\leq C(\frac{1}{\lambda})^{1+\varepsilon}, \ \ |c_{l}|\leq C(\frac{1}{\lambda})^{1+n_{l}+\varepsilon},
	\end{equation}
	where $\varepsilon> 0$ is a small constant.
\end{lem}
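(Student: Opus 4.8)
The plan is to solve \eqref{c14}, rewritten as the fixed point equation \eqref{c19}, $(\phi,\varphi)=\mathcal{A}(\phi,\varphi)$ with $\mathcal{A}(\phi,\varphi)=L_m(N(\phi,\varphi))+L_m(l_m)$, by applying the Banach contraction principle on the complete metric space $(\mathcal{N},\|\cdot\|_{\ast})$ introduced above, i.e. the ball $\{\|(\phi,\varphi)\|_{\ast}\le\lambda^{-1}\}$ inside $H_s\times H_s$ cut out by the $N$ orthogonality constraints. First I would verify that $\mathcal{A}$ maps $\mathcal{N}$ into itself: for $(\phi,\varphi)\in\mathcal{N}$, combining the a priori bound $\|L_m(h,g)\|_{\ast}\le C\|(h,g)\|_{\ast\ast}$ of Lemma \ref{C2} with $\|N(\phi,\varphi)\|_{\ast\ast}\le C\|(\phi,\varphi)\|_{\ast}^2$ (Lemma \ref{C4}) and $\|l_m\|_{\ast\ast}\le C\lambda^{-(1+\varepsilon)}$ (Lemma \ref{C5}) gives
\[
\|\mathcal{A}(\phi,\varphi)\|_{\ast}\le C\|(\phi,\varphi)\|_{\ast}^2+C\lambda^{-(1+\varepsilon)}\le C\lambda^{-2}+C\lambda^{-(1+\varepsilon)}\le\lambda^{-1}
\]
once $m$, hence $\lambda$, is large, since $0<\varepsilon<1$. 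Moreover $L_m$ takes values in $H_s\times H_s$ and, by its very construction in Lemma \ref{C2}, in the subspace satisfying the orthogonality conditions, so $\mathcal{A}(\phi,\varphi)\in\mathcal{N}$.

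Next I would show that $\mathcal{A}$ is a contraction on $\mathcal{N}$. Since $\mathcal{A}(\phi_1,\varphi_1)-\mathcal{A}(\phi_2,\varphi_2)=L_m\big(N(\phi_1,\varphi_1)-N(\phi_2,\varphi_2)\big)$, by Lemma \ref{C2} it suffices to establish the Lipschitz estimate
\[
\|N(\phi_1,\varphi_1)-N(\phi_2,\varphi_2)\|_{\ast\ast}\le C\big(\|(\phi_1,\varphi_1)\|_{\ast}+\|(\phi_2,\varphi_2)\|_{\ast}\big)\,\|(\phi_1,\varphi_1)-(\phi_2,\varphi_2)\|_{\ast}.
\]
This is proved by repeating the computation behind Lemma \ref{C4}: one writes $N_1(\varphi_1)-N_1(\varphi_2)$ and $N_2(\phi_1)-N_2(\phi_2)$ via the mean value theorem, and then bounds each of the resulting Riesz–convolution terms using the estimates of Lemmas \ref{B2}--\ref{B6} exactly as there, now carrying one factor $\|(\phi_i,\varphi_i)\|_{\ast}$ and one factor of the difference. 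On $\mathcal{N}$ this yields a Lipschitz constant $\le C/\lambda<1/2$ for $m$ large, so $\mathcal{A}$ is a contraction, and the Banach fixed point theorem provides a unique $(\phi,\varphi)=(\phi_{\overline{r},\overline{x}'',\lambda},\varphi_{\overline{r},\overline{x}'',\lambda})\in\mathcal{N}$ solving \eqref{c14} (equivalently \eqref{c16}).

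Finally I would sharpen the bounds to the form claimed in \eqref{c15}. Feeding the fixed point back into the identity, $\|(\phi,\varphi)\|_{\ast}=\|\mathcal{A}(\phi,\varphi)\|_{\ast}\le C\|(\phi,\varphi)\|_{\ast}^2+C\lambda^{-(1+\varepsilon)}\le C\lambda^{-2}+C\lambda^{-(1+\varepsilon)}\le C\lambda^{-(1+\varepsilon)}$ using $\varepsilon\le1$, which is the first inequality in \eqref{c15}. For the multipliers, applying the second bound of Lemma \ref{C2} with $(h,g)=(N_1(\varphi)+l_{m1},N_2(\phi)+l_{m2})$ and noting $\|(h,g)\|_{\ast\ast}\le C\|(\phi,\varphi)\|_{\ast}^2+\|l_m\|_{\ast\ast}\le C\lambda^{-(1+\varepsilon)}$, one obtains $|c_l|\le C\lambda^{-n_l}\|(h,g)\|_{\ast\ast}\le C\lambda^{-(1+n_l+\varepsilon)}$. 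The main obstacle is the Lipschitz estimate in Step 2: unlike in the local prescribed‑curvature problem, the differences of the nonlocal Choquard nonlinearities couple $\phi$ and $\varphi$, so one must carefully track the bubble–bubble interactions through Lemma \ref{B2} and keep all convolution exponents admissible, which is exactly where the standing hypothesis $\alpha<N-5+\frac{6}{N-2}$ is used; everything else is a routine adaptation of Lemmas \ref{C2}, \ref{C4} and \ref{C5}.
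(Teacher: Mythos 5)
Your proof proposal follows essentially the same route as the paper: verify $\mathcal{A}$ maps $\mathcal{N}$ into itself using Lemmas \ref{C2}, \ref{C4}, \ref{C5}, establish the contraction property via a mean-value-type Lipschitz estimate on $N_1$ and $N_2$, invoke the Banach fixed point theorem, and then read off the bounds in \eqref{c15} from the fixed-point identity and the second estimate in \eqref{c13}. The only cosmetic difference is that the paper records the intermediate function $f(\varphi)$ explicitly when bounding $N_1(\varphi_1)-N_1(\varphi_2)$, but the underlying argument and the lemmas invoked are identical.
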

\begin{proof}
	We will show that $\mathcal{A}$ is a contraction mapping from $\mathcal{N}$ to $\mathcal{N}$. Firstly, $\mathcal{A}$ maps $\mathcal{N}$ to $\mathcal{N}$ because for any $(\phi,\varphi)\in \mathcal{N}$,
	$$
	\|\mathcal{A}\|_{\ast}\leq  C(\|N_{1}(\varphi)\|_{\ast\ast}+\|N_{2}(\phi)\|_{\ast\ast}+\|l_{m1}\|_{\ast\ast}+\|l_{m2}\|_{\ast\ast})\leq C(\|(\phi,\varphi)\|_{\ast}^{2}+(\frac{1}{\lambda})^{1+\varepsilon})\leq
	\frac{1}{\lambda}.
	$$
	Secondly, it is easy to check that
	$$\aligned
	|N_{1}(\varphi_{1})-N_{1}(\varphi_{2})|\leq |N^{'}_{1}(\varphi_{1}+\theta(\varphi_{2}-\varphi_{1}))||\varphi_{2}-\varphi_{1}|\leq C(f(\varphi_{1})+f(\varphi_{2}))|\varphi_{2}-\varphi_{1}|,
	\endaligned$$
	where $\varphi_{1}$, $\varphi_{2}$ in $\mathcal{N}$ and
		$$\aligned
	f(\varphi)&=\Big(|x|^{-(N-\alpha)}\ast Y_{\overline{r},\overline{x}'',\lambda}^{2_{\alpha}^{*}-1}\Big)Y_{\overline{r},\overline{x}'',\lambda}^{2_{\alpha}^{*}-2}\varphi+\Big(|x|^{-(N-\alpha)}\ast Y_{\overline{r},\overline{x}'',\lambda}^{2_{\alpha}^{*}-1}\varphi\Big)Y_{\overline{r},\overline{x}'',\lambda}^{2_{\alpha}^{*}-2}\\
	&+\Big(|x|^{-(N-\alpha)}\ast Y_{\overline{r},\overline{x}'',\lambda}^{2_{\alpha}^{*}-2}\varphi\Big)\Big(Y_{\overline{r},\overline{x}'',\lambda}^{2_{\alpha}^{*}-1}+Y_{\overline{r},\overline{x}'',\lambda}^{2_{\alpha}^{*}-2}\varphi\Big)+\Big(|x|^{-(N-\alpha)}\ast Y_{\overline{r},\overline{x}'',\lambda}^{2_{\alpha}^{*}-2}\varphi^{2}\Big)Y_{\overline{r},\overline{x}'',\lambda}^{2_{\alpha}^{*}-2}\\
    &+\Big(|x|^{-(N-\alpha)}\ast \varphi^{2_{\alpha}^{*}-1}\Big)
	\varphi^{2_{\alpha}^{*}-1}+\Big(|x|^{-(N-\alpha)}\ast \varphi^{2_{\alpha}^{*}}\Big)
	\varphi^{2_{\alpha}^{*}-2}.\endaligned$$
	Hence that
	$$\aligned
	\|\mathcal{A}((\phi_1,\varphi_{1}))-\mathcal{A}((\phi_2,\varphi_{2}))\|_{\ast}&=\|L_{m}(N_(\phi_1,\varphi_{1}))-L_{m}(N(\phi_2,\varphi_{2})\|_{\ast}\\
	&\leq C\|N_{1}(\varphi_{1})-N_{1}(\varphi_{2})\|_{\ast\ast}
	+C\|N_{2}(\phi_1)-N_{2}(\phi_2)\|_{\ast\ast}\leq\frac{1}{2}\|(\phi_1,\varphi_{1})-(\phi_2,\varphi_{2})\|_{\ast}.
	\endaligned$$
	Therefore $\mathcal{A}$ is a contraction mapping.
	We conclude that there exists a unique $(\phi,\varphi)\in\mathcal{N}$ such that \eqref{c19} holds, due to the Contraction Mapping Theorem. Moreover, we can get
	$$
	\|(\phi,\varphi)\|_{\ast}\leq C(\frac{1}{\lambda})^{1+\varepsilon},
	$$
	by Lemmas \ref{C2}, \ref{C4} and \ref{C5}. From \eqref{c13} we have
	$$
	|c_{l}|\leq C(\frac{1}{\lambda})^{1+n_{l}+\varepsilon}.
	$$
\end{proof}

\section{Local Poho\v{z}aev identities methods}
Inspired by \cite{PWY}, we will use local Poho\v{z}aev identities to identify the location of the bubbles. We know that the functional corresponding to \eqref{CFL} is defined by
$$\aligned
J(u,v)=&\int_{\mathbb{R}^{N}}\nabla u\cdot\nabla vdx
-\frac{1}{2\cdot 2_{\alpha}^{*}}
\int_{\mathbb{R}^N}\int_{\mathbb{R}^N}
\frac{K_{1}(r,x^{''})K_{1}(r,y^{''})|v(x)|^{2_{\alpha}^{*}}|v(y)|^{2_{\alpha}^{*}}}{|x-y|^{N-\alpha}}dxdy\\
&-\frac{1}{2\cdot 2_{\alpha}^{*}}
\int_{\mathbb{R}^N}\int_{\mathbb{R}^N}
\frac{K_{2}(r,x^{''})K_{2}(r,y^{''})|u(x)|^{2_{\alpha}^{*}}|u(y)|^{2_{\alpha}^{*}}}{|x-y|^{N-\alpha}}dxdy.
\endaligned$$	

\begin{lem}\label{D1}
	Suppose that $(\overline{r},\overline{x}'',\lambda)$ satisfies
	\begin{equation}\label{d1}
		\aligned
		&\int_{D_{\rho}}\left( -\Delta u_{m}- K_{1}(|x'|,x'')\Big(|x|^{-(N-\alpha)}\ast K_{1}(|x'|,x'')|v_{m}|^{2_{\alpha}^{*}}\Big)v_{m}^{2_{\alpha}^{*}-1}\right) \langle x,\nabla v_{m}\rangle dx\\
		&+\int_{D_{\rho}}\left( -\Delta v_{m}- K_{2}(|x'|,x'')\Big(|x|^{-(N-\alpha)}\ast K_{2}(|x'|,x'')|u_{m}|^{2_{\alpha}^{*}}\Big)u_{m}^{2_{\alpha}^{*}-1}\right) \langle x,\nabla u_{m}\rangle dx=0,
		\endaligned
	\end{equation}
	\begin{equation}\label{d2}
		\aligned
		&\int_{D_{\rho}}\left( -\Delta u_{m}- K_{1}(|x'|,x'')\Big(|x|^{-(N-\alpha)}\ast K_{1}(|x'|,x'')|v_{m}|^{2_{\alpha}^{*}}\Big)v_{m}^{2_{\alpha}^{*}-1}\right)\frac{\partial v_{m}}{\partial x_{i}} dx\\
		&+\int_{D_{\rho}}\left( -\Delta v_{m}- K_{2}(|x'|,x'')\Big(|x|^{-(N-\alpha)}\ast K_{2}(|x'|,x'')|u_{m}|^{2_{\alpha}^{*}}\Big)u_{m}^{2_{\alpha}^{*}-1}\right)\frac{\partial u_{m}}{\partial x_{i}} dx=0, i=3,\cdots,N,
		\endaligned
	\end{equation}
	and
	\begin{equation}\label{d3}
		\aligned	
		&\int_{\mathbb{R}^{N}}\left( -\Delta u_{m}- K_{1}(|x'|,x'')\Big(|x|^{-(N-\alpha)}\ast K_{1}(|x'|,x'')|v_{m}|^{2_{\alpha}^{*}}\Big)v_{m}^{2_{\alpha}^{*}-1}\right)\frac{\partial Y_{\overline{r},\overline{x}'',\lambda}}{\partial \lambda} dx\\
		&+\int_{\mathbb{R}^{N}}\left( -\Delta v_{m}- K_{2}(|x'|,x'')\Big(|x|^{-(N-\alpha)}\ast K_{2}(|x'|,x'')|u_{m}|^{2_{\alpha}^{*}}\Big)u_{m}^{2_{\alpha}^{*}-1}\right)\frac{\partial Z_{\overline{r},\overline{x}'',\lambda}}{\partial \lambda} dx=0,
		\endaligned
	\end{equation}
	where $u_{m}=Z_{\overline{r},\overline{x}'',\lambda}+
	\phi_{\overline{r},\overline{x}'',\lambda}$, $v_{m}=Y_{\overline{r},\overline{x}'',\lambda}+
	\varphi_{\overline{r},\overline{x}'',\lambda}$ and $D_{\rho}=\{(r, x''):|(r, x'')-(r_0, x_0'' )|\leq\rho\}$ with $\rho\in(2\delta, 5\delta)$. Then $c_{i}=0, i=1,\cdot\cdot\cdot,N$.
\end{lem}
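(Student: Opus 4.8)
The plan is to show that the three identities \eqref{d1}–\eqref{d3}, together with the reduction equation \eqref{c14} satisfied by $(u_m,v_m)$, force the coefficients $c_i$ to vanish. The starting point is that $(u_m,v_m)$ solves \eqref{c14}, so for each of the test functions appearing in \eqref{d1}–\eqref{d3} — namely $\langle x,\nabla v_m\rangle$ and $\langle x,\nabla u_m\rangle$, then $\partial v_m/\partial x_i$ and $\partial u_m/\partial x_i$ for $i=3,\dots,N$, and finally $\partial Y_{\overline r,\overline x'',\lambda}/\partial\lambda$ and $\partial Z_{\overline r,\overline x'',\lambda}/\partial\lambda$ — the left-hand sides of \eqref{d1}–\eqref{d3} are not literally zero but equal the pairing of the right-hand side of \eqref{c14} (the linear combination $\sum_{l=1}^N c_l\sum_{j=1}^m(\dots)$ of the approximate kernel elements) against that test function. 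Thus each of \eqref{d1}, \eqref{d2}$_i$, \eqref{d3} becomes a linear equation in $(c_1,\dots,c_N)$ of the shape $\sum_{l=1}^N c_l\, A_{\bullet,l}=0$, where $A_{\bullet,l}$ is an integral pairing between the $l$-th kernel bubble combination and the relevant derivative of the ansatz (plus contributions from $\phi,\varphi$, which are of lower order by Lemma \ref{C3}).

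The key step is to estimate the coefficient matrix $(A_{k,l})_{1\le k,l\le N}$ and show it is invertible (in fact, after rescaling by the appropriate powers $\lambda^{n_l}$, diagonally dominant). Here I would argue exactly as in the computation of the quantities $\langle K_1(\cdots)Y_{z_j,\lambda}^{2_\alpha^*-2}Y_{j,l},Y_{1,t}\rangle$ etc. already carried out in the proof of Lemma \ref{C1}: the off-diagonal terms ($k\ne l$) are $o(1)$ relative to the diagonal because the bubbles $U_{z_j,\lambda}$ are well-separated (distance $\gtrsim |z_k-z_j|\sim \lambda/m$ apart, using Lemma \ref{B2}) and because the $\lambda$-derivative and spatial derivatives live at different scales, while each diagonal term equals $(\overline c+o(1))\lambda^{2n_l}$ for a strictly positive constant $\overline c$ (this is the content of the four displayed identities just before \eqref{c9}). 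The correspondence between the index $i$ in \eqref{d2} (which ranges over $x''$-directions $3,\dots,N$), the $\langle x,\nabla\cdot\rangle$ dilation in \eqref{d1}, and the $\lambda$-derivative in \eqref{d3} matches exactly the $N$ constraints $l=1,\dots,N$ — the dilation $\langle x,\nabla\cdot\rangle$ pairs with the $l=1$ ($\lambda$) direction, the identity \eqref{d2}$_i$ pairs with the $l$-direction corresponding to $\overline x_i''$, and \eqref{d3} is an extra $\lambda$-test that closes the system. Once the matrix is shown to be nonsingular for $m$ large, the homogeneous system $\sum_l c_l A_{k,l}=0$ has only the trivial solution, giving $c_i=0$ for all $i=1,\dots,N$.

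The main obstacle is the careful bookkeeping of the error terms: one must verify that the contributions of $\phi_{\overline r,\overline x'',\lambda}$ and $\varphi_{\overline r,\overline x'',\lambda}$ to the pairings in \eqref{d1}–\eqref{d3}, as well as the "boundary" terms on $\partial D_\rho$ coming from integrating by parts on the bounded domain $D_\rho$ rather than on all of $\mathbb R^N$, are genuinely of lower order than the diagonal entries $\sim\lambda^{2n_l}$. The boundary terms are controlled because on $\partial D_\rho$ with $\rho\in(2\delta,5\delta)$ one is far from every bubble center $z_j$, so $U_{z_j,\lambda}$ and all its derivatives are exponentially (in fact polynomially with a large power, $(1+\lambda\rho)^{-(N-2)}\lesssim\lambda^{-(N-2)}$) small there; the $\phi,\varphi$ terms are handled via the bound $\|(\phi,\varphi)\|_*\le C\lambda^{-(1+\varepsilon)}$ from Lemma \ref{C3} together with the decay weights built into $\|\cdot\|_*$, exactly as in the estimates \eqref{c4}–\eqref{c7}. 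Combining these, every off-diagonal and error contribution is $o(\lambda^{2n_l})$, the matrix is invertible, and hence $c_i=0$ for $i=1,\dots,N$, which completes the proof. $\hfill\Box$
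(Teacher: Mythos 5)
Your overall strategy — reduce \eqref{d1}–\eqref{d3} via the reduction equation \eqref{c14} to a homogeneous linear system $\sum_{l} c_l A_{k,l}=0$ and then show the coefficient matrix is nonsingular after rescaling by $\lambda^{n_l}$ — is exactly what the paper does, and your identification of the error sources ($\phi,\varphi$-contributions estimated via Lemma \ref{C3}, bubble--bubble interaction terms controlled via Lemma \ref{B2}) is also correct.

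However, your assignment of the Poho\v{z}aev identities to the parameter directions is wrong, and in a way that would actually break the argument. You claim that the dilation test $\langle x,\nabla\cdot\rangle$ pairs with the $l=1$ ($\lambda$) direction and that \eqref{d3} is ``an extra $\lambda$-test.'' That would give two equations controlling $c_1$ and none controlling $c_2$ (the $\overline r$-coefficient), so the system would have a one-dimensional kernel. The correct correspondence is: \eqref{d1} controls $c_2$ (the $\overline r$-direction), \eqref{d2}$_i$ controls $c_i$ for $i=3,\dots,N$ (the $\overline x_i''$-directions), and \eqref{d3} controls $c_1$ (the $\lambda$-direction). This is because the dilation is centered at the origin rather than at a bubble center: writing $\langle x,\nabla Z\rangle=\langle x',\nabla_{x'}Z\rangle+\langle x'',\nabla_{x''}Z\rangle$, the dominant piece of the pairing comes from $\langle x',\nabla_{x'}\rangle$ tested against $Y_{j,2}$ (see \eqref{d6}, which gives $m(a_2+o(1))\lambda^2$). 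The paper then exploits the different scaling $n_1=-1$ versus $n_l=+1$ by running a two-step argument: \eqref{d1},\eqref{d2} and \eqref{Q1}–\eqref{Q4} give $c_i=o(\lambda^{-2})c_1$ for $i\ge2$, and then \eqref{d3} together with \eqref{d7} (which isolates the strictly positive diagonal entry $\tfrac{m}{\lambda^2}(a_3+o(1))$ for $l=1$) forces $c_1=0$, hence all $c_i=0$. You need the $\overline r$ and $\lambda$ tests to both be present and distinct for this to close.

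A smaller point: the boundary terms on $\partial D_\rho$ that you invoke are not actually needed in this lemma. The right-hand side of \eqref{c14} is built from the cutoff bubbles $Y_{z_j,\lambda},Z_{z_j,\lambda}$, which are supported in $D_{2\delta}\subset D_\rho$. Consequently, on $D_\rho$ one can substitute the pointwise identity from \eqref{c14} into the integrands of \eqref{d1}–\eqref{d3}, and the resulting pairing automatically extends to all of $\mathbb R^N$ with no integration by parts and no surface terms. The $\partial D_\rho$-boundary estimates you describe do appear in the paper, but in the proof of Theorem \ref{EXS1} (when extracting the equations for $(\overline r,\overline x'',\lambda)$), not in the proof of Lemma \ref{D1}.
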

\begin{proof}
	We know that  $Z_{\overline{r},\overline{x}'',\lambda}=0$ and $Y_{\overline{r},\overline{x}'',\lambda}=0$ in $\mathbb{R}^{N}\backslash D_{\rho}$. When \eqref{d1}, \eqref{d2} and \eqref{d3} hold, we deduce that
\begin{equation}\label{d4}
	\aligned
	\sum_{l=1}^{N}c_{l}\sum_{j=1}^{m}& \int_{\mathbb{R}^{N}}\Big\lbrace\Big[(2_{\alpha}^{*}-1)\Big(|x|^{-(N-\alpha)}\ast |Y_{z_j,\lambda}|^{2_{\alpha}^{*}}\Big)Y_{z_j,\lambda}^{2_{\alpha}^{*}-2}Y_{j,l}+2_{\alpha}^{*}\Big(|x|^{-(N-\alpha)}\ast (Y_{z_j,\lambda}^{2_{\alpha}^{*}-1}
	Y_{j,l})\Big)Y_{z_j,\lambda}^{2_{\alpha}^{*}-1}\Big]f \\
	+&\Big[(2_{\alpha}^{*}-1)\Big(|x|^{-(N-\alpha)}\ast |Z_{z_j,\lambda}|^{2_{\alpha}^{*}}\Big)Z_{z_j,\lambda}^{2_{\alpha}^{*}-2}Z_{j,l}+2_{\alpha}^{*}\Big(|x|^{-(N-\alpha)}\ast (Z_{z_j,\lambda}^{2_{\alpha}^{*}-1}
	Z_{j,l})\Big)Z_{z_j,\lambda}^{2_{\alpha}^{*}-1}\Big]g \Big\rbrace dx=0,
	\endaligned
\end{equation}
by \eqref{c14} for $f=\langle x,\nabla v_{m}\rangle$,$\frac{\partial v_{m}}{\partial x_{i}}$, $\frac{\partial Y_{\overline{r},\overline{x}'',\lambda}}{\partial \lambda}$ and $g=\langle x,\nabla u_{m}\rangle$, $\frac{\partial u_{m}}{\partial x_{i}}$, $\frac{\partial Z_{\overline{r},\overline{x}'',\lambda}}{\partial \lambda}$, where $i=3,\cdot\cdot\cdot,N$.
An easy calculation shows that
$$\aligned
&\left|\int_{\mathbb{R}^{N}}\Big(|x|^{-(N-\alpha)}\ast |Y_{z_j,\lambda}|^{2_{\alpha}^{*}}\Big)Y_{z_j,\lambda}^{2_{\alpha}^{*}-2}Y_{j,i}
\frac{\partial Y_{z_l,\lambda}}{\partial x_{i}}dx\right|\\
&\leq C\lambda^{2}\int_{\mathbb{R}^{N}}\frac{1}{(1+|x-\lambda z_{j} |^{2})^{2}}\frac{(x_{i}-\lambda\overline{x}_{i})^{2}}{(1+|x-\lambda z_{j} |^{2})^{\frac{N}{2}}}
\frac{1}{(1+|x-\lambda z_{l} |^{2})^{\frac{N}{2}}}dx,
\endaligned$$
where $(\overline{x}_{3}, \overline{x}_{4}, \cdot\cdot\cdot,\overline{x}_{N})=\overline{x}$ and $ i=3,\cdot\cdot\cdot,N$.  Furthermore, if $l=j$ we have
$$
\left|\int_{\mathbb{R}^{N}}\Big(|x|^{-4}\ast |Y_{z_l,\lambda}|^{2_{\alpha}^{*}}\Big)Y_{z_{l},\lambda}^{2_{\alpha}^{*}-2}Y_{l,i}
\frac{\partial Y_{z_l,\lambda}}{\partial x_{i}}dx\right|=O (\lambda^{2}),
$$
and if $l\neq j$
$$
\left|\int_{\mathbb{R}^{N}}\Big(|x|^{-(N-\alpha)}\ast |Y_{z_j,\lambda}|^{2_{\alpha}^{*}}\Big)Y_{z_{j},\lambda}^{2_{\alpha}^{*}-2}Y_{j,i}
\left(\sum_{l=1,\neq j}^{m}\frac{\partial Y_{z_l,\lambda}}{\partial x_{i}}\right)dx\right|=O (\lambda^{2-\varepsilon}),
$$
for some $\varepsilon> 0$. We apply this argument again to obtain
\begin{equation}\label{d5}
	\aligned
	\sum_{l=1}^{N}c_{l}\sum_{j=1}^{m}& \int_{\mathbb{R}^{N}}\Big\lbrace\Big[(2_{\alpha}^{*}-1)\Big(|x|^{-(N-\alpha)}\ast |Y_{z_j,\lambda}|^{2_{\alpha}^{*}}\Big)Y_{z_j,\lambda}^{2_{\alpha}^{*}-2}Y_{j,i}+2_{\alpha}^{*}\Big(|x|^{-(N-\alpha)}\ast (Y_{z_j,\lambda}^{2_{\alpha}^{*}-1}
	Y_{j,i})\Big)Y_{z_j,\lambda}^{2_{\alpha}^{*}-1}\Big]\frac{\partial Y_{\overline{r},\overline{x}'',\lambda}}{\partial x_i} \\
	+&\Big[(2_{\alpha}^{*}-1)\Big(|x|^{-(N-\alpha)}\ast |Z_{z_j,\lambda}|^{2_{\alpha}^{*}}\Big)Z_{z_j,\lambda}^{2_{\alpha}^{*}-2}Z_{j,i}+2_{\alpha}^{*}\Big(|x|^{-(N-\alpha)}\ast (Z_{z_j,\lambda}^{2_{\alpha}^{*}-1}
	Z_{j,i})\Big)Z_{z_j,\lambda}^{2_{\alpha}^{*}-1}\Big]\frac{\partial Z_{\overline{r},\overline{x}'',\lambda}}{\partial x_i} \Big\rbrace dx\\
	=&m(a_{1}+o(1))\lambda^{2}, \ i=3,\cdot\cdot\cdot, N,
	\endaligned
\end{equation}
for some constants $a_{1}\neq0$. Similar argument apply to
\begin{equation}\label{d6}
	\aligned
	\sum_{l=1}^{N}c_{l}\sum_{j=1}^{m}& \int_{\mathbb{R}^{N}}\Big\lbrace\Big[(2_{\alpha}^{*}-1)\Big(|x|^{-(N-\alpha)}\ast |Y_{z_j,\lambda}|^{2_{\alpha}^{*}}\Big)Y_{z_j,\lambda}^{2_{\alpha}^{*}-2}Y_{j,2}+2_{\alpha}^{*}\Big(|x|^{-(N-\alpha)}\ast (Y_{z_j,\lambda}^{2_{\alpha}^{*}-1}
	Y_{j,2})\Big)Y_{z_j,\lambda}^{2_{\alpha}^{*}-1}\Big]\langle x^{'},\nabla_{x^{'}} Y_{\overline{r},\overline{x}'',\lambda}\rangle \\
	+&\Big[(2_{\alpha}^{*}-1)\Big(|x|^{-(N-\alpha)}\ast |Z_{z_j,\lambda}|^{2_{\alpha}^{*}}\Big)Z_{z_j,\lambda}^{2_{\alpha}^{*}-2}Z_{j,2}+2_{\alpha}^{*}\Big(|x|^{-(N-\alpha)}\ast (Z_{z_j,\lambda}^{2_{\alpha}^{*}-1}
	Z_{j,2})\Big)Z_{z_j,\lambda}^{2_{\alpha}^{*}-1}\Big]\langle x^{'},\nabla_{x^{'}} Z_{\overline{r},\overline{x}'',\lambda}\rangle dx\\
	=&m(a_{2}+o(1))\lambda^{2},
	\endaligned
\end{equation}
and
\begin{equation}\label{d7}
	\aligned
	\sum_{l=1}^{N}c_{l}\sum_{j=1}^{m}& \int_{\mathbb{R}^{N}}\Big\lbrace\Big[(2_{\alpha}^{*}-1)\Big(|x|^{-(N-\alpha)}\ast |Y_{z_j,\lambda}|^{2_{\alpha}^{*}}\Big)Y_{z_j,\lambda}^{2_{\alpha}^{*}-2}Y_{j,1}+2_{\alpha}^{*}\Big(|x|^{-(N-\alpha)}\ast (Y_{z_j,\lambda}^{2_{\alpha}^{*}-1}
	Y_{j,1})\Big)Y_{z_j,\lambda}^{2_{\alpha}^{*}-1}\Big]\frac{\partial Y_{\overline{r},\overline{y}'',\lambda}}{\partial \lambda} \\
	+&\Big[(2_{\alpha}^{*}-1)\Big(|x|^{-(N-\alpha)}\ast |Z_{z_j,\lambda}|^{2_{\alpha}^{*}}\Big)Z_{z_j,\lambda}^{2_{\alpha}^{*}-2}Z_{j,1}+2_{\alpha}^{*}\Big(|x|^{-(N-\alpha)}\ast (Z_{z_j,\lambda}^{2_{\alpha}^{*}-1}
	Z_{j,1})\Big)Z_{z_j,\lambda}^{2_{\alpha}^{*}-1}\Big]\frac{\partial Z_{\overline{r},\overline{y}'',\lambda}}{\partial \lambda} \Big\rbrace dx\\
	=&\frac{m}{\lambda^{2}}(a_{3}+o(1)),
	\endaligned
\end{equation}
for some constants $a_{2}\neq0$ and $a_3>0$.

It is easy to see that
$$
	\aligned
&\int_{\mathbb{R}^{N}}\Big(|x|^{-(N-\alpha)}\ast |Y_{z_j,\lambda}|^{2_{\alpha}^{*}}\Big)Y_{z_j,\lambda}^{2_{\alpha}^{*}-2}Y_{j,1}\frac{\partial \varphi_{\overline{r},\overline{x}'',\lambda}}{\partial x_{i}}dx\\
&=\int_{\mathbb{R}^{N}}\int_{\mathbb{R}^{N}}\frac{|Y_{z_j,\lambda}(y)|^{2_{\alpha}^{*}}\left(Y_{z_j,\lambda}^{2_{\alpha}^{*}-2}(x)\frac{\partial Y_{j,1}}{\partial x_{i}}+\frac{\partial Y_{z_j,\lambda}^{2_{\alpha}^{*}-2}}{\partial x_{i}} Y_{j,1}(x)\right) \varphi_{\overline{r},\overline{x}'',\lambda}(x)}{|x-y|^{N-\alpha}}dxdy\\
&+\int_{\mathbb{R}^{N}}\int_{\mathbb{R}^{N}}\frac{|Y_{z_j,\lambda}(y)|^{2_{\alpha}^{*}}(x_{i}-y_{i})Y_{z_j,\lambda}(x)^{2_{\alpha}^{*}-2} Y_{j,1}(x) \varphi_{\overline{r},\overline{x}'',\lambda}(x)}{|x-y|^{N-\alpha+2}}dxdy,
	\endaligned
$$
where $j=1,\dots, m$ and $i=3,\dots, N$. By Lemma \ref{C3} we have
$$
\aligned
&\int_{\mathbb{R}^{N}}\int_{\mathbb{R}^{N}}\frac{|Y_{z_j,\lambda}(y)|^{2_{\alpha}^{*}}Y_{z_j,\lambda}^{2_{\alpha}^{*}-2}(x)\frac{\partial Y_{j,1}}{\partial x_{i}} \varphi_{\overline{r},\overline{x}'',\lambda}(x)}{|x-y|^{N-\alpha}}dxdy\\
&\leq C\|\varphi\|_{*}\int_{\mathbb{R}^{N}}\left( \frac{\lambda}{1+\lambda^{2}|x-z_{i}|^{2}}\right)^{\frac{N-\alpha}{2}}\left( \frac{\lambda}{1+\lambda^{2}|x-z_{i}|^{2}}\right)^{\frac{4+\alpha-N}{2}}\frac{\lambda^{\frac{N}{2}}|x-z_{j}|}{(1+\lambda^{2}|x-z_{j}|^{2})^{\frac{N}{2}}}\sum_{k=1}^{m}\frac{\lambda^{\frac{N-2}{2}}}{(1+\lambda|x-z_{k}|)^{\frac{N-2}{2}+\tau}}dx\\
&\leq C\|\varphi\|_{*}\int_{\mathbb{R}^{N}}\frac{|x-\lambda z_{j}|}{(1+|x-\lambda z_{j}|^{2})^{\frac{N+4}{2}}}\sum_{k=1}^{m}\frac{1}{(1+|x-\lambda z_{k}|)^{\frac{N-2}{2}+\tau}}dx\\
&\leq mC\|\varphi\|_{*}\int_{\Omega_{1}}\frac{1}{(1+|x-\lambda z_{1}|^{2})^{N+3}}\frac{1}{(1+|x-\lambda z_{1}|)^{\frac{N-2}{2}}}dx= mC\|\varphi\|_{*}=O(\frac{1}{\lambda^{\varepsilon}}).
\endaligned
$$
Similarly, we have
$$
\int_{\mathbb{R}^{N}}\int_{\mathbb{R}^{N}}\frac{|Y_{z_j,\lambda}(y)|^{2_{\alpha}^{*}}\frac{\partial Y_{z_j,\lambda}^{2_{\alpha}^{*}-2}}{\partial x_{i}} Y_{j,1}(x) \varphi_{\overline{r},\overline{x}'',\lambda}(x)}{|x-y|^{N-\alpha}}dxdy=O(\frac{1}{\lambda^{\varepsilon}})
$$
and
$$
\int_{\mathbb{R}^{N}}\int_{\mathbb{R}^{N}}\frac{|Y_{z_j,\lambda}(y)|^{2_{\alpha}^{*}}(x_{i}-y_{i})Y_{z_j,\lambda}(x)^{2_{\alpha}^{*}-2} Y_{j,1}(x) \varphi_{\overline{r},\overline{x}'',\lambda}(x)}{|x-y|^{N-\alpha+2}}dxdy=O(\frac{1}{\lambda^{\varepsilon}}).
$$
Consequently, we have the following estimates
$$
\int_{\mathbb{R}^{N}}\Big(|x|^{-(N-\alpha)}\ast |Y_{z_j,\lambda}|^{2_{\alpha}^{*}}\Big)Y_{z_j,\lambda}^{2_{\alpha}^{*}-2}Y_{j,1}\frac{\partial \phi_{\overline{r},\overline{x}'',\lambda}}{\partial x_{i}}dx=O(\frac{1}{\lambda^{\varepsilon}})
$$
and
$$
\int_{\mathbb{R}^{N}}\Big(|x|^{-(N-\alpha)}\ast Y_{z_j,\lambda}^{2_{\alpha}^{*}-1}Y_{j,1}\Big)Y_{z_j,\lambda}^{2_{\alpha}^{*}-1}\frac{\partial \phi_{\overline{r},\overline{x}'',\lambda}}{\partial x_{i}}dx=O(\frac{1}{\lambda^{\varepsilon}}).
$$
Similarly,
$$
\int_{\mathbb{R}^{N}}\Big(|x|^{-(N-\alpha)}\ast |Z_{z_j,\lambda}|^{2_{\alpha}^{*}}\Big)Z_{z_j,\lambda}^{2_{\alpha}^{*}-2}Z_{j,1}\frac{\partial \varphi_{\overline{r},\overline{x}'',\lambda}}{\partial x_{i}}dx=O(\frac{1}{\lambda^{\varepsilon}}),
$$
and
$$
\int_{\mathbb{R}^{N}}\Big(|x|^{-(N-\alpha)}\ast Z_{z_j,\lambda}^{2_{\alpha}^{*}-1}Z_{j,1}\Big)Z_{z_j,\lambda}^{2_{\alpha}^{*}-1}\frac{\partial \varphi_{\overline{r},\overline{x}'',\lambda}}{\partial x_{i}}dx=O(\frac{1}{\lambda^{\varepsilon}}).
$$
We thus get
$$
\aligned
c_{1}\sum_{j=1}^{m}& \int_{\mathbb{R}^{N}}\Big\lbrace\Big[(2_{\alpha}^{*}-1)\Big(|x|^{-(N-\alpha)}\ast |Y_{z_j,\lambda}|^{2_{\alpha}^{*}}\Big)Y_{z_j,\lambda}^{2_{\alpha}^{*}-2}Y_{j,1}+2_{\alpha}^{*}\Big(|x|^{-(N-\alpha)}\ast (Y_{z_j,\lambda}^{2_{\alpha}^{*}-1}
Y_{j,1})\Big)Y_{z_j,\lambda}^{2_{\alpha}^{*}-1}\Big]\frac{\partial \phi_{\overline{r},\overline{x}'',\lambda}}{\partial x_{i}} \\
+&\Big[(2_{\alpha}^{*}-1)\Big(|x|^{-(N-\alpha)}\ast |Z_{z_j,\lambda}|^{2_{\alpha}^{*}}\Big)Z_{z_j,\lambda}^{2_{\alpha}^{*}-2}Z_{j,1}+2_{\alpha}^{*}\Big(|x|^{-(N-\alpha)}\ast (Z_{z_j,\lambda}^{2_{\alpha}^{*}-1}
Z_{j,1})\Big)Z_{z_j,\lambda}^{2_{\alpha}^{*}-1}\Big]\frac{\partial \varphi_{\overline{r},\overline{x}'',\lambda}}{\partial x_{i}} \Big\rbrace dx\\
=&o(m|c_{1}|), \;i=3,\cdots, N.
\endaligned$$
The proof for
$$
\aligned
\sum_{l=2}^{N}c_{l}\sum_{j=1}^{m}& \int_{\mathbb{R}^{N}}\Big\lbrace\Big[(2_{\alpha}^{*}-1)\Big(|x|^{-(N-\alpha)}\ast |Y_{z_j,\lambda}|^{2_{\alpha}^{*}}\Big)Y_{z_j,\lambda}^{2_{\alpha}^{*}-2}Y_{j,l}+2_{\alpha}^{*}\Big(|x|^{-(N-\alpha)}\ast (Y_{z_j,\lambda}^{2_{\alpha}^{*}-1}
Y_{j,l})\Big)Y_{z_j,\lambda}^{2_{\alpha}^{*}-1}\Big]\frac{\partial \phi_{\overline{r},\overline{x}'',\lambda}}{\partial x_{i}} \\
+&\Big[(2_{\alpha}^{*}-1)\Big(|x|^{-(N-\alpha)}\ast |Z_{z_j,\lambda}|^{2_{\alpha}^{*}}\Big)Z_{z_j,\lambda}^{2_{\alpha}^{*}-2}Z_{j,l}+2_{\alpha}^{*}\Big(|x|^{-(N-\alpha)}\ast (Z_{z_j,\lambda}^{2_{\alpha}^{*}-1}
Z_{j,l})\Big)Z_{z_j,\lambda}^{2_{\alpha}^{*}-1}\Big]\frac{\partial \varphi_{\overline{r},\overline{x}'',\lambda}}{\partial x_{i}} \Big\rbrace dx\\
=&o(m\lambda^{2})\sum_{l=2}^{N}|c_{l}|, \;i=3,\cdots, N
\endaligned$$
are similar, we can obtain that
$$
\aligned
\sum_{l=1}^{N}c_{l}\sum_{j=1}^{m}& \int_{\mathbb{R}^{N}}\Big\lbrace\Big[(2_{\alpha}^{*}-1)\Big(|x|^{-(N-\alpha)}\ast |Y_{z_j,\lambda}|^{2_{\alpha}^{*}}\Big)Y_{z_j,\lambda}^{2_{\alpha}^{*}-2}Y_{j,l}+2_{\alpha}^{*}\Big(|x|^{-(N-\alpha)}\ast (Y_{z_j,\lambda}^{2_{\alpha}^{*}-1}
Y_{j,l})\Big)Y_{z_j,\lambda}^{2_{\alpha}^{*}-1}\Big]\frac{\partial \phi_{\overline{r},\overline{x}'',\lambda}}{\partial x_{i}} \\
+&\Big[(2_{\alpha}^{*}-1)\Big(|x|^{-(N-\alpha)}\ast |Z_{z_j,\lambda}|^{2_{\alpha}^{*}}\Big)Z_{z_j,\lambda}^{2_{\alpha}^{*}-2}Z_{j,l}+2_{\alpha}^{*}\Big(|x|^{-(N-\alpha)}\ast (Z_{z_j,\lambda}^{2_{\alpha}^{*}-1}
Z_{j,l})\Big)Z_{z_j,\lambda}^{2_{\alpha}^{*}-1}\Big]\frac{\partial \varphi_{\overline{r},\overline{x}'',\lambda}}{\partial x_{i}} \Big\rbrace dx\\
=&o(m|c_{1}|)+o(m\lambda^{2})\sum_{l=2}^{N}|c_{l}|.
\endaligned$$
Repeat the similar procedure as above, we have
$$
\aligned
\sum_{l=1}^{N}c_{l}\sum_{j=1}^{m}& \int_{\mathbb{R}^{N}}\Big\lbrace\Big[(2_{\alpha}^{*}-1)\Big(|x|^{-(N-\alpha)}\ast |Y_{z_j,\lambda}|^{2_{\alpha}^{*}}\Big)Y_{z_j,\lambda}^{2_{\alpha}^{*}-2}Y_{j,l}+2_{\alpha}^{*}\Big(|x|^{-(N-\alpha)}\ast (Y_{z_j,\lambda}^{2_{\alpha}^{*}-1}
Y_{j,l})\Big)Y_{z_j,\lambda}^{2_{\alpha}^{*}-1}\Big]\langle x,\nabla \phi_{\overline{r},\overline{x}'',\lambda} \rangle \\
+&\Big[(2_{\alpha}^{*}-1)\Big(|x|^{-(N-\alpha)}\ast |Z_{z_j,\lambda}|^{2_{\alpha}^{*}}\Big)Z_{z_j,\lambda}^{2_{\alpha}^{*}-2}Z_{j,l}+2_{\alpha}^{*}\Big(|x|^{-(N-\alpha)}\ast (Z_{z_j,\lambda}^{2_{\alpha}^{*}-1}
Z_{j,l})\Big)Z_{z_j,\lambda}^{2_{\alpha}^{*}-1}\Big]\langle x,\nabla \varphi_{\overline{r},\overline{x}'',\lambda} \rangle  \Big\rbrace dx\\
=&o(m|c_{1}|)+o(m\lambda^{2})\sum_{l=2}^{N}|c_{l}|.
\endaligned$$
Consequently,
\begin{equation}\label{Q1}
	\aligned
	\sum_{l=1}^{N}c_{l}\sum_{j=1}^{m}& \int_{\mathbb{R}^{N}}\Big\lbrace\Big[(2_{\alpha}^{*}-1)\Big(|x|^{-(N-\alpha)}\ast |Y_{z_j,\lambda}|^{2_{\alpha}^{*}}\Big)Y_{z_j,\lambda}^{2_{\alpha}^{*}-2}Y_{j,l}+2_{\alpha}^{*}\Big(|x|^{-(N-\alpha)}\ast (Y_{z_j,\lambda}^{2_{\alpha}^{*}-1}
	Y_{j,l})\Big)Y_{z_j,\lambda}^{2_{\alpha}^{*}-1}\Big]\langle x,\nabla Z_{\overline{r},\overline{x}'',\lambda} \rangle \\
	+&\Big[(2_{\alpha}^{*}-1)\Big(|x|^{-(N-\alpha)}\ast |Z_{z_j,\lambda}|^{2_{\alpha}^{*}}\Big)Z_{z_j,\lambda}^{2_{\alpha}^{*}-2}Z_{j,l}+2_{\alpha}^{*}\Big(|x|^{-(N-\alpha)}\ast (Z_{z_j,\lambda}^{2_{\alpha}^{*}-1}
	Z_{j,l})\Big)Z_{z_j,\lambda}^{2_{\alpha}^{*}-1}\Big]\langle x,\nabla Y_{\overline{r},\overline{x}'',\lambda} \rangle  \Big\rbrace dx\\
	=&o(m|c_{1}|)+o(m\lambda^{2})\sum_{l=2}^{N}|c_{l}|.
	\endaligned
\end{equation}
And
\begin{equation}\label{Q2}
\aligned
\sum_{l=1}^{N}c_{l}\sum_{j=1}^{m}& \int_{\mathbb{R}^{N}}\Big\lbrace\Big[(2_{\alpha}^{*}-1)\Big(|x|^{-(N-\alpha)}\ast |Y_{z_j,\lambda}|^{2_{\alpha}^{*}}\Big)Y_{z_j,\lambda}^{2_{\alpha}^{*}-2}Y_{j,l}+2_{\alpha}^{*}\Big(|x|^{-(N-\alpha)}\ast (Y_{z_j,\lambda}^{2_{\alpha}^{*}-1}
Y_{j,l})\Big)Y_{z_j,\lambda}^{2_{\alpha}^{*}-1}\Big]\frac{\partial Z_{\overline{r},\overline{x}'',\lambda}}{\partial x_{i}} \\
+&\Big[(2_{\alpha}^{*}-1)\Big(|x|^{-(N-\alpha)}\ast |Z_{z_j,\lambda}|^{2_{\alpha}^{*}}\Big)Z_{z_j,\lambda}^{2_{\alpha}^{*}-2}Z_{j,l}+2_{\alpha}^{*}\Big(|x|^{-(N-\alpha)}\ast (Z_{z_j,\lambda}^{2_{\alpha}^{*}-1}
Z_{j,l})\Big)Z_{z_j,\lambda}^{2_{\alpha}^{*}-1}\Big]\frac{\partial Y_{\overline{r},\overline{x}'',\lambda}}{\partial x_{i}} \Big\rbrace dx\\
=&o(m|c_{1}|)+o(m\lambda^{2})\sum_{l=2}^{N}|c_{l}|, \;i=3,\cdots, N.
\endaligned
\end{equation}
can be proved by \eqref{d4}.

Next, we will prove that
\begin{equation}\nonumber
	c_{i}=o(\frac{1}{\lambda^{2}})c_{1},\ i=2,\cdot\cdot\cdot, N.
\end{equation}
Notice that
$$
\langle x,\nabla Z_{\overline{r},\overline{x}'',\lambda}\rangle=\langle x',\nabla_{x'} Z_{\overline{r},\overline{x}'',\lambda}\rangle+\langle x'',\nabla_{x''} Z_{\overline{r},\overline{x}'',\lambda}\rangle,
$$
and
$$
\langle x,\nabla Y_{\overline{r},\overline{x}'',\lambda}\rangle=\langle x',\nabla_{x'} Y_{\overline{r},\overline{x}'',\lambda}\rangle+\langle x'',\nabla_{x''} Y_{\overline{r},\overline{x}'',\lambda}\rangle,
$$
we have
\begin{equation}\label{Q3}
\aligned
\sum_{l=1}^{N}c_{l}\sum_{j=1}^{m}& \int_{\mathbb{R}^{N}}\Big\lbrace\Big[(2_{\alpha}^{*}-1)\Big(|x|^{-(N-\alpha)}\ast |Y_{z_j,\lambda}|^{2_{\alpha}^{*}}\Big)Y_{z_j,\lambda}^{2_{\alpha}^{*}-2}Y_{j,l}+2_{\alpha}^{*}\Big(|x|^{-(N-\alpha)}\ast (Y_{z_j,\lambda}^{2_{\alpha}^{*}-1}
Y_{j,l})\Big)Y_{z_j,\lambda}^{2_{\alpha}^{*}-1}\Big]\langle x,\nabla Z_{\overline{r},\overline{x}'',\lambda} \rangle \\
+&\Big[(2_{\alpha}^{*}-1)\Big(|x|^{-(N-\alpha)}\ast |Z_{z_j,\lambda}|^{2_{\alpha}^{*}}\Big)Z_{z_j,\lambda}^{2_{\alpha}^{*}-2}Z_{j,l}+2_{\alpha}^{*}\Big(|x|^{-(N-\alpha)}\ast (Z_{z_j,\lambda}^{2_{\alpha}^{*}-1}
Z_{j,l})\Big)Z_{z_j,\lambda}^{2_{\alpha}^{*}-1}\Big]\langle x,\nabla Y_{\overline{r},\overline{x}'',\lambda} \rangle  \Big\rbrace dx\\
=c_{2}\sum_{j=1}^{m}& \int_{\mathbb{R}^{N}}\Big\lbrace\Big[(2_{\alpha}^{*}-1)\Big(|x|^{-(N-\alpha)}\ast |Y_{z_j,\lambda}|^{2_{\alpha}^{*}}\Big)Y_{z_j,\lambda}^{2_{\alpha}^{*}-2}Y_{j,2}+2_{\alpha}^{*}\Big(|x|^{-(N-\alpha)}\ast (Y_{z_j,\lambda}^{2_{\alpha}^{*}-1}
Y_{j,2})\Big)Y_{z_j,\lambda}^{2_{\alpha}^{*}-1}\Big]\langle x',\nabla_{x'} Z_{\overline{r},\overline{x}'',\lambda}\rangle \\
+&\Big[(2_{\alpha}^{*}-1)\Big(|x|^{-(N-\alpha)}\ast |Z_{z_j,\lambda}|^{2_{\alpha}^{*}}\Big)Z_{z_j,\lambda}^{2_{\alpha}^{*}-2}Z_{j,2}+2_{\alpha}^{*}\Big(|x|^{-(N-\alpha)}\ast (Z_{z_j,\lambda}^{2_{\alpha}^{*}-1}
Z_{j,2})\Big)Z_{z_j,\lambda}^{2_{\alpha}^{*}-1}\Big]\langle x',\nabla_{x'} Y_{\overline{r},\overline{x}'',\lambda}\rangle \Big\rbrace dx\\
+&o(m|c_{1}|)+o(m\lambda^{2})\sum_{l=3}^{N}|c_{l}|
\endaligned\end{equation}
and
\begin{equation}\label{Q4}
\aligned
\sum_{l=1}^{N}c_{l}\sum_{j=1}^{m}& \int_{\mathbb{R}^{N}}\Big\lbrace\Big[(2_{\alpha}^{*}-1)\Big(|x|^{-(N-\alpha)}\ast |Y_{z_j,\lambda}|^{2_{\alpha}^{*}}\Big)Y_{z_j,\lambda}^{2_{\alpha}^{*}-2}Y_{j,l}+2_{\alpha}^{*}\Big(|x|^{-(N-\alpha)}\ast (Y_{z_j,\lambda}^{2_{\alpha}^{*}-1}
Y_{j,l})\Big)Y_{z_j,\lambda}^{2_{\alpha}^{*}-1}\Big]\frac{\partial Z_{\overline{r},\overline{x}'',\lambda}}{\partial x_{i}} \\
+&\Big[(2_{\alpha}^{*}-1)\Big(|x|^{-(N-\alpha)}\ast |Z_{z_j,\lambda}|^{2_{\alpha}^{*}}\Big)Z_{z_j,\lambda}^{2_{\alpha}^{*}-2}Z_{j,l}+2_{\alpha}^{*}\Big(|x|^{-(N-\alpha)}\ast (Z_{z_j,\lambda}^{2_{\alpha}^{*}-1}
Z_{j,l})\Big)Z_{z_j,\lambda}^{2_{\alpha}^{*}-1}\Big]\frac{\partial Y_{\overline{r},\overline{x}'',\lambda}}{\partial x_{i}} \Big\rbrace dx\\
=c_{i}\sum_{j=1}^{m}& \int_{\mathbb{R}^{N}}\Big\lbrace\Big[(2_{\alpha}^{*}-1)\Big(|x|^{-(N-\alpha)}\ast |Y_{z_j,\lambda}|^{2_{\alpha}^{*}}\Big)Y_{z_j,\lambda}^{2_{\alpha}^{*}-2}Y_{j,i}+2_{\alpha}^{*}\Big(|x|^{-(N-\alpha)}\ast (Y_{z_j,\lambda}^{2_{\alpha}^{*}-1}
Y_{j,i})\Big)Y_{z_j,\lambda}^{2_{\alpha}^{*}-1}\Big]\frac{\partial Z_{\overline{r},\overline{x}'',\lambda}}{\partial x_{i}} \\
+&\Big[(2_{\alpha}^{*}-1)\Big(|x|^{-(N-\alpha)}\ast |Z_{z_j,\lambda}|^{2_{\alpha}^{*}}\Big)Z_{z_j,\lambda}^{2_{\alpha}^{*}-2}Z_{j,i}+2_{\alpha}^{*}\Big(|x|^{-(N-\alpha)}\ast (Z_{z_j,\lambda}^{2_{\alpha}^{*}-1}
Z_{j,i})\Big)Z_{z_j,\lambda}^{2_{\alpha}^{*}-1}\Big]\frac{\partial Y_{\overline{r},\overline{x}'',\lambda}}{\partial x_{i}} \Big\rbrace dx\\
+&o(m|c_{1}|)+o(m\lambda^{2})\sum_{l\neq1,i}^{N}|c_{l}|, \;i=3,\cdots, N.
\endaligned\end{equation}
Combining \eqref{Q1}, \eqref{Q2}, \eqref{Q3}, and \eqref{Q4},  we have
$$
\aligned
c_{2}\sum_{j=1}^{m}& \int_{\mathbb{R}^{N}}\Big\lbrace\Big[(2_{\alpha}^{*}-1)\Big(|x|^{-(N-\alpha)}\ast |Y_{z_j,\lambda}|^{2_{\alpha}^{*}}\Big)Y_{z_j,\lambda}^{2_{\alpha}^{*}-2}Y_{j,2}+2_{\alpha}^{*}\Big(|x|^{-(N-\alpha)}\ast (Y_{z_j,\lambda}^{2_{\alpha}^{*}-1}
Y_{j,2})\Big)Y_{z_j,\lambda}^{2_{\alpha}^{*}-1}\Big]\langle x',\nabla_{x'} Z_{\overline{r},\overline{x}'',\lambda}\rangle \\
+&\Big[(2_{\alpha}^{*}-1)\Big(|x|^{-(N-\alpha)}\ast |Z_{z_j,\lambda}|^{2_{\alpha}^{*}}\Big)Z_{z_j,\lambda}^{2_{\alpha}^{*}-2}Z_{j,2}+2_{\alpha}^{*}\Big(|x|^{-(N-\alpha)}\ast (Z_{z_j,\lambda}^{2_{\alpha}^{*}-1}
Z_{j,2})\Big)Z_{z_j,\lambda}^{2_{\alpha}^{*}-1}\Big]\langle x',\nabla_{x'} Y_{\overline{r},\overline{x}'',\lambda}\rangle \Big\rbrace dx\\
=&o(m|c_{1}|)+o(m\lambda^{2})\sum_{l\neq1,i}^{N}|c_{l}|,
\endaligned$$
and
$$
\aligned
c_{i}\sum_{j=1}^{m}& \int_{\mathbb{R}^{N}}\Big\lbrace\Big[(2_{\alpha}^{*}-1)\Big(|x|^{-(N-\alpha)}\ast |Y_{z_j,\lambda}|^{2_{\alpha}^{*}}\Big)Y_{z_j,\lambda}^{2_{\alpha}^{*}-2}Y_{j,i}+2_{\alpha}^{*}\Big(|x|^{-(N-\alpha)}\ast (Y_{z_j,\lambda}^{2_{\alpha}^{*}-1}
Y_{j,i})\Big)Y_{z_j,\lambda}^{2_{\alpha}^{*}-1}\Big]\frac{\partial Z_{\overline{r},\overline{x}'',\lambda}}{\partial x_{i}} \\
+&\Big[(2_{\alpha}^{*}-1)\Big(|x|^{-(N-\alpha)}\ast |Z_{z_j,\lambda}|^{2_{\alpha}^{*}}\Big)Z_{z_j,\lambda}^{2_{\alpha}^{*}-2}Z_{j,i}+2_{\alpha}^{*}\Big(|x|^{-(N-\alpha)}\ast (Z_{z_j,\lambda}^{2_{\alpha}^{*}-1}
Z_{j,i})\Big)Z_{z_j,\lambda}^{2_{\alpha}^{*}-1}\Big]\frac{\partial Y_{\overline{r},\overline{x}'',\lambda}}{\partial x_{i}} \Big\rbrace dx\\
+&o(m|c_{1}|)+o(m\lambda^{2})\sum_{l\neq1,i}^{N}|c_{l}|, \;i=3,\cdots, N.
\endaligned$$
Consider these together with \eqref{d5} and \eqref{d6}, we conclude that
\begin{equation}\nonumber
	c_{i}=o(\frac{1}{\lambda^{2}})c_{1},\ i=2,\cdot\cdot\cdot, N,
\end{equation}
and hence
$$
\aligned
0=\sum_{l=1}^{N}c_{l}\sum_{j=1}^{m}& \int_{\mathbb{R}^{N}}\Big\lbrace\Big[(2_{\alpha}^{*}-1)\Big(|x|^{-(N-\alpha)}\ast |Y_{z_j,\lambda}|^{2_{\alpha}^{*}}\Big)Y_{z_j,\lambda}^{2_{\alpha}^{*}-2}Y_{j,l}+2_{\alpha}^{*}\Big(|x|^{-(N-\alpha)}\ast (Y_{z_j,\lambda}^{2_{\alpha}^{*}-1}
Y_{j,l})\Big)Y_{z_j,\lambda}^{2_{\alpha}^{*}-1}\Big]\frac{\partial Z_{\overline{r},\overline{x}'',\lambda}}{\partial x_{i}} \\
+&\Big[(2_{\alpha}^{*}-1)\Big(|x|^{-(N-\alpha)}\ast |Z_{z_j,\lambda}|^{2_{\alpha}^{*}}\Big)Z_{z_j,\lambda}^{2_{\alpha}^{*}-2}Z_{j,l}+2_{\alpha}^{*}\Big(|x|^{-(N-\alpha)}\ast (Z_{z_j,\lambda}^{2_{\alpha}^{*}-1}
Z_{j,l})\Big)Z_{z_j,\lambda}^{2_{\alpha}^{*}-1}\Big]\frac{\partial Y_{\overline{r},\overline{x}'',\lambda}}{\partial x_{i}} \Big\rbrace dx\\
=&c_{1}\sum_{j=1}^{m} \int_{\mathbb{R}^{N}}\Big\lbrace\Big[(2_{\alpha}^{*}-1)\Big(|x|^{-(N-\alpha)}\ast |Y_{z_j,\lambda}|^{2_{\alpha}^{*}}\Big)Y_{z_j,\lambda}^{2_{\alpha}^{*}-2}Y_{j,1}+2_{\alpha}^{*}\Big(|x|^{-(N-\alpha)}\ast (Y_{z_j,\lambda}^{2_{\alpha}^{*}-1}
Y_{j,1})\Big)Y_{z_j,\lambda}^{2_{\alpha}^{*}-1}\Big]\frac{\partial Z_{\overline{r},\overline{x}'',\lambda}}{\partial x_{i}} \\
+&\Big[(2_{\alpha}^{*}-1)\Big(|x|^{-(N-\alpha)}\ast |Z_{z_j,\lambda}|^{2_{\alpha}^{*}}\Big)Z_{z_j,\lambda}^{2_{\alpha}^{*}-2}Z_{j,1}+2_{\alpha}^{*}\Big(|x|^{-(N-\alpha)}\ast (Z_{z_j,\lambda}^{2_{\alpha}^{*}-1}
Z_{j,1})\Big)Z_{z_j,\lambda}^{2_{\alpha}^{*}-1}\Big]\frac{\partial Y_{\overline{r},\overline{x}'',\lambda}}{\partial x_{i}} \Big\rbrace dx\\
+&o(\frac{m}{\lambda^{2}})c_{1}
=\frac{m}{\lambda^2}(a_{3}+o(1))c_{1}+o(\frac{m}{\lambda^{2}})c_{1},
\endaligned$$
where $a_3>0$, then $c_1=0$, and finally that $c_{i}=0$, $i=2,\cdots, N$.
\end{proof}

\begin{lem}\label{P3}
	There holds that
	$$
	\frac{\partial J(Z_{\overline{r},\overline{x}'',\lambda},Y_{\overline{r},\overline{x}'',\lambda})}{\partial \overline{x}''_{t}}=\frac{\partial J(Z_{\overline{r},\overline{x}'',\lambda}^{\ast},Y_{\overline{r},\overline{x}'',\lambda}^{\ast})}{\partial \overline{x}''_{t}}+O(\frac{m}{\lambda^{1+\varepsilon}}).
	$$
\end{lem}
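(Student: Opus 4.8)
The plan is to exploit that the cut-off $\xi=\xi(|x'|,x'')$ is independent of the parameters $(\overline{r},\overline{x}'',\lambda)$, so that $Z_{\overline{r},\overline{x}'',\lambda}=\xi\,Z^{\ast}_{\overline{r},\overline{x}'',\lambda}$, $Y_{\overline{r},\overline{x}'',\lambda}=\xi\,Y^{\ast}_{\overline{r},\overline{x}'',\lambda}$, and $\partial_{\overline{x}''_{t}}$ commutes with multiplication by $\xi$; thus it suffices to bound $\partial_{\overline{x}''_{t}}\big(J(Z_{\overline{r},\overline{x}'',\lambda},Y_{\overline{r},\overline{x}'',\lambda})-J(Z^{\ast}_{\overline{r},\overline{x}'',\lambda},Y^{\ast}_{\overline{r},\overline{x}'',\lambda})\big)$ by $O(m/\lambda^{1+\varepsilon})$. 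The crucial observation is that the integrand of this difference is supported where $\xi\neq1$, a set contained in the fixed region $\mathcal{O}_{\delta}:=\{x:|(|x'|,x'')-(r_{0},x_{0}'')|\ge\delta\}$; since $|x-z_{j}|\ge|(|x'|,x'')-(\overline{r},\overline{x}'')|$ and $(\overline{r},\overline{x}'')$ stays within $\lambda^{-(1-\theta)}$ of $(r_{0},x_{0}'')$, every bubble centre $z_{j}$ lies at distance $\ge\delta/2$ from $\mathcal{O}_{\delta}$ for $m$ large. Consequently, on $\mathcal{O}_{\delta}$ each of $U_{z_{j},\lambda}$, $V_{z_{j},\lambda}$, their spatial gradients and their derivatives in $(\overline{r},\overline{x}'',\lambda)$ is $\lesssim\lambda^{-(N-2)/2}(1+|x-z_{j}|)^{-(N-2)}$, hence uniformly $O(\lambda^{-(N-2)/2})$ with an integrable polynomial tail; here $N\ge5$ and $\alpha>0$ guarantee convergence of all the integrals over $\mathcal{O}_{\delta}$ appearing below.

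First I would handle the Dirichlet part. By $\nabla(\xi f)=\xi\nabla f+f\nabla\xi$, the difference $\int\nabla Z\cdot\nabla Y-\int\nabla Z^{\ast}\cdot\nabla Y^{\ast}$ is a sum of three integrals, each carrying a factor $\xi^{2}-1$ or $\nabla\xi$ supported in $\mathcal{O}_{\delta}$ and each integrand a product of two of the bubble quantities above; differentiating in $\overline{x}''_{t}$ only replaces one bubble by its $\overline{x}''_{t}$-derivative, of the same size on $\mathcal{O}_{\delta}$. Summing over the $m^{2}$ pairs of centres and integrating gives a bound $O(m^{2}\lambda^{-(N-2)})$ for this contribution to the derivative of the difference. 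Since $m\asymp\lambda^{(N-4)/(N-2)}$ from $\lambda\in[L_{0}m^{(N-2)/(N-4)},L_{1}m^{(N-2)/(N-4)}]$ and $N-2>\tfrac{N-4}{N-2}+1$ for every $N$, this is $O(m\lambda^{-1-\varepsilon})$ for some $\varepsilon>0$.

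Next I would treat the two Choquard terms; it is enough to treat the one built from $K_{1}$ and $Y$, whose contribution to $J(Z,Y)-J(Z^{\ast},Y^{\ast})$ is $\int\!\!\int\frac{K_{1}(x)K_{1}(y)\big(\xi^{2^{*}_{\alpha}}(x)\xi^{2^{*}_{\alpha}}(y)-1\big)(Y^{\ast})^{2^{*}_{\alpha}}(x)(Y^{\ast})^{2^{*}_{\alpha}}(y)}{|x-y|^{N-\alpha}}\,dx\,dy$. The $\xi$-bracket vanishes unless $x\in\mathcal{O}_{\delta}$ or $y\in\mathcal{O}_{\delta}$; so after differentiating in $\overline{x}''_{t}$ (the $\xi$'s being inert) and symmetrising $x\leftrightarrow y$, in every term one factor among $(Y^{\ast})^{2^{*}_{\alpha}}$ and $(Y^{\ast})^{2^{*}_{\alpha}-1}\partial_{\overline{x}''_{t}}Y^{\ast}$ is evaluated at a point of $\mathcal{O}_{\delta}$, where it is $\lesssim(m\lambda^{-(N-2)/2})^{2^{*}_{\alpha}}$ times an integrable weight, while the complementary integral against the kernel $|x-y|^{-(N-\alpha)}$ (essentially $\int_{\mathcal{O}_{\delta}}\big(|x|^{-(N-\alpha)}\ast(Y^{\ast})^{2^{*}_{\alpha}}\big)$ and its $\overline{x}''_{t}$-analogue) is controlled by Lemmas \ref{B2}, \ref{B4} and \ref{B6} together with $\|(Y^{\ast})^{2^{*}_{\alpha}}\|_{L^{1}}\lesssim m^{2^{*}_{\alpha}}\lambda^{(\alpha-N)/2}$ for the far field. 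Because $\lambda\asymp m^{(N-2)/(N-4)}$ makes $m\lambda^{-(N-2)/2}$ a large negative power of $m$ when $N\ge5$, this Choquard contribution is far below $m\lambda^{-1-\varepsilon}$, and adding it to the Dirichlet estimate gives the lemma.

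I expect the only real obstacle to be the exponent bookkeeping in the Choquard terms: one must check that the convolution estimates of Section~3 still apply when one factor sits on the fixed far region $\mathcal{O}_{\delta}$ (where the bubbles are only of size $\lambda^{-(N-2)/2}$) while the other concentrates, and that, after invoking $\lambda\asymp m^{(N-2)/(N-4)}$ together with the standing hypotheses $N\ge5$ and $\alpha<N-5+\tfrac{6}{N-2}$, every resulting negative power of $\lambda$ dominates $\lambda^{-1-\varepsilon}$ and every tail integral over the unbounded set $\mathcal{O}_{\delta}$ converges. This lemma then serves as the bridge that lets the main energy expansion be carried out with the clean sum of bubbles $(Z^{\ast}_{\overline{r},\overline{x}'',\lambda},Y^{\ast}_{\overline{r},\overline{x}'',\lambda})$, for which $U_{z_{j},\lambda}$ and $V_{z_{j},\lambda}$ solve \eqref{hemilton} exactly.
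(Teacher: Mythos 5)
Your proposal is correct and follows essentially the same strategy as the paper: exploit that $\xi$ is independent of the parameters, decompose the difference into Dirichlet and Choquard parts, observe that each resulting integrand carries a factor $(\xi^{2}-1)$, $\nabla\xi$, $\Delta\xi$, or $(\xi^{2^{*}_{\alpha}}-1)$ supported on the fixed far region, and there use the uniform decay $U_{z_{j},\lambda},\,\partial_{\overline{x}''_{t}}U_{z_{j},\lambda}=O(\lambda^{-(N-2)/2})$ together with $\lambda\asymp m^{(N-2)/(N-4)}$. The one technical variation is that you decompose the Dirichlet piece at the gradient level via $\nabla(\xi f)=\xi\nabla f+f\nabla\xi$, whereas the paper works at the Laplacian level via $\Delta(\xi f)=\xi\Delta f+f\Delta\xi+2\nabla\xi\cdot\nabla f$ and then converts $\Delta Z^{\ast}_{\overline{r},\overline{x}'',\lambda}$ back into a Choquard expression through the exact PDE; both are equivalent after an integration by parts and lead to the same conclusion.
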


\begin{proof}By direction calculation, we know
	$$\aligned
	&\frac{\partial J(Z_{\overline{r},\overline{x}'',\lambda}^{\ast},Y_{\overline{r},\overline{x}'',\lambda}^{\ast})}{\partial \overline{x}''_{t}}-\frac{\partial J(Z_{\overline{r},\overline{x}'',\lambda},Y_{\overline{r},\overline{x}'',\lambda})}{\partial \overline{x}''_{t}}\\
	&=\int_{\mathbb{R}^{N}}\Delta Z_{\overline{r},\overline{x}'',\lambda}^{\ast}\frac{\partial Y_{\overline{r},\overline{x}'',\lambda}^{\ast}}{\partial \overline{x}''_{t}}dx-\int_{\mathbb{R}^{N}}\Delta Z_{\overline{r},\overline{x}'',\lambda}\frac{\partial Y_{\overline{r},\overline{x}'',\lambda}}{\partial \overline{x}''_{t}}dx+\int_{\mathbb{R}^{N}}\Delta Y_{\overline{r},\overline{x}'',\lambda}^{\ast}\frac{\partial Z_{\overline{r},\overline{x}'',\lambda}^{\ast}}{\partial \overline{x}''_{t}}dx-\int_{\mathbb{R}^{N}}\Delta Y_{\overline{r},\overline{x}'',\lambda}\frac{\partial Z_{\overline{r},\overline{x}'',\lambda}}{\partial \overline{x}''_{t}}dx\\
	&+\int_{\mathbb{R}^N}\int_{\mathbb{R}^N}
	\frac{K_{1}(x)K_{1}(y)|Y_{\overline{r},\overline{x}'',\lambda}^{\ast}(x)|^{2_{\alpha}^{*}}
		(Y_{\overline{r},\overline{x}'',\lambda}^{\ast}(y))^{2_{\alpha}^{*}-1}\frac{\partial Y_{\overline{r},\overline{x}'',\lambda}^{\ast}}{\partial \overline{x}''_{t}}(y)}{|x-y|^{N-\alpha}}dxdy\\
	&-\int_{\mathbb{R}^N}\int_{\mathbb{R}^N}
	\frac{K_{1}(x)K_{1}(y)|Y_{\overline{r},\overline{x}'',\lambda}(x)|^{2_{\alpha}^{*}}
		Y_{\overline{r},\overline{x}'',\lambda}^{2_{\alpha}^{*}-1}(y)\frac{\partial Y_{\overline{r},\overline{x}'',\lambda}}{\partial \overline{x}''_{t}}(y)}{|x-y|^{N-\alpha}}dxdy\\
	&+\int_{\mathbb{R}^N}\int_{\mathbb{R}^N}
	\frac{K_{2}(x)K_{2}(y)|Z_{\overline{r},\overline{x}'',\lambda}^{\ast}(x)|^{2_{\alpha}^{*}}
		(Z_{\overline{r},\overline{x}'',\lambda}^{\ast}(y))^{2_{\alpha}^{*}-1}\frac{\partial Z_{\overline{r},\overline{x}'',\lambda}^{\ast}}{\partial \overline{x}''_{t}}(y)}{|x-y|^{N-\alpha}}dxdy\\
	&-\int_{\mathbb{R}^N}\int_{\mathbb{R}^N}
	\frac{K_{2}(x)K_{2}(y)|Z_{\overline{r},\overline{x}'',\lambda}(x)|^{2_{\alpha}^{*}}
		Z_{\overline{r},\overline{x}'',\lambda}^{2_{\alpha}^{*}-1}(y)\frac{\partial Z_{\overline{r},\overline{x}'',\lambda}}{\partial \overline{x}''_{t}}(y)}{|x-y|^{N-\alpha}}dxdy.
	\endaligned$$
We start by proving the following estimates
$$
\int_{\mathbb{R}^N}\int_{\mathbb{R}^N}
\frac{|Y_{\overline{r},\overline{x}'',\lambda}^{\ast}(x)|^{2_{\alpha}^{*}}
	(Y_{\overline{r},\overline{x}'',\lambda}^{\ast}(y))^{2_{\alpha}^{*}-1}\frac{\partial Y_{\overline{r},\overline{x}'',\lambda}^{\ast}}{\partial \overline{x}''_{t}}(y)}{|x-y|^{N-\alpha}}dxdy-\int_{\mathbb{R}^N}\int_{\mathbb{R}^N}
\frac{|Y_{\overline{r},\overline{x}'',\lambda}(x)|^{2_{\alpha}^{*}}
	Y_{\overline{r},\overline{x}'',\lambda}^{2_{\alpha}^{*}-1}(y)\frac{\partial Y_{\overline{r},\overline{x}'',\lambda}}{\partial \overline{x}''_{t}}(y)}{|x-y|^{N-\alpha}}dxdy=O(\frac{m}{\lambda^{1+\varepsilon}})
$$
and
$$
\int_{\mathbb{R}^N}\int_{\mathbb{R}^N}
\frac{|Z_{\overline{r},\overline{x}'',\lambda}^{\ast}(x)|^{2_{\alpha}^{*}}
	(Z_{\overline{r},\overline{x}'',\lambda}^{\ast}(y))^{2_{\alpha}^{*}-1}\frac{\partial Z_{\overline{r},\overline{x}'',\lambda}^{\ast}}{\partial \overline{x}''_{t}}(y)}{|x-y|^{N-\alpha}}dxdy-\int_{\mathbb{R}^N}\int_{\mathbb{R}^N}
\frac{|Z_{\overline{r},\overline{x}'',\lambda}(x)|^{2_{\alpha}^{*}}
	Z_{\overline{r},\overline{x}'',\lambda}^{2_{\alpha}^{*}-1}(y)\frac{\partial Z_{\overline{r},\overline{x}'',\lambda}}{\partial \overline{x}''_{t}}(y)}{|x-y|^{N-\alpha}}dxdy=O(\frac{m}{\lambda^{1+\varepsilon}}),
$$
where $t=3,\cdots,N.$ Here we only need to establish the last one, and the first one can done in the same way. It is obvious that
$$\aligned
&\int_{\mathbb{R}^N}\int_{\mathbb{R}^N}
\frac{|U_{z_j,\lambda}(x)|^{2_{\alpha}^{*}}
	U_{z_j,\lambda}^{2_{\alpha}^{*}-1}(x)\frac{\partial U_{z_j,\lambda}}{\partial \overline{x}''_{t}}(y)}{|x-y|^{N-\alpha}}dxdy-\int_{\mathbb{R}^N}\int_{\mathbb{R}^N}
\frac{|Z_{z_j,\lambda}(x)|^{2_{\alpha}^{*}}
	Z_{z_j,\lambda}^{2_{\alpha}^{*}-1}(y)\frac{\partial Z_{z_j,\lambda}}{\partial \overline{x}''_{t}}(y)}{|x-y|^{N-\alpha}}dxdy\\
=&\int_{\mathbb{R}^N}\int_{\mathbb{R}^N}
\frac{|U_{z_j,\lambda}(x)|^{2_{\alpha}^{*}}
	(1-\xi^{2_{\alpha}^{*}}(x))U_{z_j,\lambda}^{2_{\alpha}^{*}-1}(y)\frac{\partial U_{z_j,\lambda}}{\partial \overline{x}''_{t}}(y)}{|x-y|^{N-\alpha}}dxdy\\&+\int_{\mathbb{R}^N}\int_{\mathbb{R}^N}
\frac{(1-\xi^{2_{\alpha}^{*}})| U_{z_j,\lambda}(x)|^{2_{\alpha}^{*}}
	|\xi|^{2_{\alpha}^{*}} U_{z_j,\lambda}^{2_{\alpha}^{*}-1}(y)\frac{\partial U_{z_j,\lambda}}{\partial \overline{x}''_{t}}(y)}{|x-y|^{N-\alpha}}dxdy,
\endaligned$$
where $j=1,\dots, m$. By the Hardy-Littlewood-Sobolev inequality, we know
$$\aligned
\int_{\mathbb{R}^N}\int_{\mathbb{R}^N}
&\frac{|U_{z_j,\lambda}(x)|^{2_{\alpha}^{*}}
	(1-\xi^{2_{\alpha}^{*}}(x))U_{z_j,\lambda}^{2_{\alpha}^{*}-1}(y)\frac{\partial U_{z_j,\lambda}}{\partial \overline{x}''_{t}}(y)}{|x-y|^{N-\alpha}}dxdy\\
=&C\int_{\mathbb{R}^{N}}\int_{\mathbb{R}^{N}}\frac{\lambda^{\frac{N+\alpha}{2}}}{(1+\lambda^{2}|x-z_{j} |^{2})^{\frac{N+\alpha}{2}}}\frac{1}{|x-y|^{N-\alpha}}\frac{(1-\xi^{2_{\alpha}^{*}}(x))\lambda^{\frac{N+\alpha+4}{2}}(y_{t}-\overline{x}''_{t}) }
{(1+\lambda^{2}|y-z_{j} |^{2})^{\frac{N+\alpha+2}{2}}}dxdy\\
\leq &C\lambda \left(\int_{\mathbb{R}^{N}}\left[\frac{(1-\xi^{2_{{\alpha}^{*}}}(x+z_{j}))^{\frac{1}{2}}\lambda^{\frac{N+\alpha}{2}}}{(1+\lambda^{2}|x |^{2})^{\frac{N+\alpha}{2}}}\right]^{\frac{2N}{N+\alpha}}dx\right)^{\frac{N+\alpha}{2N}}
\left(\int_{\mathbb{R}^{N}}\left[\frac{(1-\xi^{2_{{\alpha}^{*}}}(x+z_{j}))^{\frac{1}{2}}\lambda^{\frac{N+\alpha}{2}}}{(1+\lambda^{2}|y|^{2})^{\frac{N+\alpha}{2}}}\right]^{\frac{2N}{N+\alpha}}dy\right)^{\frac{N+\alpha}{2N}}\\
=&O(\frac{1}{\lambda^{N+\alpha-1}}),
\endaligned$$
where we used the following fact
$$
\int_{\mathbb{R}^{N}}\left[\frac{(1-\xi^{2_{{\alpha}^{*}}}(x+z_{j}))^{\frac{1}{2}}\lambda^{\frac{N+\alpha}{2}}}{(1+\lambda^{2}|x|^{2})^{\frac{N+\alpha}{2}}}\right]^{\frac{2N}{N+\alpha}}dy=O(\frac{1}{\lambda^{N}}).
$$
Likewise
$$\aligned
\int_{\mathbb{R}^N}\int_{\mathbb{R}^N}
\frac{(1-\xi^{2_{\alpha}^{*}})| U_{z_j,\lambda}(x)|^{2_{\alpha}^{*}}
	|\xi|^{2_{\alpha}^{*}} U_{z_j,\lambda}^{2_{\alpha}^{*}-1}(y)\frac{\partial U_{z_j,\lambda}}{\partial \overline{x}''_{t}}(y)}{|x-y|^{N-\alpha}}dxdy=O(\frac{1}{\lambda^{N+\alpha-1}}).
\endaligned$$
Thus we have
$$\aligned
&\int_{\mathbb{R}^N}\int_{\mathbb{R}^N}
\frac{|U_{z_j,\lambda}(x)|^{2_{\alpha}^{*}}
	U_{z_j,\lambda}^{2_{\alpha}^{*}-1}(x)\frac{\partial U_{z_j,\lambda}}{\partial \overline{x}_{t}}(y)}{|x-y|^{N-\alpha}}dxdy-\int_{\mathbb{R}^N}\int_{\mathbb{R}^N}
\frac{|Z_{z_j,\lambda}(x)|^{2_{\alpha}^{*}}
	Z_{z_j,\lambda}^{2_{\alpha}^{*}-1}(y)\frac{\partial Z_{z_j,\lambda}}{\partial \overline{x}_{t}}(y)}{|x-y|^{N-\alpha}}dxdy=O(\frac{1}{\lambda^{N+\alpha-1}}),
\endaligned$$
where $j=1,2, \cdot\cdot\cdot,m$. Other cases are similarly available. Therefore
$$\aligned
\int_{\mathbb{R}^N}\int_{\mathbb{R}^N}
&\frac{|Z_{\overline{r},\overline{x}'',\lambda}^{\ast}(x)|^{2_{\alpha}^{*}}
	(Z_{\overline{r},\overline{x}'',\lambda}^{\ast}(y))^{2_{\alpha}^{*}-1}\frac{\partial Z_{\overline{r},\overline{x}'',\lambda}^{\ast}}{\partial \overline{x}_{t}}(y)}{|x-y|^{N-\alpha}}dxdy-\int_{\mathbb{R}^N}\int_{\mathbb{R}^N}
\frac{|Z_{\overline{r},\overline{x}'',\lambda}(x)|^{2_{\alpha}^{*}}
	Z_{\overline{r},\overline{x}'',\lambda}^{2_{\alpha}^{*}-1}(y)\frac{\partial Z_{\overline{r},\overline{x}'',\lambda}}{\partial \overline{x}_{t}}(y)}{|x-y|^{N-\alpha}}dxdy\\
=&O(\frac{m^{2\cdot2_{\alpha}^{*} }}{\lambda^{N+\alpha-1}})=O(\frac{m}{\lambda^{1+\varepsilon}}).
\endaligned$$

It is easy to check that
$$\aligned
\int_{\mathbb{R}^{N}}&\Delta Z_{\overline{r},\overline{x}'',\lambda}^{\ast}\frac{\partial Y_{\overline{r},\overline{x}'',\lambda}^{\ast}}{\partial \overline{x}''_{t}}dx-\int_{\mathbb{R}^{N}}\Delta Z_{\overline{r},\overline{x}'',\lambda}\frac{\partial Y_{\overline{r},\overline{x}'',\lambda}}{\partial \overline{x}''_{t}}dx\\
&=\int_{\mathbb{R}^{N}}\Delta Z_{\overline{r},\overline{x}'',\lambda}^{\ast}\frac{\partial Y_{\overline{r},\overline{x}'',\lambda}^{\ast}}{\partial \overline{x}''_{t}}dx-\int_{\mathbb{R}^{N}}\xi(\xi\Delta Z_{\overline{r},\overline{x}'',\lambda}^{\ast}+Z_{\overline{r},\overline{x}'',\lambda}^{\ast}\Delta\xi +2\nabla\xi\nabla Z_{\overline{r},\overline{x}'',\lambda}^{\ast})\frac{\partial Y_{\overline{r},\overline{x}'',\lambda}^{\ast}}{\partial \overline{x}''_{t}}dx.
\endaligned$$
Obviously,
$$\aligned
\int_{\mathbb{R}^{N}}(1-\xi^{2})\Delta Z_{\overline{r},\overline{x}'',\lambda}^{\ast}\frac{\partial Y_{\overline{r},\overline{x}'',\lambda}^{\ast}}{\partial \overline{x}''_{t}}dx&=\sum_{j=1}^{m}\int_{\mathbb{R}^N}\int_{\mathbb{R}^N}
\frac{(1-\xi^{2}(x))|V_{z_j,\lambda}(x)|^{2_{\alpha}^{*}}
	V_{z_j,\lambda}^{2_{\alpha}^{*}-1}(y)\frac{\partial Y_{\overline{r},\overline{x}'',\lambda}^{\ast}}{\partial \overline{x}''_{t}}(y)}{|x-y|^{N-\alpha}}dxdy\\
&=O(\frac{m^{2}}{\lambda^{N+\alpha-1
}})
\endaligned$$
can be obtained by the claim. A similar estimate can be drawn for
$$\aligned
\left| \int_{\mathbb{R}^{N}}\xi U_{z_j,\lambda}(y)\Delta\xi \frac{\partial V_{z_j,\lambda}(y)}{\partial \overline{x}''_{t}}dx\right| &\leq C\left| \int_{\mathbb{R}^{N}}\frac{\xi(y)\lambda^{\frac{N-2}{2}}}{(1+\lambda^{2}|y-z_{j} |^{2})^{\frac{N-2}{2}}}\frac{\lambda^{\frac{N+2}{2}}(y_{t}-\overline{x}''_{t})}
{(1+\lambda^{2}|y-z_{j} |^{2})^{\frac{N}{2}}}dy\right| \\
&\leq C\int_{\mathbb{R}^{N}\setminus B_{\delta-\vartheta} (0)}\frac{\lambda^{\frac{N-2}{2}}}{(1+\lambda^{2}|y |^{2})^{\frac{N-2}{2}}}\frac{\lambda^{\frac{N+2}{2}}|y|}{(1+\lambda^{2}|y|^{2})^{\frac{N}{2}}}dy
=O(\frac{1}{\lambda^{N-3}}),
\endaligned$$
so
$$
\int_{\mathbb{R}^{N}}\xi Z_{\overline{r},\overline{x}'',\lambda}^{\ast}\Delta\xi \frac{\partial Y_{\overline{r},\overline{x}'',\lambda}^{\ast}}{\partial \overline{x}''_{t}}dx=O(\frac{m^{2}}{\lambda^{N-3}}).
$$
Combining with the following
$$
\int_{\mathbb{R}^{N}}\xi\nabla\xi\nabla Z_{\overline{r},\overline{x}'',\lambda}^{\ast}\frac{\partial Y_{\overline{r},\overline{x}'',\lambda}^{\ast}}{\partial \overline{x}''_{t}}dx
=O(\frac{m^{2}}{\lambda^{N-3}}),
$$
we conclude
$$
\int_{\mathbb{R}^{N}}\Delta Z_{\overline{r},\overline{x}'',\lambda}^{\ast}\frac{\partial Y_{\overline{r},\overline{x}'',\lambda}^{\ast}}{\partial \overline{x}''_{t}}dx-\int_{\mathbb{R}^{N}}\Delta Z_{\overline{r},\overline{x}'',\lambda}\frac{\partial Y_{\overline{r},\overline{x}'',\lambda}}{\partial \overline{x}''_{t}}dx=O(\frac{m^{2}}{\lambda^{N-3}})+O(\frac{m^{2}}{\lambda^{N+\alpha-1}})=O(\frac{m}{\lambda^{1+\varepsilon}}).
$$
Similarly,
$$
\int_{\mathbb{R}^{N}}\Delta Y_{\overline{r},\overline{x}'',\lambda}^{\ast}\frac{\partial Z_{\overline{r},\overline{x}'',\lambda}^{\ast}}{\partial \overline{x}''_{t}}dx-\int_{\mathbb{R}^{N}}\Delta Y_{\overline{r},\overline{x}'',\lambda}\frac{\partial Z_{\overline{r},\overline{x}'',\lambda}}{\partial \overline{x}''_{t}}dx=O(\frac{m^{2}}{\lambda^{N-3}})+O(\frac{m^{2}}{\lambda^{N+\alpha-1}})=O(\frac{m}{\lambda^{1+\varepsilon}}).
$$
Therefore, the proof is finished.
\end{proof}
\begin{lem}\label{D2}
	We have
	$$
	\frac{\partial J(Z_{\overline{r},\overline{x}'',\lambda},Y_{\overline{r},\overline{x}'',\lambda})}{\partial \overline{x}''_{t}}=m\left(B_{1}\frac{\partial K_{1}(\overline{r},\overline{x}'') }{\partial \overline{x}''_{t}}+B_{2}\frac{\partial K_{2}(\overline{r},\overline{x}'') }{\partial \overline{x}''_{t}} +O(\frac{1}{\lambda^{1+\varepsilon}})\right),
	$$
	where $B_1,B_2>0$ is constant.
\end{lem}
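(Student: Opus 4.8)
The plan is to use Lemma \ref{P3} to replace the truncated bubbles by the genuine ones, so that it suffices to expand $\partial J(Z^{\ast}_{\overline{r},\overline{x}'',\lambda},Y^{\ast}_{\overline{r},\overline{x}'',\lambda})/\partial\overline{x}''_{t}$. By \eqref{CSF0} one has $U_{z_j,\lambda}=V_{z_j,\lambda}$, hence $Z^{\ast}_{\overline{r},\overline{x}'',\lambda}=Y^{\ast}_{\overline{r},\overline{x}'',\lambda}=:W$ with $W=\sum_{j=1}^{m}U_{z_j,\lambda}$, and each $U_{z_j,\lambda}$ solves \eqref{hemilton}, i.e. \eqref{CFL} with $K_1\equiv K_2\equiv 1$. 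Since the $x''$-components of all the $z_j$ equal $\overline{x}''$, we have $\partial W/\partial\overline{x}''_{t}=-\sum_{j}\partial U_{z_j,\lambda}/\partial x_{t}$; integrating by parts in the quadratic term of $J$ and using the limiting equation then gives
$$
\frac{\partial J(W,W)}{\partial\overline{x}''_{t}}
=2\int_{\R^N}\Big(\sum_{j=1}^{m}\big(|x|^{-(N-\alpha)}\ast U_{z_j,\lambda}^{2_{\alpha}^{*}}\big)U_{z_j,\lambda}^{2_{\alpha}^{*}-1}\Big)\frac{\partial W}{\partial\overline{x}''_{t}}\,dx
-\int_{\R^N}\int_{\R^N}\frac{\big(K_1(x)K_1(y)+K_2(x)K_2(y)\big)W^{2_{\alpha}^{*}-1}(x)\frac{\partial W}{\partial\overline{x}''_{t}}(x)\,W^{2_{\alpha}^{*}}(y)}{|x-y|^{N-\alpha}}\,dxdy .
$$

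Next I would insert the splitting $K_i(x)K_i(y)=1+(K_i(x)-1)+(K_i(y)-1)+(K_i(x)-1)(K_i(y)-1)$, which is natural because $K_i(r_0,x_0'')=1$. Keeping $1$ in both slots, the double integral becomes $-2\int_{\R^N}(|x|^{-(N-\alpha)}\ast W^{2_{\alpha}^{*}})W^{2_{\alpha}^{*}-1}\frac{\partial W}{\partial\overline{x}''_{t}}$, which cancels the first integral up to the difference between $\sum_j(|x|^{-(N-\alpha)}\ast U_{z_j,\lambda}^{2_{\alpha}^{*}})U_{z_j,\lambda}^{2_{\alpha}^{*}-1}$ and $(|x|^{-(N-\alpha)}\ast W^{2_{\alpha}^{*}})W^{2_{\alpha}^{*}-1}$, that is, up to pure interaction terms between distinct bubbles. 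Using $|z_i-z_j|\ge 2\overline{r}\sin\frac{\pi}{m}$ for $i\neq j$, the range $\lambda\in[L_0m^{\frac{N-2}{N-4}},L_1m^{\frac{N-2}{N-4}}]$ (so that $\lambda|z_i-z_j|\to\infty$), Lemmas \ref{B2}--\ref{B6}, the Hardy--Littlewood--Sobolev inequality, and the argument already carried out in Lemma \ref{P3}, these interaction terms are $O(m\lambda^{-1-\varepsilon})$ for some $\varepsilon>0$.

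For the part carrying $(K_i(x)-1)+(K_i(y)-1)$ I would localize the integral over each region $\Omega_l$; since the configuration $\{z_l\}$ is invariant under rotation by $2\pi/m$ in the $x'$-plane, which commutes with $\partial_{\overline{x}''_{t}}$ when $t\ge 3$, the $m$ pieces coincide and yield a global factor $m$. On $\Omega_l$ the factors accompanying $(K_i-1)$ are radial about $z_l$, whereas $\partial_{\overline{x}''_{t}}U_{z_l,\lambda}=-\partial U_{z_l,\lambda}/\partial x_{t}$ is odd in $(x-z_l)_{t}$ and even in the remaining components; Taylor expanding $K_i$ about $z_l$, the constant and all even-order terms integrate to zero by parity, and only the component $\partial_{x_{t}}K_i(z_l)(x-z_l)_{t}=\partial_{\overline{x}''_{t}}K_i(\overline{r},\overline{x}'')(x-z_l)_{t}$ of the linear term survives. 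After the rescaling $x=z_l+y/\lambda$, using $\partial_{y_t}U_{0,1}=(2-N)U_{0,1}\,y_t/(1+|y|^2)$ and $-\Delta U_{0,1}=(|x|^{-(N-\alpha)}\ast U_{0,1}^{2_{\alpha}^{*}})U_{0,1}^{2_{\alpha}^{*}-1}$, this produces the leading term $m\big(B_1\,\partial_{\overline{x}''_{t}}K_1(\overline{r},\overline{x}'')+B_2\,\partial_{\overline{x}''_{t}}K_2(\overline{r},\overline{x}'')\big)$, each $B_i$ being an explicit, scale-invariant integral of the bubble whose positivity follows by inspecting the constants. The remaining part carrying $(K_i(x)-1)(K_i(y)-1)$ is $O(m\lambda^{-1-\varepsilon})$, combining the bound $|K_i-1|=O(\lambda^{-(1-\theta)})$ on the set $|(\overline{r},\overline{x}'')-(r_0,x_0'')|\le\lambda^{-(1-\theta)}$ with the same parity cancellation.

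The interaction bounds and the rescalings are routine, parallel to Lemmas \ref{P3}, \ref{C4}, \ref{C5} and to the computations in \cite{CYZ,PWW}. The delicate point — and the main obstacle — is the Taylor-remainder estimate for the double nonlocal integral: one has to verify that, besides the constant term, also the quadratic terms in the expansion of $K_i$ cancel by parity against the odd factor $\partial_{\overline{x}''_{t}}U_{z_l,\lambda}$, so that the first non-vanishing remainder is cubic and contributes only $O(m\lambda^{-3})$, and one must treat the far region $|x-z_l|\gtrsim\delta$ (where the Taylor expansion is not available) separately, using the boundedness of $K_1,K_2$ and the decay of the bubbles. The hypothesis $\alpha<N-5+\frac{6}{N-2}$ then ensures that all exponents appearing in these estimates exceed $1$, which gives the stated error $O(m\lambda^{-1-\varepsilon})$.
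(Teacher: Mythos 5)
Your proposal is correct and follows essentially the same route as the paper: pass to the untruncated bubbles via Lemma \ref{P3}, split off the free-equation cancellation from the nonlocal terms, and isolate the linear-in-$(K_i-1)$ contribution by rotation symmetry, Taylor expansion about $z_l$, and oddness of $\partial_{\overline{x}''_t}U_{z_l,\lambda}$ in $(x-z_l)_t$; the paper's decomposition $(K_i(x)-1)(K_i(y)+1)$ is algebraically equivalent to your four-term split $1+(K_i(x)-1)+(K_i(y)-1)+(K_i(x)-1)(K_i(y)-1)$. One small slip: after the parity cancellation the cubic Taylor remainder of $K_i$ contributes $O(m\lambda^{-2})$, not $O(m\lambda^{-3})$, because the factor $\partial_{\overline{x}''_t}$(bubble product) carries an extra power of $\lambda$ after rescaling — but this is still comfortably inside the stated $O(m\lambda^{-1-\varepsilon})$ error.
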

\begin{proof}
By calculation we have
	\begin{equation}\label{ExpenL4.4}
		\aligned
		&\frac{\partial J(Z_{\overline{r},\overline{x}'',\lambda}^{\ast},Y_{\overline{r},\overline{x}'',\lambda}^{\ast})}{\partial \overline{x}''_{t}}=- \int_{\mathbb{R}^{N}}\Delta Z_{\overline{r},\overline{x}'',\lambda}^{\ast}\frac{\partial Y_{\overline{r},\overline{x}'',\lambda}^{\ast}}{\partial \overline{x}''_{t}}dx-\int_{\mathbb{R}^{N}}\Delta Y_{\overline{r},\overline{x}'',\lambda}^{\ast}\frac{\partial Z_{\overline{r},\overline{x}'',\lambda}^{\ast}}{\partial \overline{x}''_{t}}dx\\
		&-\int_{\mathbb{R}^N}
		K_{1}(r,x'')\left( |x|^{-(N-\alpha)}*K_{1}(r,x'')|Y_{\overline{r},\overline{x}'',\lambda}^{\ast}|^{2_{\alpha}^{*}}\right)
			(Y_{\overline{r},\overline{x}'',\lambda}^{\ast})^{2_{\alpha}^{*}-1}\frac{\partial Y_{\overline{r},\overline{x}'',\lambda}^{\ast}}{\partial \overline{x}''_{t}}dx\\
		&-\int_{\mathbb{R}^N}
		K_{2}(r,x'')\left( |x|^{-(N-\alpha)}*K_{2}(r,x'')|Z_{\overline{r},\overline{x}'',\lambda}^{\ast}|^{2_{\alpha}^{*}}\right)
		(Z_{\overline{r},\overline{x}'',\lambda}^{\ast})^{2_{\alpha}^{*}-1}\frac{\partial Z_{\overline{r},\overline{x}'',\lambda}^{\ast}}{\partial \overline{x}''_{t}}dx \\
     	&=\Big[\sum_{j=1}^{m}\int_{\mathbb{R}^{N}}\left(|x|^{-(N-\alpha)}* |V_{z_j,\lambda}^{\ast}|^{2_{\alpha}}\right)V_{z_j,\lambda}^{2_{\alpha}^{*}-1}\frac{\partial Y_{\overline{r},\overline{x}'',\lambda}^{\ast}}{\partial \overline{x}''_{t}}dx-\int_{\mathbb{R}^N}
     	\left( |x|^{-(N-\alpha)}*|Y_{\overline{r},\overline{x}'',\lambda}^{\ast}|^{2_{\alpha}^{*}}\right)
     	(Y_{\overline{r},\overline{x}'',\lambda}^{\ast})^{2_{\alpha}^{*}-1}\frac{\partial Y_{\overline{r},\overline{x}'',\lambda}^{\ast}}{\partial \overline{x}''_{t}}dx\\
     	&+\sum_{j=1}^{m}\int_{\mathbb{R}^{N}}\left(|x|^{-(N-\alpha)}* |U_{z_j,\lambda}|^{2_{\alpha}}\right)U_{z_j,\lambda}^{2_{\alpha}^{*}-1}\frac{\partial Z_{\overline{r},\overline{x}'',\lambda}^{\ast}}{\partial \overline{x}''_{t}}dx-\int_{\mathbb{R}^N}
     	\left( |x|^{-(N-\alpha)}*|Z_{\overline{r},\overline{x}'',\lambda}^{\ast}|^{2_{\alpha}^{*}}\right)
     	(Z_{\overline{r},\overline{x}'',\lambda}^{\ast})^{2_{\alpha}^{*}-1}\frac{\partial Z_{\overline{r},\overline{x}'',\lambda}^{\ast}}{\partial \overline{x}''_{t}}dx\Big]\\
     	&+\frac{1}{22_{\alpha}^{*}}\Big[ \int_{\mathbb{R}^N}
     	\left(1- K_{1}(r,x'')\right)\frac{\partial }{\partial \overline{x}''_{t}} \left( \left( |x|^{-(N-\alpha)}*(K_{1}(r,x'')+1)|Y_{\overline{r},\overline{x}'',\lambda}^{\ast}|^{2_{\alpha}^{*}}\right)(Y_{\overline{r},\overline{x}'',\lambda}^{\ast})^{2_{\alpha}^{*}}\right) dx \\
     	&-\int_{\mathbb{R}^N}
     	\left( 1-K_{2}(r,x'')\right)\frac{\partial }{\partial \overline{x}''_{t}}\left( \left( |x|^{-(N-\alpha)}*(K_{2}(r,x'')+1)|Z_{\overline{r},\overline{x}'',\lambda}^{\ast}|^{2_{\alpha}^{*}}\right)
     	|Z_{\overline{r},\overline{x}'',\lambda}^{\ast})|^{2_{\alpha}^{*}}\right)  dx\Big]:=\cp_1+\cp_2.
		\endaligned
	\end{equation}
Then we will estimate $\cp_1$. We have
\begin{equation}\label{5.1}
	\aligned
	&\int_{\mathbb{R}^{N}}\Bigg[\Big(|x|^{-(N-\alpha)}\ast |Z_{\overline{r},\overline{x}'',\lambda}^{\ast}|^{2_{\alpha}^{*}}\Big)	 (Z_{\overline{r},\overline{x}'',\lambda}^{\ast})^{2_{\alpha}^{*}-1}-\sum_{j=1}^{m}\Big(|x|^{-(N-\alpha)}\ast |U_{z_j,\lambda}|^{2_{\alpha}^{*}}\Big)U_{z_j,\lambda}^{2_{\alpha}^{*}-1}\Bigg] \frac{\partial Z_{\overline{r},\overline{x}'',\lambda}^{\ast}}{\partial \overline{x}''_{t}}dx\\
	&=m\Bigg( \int_{\Omega_{1}}\Bigg[2_{\alpha}^{*}\Big(|x|^{-(N-\alpha)}\ast |U_{z_1,\lambda}^{2_{\alpha}^{*}-1}\sum_{i=2}^{m}U_{z_i,\lambda}|\Big)	 U_{z_1,\lambda}^{2_{\alpha}^{*}-1}+(2_{\alpha}^{*}-1)\Big(|x|^{-(N-\alpha)}\ast |U_{z_1,\lambda}^{2_{\alpha}^{*}}|\Big)U_{z_1,\lambda}^{2_{\alpha}^{*}-2}\sum_{i=2}^{m}U_{z_i,\lambda}\Bigg] \frac{\partial U_{z_1,\lambda}}{\partial \overline{x}''_{t}}dx\\
	&\quad+O(\frac{1}{\lambda^{1+\varepsilon}})\Bigg) .
	\endaligned\end{equation}	
	According to Lemma \ref{B6}, we deduce
	$$\aligned
	& \int_{\Omega_{1}}\Big(|x|^{-(N-\alpha)}\ast |U_{z_1,\lambda}|^{2_{\alpha}}\Big)U_{z_1,\lambda}^{2_{\alpha}^{*}-2}U_{z_i,\lambda}\frac{\partial U_{z_1,\lambda}}{\partial \overline{x}''_{t}}dx \\
	=&-C\int_{\Omega_{1}}\frac{\lambda^{\frac{N-\alpha}{2}}}{(1+\lambda^{2}|x-z_{1} |^{2})^{\frac{N-\alpha}{2}}}\frac{\lambda^{\frac{N-2}{2}}}{(1+\lambda^{2}|x-z_{i} |^{2})^{\frac{N-2}{2}}}\frac{\lambda^{\frac{6+\alpha}{2}}(x_{t}-\overline{x}''_{t})}{(1+\lambda^{2}|x-z_{1} |^{2})^{\frac{4+\alpha}{2}}}dx \\
	\leq&-C\int_{\Omega_{1}}\frac{\lambda}{(1+|x-\lambda z_{1} |)^{N+3}}\frac{1}{(1+|x-\lambda z_{i} |)^{N-2}}dx\\
	\leq&-C\int_{\Omega_{1}}\frac{\lambda}{(1+|x-\lambda z_{1} |)^{N+2}}\frac{1}{(1+|x-\lambda z_{i} |)^{N-1}}dx
	=\frac{-C}{\lambda^{N-2}|z_{1}-z_{i}|^{N-1}},
	\endaligned
	$$
	where $i=2,\dots,m$ and $C>0$. Thus
	\begin{equation}\label{5.2}
		\aligned
		\left|\sum_{i=2}^{m}\int_{\Omega_{1}}\Big(|x|^{-(N-\alpha)}\ast |U_{z_1,\lambda}|^{2_{\alpha}}\Big)U_{z_1,\lambda}^{2_{\alpha}^{*}-2}U_{z_i,\lambda}\frac{\partial U_{z_1,\lambda}}{\partial \overline{x}''_{t}}dx\right|
		\leq\sum_{i=2}^{m}\frac{-C}{\lambda^{N-2}|z_{1}-z_{i}|^{N-1}}=O(\frac{1}{\lambda^{1+\varepsilon}}).
\endaligned\end{equation}	
Moreover,

$$
\aligned
&\int_{\Omega_{1}}\Big(|x|^{-(N-\alpha)}\ast |U_{z_1,\lambda}^{2_{\alpha}^{*}-1}U_{z_i,\lambda}|\Big)	U_{z_1,\lambda}^{2_{\alpha}^{*}-1} \frac{\partial U_{z_1,\lambda}}{\partial \overline{x}''_{t}}dx\\
&=\frac{1}{2_{\alpha}^{*}}\Bigg[\frac{\partial}{\partial \overline{x}''_{t}}\int_{\Omega_{1}}\Big(|x|^{-(N-\alpha)}\ast |U_{z_1,\lambda}|^{2_{\alpha}^{*}}\Big)	 U_{z_1,\lambda}^{2_{\alpha}^{*}-1}(y)U_{z_i,\lambda}(y)dy-\int_{\Omega_{1}}\Big(|x|^{-(N-\alpha)}\ast |U_{z_1,\lambda}|^{2_{\alpha}^{*}}\Big)\frac{\partial U_{z_1,\lambda}^{2_{\alpha}^{*}-1}}{\partial \overline{x}''_{t}}U_{z_i,\lambda}(y)dy\\
&-\int_{\Omega_{1}}\Big(|x|^{-(N-\alpha)}\ast |U_{z_1,\lambda}|^{2_{\alpha}^{*}}\Big)U_{z_1,\lambda}^{2_{\alpha}^{*}-1} \frac{\partial U_{z_i,\lambda}}{\partial \overline{x}''_{t}}dy\Bigg]\\
&= C\frac{\partial}{\partial \overline{x}''_{t}}\int_{\Omega_{1}}\frac{\lambda^{\frac{N-\alpha}{2}}}{(1+\lambda^{2}|y-z_{1} |^{2})^{\frac{N-\alpha}{2}}}\frac{\lambda^{\frac{N-2}{2}}}{(1+\lambda^{2}|y-z_{i} |^{2})^{\frac{N-2}{2}}}\frac{\lambda^{\frac{2+\alpha}{2}}}{(1+\lambda^{2}|y-z_{1} |^{2})^{\frac{2+\alpha}{2}}}dy\\
&+C\int_{\Omega_{1}}\frac{\lambda^{\frac{N-\alpha}{2}}}{(1+\lambda^{2}|y-z_{1} |^{2})^{\frac{N-\alpha}{2}}}\frac{\lambda^{\frac{N-2}{2}}}{(1+\lambda^{2}|y-z_{i} |^{2})^{\frac{N-2}{2}}} \frac{\partial U_{z_i,\lambda}^{2_{\alpha}^{*}-1}}{\partial \overline{x}''_{t}} dy\\
&+C\int_{\Omega_{1}}\frac{\lambda^{\frac{N-\alpha}{2}}}{(1+\lambda^{2}|y-z_{1} |^{2})^{\frac{N-\alpha}{2}}} \frac{\partial U_{z_i,\lambda}}{\partial \overline{x}''_{t}}\frac{\lambda^{\frac{2+\alpha}{2}}}{(1+\lambda^{2}|y-z_{1} |^{2})^{\frac{2+\alpha}{2}}}dy:=T_1+T_2+T_3.\\
\endaligned
$$
Next, we estimate each of these three items separately.
$$\aligned
|T_1|&=\left| C\frac{\partial}{\partial \overline{x}''_{t}}\int_{\Omega_{1}}\frac{\lambda^{\frac{N-\alpha}{2}}}{(1+\lambda^{2}|y-z_{1} |^{2})^{\frac{N-\alpha}{2}}}\frac{\lambda^{\frac{N-2}{2}}}{(1+\lambda^{2}|y-z_{i} |^{2})^{\frac{N-2}{2}}}\frac{\lambda^{\frac{2+\alpha}{2}}}{(1+\lambda^{2}|y-z_{1} |^{2})^{\frac{2+\alpha}{2}}}dy\right| \\
&=\left| \frac{\partial}{\partial \overline{x}''_{t}}\frac{C}{\lambda^{N-2}|z_{1}-z_{i}|^{N-2}}\right| \leq \frac{C}{\lambda^{N-2}|z_{1}-z_{i}|^{N-1}}=O(\frac{1}{\lambda^{1+\varepsilon}}),
\endaligned$$
$$\aligned
T_2&= -C\int_{\Omega_{1}}\frac{\lambda^{\frac{N-\alpha}{2}}}{(1+\lambda^{2}|y-z_{1} |^{2})^{\frac{N-\alpha}{2}}}\frac{\lambda^{\frac{N-2}{2}}}{(1+\lambda^{2}|y-z_{i} |^{2})^{\frac{N-2}{2}}}\frac{\lambda^{\frac{6+\alpha}{2}}(y_{t}-\overline{y}''_{t})}{(1+\lambda^{2}|y-z_{1} |^{2})^{\frac{4+\alpha}{2}}}dy\\
&=\frac{-C}{\lambda^{N-2}|z_{1}-z_{i}|^{N-1}}=O(\frac{1}{\lambda^{1+\varepsilon}}),
\endaligned$$
and
$$\aligned
|T_3|&=\left| C\int_{\Omega_{1}}\frac{\lambda^{\frac{N-\alpha}{2}}}{(1+\lambda^{2}|y-z_{1} |^{2})^{\frac{N-\alpha}{2}}}\frac{\lambda^{\frac{N+2}{2}}(y_{t}-\overline{y}''_{t})}{(1+\lambda^{2}|y-z_{i} |^{2})^{\frac{N}{2}}}\frac{\lambda^{\frac{2+\alpha}{2}}}{(1+\lambda^{2}|y-z_{1} |^{2})^{\frac{2+\alpha}{2}}}dy\right| \\
&\leq C\int_{\Omega_{1}}\frac{\lambda}{(1+|x-\lambda z_{1} |)^{N+1}}\frac{1}{(1+|x-\lambda z_{i} |)^{N-1}}dx=\frac{C}{\lambda^{N-2}|z_{1}-z_{i}|^{N-1}}=O(\frac{1}{\lambda^{1+\varepsilon}}).
\endaligned$$
Combining these, we have
$$
\aligned
\sum_{i=2}^{m}\int_{\Omega_{1}}\Big(|x|^{-(N-\alpha)}\ast |U_{z_1,\lambda}^{2_{\alpha}^{*}-1}U_{z_i,\lambda}|\Big)	U_{z_1,\lambda}^{2_{\alpha}^{*}-1} \frac{\partial U_{z_1,\lambda}}{\partial \overline{x}''_{t}}dx=O(\frac{1}{\lambda^{1+\varepsilon}}),
\endaligned
$$
and hence that
	\begin{equation}\label{5.12}
	\int_{\mathbb{R}^{N}}\Bigg[\Big(|x|^{-(N-\alpha)}\ast |Z_{\overline{r},\overline{x}'',\lambda}^{\ast}|^{2_{\alpha}^{*}}\Big)	 (Z_{\overline{r},\overline{x}'',\lambda}^{\ast})^{2_{\alpha}^{*}-1}-\sum_{j=1}^{m}\Big(|x|^{-(N-\alpha)}\ast |U_{z_j,\lambda}|^{2_{\alpha}^{*}}\Big)U_{z_j,\lambda}\Bigg] \frac{\partial Z_{\overline{r},\overline{x}'',\lambda}^{\ast}}{\partial \overline{x}''_{t}}dx=mO(\frac{1}{\lambda^{1+\varepsilon}}).
	\end{equation}
We can obtain
	\begin{equation}\label{5.121}
	\int_{\mathbb{R}^{N}}\Bigg[\Big(|x|^{-(N-\alpha)}\ast |Y_{\overline{r},\overline{x}'',\lambda}^{\ast}|^{2_{\alpha}^{*}}\Big)	 (Y_{\overline{r},\overline{x}'',\lambda}^{\ast})^{2_{\alpha}^{*}-1}-\sum_{j=1}^{m}\Big(|x|^{-(N-\alpha)}\ast |V_{z_j,\lambda}|^{2_{\alpha}^{*}}\Big)V_{z_j,\lambda}\Bigg] \frac{\partial Y_{\overline{r},\overline{x}'',\lambda}^{\ast}}{\partial \overline{x}''_{t}}dx=mO(\frac{1}{\lambda^{1+\varepsilon}}),
\end{equation}
by the same way.

 Next we will estimate $\cp_1$, according to the assumption the functions $K_1$ and $K_2$ are bounded, then
	\begin{align*}
	&\int_{\mathbb{R}^N}(1-K_{1}(r,x''))\frac{\partial}{\partial \overline{x}''_{t}}\left( \left( |x|^{-(N-\alpha)}*(1+K_{1}(r,x''))|Y_{\overline{r},\overline{x}'',\lambda}^{\ast}|^{2_{\alpha}^{*}}\right)
		|Y_{\overline{r},\overline{x}'',\lambda}^{\ast}|^{2_{\alpha}^{*}}\right) dx\\
		&=m\left(-C \int_{\Omega_{1}}(K_{1}(r,x'')-1)\frac{\partial}{\partial \overline{x}''_{t}}\left(\left( |x|^{-(N-\alpha)}*|V_{z_1,\lambda}|^{2_{\alpha}^{*}}\right)
		|V_{z_1,\lambda}|^{2_{\alpha}^{*}}\right) dx+O(\frac{1}{\lambda^{1+\varepsilon}})\right) \\
		&=m\Big(C \Big( \int_{\Omega_{1}}\frac{\partial \left( K_{1}(x+z_1)-K_{1}(z_1)\right) }{\partial \overline{x}''_{t}}\left( |x|^{-(N-\alpha)}*|V_{0,\lambda}|^{2_{\alpha}^{*}}\right)
		|V_{0,\lambda}|^{2_{\alpha}^{*}} dx\\
		&\quad+\int_{\Omega_{1}}\frac{\partial K_{1}(z_1) }{\partial \overline{x}''_{t}}\left( |x|^{-(N-\alpha)}*|V_{0,\lambda}|^{2_{\alpha}^{*}}\right)
		|V_{0,\lambda}|^{2_{\alpha}^{*}} dx\Big)
		+O(\frac{1}{\lambda^{1+\varepsilon}}) \Big)\\
		&=m\left(C\frac{\partial K_{1}(\overline{r},\overline{x}'') }{\partial \overline{x}''_{t}}\int_{\Omega_{1}}\left( |x|^{-(N-\alpha)}*|V_{0,1}|^{2_{\alpha}^{*}}\right)
		|V_{0,1}|^{2_{\alpha}^{*}} dx
		+O(\frac{1}{\lambda^{1+\varepsilon}}) \right)\\
		&=m\left(B_{1}\frac{\partial K_{1}(\overline{r},\overline{x}'') }{\partial \overline{x}''_{t}}+ O(\frac{1}{\lambda^{1+\varepsilon}})\right) .
	\end{align*}
Similarly,
\begin{align}\nonumber
	&\int_{\mathbb{R}^N}
	(1-K_{2}(r,x''))\left( |x|^{-(N-\alpha)}*(1+K_{2}(r,x''))|Z_{\overline{r},\overline{x}'',\lambda}^{\ast}|^{2_{\alpha}^{*}}\right)
	(Z_{\overline{r},\overline{x}'',\lambda}^{\ast})^{2_{\alpha}^{*}-1}\frac{\partial Z_{\overline{r},\overline{x}'',\lambda}^{\ast}}{\partial \overline{x}''_{t}}dx\\&
	=m\left(B_{2}\frac{\partial K_{2}(\overline{r},\overline{x}'') }{\partial \overline{x}''_{t}}+ O(\frac{1}{\lambda^{1+\varepsilon}})\right).
\end{align}
Thus
\begin{equation}\label{5.122}
\cp_1=m\left(B_{1}\frac{\partial K_{1}(\overline{r},\overline{x}'') }{\partial \overline{x}''_{t}}+B_{2}\frac{\partial K_{2}(\overline{r},\overline{x}'') }{\partial \overline{x}''_{t}} +O(\frac{1}{\lambda^{1+\varepsilon}})\right),
\end{equation}
where $B_{1}>0, B_{2}>0$ are constants. Combining \eqref{5.12}, \eqref{5.121} and \eqref{5.122},  we can prove the conclusion.
\end{proof}

	\begin{lem}\label{D3}
		We have
		$$\aligned
		\int_{\mathbb{R}^{N}}&\Big(-\Delta u_{m}
		-K_{1}(|x'|,x'')(|x|^{-(N-\alpha)}\ast K_{1}(|x'|,x'')|v_{m}|^{2_{\alpha}^{*}})v_{m}^{2_{\alpha}^{*}-1}\Big)\frac{\partial Y_{\overline{r},\overline{x}'',\lambda}}{\partial \overline{x}''_{t}} dx\\
		+\int_{\mathbb{R}^{N}}&\Big(-\Delta v_{m}-K_{2}(|x'|,x'')(|x|^{-(N-\alpha)}\ast K_{2}(|x'|,x'')|u_{m}|^{2_{\alpha}^{*}})u_{m}^{2_{\alpha}^{*}-1}\Big)\frac{\partial Z_{\overline{r},\overline{x}'',\lambda}}{\partial \overline{x}''_{t}} dx\\
		=&m\left(B_{1}\frac{\partial K_{1}(\overline{r},\overline{x}'') }{\partial \overline{x}''_{t}}+B_{2}\frac{\partial K_{2}(\overline{r},\overline{x}'') }{\partial \overline{x}''_{t}} +O(\frac{1}{\lambda^{1+\varepsilon}})\right).\endaligned$$
	\end{lem}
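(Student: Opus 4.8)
The plan is to recognize the left‑hand side as a single partial derivative of the energy functional $J$, Taylor‑expand it about the approximate solution, read off the leading term from Lemma~\ref{D2}, and bound the remainders with the estimates of Sections~3--4. Write $(Z,Y):=(Z_{\overline{r},\overline{x}'',\lambda},Y_{\overline{r},\overline{x}'',\lambda})$ and $(\phi,\varphi):=(\phi_{\overline{r},\overline{x}'',\lambda},\varphi_{\overline{r},\overline{x}'',\lambda})$, so that $u_m=Z+\phi$ and $v_m=Y+\varphi$. Differentiating $J$ one finds
\begin{align*}
J'(u,v)[(\psi_1,\psi_2)]&=\int_{\mathbb{R}^N}\Big(-\Delta v-K_2(|x'|,x'')\big(|x|^{-(N-\alpha)}\ast K_2(|x'|,x'')|u|^{2^{*}_{\alpha}}\big)u^{2^{*}_{\alpha}-1}\Big)\psi_1\,dx\\
&\quad+\int_{\mathbb{R}^N}\Big(-\Delta u-K_1(|x'|,x'')\big(|x|^{-(N-\alpha)}\ast K_1(|x'|,x'')|v|^{2^{*}_{\alpha}}\big)v^{2^{*}_{\alpha}-1}\Big)\psi_2\,dx ,
\end{align*}
so that the quantity in the statement is exactly $J'(u_m,v_m)\big[\big(\tfrac{\partial Z}{\partial\overline{x}''_{t}},\tfrac{\partial Y}{\partial\overline{x}''_{t}}\big)\big]$.

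I would then expand $J'(u_m,v_m)$ around $(Z,Y)$: expanding the nonlinearities to second order (the second‑order remainders being precisely $N_1(\varphi)$ and $N_2(\phi)$ from \eqref{c16}) and using the chain rule $J'(Z,Y)\big[\big(\tfrac{\partial Z}{\partial\overline{x}''_{t}},\tfrac{\partial Y}{\partial\overline{x}''_{t}}\big)\big]=\tfrac{\partial}{\partial\overline{x}''_{t}}J(Z,Y)$, one obtains
\begin{align*}
J'(u_m,v_m)\Big[\Big(\tfrac{\partial Z}{\partial\overline{x}''_{t}},\tfrac{\partial Y}{\partial\overline{x}''_{t}}\Big)\Big]=&\ \frac{\partial J(Z,Y)}{\partial\overline{x}''_{t}}+J''(Z,Y)\Big[(\phi,\varphi),\Big(\tfrac{\partial Z}{\partial\overline{x}''_{t}},\tfrac{\partial Y}{\partial\overline{x}''_{t}}\Big)\Big]\\
&-\int_{\mathbb{R}^N}N_1(\varphi)\tfrac{\partial Y}{\partial\overline{x}''_{t}}\,dx-\int_{\mathbb{R}^N}N_2(\phi)\tfrac{\partial Z}{\partial\overline{x}''_{t}}\,dx .
\end{align*}
By Lemma~\ref{D2} the first term equals $m\big(B_1\tfrac{\partial K_1(\overline{r},\overline{x}'')}{\partial\overline{x}''_{t}}+B_2\tfrac{\partial K_2(\overline{r},\overline{x}'')}{\partial\overline{x}''_{t}}+O(\lambda^{-1-\varepsilon})\big)$, which is the asserted leading order; it remains to show the three remaining terms are $O(m\lambda^{-1-\varepsilon})$.

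For the two quadratic terms, Lemma~\ref{C4} with Lemma~\ref{C3} gives $\|N(\phi,\varphi)\|_{\ast\ast}\le C\|(\phi,\varphi)\|_{\ast}^{2}\le C\lambda^{-2-2\varepsilon}$; since $\tfrac{\partial Y}{\partial\overline{x}''_{t}}$ and $\tfrac{\partial Z}{\partial\overline{x}''_{t}}$ carry one extra power of $\lambda$ (pairing them against an element of the $\ast\ast$‑space costs at most $Cm\lambda$, exactly as in the computations of Lemmas~\ref{C1} and~\ref{D1}), these terms are $O(m\lambda^{-1-2\varepsilon})$. For the linear term, the symmetry of $J''$ and the chain rule (with $(\phi,\varphi)$ held fixed) give
\begin{align*}
J''(Z,Y)\Big[(\phi,\varphi),\Big(\tfrac{\partial Z}{\partial\overline{x}''_{t}},\tfrac{\partial Y}{\partial\overline{x}''_{t}}\Big)\Big]&=\frac{\partial}{\partial\overline{x}''_{t}}\Big(J'(Z,Y)[(\phi,\varphi)]\Big)\\
&=-\int_{\mathbb{R}^N}\frac{\partial l_{m2}}{\partial\overline{x}''_{t}}\,\phi\,dx-\int_{\mathbb{R}^N}\frac{\partial l_{m1}}{\partial\overline{x}''_{t}}\,\varphi\,dx ,
\end{align*}
where we used that, by \eqref{c16}, $-\Delta Z-K_1(|x'|,x'')\big(|x|^{-(N-\alpha)}\ast K_1(|x'|,x'')|Y|^{2^{*}_{\alpha}}\big)Y^{2^{*}_{\alpha}-1}=-l_{m1}$ together with the corresponding identity with $(Z,K_1)$ and $(Y,K_2)$ interchanged, so that $J'(Z,Y)[(\phi,\varphi)]=-\int l_{m2}\phi-\int l_{m1}\varphi$. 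Differentiating the splitting $l_{m1}=J_1+J_2+J_3+J_4$ from the proof of Lemma~\ref{C5} in $\overline{x}''_{t}$ — the derivative acts only on the bubble centres $z_j$ and leaves intact the cancellations governed by Lemmas~\ref{B2}--\ref{B6} — yields $\|(\partial_{\overline{x}''_{t}}l_{m1},\partial_{\overline{x}''_{t}}l_{m2})\|_{\ast\ast}=O(\lambda^{-\varepsilon})$, so with $\|(\phi,\varphi)\|_{\ast}\le C\lambda^{-1-\varepsilon}$ this term is $O(m\lambda^{-1-2\varepsilon})$ as well. Adding the three estimates to the main term completes the proof.

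The main obstacle is this last bound, i.e. controlling $\partial_{\overline{x}''_{t}}l_{m1}$ and $\partial_{\overline{x}''_{t}}l_{m2}$ in the $\ast\ast$‑norm. This amounts to re‑running the delicate computation of Lemma~\ref{C5} — the Taylor expansion of $K_1,K_2$ near $(r_0,x_0'')$, the splitting into the region $\{|(r,x'')-(r_0,x_0'')|\le a\lambda^{-\frac{1}{2}-\varepsilon}\}$ and its complement inside $\{\xi\ne0\}$, and the bubble‑interaction bounds of Lemmas~\ref{B2},~\ref{B3},~\ref{B4},~\ref{B6} — with one additional $\overline{x}''_{t}$‑derivative throughout, checking that each cancellation persists; in particular one must verify that the cut‑off contributions $Z_{\overline{r},\overline{x}'',\lambda}^{\ast}\Delta\xi+2\nabla\xi\nabla Z_{\overline{r},\overline{x}'',\lambda}^{\ast}$ and $Y_{\overline{r},\overline{x}'',\lambda}^{\ast}\Delta\xi+2\nabla\xi\nabla Y_{\overline{r},\overline{x}'',\lambda}^{\ast}$ (supported where $\nabla\xi\ne0$, hence away from every bubble) and their $\overline{x}''_{t}$‑derivatives stay of lower order, exactly as $J_4$ is treated in Lemma~\ref{C5}.
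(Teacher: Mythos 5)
Your overall skeleton matches the paper's: you identify the left-hand side as $J'(u_m,v_m)$ paired with the $\overline{x}''_t$-derivatives of $(Z,Y)$, Taylor-expand about $(Z_{\overline{r},\overline{x}'',\lambda},Y_{\overline{r},\overline{x}'',\lambda})$, invoke Lemma~\ref{D2} for the leading term $\partial J(Z,Y)/\partial\overline{x}''_t$, and bound the quadratic remainder by pairing $N_1(\varphi),N_2(\phi)$ (controlled by Lemma~\ref{C4}) against the bubble derivatives. The identification $J'(Z,Y)[(\phi,\varphi)]=-\int l_{m2}\phi-\int l_{m1}\varphi$ that you exploit is indeed correct and is a clean way to see the structure.

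Where you genuinely depart from the paper is in the treatment of the linear remainder $J''(Z,Y)[(\phi,\varphi),(\partial Z,\partial Y)]$. The paper writes this term as $m\langle\mathcal{L}_m(\phi,\varphi),(\partial Y_{z_1,\lambda}/\partial\overline{x}''_t,\partial Z_{z_1,\lambda}/\partial\overline{x}''_t)\rangle$ and reads off $O(\|(\phi,\varphi)\|_*/\lambda^\varepsilon)=O(\lambda^{-1-\varepsilon})$ directly from the pairing computations already carried out in the proof of Lemma~\ref{C1}: there the bound $\langle\mathcal{L}_m(\phi,\varphi),(Y_{1,t},Z_{1,t})\rangle=O(\lambda^{n_t}\|(\phi,\varphi)\|_*/\lambda^{1+\varepsilon})$ is established term by term, and with $n_t=1$ for $t\ge 3$ this is exactly what is needed. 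No new estimate is required. You instead convert the $J''$ term to $-\int\partial_{\overline{x}''_t}l_{m2}\,\phi-\int\partial_{\overline{x}''_t}l_{m1}\,\varphi$ and then must show $\|\partial_{\overline{x}''_t}l_m\|_{\ast\ast}=O(\lambda^{-\varepsilon})$. That bound is plausible (one heuristically gains a factor $\lambda$ per $\overline{x}''_t$-derivative on the bubbles, degrading $\|l_m\|_{\ast\ast}=O(\lambda^{-1-\varepsilon})$ to $O(\lambda^{-\varepsilon})$), and you correctly identify it as the hard step, but it is not proved anywhere in the paper and verifying it would essentially duplicate, with one extra derivative, the entire $J_1,\dots,J_4$ analysis of Lemma~\ref{C5}. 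In short: both routes are logically sound, your remainder bounds and pairing inequalities ($|\int h\varphi|\le Cm\|h\|_{\ast\ast}\|\varphi\|_*$, $\|\partial_{\overline{x}''_t}Y\|_*\sim\lambda$, $\|N\|_{\ast\ast}\le C\lambda^{-2-2\varepsilon}$) are correct, but the paper's route recycles existing estimates whereas yours trades them for a fresh estimate that would have to be established from scratch.
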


	\begin{proof}
	By direct calculation, we have
	\begin{align*}\nonumber
		&\int_{\mathbb{R}^{N}}\Big(-\Delta u_{m}
	-K_{1}(|x'|,x'')(|x|^{-(N-\alpha)}\ast K_{1}(|x'|,x'')|v_{m}|^{2_{\alpha}^{*}})v_{m}^{2_{\alpha}^{*}-1}\Big)\frac{\partial Y_{\overline{r},\overline{x}'',\lambda}}{\partial \overline{x}''_{t}} dx\\
	&+\int_{\mathbb{R}^{N}}\Big(-\Delta v_{m}-K_{2}(|x'|,x'')(|x|^{-(N-\alpha)}\ast K_{2}(|x'|,x'')|u_{m}|^{2_{\alpha}^{*}})u_{m}^{2_{\alpha}^{*}-1}\Big)\frac{\partial Z_{\overline{r},\overline{x}'',\lambda}}{\partial \overline{x}''_{t}} dx\\
	&=\langle J'(Z_{\overline{r},\overline{x}'',\lambda},Y_{\overline{r},\overline{x}'',\lambda}),(\frac{\partial Y_{\overline{r},\overline{x}'',\lambda}}{\partial \overline{x}''_{t}},\frac{\partial Z_{\overline{r},\overline{x}'',\lambda}}{\partial \overline{x}''_{t}})\rangle+m\Big\langle \mathcal{L}_m (\phi,\varphi),
	(\frac{\partial Y_{z_1,\lambda}}{\partial \overline{x}''_{t}},\frac{\partial Z_{z_1,\lambda}}{\partial \overline{x}''_{t}})\Big\rangle\\
	&-\int_{\mathbb{R}^{N}}K_{1}(|x'|,x'')(|x|^{-(N-\alpha)}\ast K_{1}(|x'|,x'')|v_{m}|^{2_{\alpha}^{*}})v_{m}^{2_{\alpha}^{*}-1}\frac{\partial Y_{\overline{r},\overline{x}'',\lambda}}{\partial \overline{x}''_{t}} dx\\
	&-\int_{\mathbb{R}^{N}}K_{1}(|x'|,x'')(|x|^{-(N-\alpha)}\ast K_{1}(|x'|,x'')|Y_{\overline{r},\overline{x}'',\lambda}|^{2_{\alpha}^{*}})Y_{\overline{r},\overline{x}'',\lambda}^{2_{\alpha}^{*}-1}\frac{\partial Y_{\overline{r},\overline{x}'',\lambda}}{\partial \overline{x}''_{t}} dx\\
	&-(2_{\alpha}^{*}-1)\int_{\mathbb{R}^{N}}K_{1}(|x'|,x'')(|x|^{-(N-\alpha)}\ast K_{1}(|x'|,x'')|Y_{\overline{r},\overline{x}'',\lambda}|^{2_{\alpha}^{*}})Y_{\overline{r},\overline{x}'',\lambda}^{2_{\alpha}^{*}-2}\varphi\frac{\partial Y_{\overline{r},\overline{x}'',\lambda}}{\partial \overline{x}''_{t}} dx \\
	&-2_{\alpha}^{*}\int_{\mathbb{R}^{N}}K_{1}(|x'|,x'')(|x|^{-(N-\alpha)}\ast K_{1}(|x'|,x'')Y_{\overline{r},\overline{x}'',\lambda}^{2_{\alpha}^{*}-1}\varphi)Y_{\overline{r},\overline{x}'',\lambda}^{2_{\alpha}^{*}-1}\frac{\partial Y_{\overline{r},\overline{x}'',\lambda}}{\partial \overline{x}''_{t}} dx\\
	&+\int_{\mathbb{R}^{N}}K_{2}(|x'|,x'')(|x|^{-(N-\alpha)}\ast K_{2}(|x'|,x'')|u_{m}|^{2_{\alpha}^{*}})u_{m}^{2_{\alpha}^{*}-1}\frac{\partial Z_{\overline{r},\overline{x}'',\lambda}}{\partial \overline{x}''_{t}} dx\\
	&-\int_{\mathbb{R}^{N}}K_{2}(|x'|,x'')(|x|^{-(N-\alpha)}\ast K_{2}(|x'|,x'')|Z_{\overline{r},\overline{x}'',\lambda}|^{2_{\alpha}^{*}})Z_{\overline{r},\overline{x}'',\lambda}^{2_{\alpha}^{*}-1}\frac{\partial Z_{\overline{r},\overline{x}'',\lambda}}{\partial \overline{x}''_{t}} dx\\
	&-(2_{\alpha}^{*}-1)\int_{\mathbb{R}^{N}}K_{2}(|x'|,x'')(|x|^{-(N-\alpha)}\ast K_{2}(|x'|,x'')|Z_{\overline{r},\overline{x}'',\lambda}|^{2_{\alpha}^{*}})Z_{\overline{r},\overline{x}'',\lambda}^{2_{\alpha}^{*}-2}\phi\frac{\partial Z_{\overline{r},\overline{x}'',\lambda}}{\partial \overline{x}''_{t}} dx \\
	&-2_{\alpha}^{*}\int_{\mathbb{R}^{N}}K_{2}(|x'|,x'')(|x|^{-(N-\alpha)}\ast K_{2}(|x'|,x'')Z_{\overline{r},\overline{x}'',\lambda}^{2_{\alpha}^{*}-1}\phi)Z_{\overline{r},\overline{x}'',\lambda}^{2_{\alpha}^{*}-1}\frac{\partial Z_{\overline{r},\overline{x}'',\lambda}}{\partial \overline{x}''_{t}} dx\\
	&:=\langle J'(Z_{\overline{r},\overline{x}'',\lambda},Y_{\overline{r},\overline{x}'',\lambda}),(\frac{\partial Y_{\overline{r},\overline{x}'',\lambda}}{\partial \overline{x}''_{t}},\frac{\partial Z_{\overline{r},\overline{x}'',\lambda}}{\partial \overline{x}''_{t}})\rangle+mI_{1}-I_{2}.
\end{align*}
	From Lemma \ref{c1}, we have
	$$
	I_{1}=O(\frac{\|(\phi,\varphi)\|_{\ast}}{\lambda^{\varepsilon}})
	=O(\frac{1}{\lambda^{1+\varepsilon}}).
	$$
	By the definition of $N_{1}(\varphi)$ and Lemma \ref{C4}, we get
	$$\aligned
	&\int_{\mathbb{R}^{N}}K_{1}(|x'|,x'')\Big(|x|^{-(N-\alpha)}\ast K_{1}(|x'|,x'')|v_{m}|^{2_{\alpha}^{*}}v_{m}^{2_{\alpha}^{*}-1}\Big)\frac{\partial Y_{\overline{r},\overline{x}'',\lambda}}{\partial \overline{x}''_{t}} dx\\
	&-\int_{\mathbb{R}^{N}}K_{1}(|x'|,x'')(|x|^{-(N-\alpha)}\ast K_{1}(|x'|,x'')|Y_{\overline{r},\overline{x}'',\lambda}|^{2_{\alpha}^{*}})Y_{\overline{r},\overline{x}'',\lambda}^{2_{\alpha}^{*}-1}\frac{\partial Y_{\overline{r},\overline{x}'',\lambda}}{\partial \overline{x}''_{t}} dx\\
	&-(2_{\alpha}^{*}-1)\int_{\mathbb{R}^{N}}K_{1}(|x'|,x'')(|x|^{-(N-\alpha)}\ast K_{1}(|x'|,x'')|Y_{\overline{r},\overline{x}'',\lambda}|^{2_{\alpha}^{*}})Y_{\overline{r},\overline{x}'',\lambda}^{2_{\alpha}^{*}-2}\varphi\frac{\partial Y_{\overline{r},\overline{x}'',\lambda}}{\partial \overline{x}''_{t}} dx \\
	&-2_{\alpha}^{*}\int_{\mathbb{R}^{N}}K_{1}(|x'|,x'')(|x|^{-(N-\alpha)}\ast K_{1}(|x'|,x'')Y_{\overline{r},\overline{x}'',\lambda}^{2_{\alpha}^{*}-1}\varphi)Y_{\overline{r},\overline{x}'',\lambda}^{2_{\alpha}^{*}-1}\frac{\partial Y_{\overline{r},\overline{x}'',\lambda}}{\partial \overline{x}''_{t}} dx\\
	&\leq C\int_{\mathbb{R}^{N}}|N_{1}(\varphi)||\frac{\partial Y_{\overline{r},\overline{x}'',\lambda}}{\partial \overline{x}''_{t}}| dx\\
	&\leq C	 \|\varphi\|_{\ast}^{2}\lambda\int_{\mathbb{R}^{N}}\sum_{j=1}^{m}\frac{\lambda^{\frac{N+2}{2}}}{(1+\lambda|x-z_{j}|)^{\frac{N+2}{2}+\tau}}\sum_{j=1}^{m}\frac{\lambda^{\frac{N-2}{2}}}{(1+\lambda|x-z_{j}|)^{N-1}}dx=O(\frac{m}{\lambda^{1+\varepsilon}}).
	\endaligned$$
The rest of $I_2$ run as before.

Finally, we  conclude by Lemma \ref{D2} that
	$$
	\aligned
	\langle J'(Z_{\overline{r},\overline{x}'',\lambda}+\phi,Y_{\overline{r},\overline{x}'',\lambda}+\varphi),(\frac{\partial Y_{\overline{r},\overline{x}'',\lambda}}{\partial \overline{x}''_{t}},\frac{\partial Z_{\overline{r},\overline{x}'',\lambda}}{\partial \overline{x}''_{t}})\rangle&=\langle J'(Z_{\overline{r},\overline{x}'',\lambda},Y_{\overline{r},\overline{x}'',\lambda}),(\frac{\partial Y_{\overline{r},\overline{x}'',\lambda}}{\partial \overline{x}''_{t}},\frac{\partial Z_{\overline{r},\overline{x}'',\lambda}}{\partial \overline{x}''_{t}})\rangle+O(\frac{m}{\lambda^{1+\varepsilon}})\\
	&=m\left(B_{1}\frac{\partial K_{1}(\overline{r},\overline{x}'') }{\partial \overline{x}''_{t}}+B_{2}\frac{\partial K_{2}(\overline{r},\overline{x}'') }{\partial \overline{x}''_{t}} +O(\frac{1}{\lambda^{1+\varepsilon}})\right).
	\endaligned
	$$
	
\end{proof}
Moreover, we have the following lemma.
\begin{lem}
	We have
	\begin{equation}\aligned\label{dr0}
		\Big\langle J'(Z_{\overline{r},\overline{x}'',\lambda}+\phi,Y_{\overline{r},\overline{x}'',\lambda}+\varphi),(\frac{\partial Z_{\overline{r},\overline{x}'',\lambda}}{\partial \lambda},\frac{\partial Y_{\overline{r},\overline{x}'',\lambda}}{\partial \lambda})\Big\rangle
		=&m\Big(-\frac{B_3}{\lambda^{3}}+\sum_{j=2}^{m}
		\frac{B_{4}}{\lambda^{N-1}|z_{1}-z_{j}|^{N-2}}+O(\frac{1}{\lambda^{3+\varepsilon}})\Big)\\
		=&m\Big(-\frac{B_3}{\lambda^{3}}+
		\frac{B_{4}m^{N-2}}{\lambda^{N-1}}+O(\frac{1}{\lambda^{3+\varepsilon}})\Big),
		\endaligned\end{equation}
	and
	\begin{equation}\aligned\label{dr1}
		&\Big\langle J'(Z_{\overline{r},\overline{x}'',\lambda}+\phi,Y_{\overline{r},\overline{x}'',\lambda}+\varphi),(\frac{\partial Z_{\overline{r},\overline{x}'',\lambda}}{\partial \overline{r}},\frac{\partial Y_{\overline{r},\overline{x}'',\lambda}}{\partial \overline{r}})\Big\rangle
		=m\Big(B_{1}\frac{\partial K_{1}(\overline{r},\overline{x}'') }{\partial \overline{r}}+B_{2}\frac{\partial K_{2}(\overline{r},\overline{x}'') }{\partial \overline{r}}+
		\frac{B_{5}m^{N-2}}{\lambda^{N-2}}+O(\frac{1}{\lambda^{1+\varepsilon}})\Big),
		\endaligned\end{equation}
	where $B_i>0$, $i=1, 2,3,$ are constants.
\end{lem}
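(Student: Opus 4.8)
The plan is to imitate the three–step argument of Lemmas~\ref{D2} and~\ref{D3}, carried out now for the directions $\partial/\partial\lambda$ and $\partial/\partial\overline{r}$: first strip off the correction $(\phi,\varphi)$, then strip off the truncation $\xi$, and finally expand the reduced energy and differentiate it explicitly.

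To strip off $(\phi,\varphi)$, write $u_m=Z_{\overline{r},\overline{x}'',\lambda}+\phi$, $v_m=Y_{\overline{r},\overline{x}'',\lambda}+\varphi$; using that $(\phi,\varphi)$ solves \eqref{c14} one obtains, exactly as in the proof of Lemma~\ref{D3}, for $\bullet\in\{\lambda,\overline{r}\}$
$$\aligned
\Big\langle J'(u_m,v_m),\big(\tfrac{\partial Z_{\overline{r},\overline{x}'',\lambda}}{\partial\bullet},\tfrac{\partial Y_{\overline{r},\overline{x}'',\lambda}}{\partial\bullet}\big)\Big\rangle
=&\Big\langle J'(Z_{\overline{r},\overline{x}'',\lambda},Y_{\overline{r},\overline{x}'',\lambda}),\big(\tfrac{\partial Z_{\overline{r},\overline{x}'',\lambda}}{\partial\bullet},\tfrac{\partial Y_{\overline{r},\overline{x}'',\lambda}}{\partial\bullet}\big)\Big\rangle\\
&+m\big\langle\mathcal{L}_m(\phi,\varphi),\big(\tfrac{\partial Y_{z_1,\lambda}}{\partial\bullet},\tfrac{\partial Z_{z_1,\lambda}}{\partial\bullet}\big)\big\rangle-I_{\bullet},
\endaligned$$
where $I_{\bullet}$ gathers the quadratic remainder $N(\phi,\varphi)$ tested against $\partial_{\bullet}(Y,Z)$. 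With $\|(\phi,\varphi)\|_{\ast}\le C\lambda^{-1-\varepsilon}$ and $|c_l|\le C\lambda^{-1-n_l-\varepsilon}$ from Lemma~\ref{C3}, the decompositions of $l_m$ and $\mathcal{L}_m(\phi,\varphi)$ used in Lemmas~\ref{C1},~\ref{C4},~\ref{C5}, and the fact that when tested against the \emph{localized} fields $\partial_\bullet(Y_{z_1},Z_{z_1})$ the pieces carrying $\Delta\xi,\nabla\xi$ or $(K_i-1)$ gain extra decay exactly as in Lemma~\ref{D3}, the last two terms are $O(m\lambda^{-1-\varepsilon})$ for $\bullet=\overline{r}$ and $O(m\lambda^{-3-\varepsilon})$ for $\bullet=\lambda$, the extra $\lambda^{-2}$ coming from the weight $\lambda^{n_1}=\lambda^{-1}$ of the $\lambda$–direction together with the $\lambda^{-2}$ smaller sup–norm of $\partial_\lambda(Y_{z_1},Z_{z_1})$ compared with $\partial_{\overline{r}}(Y_{z_1},Z_{z_1})$. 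In the same way, following Lemma~\ref{P3}, one replaces $(Z,Y)$ by $(Z^\ast,Y^\ast)$: the truncation produces only terms supported on $\{\xi<1\}$, where $(1+\lambda|x-z_j|)^{-1}\le C\lambda^{-1/2+\varepsilon}$ by \cite{PWW}, so these are $O(m\lambda^{-1-\varepsilon})$ for $\bullet=\overline{r}$ and, by the same sup–norm gain, $O(m\lambda^{-3-\varepsilon})$ for $\bullet=\lambda$. Thus it suffices to expand $\partial_\bullet J(Z^\ast_{\overline{r},\overline{x}'',\lambda},Y^\ast_{\overline{r},\overline{x}'',\lambda})$.

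For this I would split $J(Z^\ast,Y^\ast)$ into the Dirichlet part $\int_{\mathbb{R}^N}\nabla Z^\ast\cdot\nabla Y^\ast$ and the two Hartree parts, use $U_{z_j,\lambda}=V_{z_j,\lambda}$, the equation \eqref{hemilton}, Lemma~\ref{B6} for the convolution and the cross–term splitting of Lemma~\ref{B2}, and keep the $z_j$ on the circle of radius $\overline{r}$, so $|z_1-z_j|=2\overline{r}\sin\frac{(j-1)\pi}{m}$. This gives an expansion of the form
$$J(Z^\ast_{\overline{r},\overline{x}'',\lambda},Y^\ast_{\overline{r},\overline{x}'',\lambda})=m\Big(A_0-\frac{C_1\,(\Delta K_1+\Delta K_2)(\overline{r},\overline{x}'')}{\lambda^{2}}-\sum_{j=2}^{m}\frac{C_2}{(\lambda|z_1-z_j|)^{N-2}}+m\,\mathcal{E}\Big),$$
with $A_0>0$ the common single–bubble energy, $C_1,C_2>0$ explicit constants, $\Delta K_i$ understood as in condition~\textbf{(II)} (the off–diagonal Hessian entries of $K_i$ cancel by axial symmetry; the first–order Taylor terms of $K_i$ at $z_j$, and the curvature correction from writing $|x'|$ in Cartesian coordinates near $z_j$, are $o(1)$ since $(\overline{r},\overline{x}'')\to(r_0,x_0'')$ by condition~\textbf{(I)}), and $\mathcal{E}$ an error negligible against the displayed terms. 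Differentiating in $\lambda$, and using $\sum_{j=2}^m|z_1-z_j|^{-(N-2)}=C_0m^{N-2}(1+o(1))$ (the series $\sum_k\sin^{-(N-2)}(k\pi/m)$ is dominated by $k$ near $1$ and near $m-1$ since $N-2\ge3$), gives the first identity with $B_3=-2C_1(\Delta K_1+\Delta K_2)(r_0,x_0'')>0$ by condition~\textbf{(II)} and $B_4=(N-2)C_2>0$ the constant from $\partial_\lambda$ of the interaction sum. Differentiating instead in $\overline{r}$: the Hartree parts yield $m\big(B_1\partial_{\overline{r}}K_1(\overline{r},\overline{x}'')+B_2\partial_{\overline{r}}K_2(\overline{r},\overline{x}'')+o(1)\big)$ exactly as in Lemma~\ref{D2} with $\overline{x}''_t$ replaced by $\overline{r}$, where $B_1,B_2>0$ are the positive single–bubble integrals $\int_{\mathbb{R}^N}(|x|^{-(N-\alpha)}\ast|V_{0,1}|^{2^{*}_{\alpha}})|V_{0,1}|^{2^{*}_{\alpha}}$, while differentiating the distances $|z_1-z_j|=2\overline{r}\sin\frac{(j-1)\pi}{m}$ produces $(N-2)\overline{r}^{-1}\sum_{j\ge2}C_2(\lambda|z_1-z_j|)^{-(N-2)}=B_5m^{N-2}\lambda^{-(N-2)}(1+o(1))$, which gives the second identity.

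I expect the delicate point to be the $\lambda$–expansion. Its leading term is of size only $m\lambda^{-3}$, so one must isolate the $O(1)$–in–$K$ Taylor coefficient of the Hartree energy accurately enough to extract the $(\Delta K_1+\Delta K_2)/\lambda^{2}$ term and fix its sign through condition~\textbf{(II)} and the axial symmetry, control the two–bubble interaction energy of size $m^{N-1}\lambda^{-(N-2)}$ and its $\lambda$–derivative (including the positivity $C_2>0$, i.e. that the net two–bubble interaction behaves as in the local prescribed–curvature case), and prove that the cut-off errors, the higher Taylor terms of $K_i$, the triple–bubble interactions and the $(\phi,\varphi)$–contribution are all genuinely $O(m\lambda^{-3-\varepsilon})$. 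Since in the admissible window $\lambda\sim m^{(N-2)/(N-4)}$ the two quantities $m\lambda^{-3}$ and $m^{N-1}\lambda^{-(N-2)}$ are of the same order, none of these estimates has room to spare, so the main work is precisely this bookkeeping rather than any single new argument.
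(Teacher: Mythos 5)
Your proposal follows essentially the same route as the paper's (rather terse) proof: strip the correction $(\phi,\varphi)$ via the reduction equation as in Lemma~\ref{D3}, strip the truncation $\xi$ as in Lemma~\ref{P3}, and then Taylor–expand $K_i$ at the critical point so that, by $\nabla K_i(r_0,x_0'')=0$ and the axial symmetry, the Hessian term produces the $-B_3/\lambda^3$ contribution. You are in fact more explicit than the paper about the origin of the $B_4 m^{N-2}/\lambda^{N-1}$ (resp.\ $B_5 m^{N-2}/\lambda^{N-2}$) term: the paper's displayed computation covers only the $K$-weighted piece and silently refers back to Lemma~\ref{D2} for the bubble-interaction part, whereas you identify it as the $\lambda$- (resp.\ $\overline r$-) derivative of $\sum_{j\ge2}C_2(\lambda|z_1-z_j|)^{-(N-2)}$ together with $\sum_k\sin^{-(N-2)}(k\pi/m)\sim C m^{N-2}$, valid for $N\ge5$.

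Two small caveats. First, you expand the reduced energy $J(Z^\ast,Y^\ast)$ and then differentiate; since the remainder is only an $o$-bound, one must in principle verify that its $\lambda$-derivative is still $O(m\lambda^{-3-\varepsilon})$, which is why the paper works directly with $\partial_\lambda J$ and lets the derivative fall on the explicit bubble quantities. Second, in your closing comparison of orders you wrote $m^{N-1}\lambda^{-(N-2)}$ against $m\lambda^{-3}$; these are \emph{not} of the same size at $\lambda\sim m^{(N-2)/(N-4)}$. The quantity that balances $m\lambda^{-3}$ is the derivative $m^{N-1}\lambda^{-(N-1)}$ (equivalently $m\cdot B_4 m^{N-2}/\lambda^{N-1}$), consistent with the statement of the lemma; with that correction the bookkeeping you describe is accurate. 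Finally, you correctly flag that the sign $C_2>0$ of the cross-interaction needs verification; the paper does not carry this out either, so you are on equal footing there.
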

\begin{proof}
	The proof for \eqref{dr1} is exactly similar to Lemma \ref{D2}. For \eqref{dr0}, we only need to give the following proof  inspired by \cite{PWW}. Since the function $K_1$ and $K_2$ are bound and satisfy the condition $(\textbf{II})$, we have
		\begin{multline*}
		\int_{\mathbb{R}^N}(1-K_{1}(r,x''))\frac{\partial}{\partial \lambda}\left( \left( |x|^{-(N-\alpha)}*(1+K_{1}(r,x''))|Y_{\overline{r},\overline{x}'',\lambda}^{\ast}|^{2_{\alpha}^{*}}\right)
		(Y_{\overline{r},\overline{x}'',\lambda}^{\ast})^{2_{\alpha}^{*}}\right) dx\\
		=m\left(-C \int_{\Omega_{1}}(K_{1}(r,x'')-1)\frac{\partial}{\partial \lambda}\left(\left( |x|^{-(N-\alpha)}*|V_{z_1,\lambda}|^{2_{\alpha}^{*}}\right)
		|V_{z_1,\lambda}|^{2_{\alpha}^{*}}\right) dx+O(\frac{1}{\lambda^{1+\varepsilon}})\right)\\ 	
		=m\Bigg(-C \int_{B_{\lambda^{\frac{1}{2}+\varepsilon}}(x_{0})}\Big(\sum_{i,j=1}^{m}\frac{1}{2}\frac{\partial^{2}K_{1}(x_0)}{\partial x_{i}\partial x_{j}}(x_i-x_{0i})(x_j-x_{0j})\\
		+\sum_{i,j,k=1}^{m}\frac{1}{6}\frac{\partial^{3}K_{1}(x_0)}{\partial x_{i}\partial x_{j}\partial x_{k}}(x_i-x_{0i})(x_j-x_{0j})(x_k-x_{0k})\Big)\frac{1}{2_{\alpha}^{*}}\frac{\partial}{\partial \lambda}\left(\left( |x|^{-(N-\alpha)}*|V_{z_1,\lambda}|^{2_{\alpha}^{*}}\right)
		|V_{z_1,\lambda}|^{2_{\alpha}^{*}}\right)\\
		-C \int_{B_{\lambda^{\frac{1}{2}+\varepsilon}}^{C}(x_{0})}(K_{1}(r,x'')-1)\frac{1}{2_{\alpha}^{*}}\frac{\partial}{\partial \lambda}\left(\left( |x|^{-(N-\alpha)}*|V_{z_1,\lambda}|^{2_{\alpha}^{*}}\right)
		|V_{z_1,\lambda}|^{2_{\alpha}^{*}}\right) dx+O(\frac{1}{\lambda^{3+\varepsilon}})\Bigg)\\
		=m\Bigg(-C \int_{B_{\lambda^{\frac{3}{2}+\varepsilon}}(z_{1})}\Big(\sum_{i,j=1}^{m}\frac{1}{2}\frac{\partial^{2}K_{1}(x_0)}{\partial x_{i}\partial x_{j}}(x_i-x_{0i})(x_j-x_{0j})
		+O\left( \|x-x_{0}\|^{3}\right) \Big)\frac{1}{2_{\alpha}^{*}}\frac{\partial}{\partial \lambda}\left(\left( |x|^{-(N-\alpha)}*|V_{z_1,\lambda}|^{2_{\alpha}^{*}}\right)
		|V_{z_1,\lambda}|^{2_{\alpha}^{*}}\right)\\
		+O \Big(\int_{B_{\lambda^{\frac{1}{2}+\varepsilon}}^{C}(x_{0})}\frac{1}{\lambda}\left(\left( |x|^{-(N-\alpha)}*|V_{z_1,\lambda}|^{2_{\alpha}^{*}}\right)
		|V_{z_1,\lambda}|^{2_{\alpha}^{*}}\right) dx\Big)+O(\frac{1}{\lambda^{3+\varepsilon}})\Bigg)\\
			=m\Bigg(-C \int_{B_{\lambda^{\frac{3}{2}+\varepsilon}}(z_{1})}\Big(\sum_{i,j=1}^{m}\frac{1}{2}\frac{\partial^{2}K_{1}(x_0)}{\partial x_{i}\partial x_{j}}(x_i-x_{0i})(x_j-x_{0j})
		+O\left( \|x-x_{0}\|^{3}\right)\Big)\frac{1}{2_{\alpha}^{*}}\frac{\partial}{\partial \lambda}\left(\left( |x|^{-(N-\alpha)}*|V_{z_1,\lambda}|^{2_{\alpha}^{*}}\right)
		|V_{z_1,\lambda}|^{2_{\alpha}^{*}}\right)\\+O(\frac{1}{\lambda^{3+\varepsilon}})\Bigg)\\
			=m\Bigg(-\frac{C}{2_{\alpha}^{*}}\frac{\partial}{\partial \lambda} \int\Big(\sum_{i,j=1}^{m}\frac{1}{2}\frac{\partial^{2}K_{1}(x_0)}{\partial x_{i}\partial x_{j}}(\frac{y_{i}}{\lambda}+z_{1i}-x_{0i})(\frac{y_{j}}{\lambda}+z_{1j}-x_{0j})\Big)\left(\left( |y|^{-(N-\alpha)}*|V_{0,1}|^{2_{\alpha}^{*}}\right)
		|V_{0,1}|^{2_{\alpha}^{*}}\right)\\
		-\frac{C}{2_{\alpha}^{*}}\frac{\partial}{\partial \lambda} \int_{B_{\lambda^{\frac{3}{2}+\varepsilon}}^{C}(0)}\Big(\sum_{i,j=1}^{m}\frac{1}{2}\frac{\partial^{2}K_{1}(x_0)}{\partial x_{i}\partial x_{j}}(\frac{y_{i}}{\lambda}+z_{1i}-x_{0i})(\frac{y_{j}}{\lambda}+z_{1j}-x_{0j})\Big)\left(\left( |y|^{-(N-\alpha)}*|V_{0,1}|^{2_{\alpha}^{*}}\right)
		|V_{0,1}|^{2_{\alpha}^{*}}\right)\\
		+\frac{C}{2_{\alpha}^{*}}\frac{1}{\lambda} \int O\left( \|\frac{y}{\lambda}+z_{1}-x_{0}\|^{3}\right) \left(\left( |y|^{-(N-\alpha)}*|V_{0,1}|^{2_{\alpha}^{*}}\right)
		|V_{0,1}|^{2_{\alpha}^{*}}\right)+O(\frac{1}{\lambda^{3+\varepsilon}})\Bigg)\\
		=m\Bigg(-\frac{C}{2_{\alpha}^{*}}\frac{\partial}{\partial \lambda} \int\Big(\sum_{i,j=1}^{m}\frac{1}{2}\frac{\partial^{2}K_{1}(x_0)}{\partial x_{i}^{2}}(\frac{y_{i}}{\lambda}+z_{1i}-x_{0i})^{2}\Big)\left(\left( |y|^{-(N-\alpha)}*|V_{0,1}|^{2_{\alpha}^{*}}\right)
		|V_{0,1}|^{2_{\alpha}^{*}}\right)+O(\frac{1}{\lambda^{3+\varepsilon}})\Bigg)\\
			=m\Bigg(-\frac{C}{2_{\alpha}^{*}}\frac{1}{\lambda^3}\frac{\Delta K_{1}(x_0)}{N} \int y^{2}\left(\left( |y|^{-(N-\alpha)}*|V_{0,1}|^{2_{\alpha}^{*}}\right)
		|V_{0,1}|^{2_{\alpha}^{*}}\right)+O(\frac{1}{\lambda^{3+\varepsilon}})\Bigg)
		=m\Big(-\frac{B_3}{\lambda^{3}}+
		O(\frac{1}{\lambda^{3+\varepsilon}})\Big).
	\end{multline*}
	The proof is completed.
\end{proof}

In order to prove Theorem \ref{EXS1}, let us first show the following estimate.
\begin{lem}\label{D4}
We have
	\begin{equation}\label{d20}
		\int\nabla \phi\cdot\nabla \varphi dx+\int\left( \int_{\mathbb{R}^{N}}\frac{|\varphi|^{2_{\alpha}^{*}}}{|x-y|^{N-\alpha}}\right) |\varphi|^{2_{\alpha}^{*}}dx+\int\left( \int_{\mathbb{R}^{N}}\frac{|\phi|^{2_{\alpha}^{*}}}{|x-y|^{N-\alpha}}\right) |\phi|^{2_{\alpha}^{*}}dx=O(\frac{m}{\lambda^{2+\varepsilon}}).
	\end{equation}
\end{lem}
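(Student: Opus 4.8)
The plan is to bound the three terms on the left of \eqref{d20} separately. The mixed Dirichlet term $\int\nabla\phi\cdot\nabla\varphi$ will be controlled by first deriving an a priori $D^{1,2}$--bound on the pair $(\phi,\varphi)=(\phi_{\overline r,\overline x'',\lambda},\varphi_{\overline r,\overline x'',\lambda})$ from the linear system \eqref{c16} it solves, and then applying the Cauchy--Schwarz inequality; the two Hartree self--interaction terms will be estimated directly via the Hardy--Littlewood--Sobolev inequality and the pointwise control $|w(x)|\le\|w\|_{\ast}\lambda^{\frac{N-2}{2}}\sum_{j=1}^{m}(1+\lambda|x-z_j|)^{-\frac{N-2}{2}-\tau}$ coming from the $\|\cdot\|_{\ast}$--norm. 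Throughout I would use $\|(\phi,\varphi)\|_{\ast}\le C\lambda^{-1-\varepsilon}$ from Lemma \ref{C3} together with the elementary bilinear estimate
\[
\int_{\R^N}|h|\,|w|\,dx\le C\,m\,\|h\|_{\ast\ast}\,\|w\|_{\ast},
\]
which follows by expanding both weighted sums, using $\int_{\R^N}(1+\lambda|x-z_j|)^{-N-2\tau}dx\lesssim\lambda^{-N}$ on the diagonal, Lemma \ref{B2} (with a weight $\delta\in(1,2\tau)$) off the diagonal, and $m/\lambda\to0$.

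For the Dirichlet term I would test the first line of \eqref{c16} against $\phi$ and the second against $\varphi$, and add. By the orthogonality conditions contained in \eqref{c16} (the last line states exactly that, for each $l$, the coefficient of $c_l$ pairs to zero against $(\phi,\varphi)$), the whole multiplier contribution $\sum_l c_l\sum_j(\cdots)$ disappears, leaving
\[
\int_{\R^N}|\nabla\phi|^2\,dx+\int_{\R^N}|\nabla\varphi|^2\,dx=\int_{\R^N}\big(\mathrm{lin}_1(\varphi)+N_1(\varphi)+l_{m1}\big)\phi\,dx+\int_{\R^N}\big(\mathrm{lin}_2(\phi)+N_2(\phi)+l_{m2}\big)\varphi\,dx,
\]
where $\mathrm{lin}_1(\varphi):=(2^{*}_{\alpha}-1)K_1\big(|x|^{-(N-\alpha)}\ast K_1|Y_{\overline r,\overline x'',\lambda}|^{2^{*}_{\alpha}}\big)Y_{\overline r,\overline x'',\lambda}^{2^{*}_{\alpha}-2}\varphi+2^{*}_{\alpha}K_1\big(|x|^{-(N-\alpha)}\ast K_1(Y_{\overline r,\overline x'',\lambda}^{2^{*}_{\alpha}-1}\varphi)\big)Y_{\overline r,\overline x'',\lambda}^{2^{*}_{\alpha}-1}$ is the part of the first Hartree nonlinearity linear in $\varphi$ (and $\mathrm{lin}_2(\phi)$ is its $K_2,Z_{\overline r,\overline x'',\lambda},\phi$ analogue). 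Using $\|\mathrm{lin}_1(\varphi)\|_{\ast\ast}\lesssim\|\varphi\|_{\ast}$ (the same pointwise bounds as in Lemmas \ref{B6} and \ref{C4}), $\|N_1(\varphi)\|_{\ast\ast}\lesssim\|\varphi\|_{\ast}^{2}$ (Lemma \ref{C4}), $\|l_{m1}\|_{\ast\ast}\lesssim\lambda^{-1-\varepsilon}$ (Lemma \ref{C5}), the symmetric bounds for the $\phi$--quantities, and the bilinear estimate above, the right--hand side is $\lesssim m\big(\|\varphi\|_{\ast}\|\phi\|_{\ast}+\|\varphi\|_{\ast}^2\|\phi\|_{\ast}+\lambda^{-1-\varepsilon}\|\phi\|_{\ast}+\|\phi\|_{\ast}\|\varphi\|_{\ast}+\|\phi\|_{\ast}^2\|\varphi\|_{\ast}+\lambda^{-1-\varepsilon}\|\varphi\|_{\ast}\big)\lesssim m\lambda^{-2-2\varepsilon}$. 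Hence $\int|\nabla\phi|^2+\int|\nabla\varphi|^2\lesssim m\lambda^{-2-2\varepsilon}$, and by Cauchy--Schwarz $\big|\int\nabla\phi\cdot\nabla\varphi\big|\le(\int|\nabla\phi|^2)^{1/2}(\int|\nabla\varphi|^2)^{1/2}\lesssim m\lambda^{-2-2\varepsilon}\le m/\lambda^{2+\varepsilon}$.

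For the Hartree self--interaction terms, the Hardy--Littlewood--Sobolev inequality (Proposition \ref{pro1.1} with $\mu=N-\alpha$ and $t=r=\frac{2N}{N+\alpha}$, so that $t\,2^{*}_{\alpha}=2^{*}:=\frac{2N}{N-2}$) gives $\int_{\R^N}(|x|^{-(N-\alpha)}\ast|\varphi|^{2^{*}_{\alpha}})|\varphi|^{2^{*}_{\alpha}}dx\le C|\varphi|_{L^{2^{*}}(\R^N)}^{2\cdot2^{*}_{\alpha}}$. From the pointwise bound on $|\varphi|$ and the standard estimate $\int_{\R^N}\big(\sum_{j=1}^{m}(1+\lambda|x-z_j|)^{-\frac{N-2}{2}-\tau}\big)^{2^{*}}dx\lesssim m\lambda^{-N}$ (valid since $(\tfrac{N-2}{2}+\tau)2^{*}>N$) one gets $|\varphi|_{L^{2^{*}}}^{2^{*}}\lesssim m\|\varphi\|_{\ast}^{2^{*}}$, hence $\int_{\R^N}(|x|^{-(N-\alpha)}\ast|\varphi|^{2^{*}_{\alpha}})|\varphi|^{2^{*}_{\alpha}}dx\lesssim m^{\frac{N+\alpha}{N}}\|\varphi\|_{\ast}^{2\cdot2^{*}_{\alpha}}\lesssim m^{\frac{N+\alpha}{N}}\lambda^{-2\cdot2^{*}_{\alpha}(1+\varepsilon)}$. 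Since $\lambda\in[L_0m^{\frac{N-2}{N-4}},L_1m^{\frac{N-2}{N-4}}]$ and $2\cdot2^{*}_{\alpha}-2=\frac{2\alpha+4}{N-2}$, a short computation rewrites this as $\frac{m}{\lambda^{2+\varepsilon}}\,m^{\frac{\alpha}{N}-\frac{2\alpha+4}{N-4}+O(\varepsilon)}$; because $\alpha(N+4)+4N>0$ forces $\frac{\alpha}{N}<\frac{2\alpha+4}{N-4}$, this is $\le\frac{m}{\lambda^{2+\varepsilon}}$ for $m$ large. The term with $\phi$ is identical, and adding the three bounds yields \eqref{d20}.

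\textbf{The main obstacle} is the organization of the $D^{1,2}$--energy identity in the second step: one must recognize that testing \eqref{c16} against $(\phi,\varphi)$ is precisely what makes the multiplier terms vanish (so Lemma \ref{C3} is needed only through $\|(\phi,\varphi)\|_{\ast}$, not term by term in $c_l$), and then keep careful track of the powers of $m$ in the bilinear estimates — the factor $m$ coming from summing over the $m$ bubbles is harmless only because $m/\lambda\to0$, which is where the choice $\lambda\sim m^{(N-2)/(N-4)}$ enters, while the restriction $\alpha>0$ (together with the $\ast$--bound of Lemma \ref{C3}, which in turn uses the full admissible range $\alpha<N-5+\frac{6}{N-2}$) is what keeps the Hartree self--interaction of the correction below the target order $m/\lambda^{2+\varepsilon}$.
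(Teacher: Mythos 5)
Your proposal is correct and essentially follows the paper's route (test the reduced system \eqref{c16} against the correction, kill the $c_l$ terms by the orthogonality constraints, bound the remaining contributions via Lemmas \ref{C3}, \ref{C4}, \ref{C5}, then treat the nonlocal self--interaction terms via Hardy--Littlewood--Sobolev), but your write--up tightens two points the paper leaves imprecise. First, the paper writes the formula ``$\int\nabla\phi\cdot\nabla\varphi=\int[\ K_1(\cdots)(Y+\varphi)^{2^*_\alpha-1}+\Delta Z\ ]\phi+\int[\ K_2(\cdots)(Z+\phi)^{2^*_\alpha-1}+\Delta Y\ ]\varphi$'', which, if one tests the first equation of \eqref{c16} against $\phi$ and the second against $\varphi$ as the right--hand side indicates, actually produces $\int|\nabla\phi|^2+\int|\nabla\varphi|^2$ on the left (after using the orthogonality conditions to cancel the $c_l$ sums) -- exactly the identity you set up -- and the cross term is then obtained by Cauchy--Schwarz, as you do. Second, for the Hartree self--interaction terms, the paper invokes $\int(\ast|\varphi|^{2^*_\alpha})|\varphi|^{2^*_\alpha}\le C(\int|\nabla\varphi|^2)^{2^*_\alpha}$ and then asserts this is $O(m/\lambda^{2+\varepsilon})$, which requires the individual $D^{1,2}$--bound on $\varphi$ that the paper never states explicitly (it only claims a bound on the mixed Dirichlet term); your HLS--plus--pointwise--$L^{2^*}$ route reaches the same conclusion while also supplying a self--contained verification, including the numerical check $\alpha/N<(2\alpha+4)/(N-4)$ which quantifies why the quadratic gain in $\|(\phi,\varphi)\|_*$ dominates the power of $m$ picked up from summing over bubbles. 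Both approaches are valid; yours is the more complete version of what the paper intends.
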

\begin{proof}
	As \eqref{c14}, we have
	$$
	\aligned
	\int_{\mathbb{R}^{N}}&\nabla \phi\cdot\nabla \varphi dx=\int_{\mathbb{R}^{N}}\left[ K_{1}(x)(|x|^{-(N-\alpha)}\ast K_{1}(x)|Y_{\overline{r},\overline{x}'',\lambda}+\varphi|^{2_{\alpha}^{*}})(Y_{\overline{r},\overline{x}'',\lambda}+\varphi)^{2_{\alpha}^{*}-1}+\Delta Z_{\overline{r},\overline{x}'',\lambda}\right]\phi\\
	&+\int_{\mathbb{R}^{N}}\left[ K_{2}(x)(|x|^{-(N-\alpha)}\ast K_{2}(x)|Z_{\overline{r},\overline{x}'',\lambda}+\phi|^{2_{\alpha}^{*}})(Z_{\overline{r},\overline{x}'',\lambda}+\phi)^{2_{\alpha}^{*}-1}+\Delta Y_{\overline{r},\overline{x}'',\lambda}\right]\varphi\\
	&=\int_{\mathbb{R}^{N}}\Big[ K_{1}(x)(|x|^{-(N-\alpha)}\ast K_{1}(x)|Y_{\overline{r},\overline{x}'',\lambda}+\varphi|^{2_{\alpha}^{*}})(Y_{\overline{r},\overline{x}'',\lambda}+\varphi)^{2_{\alpha}^{*}-1}\\
	&-K_{1}(x)(|x|^{-(N-\alpha)}\ast K_{1}(x)|Y_{\overline{r},\overline{x}'',\lambda}|^{2_{\alpha}^{*}})(Y_{\overline{r},\overline{x}'',\lambda})^{2_{\alpha}^{*}-1}\Big]\phi\\
	&+\int_{\mathbb{R}^{N}}\Big[ K_{1}(x)(|x|^{-(N-\alpha)}\ast K_{1}(x)|Y_{\overline{r},\overline{x}'',\lambda}|^{2_{\alpha}^{*}})(Y_{\overline{r},\overline{x}'',\lambda})^{2_{\alpha}^{*}-1}+\Delta Z_{\overline{r},\overline{x}'',\lambda}\Big]\phi\\
	&+\int_{\mathbb{R}^{N}}\Big[ K_{2}(x)(|x|^{-(N-\alpha)}\ast K_{2}(x)|Z_{\overline{r},\overline{x}'',\lambda}+\phi|^{2_{\alpha}^{*}})(Z_{\overline{r},\overline{x}'',\lambda}+\phi)^{2_{\alpha}^{*}-1}\\
	&-K_{2}(x)(|x|^{-(N-\alpha)}\ast K_{2}(x)|Z_{\overline{r},\overline{x}'',\lambda}|^{2_{\alpha}^{*}})(Z_{\overline{r},\overline{x}'',\lambda})^{2_{\alpha}^{*}-1}\Big]\varphi\\
	&+\int_{\mathbb{R}^{N}}\Big[ K_{2}(x)(|x|^{-(N-\alpha)}\ast K_{2}(x)|Z_{\overline{r},\overline{x}'',\lambda}|^{2_{\alpha}^{*}})(Z_{\overline{r},\overline{x}'',\lambda})^{2_{\alpha}^{*}-1}+\Delta Y_{\overline{r},\overline{x}'',\lambda}\Big]\varphi.
	\endaligned
	$$
	Let's denote the first three terms of the above by
	$$
	\aligned
	&\int_{\mathbb{R}^{N}}\Big[ K_{1}(x)(|x|^{-(N-\alpha)}\ast K_{1}(x)|Y_{\overline{r},\overline{x}'',\lambda}+\varphi|^{2_{\alpha}^{*}})(Y_{\overline{r},\overline{x}'',\lambda}+\varphi)^{2_{\alpha}^{*}-1}-K_{1}(x)(|x|^{-(N-\alpha)}\ast K_{1}(x)|Y_{\overline{r},\overline{x}'',\lambda}|^{2_{\alpha}^{*}})(Y_{\overline{r},\overline{x}'',\lambda})^{2_{\alpha}^{*}-1}\Big]\phi\\
	&+\int_{\mathbb{R}^{N}}\Big[ K_{1}(x)(|x|^{-(N-\alpha)}\ast K_{1}(x)|Y_{\overline{r},\overline{x}'',\lambda}|^{2_{\alpha}^{*}})(Y_{\overline{r},\overline{x}'',\lambda})^{2_{\alpha}^{*}-1}+\Delta Z_{\overline{r},\overline{x}'',\lambda}\Big]\phi:=I_{1}+I_{2},
	\endaligned
	$$
the rest terms can be estimated in a same way. By Lemma \ref{C4}, we have the following estimate
		$$
	\aligned
	\left| I_{1}\right|=&\int_{\mathbb{R}^{N}}\left|N_{1}(\varphi)\right|\phi+ \int_{\mathbb{R}^{N}}\Big[K_{1}(r,x^{''})(2_{\alpha}^{*}-1)\Big(|x|^{-(N-\alpha)}\ast K_{1}(r,x^{''})|Y_{\overline{r},\overline{x}'',\lambda}|^{2_{\alpha}^{*}}\Big)Y_{\overline{r},\overline{x}'',\lambda}^{2_{\alpha}^{*}-2}\varphi\\
	&+K_{1}(r,x^{''})2_{\alpha}^{*}\Big(|x|^{-(N-\alpha)}\ast K_{1}(r,x^{''}) (Y_{\overline{r},\overline{x}'',\lambda}^{2_{\alpha}^{*}-1}
	\varphi)\Big)Y_{\overline{r},\overline{x}'',\lambda}^{2_{\alpha}^{*}-1}\Big]\phi\\
	&\leq C\|\varphi\|_{\ast}^{2}\|\phi\|_{\ast}\int_{\mathbb{R}^{N}}\sum_{j=1}^{m}\frac{\lambda^{\frac{N+2}{2}}}{(1+\lambda|x-z_{j}|)^{\frac{N+2}{2}+\tau}}\sum_{j=1}^{m}\frac{\lambda^{\frac{N-2}{2}}}{(1+\lambda|x-z_{j}|)^{\frac{N-2}{2}+\tau}}dx\leq \frac{Cm}{\lambda^{3+3\varepsilon}}=O(\frac{m}{\lambda^{2+\varepsilon}}).
		\endaligned
	$$
	To estimate $I_2$, one has from Lemma \ref{C5} that
		$$
	\aligned
	I_{2}&\leq C\frac{\|\phi\|_{\ast}}{\lambda^{1+\varepsilon}}\int_{\mathbb{R}^{N}}\sum_{j=1}^{m}\frac{\lambda^{\frac{N+2}{2}}}{(1+\lambda|x-z_{j}|)^{\frac{N+2}{2}+\tau}}\sum_{j=1}^{m}\frac{\lambda^{\frac{N-2}{2}}}{(1+\lambda|x-z_{j}|)^{\frac{N-2}{2}+\tau}}dx\leq \frac{Cm}{\lambda^{2+2\varepsilon}}=O(\frac{m}{\lambda^{2+\varepsilon}}).
	\endaligned
	$$
	 Hence,
	$$
	\int_{\mathbb{R}^{N}}\nabla \phi\cdot\nabla \varphi dx=O(\frac{m}{\lambda^{2+\varepsilon}}).
$$
Similarly, we have
$$
\int_{\mathbb{R}^{N}}\left( \int_{\mathbb{R}^{N}}\frac{|\varphi|^{2_{\alpha}^{*}}}{|x-y|^{(N-\alpha)}}\right) |\varphi|^{2_{\alpha}^{*}}dx
\leq C\left( \int_{\mathbb{R}^{N}}|\nabla \varphi|^{2}dx\right)^{2_{\alpha}^{*}}\leq O(\frac{m}{\lambda^{2+\varepsilon}}),
$$
and
$$
\int_{\mathbb{R}^{N}}\left( \int_{\mathbb{R}^{N}}\frac{|\phi|^{2_{\alpha}^{*}}}{|x-y|^{(N-\alpha)}}\right) |\phi|^{2_{\alpha}^{*}}dx\leq C\left( \int_{\mathbb{R}^{N}}|\nabla \phi|^{2}dx\right)^{2_{\alpha}^{*}}\leq O(\frac{m}{\lambda^{2+\varepsilon}}).
$$
Combine all these together, we complete the proof.
\end{proof}

\noindent
{\bf Proof of Theorem \ref{EXS1}.}
The proof will be divided into four steps.

\textbf{Step 1:} Since $u_m=\phi$ and $v_m=\varphi$ on $\partial D_{\rho}$, it is easy to check that \eqref{d1} is equivalent to
	\begin{equation}\label{d12}
		\begin{split}
			&-(N-2)\int_{D_{\rho}}\nabla u_{m}\cdot\nabla v_{m}dx
			+\frac{1}{2_{\alpha}^{*}}\int_{D_{\rho}}\langle x,\nabla K_{1}(x)\rangle\int_{\R^N}\frac{|v_{m}(y)|^{2_{\alpha}^{*}}}{|x-y|^{(N-\alpha)}}dyv^{2_{\alpha}^{*}}_{m}(x)dx\\
			 &+\frac{N}{2_{\alpha}^{*}}\int_{D_{\rho}}\int_{\R^N}\frac{K_{1}(x)K_{1}(y)|v_{m}(y)|^{2_{\alpha}^{*}}v^{2_{\alpha}^{*}}_{m}(x)}{|x-y|^{(N-\alpha)}}dxdy-\frac{(N-\alpha)}{2_{\alpha}^{*}}\int_{D_{\rho}}\int_{\R^N}x(x-y)\frac{K_{1}(x)K_{1}(y)|v_{m}(y)|^{2_{\alpha}^{*}}v^{2_{\alpha}^{*}}_{m}(x)}{|x-y|^{(N-\alpha+2)}}dxdy
			\\
			&+\frac{1}{2_{\alpha}^{*}}\int_{D_{\rho}}\langle x,\nabla K_{2}(x)\rangle\int_{\R^N}\frac{|u_{m}(y)|^{2_{\alpha}^{*}}u^{2_{\alpha}^{*}}_{m}(x)}{|x-y|^{(N-\alpha)}}dxdy+\frac{N}{2_{\alpha}^{*}}\int_{D_{\rho}}\int_{\R^N}\frac{K_{2}(x)K_{2}(y)|u_{m}(y)|^{2_{\alpha}^{*}}u^{2_{\alpha}^{*}}_{m}(x)}{|x-y|^{(N-\alpha)}}dxdy\\
			&-\frac{(N-\alpha)}{2_{\alpha}^{*}}\int_{D_{\rho}}\int_{\R^N}x(x-y)\frac{K_{2}(x)K_{2}(y)|u_{m}(y)|^{2_{\alpha}^{*}}u^{2_{\alpha}^{*}}_{m}(x)}{|x-y|^{(N-\alpha+2)}}dxdy\\
			&=O\Big(\int_{\partial D_{\rho}}\nabla \phi\nabla \varphi ds
			+\int_{\partial D_{\rho}}\Big(\int_{\R^N} \frac{|\varphi(y)|^{2_{\alpha}^{*}}}{|x-y|^{(N-\alpha)}}dy\Big)|\varphi|^{2_{\alpha}^{*}}ds+\int_{\partial D_{\rho}}\Big(\int_{\R^N} \frac{|\phi(y)|^{2}}{|x-y|^{(N-\alpha)}}dy\Big)|\phi|^{2_{\alpha}^{*}}ds\Big),
		\end{split}
	\end{equation}
	by integration by parts.
	As
	$$\aligned
	\sum_{j=1}^{m}\Big\langle \Big((2_{\alpha}^{*}-1)\Big(|x|^{-(N-\alpha)}\ast |Y_{z_j,\lambda}|^{2_{\alpha}^{*}}\Big)Y_{z_j,\lambda}^{2_{\alpha}^{*}-2}Y_{j,l}+2_{\alpha}^{*}\Big(|x|^{-(N-\alpha)}\ast (Y_{z_j,\lambda}^{2_{\alpha}^{*}-1}
	Y_{j,l})\Big)Y_{z_j,\lambda}^{2_{\alpha}^{*}-1}\\
	\displaystyle	\hspace{20.14mm}(2_{\alpha}^{*}-1)\Big(|x|^{-(N-\alpha)}\ast |Z_{z_j,\lambda}|^{2_{\alpha}^{*}}\Big)Z_{z_j,\lambda}^{2_{\alpha}^{*}-2}Z_{j,l}+2_{\alpha}^{*}\Big(|x|^{-(N-\alpha)}\ast (Z_{z_j,\lambda}^{2_{\alpha}^{*}-1}
	Z_{j,l})\Big)Z_{z_j,\lambda}^{2_{\alpha}^{*}-1} \Big) , (\phi,\varphi) \Big\rangle=0
	\endaligned$$
	where $l=1,2,\dots,6$, and \eqref{c14},  we have
	\begin{equation}\label{d13}
		\aligned
		&2\int_{D_{\rho}}\nabla u_{m}\nabla v_{m}dx\\
		&=\int_{D_{\rho}}K_{1}(x)\Big(|x|^{-(N-\alpha)}\ast K_{1}(x) |v_{m}|^{2_{\alpha}^{*}}\Big)|v_{m}|^{2_{\alpha}^{*}} dx+\int_{D_{\rho}}K_{2}(x)\Big(|x|^{-(N-\alpha)}\ast K_{2}(x) |u_{m}|^{2_{\alpha}^{*}}\Big)|u_{m}|^{2_{\alpha}^{*}} dx\\
		&+\sum_{l=1}^{N}c_{l}\sum_{j=1}^{m}\int_{\mathbb{R}^{N}}
		\Big[(2_{\alpha}^{*}-1)\Big(|x|^{-(N-\alpha)}\ast |Y_{z_j,\lambda}|^{2_{\alpha}^{*}}\Big)Y_{z_j,\lambda}^{2_{\alpha}^{*}-2}Y_{j,l}+2_{\alpha}^{*}\Big(|x|^{-(N-\alpha)}\ast (Y_{z_j,\lambda}^{2_{\alpha}^{*}-1}
		Y_{j,l})\Big)Y_{z_j,\lambda}^{2_{\alpha}^{*}-1}\Big]Y_{\overline{r},\overline{x}'',\lambda}\\
	&+	\Big[(2_{\alpha}^{*}-1)\Big(|x|^{-(N-\alpha)}\ast |Z_{z_j,\lambda}|^{2_{\alpha}^{*}}\Big)Z_{z_j,\lambda}^{2_{\alpha}^{*}-2}Z_{j,l}+2_{\alpha}^{*}\Big(|x|^{-(N-\alpha)}\ast (Z_{z_j,\lambda}^{2_{\alpha}^{*}-1}
		Z_{j,l})\Big)Z_{z_j,\lambda}^{2_{\alpha}^{*}-1}\Big]Z_{\overline{r},\overline{x}'',\lambda}dx+O\Big(\int_{\partial D_{\rho}}\nabla \phi\nabla \varphi ds\Big).
		\endaligned\end{equation}
	Substituting \eqref{d13} into \eqref{d12} yields
	\begin{equation}\label{d14}
		\aligned
	&\frac{1}{2_{\alpha}^{*}}\left[ \int_{D_{\rho}}\langle x,\nabla K_{1}(x)\rangle\int_{\R^N}\frac{|v_{m}(y)|^{2_{\alpha}^{*}}|v_{m}(x)|^{2_{\alpha}^{*}}}{|x-y|^{(N-\alpha)}}dxdy+\int_{D_{\rho}}\langle x,\nabla K_{2}(x)\rangle\int_{\R^N}\frac{|u_{m}(y)|^{2_{\alpha}^{*}}|u_{m}(x)|^{2_{\alpha}^{*}}}{|x-y|^{(N-\alpha)}}dxdy\right] \\
	&=(N-2)\int_{D_{\rho}}\nabla u_{m}\nabla v_{m}dx-\frac{N}{2_{\alpha}^{*}}\Big[ \int_{D_{\rho}}\int_{\R^N}\frac{K_{1}(x)K_{1}(y)|v_{m}(y)|^{2_{\alpha}^{*}}|v_{m}(x)|^{2_{\alpha}^{*}}}{|x-y|^{(N-\alpha)}}dxdy\\
	&+\int_{D_{\rho}}\int_{\R^N}\frac{K_{2}(x)K_{2}(y)|u_{m}(y)|^{2_{\alpha}^{*}}|u_{m}(x)|^{2_{\alpha}^{*}}}{|x-y|^{(N-\alpha)}}dxdy\Big] +\frac{N-\alpha}{2_{\alpha}^{*}}\Big[ \int_{D_{\rho}}\int_{\R^N}x(x-y)\frac{K_{1}(x)K_{1}(y)|v_{m}(y)|^{2_{\alpha}^{*}}|v_{m}(x)|^{2_{\alpha}^{*}}}{|x-y|^{(N-\alpha+2)}}dxdy\\
	&+\int_{D_{\rho}}\int_{\R^N}x(x-y)\frac{K_{2}(x)K_{2}(y)|u_{m}(y)|^{2_{\alpha}^{*}}|u_{m}(x)|^{2_{\alpha}^{*}}}{|x-y|^{(N-\alpha+2)}}dxdy\Big] \\
	&+O\Big(\int_{\partial D_{\rho}}\nabla \phi\nabla \varphi ds
	+\int_{\partial D_{\rho}}\Big(\int_{\R^N} \frac{|\varphi(y)|^{2_{\alpha}^{*}}}{|x-y|^{(N-\alpha)}}dy\Big)|\varphi|^{2_{\alpha}^{*}}ds\Big)+\int_{\partial D_{\rho}}\Big(\int_{\R^N} \frac{|\phi(y)|^{2_{\alpha}^{*}}}{|x-y|^{(N-\alpha)}}dy\Big)|\phi|^{2_{\alpha}^{*}}ds\Big)\\
	\endaligned\end{equation}
	\begin{equation}\nonumber
		\aligned
	&=\frac{N-2}{2}\sum_{l=1}^{N}c_{l}\sum_{j=1}^{m}\int_{\mathbb{R}^{N}}
	\Big[(2_{\alpha}^{*}-1)\Big(|x|^{-(N-\alpha)}\ast |Y_{z_j,\lambda}|^{2_{\alpha}^{*}}\Big)Y_{z_j,\lambda}^{2_{\alpha}^{*}-2}Y_{j,l}+2_{\alpha}^{*}\Big(|x|^{-(N-\alpha)}\ast (Y_{z_j,\lambda}^{2_{\alpha}^{*}-1}
	Y_{j,l})\Big)Y_{z_j,\lambda}^{2_{\alpha}^{*}-1}\Big]Z_{\overline{r},\overline{x}'',\lambda}\\
	&+\Big[(2_{\alpha}^{*}-1)\Big(|x|^{-(N-\alpha)}\ast |Z_{z_j,\lambda}|^{2_{\alpha}^{*}}\Big)Z_{z_j,\lambda}^{2_{\alpha}^{*}-2}Z_{j,l}+2_{\alpha}^{*}\Big(|x|^{-(N-\alpha)}\ast (Z_{z_j,\lambda}^{2_{\alpha}^{*}-1}
	Z_{j,l})\Big)Z_{z_j,\lambda}^{2_{\alpha}^{*}-1}\Big]Y_{\overline{r},\overline{x}'',\lambda}dx\\
	&+\left(\frac{N-2}{2}-\frac{N}{2_{\alpha}^{*}}\right) \left[ \int_{D_{\rho}}\int_{\R^N}\frac{K_{1}(x)K_{1}(y)|v_{m}(y)|^{2_{\alpha}^{*}}|v_{m}(x)|^{2_{\alpha}^{*}}}{|x-y|^{(N-\alpha)}}dxdy+\int_{D_{\rho}}\int_{\R^N}\frac{K_{2}(x)K_{2}(y)|u_{m}(y)|^{2_{\alpha}^{*}}|u_{m}(x)|^{2_{\alpha}^{*}}}{|x-y|^{(N-\alpha)}}dxdy\right]\\
	&+\frac{N-\alpha}{2_{\alpha}^{*}}\Big[ \int_{D_{\rho}}\int_{\R^N}x(x-y)\frac{K_{1}(x)K_{1}(y)|v_{m}(y)|^{2_{\alpha}^{*}}|v_{m}(x)|^{2_{\alpha}^{*}}}{|x-y|^{(N-\alpha+2)}}dxdy+\int_{D_{\rho}}\int_{\R^N}x(x-y)\frac{K_{2}(x)K_{2}(y)|u_{m}(y)|^{2_{\alpha}^{*}}|u_{m}(x)|^{2_{\alpha}^{*}}}{|x-y|^{(N-\alpha+2)}}dxdy\Big] \\
&+O\Big(\int_{\partial D_{\rho}}\nabla \phi\nabla \varphi ds
+\int_{\partial D_{\rho}}\Big(\int_{\R^N} \frac{|\varphi(y)|^{2_{\alpha}^{*}}}{|x-y|^{(N-\alpha)}}dy\Big)|\varphi|^{2_{\alpha}^{*}}ds+\int_{\partial D_{\rho}}\Big(\int_{\R^N} \frac{|\phi(y)|^{2_{\alpha}^{*}}}{|x-y|^{(N-\alpha)}}dy\Big)|\phi|^{2_{\alpha}^{*}}ds\Big)\\
&=\frac{N-2}{2}\sum_{l=1}^{N}c_{l}\sum_{j=1}^{m}\int_{\mathbb{R}^{N}}
\Big[(2_{\alpha}^{*}-1)\Big(|x|^{-(N-\alpha)}\ast |Y_{z_j,\lambda}|^{2_{\alpha}^{*}}\Big)Y_{z_j,\lambda}^{2_{\alpha}^{*}-2}Y_{j,l}+2_{\alpha}^{*}\Big(|x|^{-(N-\alpha)}\ast (Y_{z_j,\lambda}^{2_{\alpha}^{*}-1}
Y_{j,l})\Big)Y_{z_j,\lambda}^{2_{\alpha}^{*}-1}\Big]Z_{\overline{r},\overline{x}'',\lambda}\\
	&+\Big[(2_{\alpha}^{*}-1)\Big(|x|^{-(N-\alpha)}\ast |Z_{z_j,\lambda}|^{2_{\alpha}^{*}}\Big)Z_{z_j,\lambda}^{2_{\alpha}^{*}-2}Z_{j,l}+2_{\alpha}^{*}\Big(|x|^{-(N-\alpha)}\ast (Z_{z_j,\lambda}^{2_{\alpha}^{*}-1}
Z_{j,l})\Big)Z_{z_j,\lambda}^{2_{\alpha}^{*}-1}\Big]Y_{\overline{r},\overline{x}'',\lambda}dx\\
&+O\Big(\int_{\partial D_{\rho}}\nabla \phi\nabla \varphi ds
+\int_{\partial D_{\rho}}\Big(\int_{\R^N} \frac{|\varphi(y)|^{2_{\alpha}^{*}}}{|x-y|^{(N-\alpha)}}dy\Big)|\varphi|^{2_{\alpha}^{*}}ds+\int_{\partial D_{\rho}}\Big(\int_{\R^N} \frac{|\phi(y)|^{2_{\alpha}^{*}}}{|x-y|^{(N-\alpha)}}dy\Big)|\phi|^{2_{\alpha}^{*}}ds\Big)\\
&+O\Big(\int_{ D_{\rho}}\int_{\R^N\backslash D_{\rho}}(x_i-y_i)\frac{|\varphi(x)|^{2_{\alpha}^{*}}|\varphi(y)|^{2_{\alpha}^{*}}}{|x-y|^{(N-\alpha+2)}}dxdy+\int_{ D_{\rho}}\int_{\R^N\backslash D_{\rho}}(x_i-y_i)\frac{|\phi(y)|^{2_{\alpha}^{*}}|\phi(x)|^{2_{\alpha}^{*}}}{|x-y|^{(N-\alpha+2)}}dxdy\Big)+O(\frac{1}{\lambda^{2+\varepsilon}}).
\endaligned\end{equation}
where $i=3,\cdot\cdot\cdot,N$.
	
	\textbf{Step 2:} By the same method as in Lemma \ref{D1}, we have
	$$\aligned
	\sum_{j=1}^{m}\int_{\mathbb{R}^{N}}
	\Big[(2_{\alpha}^{*}-1)\Big(|x|^{-(N-\alpha)}\ast |Y_{z_j,\lambda}|^{2_{\alpha}^{*}}\Big)Y_{z_j,\lambda}^{2_{\alpha}^{*}-2}Y_{j,l}+2_{\alpha}^{*}\Big(|x|^{-(N-\alpha)}\ast (Y_{z_j,\lambda}^{2_{\alpha}^{*}-1}
	Y_{j,l})\Big)Y_{z_j,\lambda}^{2_{\alpha}^{*}-1}\Big]\frac{\partial Y_{\overline{r},\overline{x}'',\lambda}}{\partial \overline{x}''_{t}}
	&=O(m\lambda^{2}),
	\endaligned$$
	
		$$\aligned
	&\sum_{j=1}^{m}\int_{\mathbb{R}^{N}}
	\Big[(2_{\alpha}^{*}-1)\Big(|x|^{-(N-\alpha)}\ast |Y_{z_j,\lambda}|^{2_{\alpha}^{*}}\Big)Y_{z_j,\lambda}^{2_{\alpha}^{*}-2}Y_{j,1}+2_{\alpha}^{*}\Big(|x|^{-(N-\alpha)}\ast (Y_{z_j,\lambda}^{2_{\alpha}^{*}-1}
	Y_{j,1})\Big)Y_{z_j,\lambda}^{2_{\alpha}^{*}-1}\Big]\frac{\partial Y_{\overline{r},\overline{x}'',\lambda}}{\partial \overline{x}''_{t}}
	=O(m),
	\endaligned$$
		$$\aligned
	\sum_{j=1}^{m}\int_{\mathbb{R}^{N}}
	\Big[(2_{\alpha}^{*}-1)\Big(|x|^{-(N-\alpha)}\ast |Z_{z_j,\lambda}|^{2_{\alpha}^{*}}\Big)Z_{z_j,\lambda}^{2_{\alpha}^{*}-2}Z_{j,l}+2_{\alpha}^{*}\Big(|x|^{-(N-\alpha)}\ast (Z_{z_j,\lambda}^{2_{\alpha}^{*}-1}
	Z_{j,l})\Big)Z_{z_j,\lambda}^{2_{\alpha}^{*}-1}\Big]\frac{\partial Z_{\overline{r},\overline{x}'',\lambda}}{\partial \overline{x}''_{t}}
	&=O(m\lambda^{2}),
	\endaligned$$
	
	$$\aligned
	&\sum_{j=1}^{m}\int_{\mathbb{R}^{N}}
	\Big[(2_{\alpha}^{*}-1)\Big(|x|^{-(N-\alpha)}\ast |Z_{z_j,\lambda}|^{2_{\alpha}^{*}}\Big)Z_{z_j,\lambda}^{2_{\alpha}^{*}-2}Z_{j,1}+2_{\alpha}^{*}\Big(|x|^{-(N-\alpha)}\ast (Z_{z_j,\lambda}^{2_{\alpha}^{*}-1}
	Z_{j,1})\Big)Y_{z_j,\lambda}^{2_{\alpha}^{*}-1}\Big]\frac{\partial Z_{\overline{r},\overline{x}'',\lambda}}{\partial \overline{x}''_{t}}
	=O(m).
	\endaligned$$
	 where $l=2,\cdot\cdot\cdot,N.$ Combining these and Lemma \ref{D3}, $c_{i}=o(\frac{1}{\lambda^{2}})c_{1},\ i=2,\cdot\cdot\cdot, N$, \eqref{dr0}, \eqref{dr1} and \eqref{c14}, we have
	\begin{equation}\label{d15}
		c_i=O(\frac{1}{\lambda^{3+\varepsilon}}),  i=2,\cdot\cdot\cdot,N,
	\end{equation}
	and
	\begin{equation}\label{d16}
		c_1=O(\frac{1}{\lambda^{1+\varepsilon}}). \
	\end{equation}

Moreover, we have
\begin{equation}\nonumber
	\aligned
		\sum_{j=1}^{m}\int_{\mathbb{R}^{N}}
	\Big[(2_{\alpha}^{*}-1)\Big(|x|^{-(N-\alpha)}\ast |Y_{z_j,\lambda}|^{2_{\alpha}^{*}}\Big)Y_{z_j,\lambda}^{2_{\alpha}^{*}-2}Y_{j,l}+2_{\alpha}^{*}\Big(|x|^{-(N-\alpha)}\ast (Y_{z_j,\lambda}^{2_{\alpha}^{*}-1}
	Y_{j,l})\Big)Y_{z_j,\lambda}^{2_{\alpha}^{*}-1}\Big]Y_{\overline{r},\overline{x}'',\lambda}
	&=O(m\lambda),
	\endaligned
\end{equation}
\begin{equation}\nonumber
	\aligned
	&\sum_{j=1}^{m}\int_{\mathbb{R}^{N}}
\Big[(2_{\alpha}^{*}-1)\Big(|x|^{-(N-\alpha)}\ast |Y_{z_j,\lambda}|^{2_{\alpha}^{*}}\Big)Y_{z_j,\lambda}^{2_{\alpha}^{*}-2}Y_{j,1}+2_{\alpha}^{*}\Big(|x|^{-(N-\alpha)}\ast (Y_{z_j,\lambda}^{2_{\alpha}^{*}-1}
Y_{j,1})\Big)Y_{z_j,\lambda}^{2_{\alpha}^{*}-1}\Big] Y_{\overline{r},\overline{x}'',\lambda}dx=O(\frac{m}{\lambda}),
	\endaligned\end{equation}
\begin{equation}\nonumber
	\aligned
	\sum_{j=1}^{m}\int_{\mathbb{R}^{N}}
\Big[(2_{\alpha}^{*}-1)\Big(|x|^{-(N-\alpha)}\ast |Z_{z_j,\lambda}|^{2_{\alpha}^{*}}\Big)Z_{z_j,\lambda}^{2_{\alpha}^{*}-2}Z_{j,l}+2_{\alpha}^{*}\Big(|x|^{-(N-\alpha)}\ast (Z_{z_j,\lambda}^{2_{\alpha}^{*}-1}
Z_{j,l})\Big)Z_{z_j,\lambda}^{2_{\alpha}^{*}-1}\Big] Z_{\overline{r},\overline{x}'',\lambda}dx&=O(m\lambda),
	\endaligned
\end{equation}
and
\begin{equation}\nonumber
	\aligned
		&\sum_{j=1}^{m}\int_{\mathbb{R}^{N}}
	\Big[(2_{\alpha}^{*}-1)\Big(|x|^{-(N-\alpha)}\ast |Z_{z_j,\lambda}|^{2_{\alpha}^{*}}\Big)Z_{z_j,\lambda}^{2_{\alpha}^{*}-2}Z_{j,1}+2_{\alpha}^{*}\Big(|x|^{-(N-\alpha)}\ast (Z_{z_j,\lambda}^{2_{\alpha}^{*}-1}
	Z_{j,1})\Big)Y_{z_j,\lambda}^{2_{\alpha}^{*}-1}\Big] Y_{\overline{r},\overline{x}'',\lambda}dx=O(\frac{m}{\lambda}).
	\endaligned\end{equation}
 where $l=2,\cdot\cdot\cdot,N.$ According to the above work, we can find \eqref{d14} is equivalent to
\begin{equation}\label{d17}
	\aligned
	&\frac{1}{2_{\alpha}^{*}}\left[ \int_{D_{\rho}}\langle x,\nabla K_{1}(x)\rangle\int_{\R^N}\frac{|v_{m}(y)|^{2_{\alpha}^{*}}|v_{m}(x)|^{2_{\alpha}^{*}}}{|x-y|^{(N-\alpha)}}dxdy+\int_{D_{\rho}}\langle x,\nabla K_{2}(x)\rangle\int_{\R^N}\frac{|u_{m}(y)|^{2_{\alpha}^{*}}|u_{m}(x)|^{2_{\alpha}^{*}}}{|x-y|^{(N-\alpha)}}dxdy\right] \\
&=O\Big(\int_{\partial D_{\rho}}\nabla \phi\nabla \varphi ds
+\int_{\partial D_{\rho}}\Big(\int_{\R^N} \frac{|\varphi(y)|^{2_{\alpha}^{*}}}{|x-y|^{(N-\alpha)}}dy\Big)|\varphi|^{2_{\alpha}^{*}}ds+\int_{\partial D_{\rho}}\Big(\int_{\R^N} \frac{|\phi(y)|^{2_{\alpha}^{*}}}{|x-y|^{(N-\alpha)}}dy\Big)|\phi|^{2_{\alpha}^{*}}ds\Big)\\
&+O\Big(\int_{ D_{\rho}}\int_{\R^N\backslash D_{\rho}}(x_i-y_i)\frac{|\varphi(x)|^{2_{\alpha}^{*}}|\varphi(y)|^{2_{\alpha}^{*}}}{|x-y|^{(N-\alpha+2)}}dxdy+\int_{ D_{\rho}}\int_{\R^N\backslash D_{\rho}}(x_i-y_i)\frac{|\phi(y)|^{2_{\alpha}^{*}}|\phi(x)|^{2_{\alpha}^{*}}}{|x-y|^{(N-\alpha+2)}}dxdy\Big)\\
&+O(\frac{1}{\lambda^{2+\varepsilon}})+O(\frac{m}{\lambda^{2+\varepsilon}})
\endaligned\end{equation}
for some small $\varepsilon>0$.

\textbf{Step 3:} Integrating by parts, we can rewrite \eqref{d2} as
\begin{equation}\label{d11}
	\begin{split}
&\int_{D_{\rho}}\frac{\partial K_{1}(\overline{r},\overline{x}'')}{\partial x_{i}}\int_{\R^N}\frac{K_{1}(y)|v_{m}(y)|^{2_{\alpha}^{*}}|v_{m}(x)|^{2_{\alpha}^{*}}}{|x-y|^{(N-\alpha)}}dxdy+\int_{D_{\rho}}\frac{\partial K_{2}(\overline{r},\overline{x}'')}{\partial x_{i}}\int_{\R^N}\frac{K_{2}(y)|u_{m}(y)|^{2_{\alpha}^{*}}|u_{m}(x)|^{2_{\alpha}^{*}}}{|x-y|^{(N-\alpha)}}dxdy\\
&=O\Big(\int_{\partial D_{\rho}}\nabla \phi\nabla \varphi ds
+\int_{\partial D_{\rho}}\Big(\int_{\R^N} \frac{|\varphi(y)|^{2_{\alpha}^{*}}}{|x-y|^{(N-\alpha)}}dy\Big)|\varphi|^{2_{\alpha}^{*}}ds+\int_{\partial D_{\rho}}\Big(\int_{\R^N} \frac{|\phi(y)|^{2_{\alpha}^{*}}}{|x-y|^{(N-\alpha)}}dy\Big)|\phi|^{2_{\alpha}^{*}}ds\Big)\\
&+O\Big(\int_{ D_{\rho}}\int_{\R^N\backslash D_{\rho}}(x_i-y_i)\frac{|\varphi(x)|^{2_{\alpha}^{*}}|\varphi(y)|^{2_{\alpha}^{*}}}{|x-y|^{(N-\alpha+2)}}dxdy+\int_{ D_{\rho}}\int_{\R^N\backslash D_{\rho}}(x_i-y_i)\frac{|\phi(y)|^{2_{\alpha}^{*}}|\phi(x)|^{2_{\alpha}^{*}}}{|x-y|^{(N-\alpha+2)}}dxdy\Big).
\end{split}
\end{equation}

Therefore, \eqref{d17} is equivalent to
\begin{equation}\label{d19}
	\begin{split}
&\int_{D_{\rho}}\frac{r}{2_{\alpha}^{*}}\frac{\partial K_{1}(\overline{r},\overline{x}'')}{\partial r}\int_{\R^N}\frac{ K_{1}(y)|v_{m}(y)|^{2_{\alpha}^{*}}|v_{m}(x)|^{2_{\alpha}^{*}}}{|x-y|^{(N-\alpha)}}dxdy+\int_{D_{\rho}}\frac{r}{2_{\alpha}^{*}}\frac{\partial K_{2}(\overline{r},\overline{x}'')}{\partial r}\int_{\R^N}\frac{K_{2}(y)|u_{m}(y)|^{2_{\alpha}^{*}}|u_{m}(x)|^{2_{\alpha}^{*}}}{|x-y|^{(N-\alpha)}}dxdy\\
&=o(\frac{m}{\lambda^{2}})+O\Big(\int_{\partial D_{\rho}}\nabla \phi\nabla \varphi ds
+\int_{\partial D_{\rho}}\Big(\int_{\R^N} \frac{|\varphi(y)|^{2_{\alpha}^{*}}}{|x-y|^{(N-\alpha)}}dy\Big)|\varphi|^{2_{\alpha}^{*}}ds+\int_{\partial D_{\rho}}\Big(\int_{\R^N} \frac{|\phi(y)|^{2_{\alpha}^{*}}}{|x-y|^{(N-\alpha)}}dy\Big)|\phi|^{2_{\alpha}^{*}}ds\Big)\\
&+O\Big(\int_{ D_{\rho}}\int_{\R^N\backslash D_{\rho}}(x_i-y_i)\frac{|\varphi(x)|^{2_{\alpha}^{*}}|\varphi(y)|^{2_{\alpha}^{*}}}{|x-y|^{(N-\alpha+2)}}dxdy+\int_{ D_{\rho}}\int_{\R^N\backslash D_{\rho}}(x_i-y_i)\frac{|\phi(y)|^{2_{\alpha}^{*}}|\phi(x)|^{2_{\alpha}^{*}}}{|x-y|^{(N-\alpha+2)}}dxdy\Big).
\end{split}
\end{equation}
Estimate similar to that in the proof of Lemma 3.5 in\cite{PWY} gives that
$$
\int_{D_{4\delta}\backslash D_{3\delta}}\nabla\phi\nabla\varphi dx=
O(\frac{m}{\lambda^{2+\varepsilon}}) .
$$
Moreover, by Lemma \ref{D4} we have
$$\aligned
&\int_{D_{4\delta}\backslash D_{3\delta}}\nabla \phi\nabla \varphi ds
+\int_{D_{4\delta}\backslash D_{3\delta}}\Big(\int_{ D_{\rho}} \frac{|\varphi(y)|^{2_{\alpha}^{*}}}{|x-y|^{(N-\alpha)}}dy\Big)|\varphi|^{2_{\alpha}^{*}}ds+\int_{D_{4\delta}\backslash D_{3\delta}}\Big(\int_{ D_{\rho}} \frac{|\phi(y)|^{2_{\alpha}^{*}}}{|x-y|^{(N-\alpha)}}dy\Big)|\phi|^{2_{\alpha}^{*}}ds\\
&+\int_{D_{4\delta}\backslash D_{3\delta}}\int_{\R^N\backslash D_{\rho}}(x_i-y_i)\frac{|\varphi(x)|^{2_{\alpha}^{*}}|\varphi(y)|^{2_{\alpha}^{*}}}{|x-y|^{(N-\alpha+2)}}dxdy+\int_{D_{4\delta}\backslash D_{3\delta}}\int_{\R^N\backslash D_{\rho}}(x_i-y_i)\frac{|\phi(y)|^{2_{\alpha}^{*}}|\phi(x)|^{2_{\alpha}^{*}}}{|x-y|^{(N-\alpha+2)}}dxdy=O(\frac{m}{\lambda^{2+\varepsilon}}),
\endaligned$$
where $i=3,\cdot\cdot\cdot,N$. Finally, we can find a $\rho\in(3\delta,4\delta)$ such that
$$\aligned
&\int_{\partial D_{\rho}}\nabla \phi\nabla \varphi ds
+\int_{\partial D_{\rho}}\Big(\int_{\R^N} \frac{|\varphi(y)|^{2_{\alpha}^{*}}}{|x-y|^{(N-\alpha)}}dy\Big)|\varphi|^{2_{\alpha}^{*}}ds+\int_{\partial D_{\rho}}\Big(\int_{\R^N} \frac{|\phi(y)|^{2_{\alpha}^{*}}}{|x-y|^{(N-\alpha)}}dy\Big)|\phi|^{2_{\alpha}^{*}}ds\\
&+\int_{ D_{\rho}}\int_{\R^N\backslash D_{\rho}}(x_i-y_i)\frac{|\varphi(x)|^{2_{\alpha}^{*}}|\varphi(y)|^{2_{\alpha}^{*}}}{|x-y|^{(N-\alpha+2)}}dxdy+\int_{ D_{\rho}}\int_{\R^N\backslash D_{\rho}}(x_i-y_i)\frac{|\phi(y)|^{2_{\alpha}^{*}}|\phi(x)|^{2_{\alpha}^{*}}}{|x-y|^{(N-\alpha+2)}}dxdy=O(\frac{m}{\lambda^{2+\varepsilon}})
\endaligned$$
where $i=3,\cdot\cdot\cdot,N$.

$\textbf{Step 4:}$ For any $C^1$ function $g(r, x'')$, by Lemma 4.4 in \cite{CYZ} it holds
$$\aligned
\int_{D_{\rho}}g(r, x'')&\int_{\R^N}\frac{ K_{1}(y)|v_{m}(y)|^{2_{\alpha}^{*}}|v_{m}(x)|^{2_{\alpha}^{*}}}{|x-y|^{(N-\alpha)}}dxdy\\
&=m(g(\overline{r},\overline{x}'')\int_{\R^N}\int_{\mathbb{R}^{N}}\frac{ K_{1}(z_{j}+\frac{y}{\lambda})|V_{0,1}(y)|^{2_{\alpha}^{*}}|V_{0,1}|^{2_{\alpha}^{*}}}{|x-y|^{(N-\alpha)}}dxdy+o(\frac{1}{\lambda^{1-\varepsilon}})),
\endaligned$$
and
$$\aligned
\int_{D_{\rho}}g(r, x'')&\int_{\R^N}\frac{ K_{2}(y)|u_{m}(y)|^{2_{\alpha}^{*}}|u_{m}(x)|^{2_{\alpha}^{*}}}{|x-y|^{(N-\alpha)}}dxdy\\
&=m(g(\overline{r},\overline{x}'')\int_{\R^N}\int_{\mathbb{R}^{N}}\frac{ K_{2}(z_{j}+\frac{y}{\lambda})|U_{0,1}(y)|^{2_{\alpha}^{*}}|U_{0,1}|^{2_{\alpha}^{*}}}{|x-y|^{(N-\alpha)}}dxdy+o(\frac{1}{\lambda^{1-\varepsilon}})).
\endaligned$$
Therefore, from \eqref{d11} and \eqref{d19} we deduce that
$$\aligned
&m\Big( \frac{\partial (K_{1}(\overline{r},\overline{x}''))}{\partial \overline{x}_{i}}\int_{\mathbb{R}^{N}}\int_{\mathbb{R}^{N}}\frac{ K_{1}(z_{j}+\frac{y}{\lambda})|V_{0,1}(y)|^{2_{\alpha}^{*}}|V_{0,1}|^{2_{\alpha}^{*}}}{|x-y|^{(N-\alpha)}}dxdy\\
&+\frac{\partial (K_{2}(\overline{r},\overline{x}''))}{\partial \overline{x}_{i}}\int_{\R^N}\int_{\mathbb{R}^{N}}\frac{ K_{2}(z_{j}+\frac{y}{\lambda})|U_{0,1}(y)|^{2_{\alpha}^{*}}|U_{0,1}|^{2_{\alpha}^{*}}}{|x-y|^{(N-\alpha)}}dxdy+o(\frac{1}{\lambda^{1-\varepsilon}})\Big) =o(\frac{m}{\lambda^{2}}),
\endaligned$$
and
$$\aligned
&m\Big(\frac{r}{2_{\alpha}^{*}}\frac{\partial K_{1}(\overline{r},\overline{x}'')}{\partial \overline{r}}\int_{\mathbb{R}^{N}}\int_{\mathbb{R}^{N}}\frac{ K_{1}(z_{j}+\frac{y}{\lambda})|V_{0,1}(y)|^{2_{\alpha}^{*}}|V_{0,1}|^{2_{\alpha}^{*}}}{|x-y|^{(N-\alpha)}}dxdy\\
&+\frac{r}{2_{\alpha}^{*}}\frac{\partial K_{2}(\overline{r},\overline{x}'')}{\partial \overline{r}}\int_{\R^N}\int_{\mathbb{R}^{N}}\frac{ K_{2}(z_{j}+\frac{y}{\lambda})|U_{0,1}(y)|^{2_{\alpha}^{*}}|U_{0,1}|^{2_{\alpha}^{*}}}{|x-y|^{(N-\alpha)}}dxdy+o(\frac{1}{\lambda^{1-\varepsilon}})\Big) =o(\frac{m}{\lambda^{2}}),
\endaligned$$
hence that the equations to determine $(\overline{r},\overline{x}'')$ are
\begin{equation}\label{d21}
	\frac{\partial (K_{1}(\overline{r},\overline{x}'')+K_{2}(\overline{r},\overline{x}''))}{\partial \overline{x}_{i}}=o(\frac{1}{\lambda^{1-\varepsilon}}),  i=3,\cdot\cdot\cdot,N,
\end{equation}
and
\begin{equation}\label{d22}
	\frac{\partial (K_{1}(\overline{r},\overline{x}'')+K_{2}(\overline{r},\overline{x}''))}{\partial \overline{r}}=o(\frac{1}{\lambda^{1-\varepsilon}}).
\end{equation}
Combing these we have already proved that \eqref{d1}, \eqref{d2} and \eqref{d3} are equivalent to \eqref{d21}, \eqref{d22} and
$$
-\frac{B_1}{\lambda^{3}}+\frac{m^{N-2}B_3}{\lambda^{N-1}}=O(\frac{1}{\lambda^{3+\varepsilon}}).
$$

Let $\lambda=tm^{\frac{N-2}{N-4}}$, we have $t\in[L_0, L_1]$ by $\lambda\in[L_0m^{\frac{N-2}{N-4}}, L_1m^{\frac{N-2}{N-4}}]$, and thus
\begin{equation}\label{d23}
	-\frac{B_1}{t^{3}}+\frac{B_3}{t^{N-1}}=o(1), t\in[L_0, L_1].
\end{equation}
Set
$$
F(t, \overline{r},\overline{x}'')=\Big(\nabla_{\overline{r},\overline{x}''}(K_{1}(\overline{r},\overline{x}'')+K_{2}(\overline{r},\overline{x}'')),
-\frac{2B_1}{t^{3}}+\frac{B_3}{t^{N-1}}\Big).
$$
Then we obtain
$$
\mbox{deg}(F(t, \overline{r},\overline{x}''), [L_0, L_1]\times B_{\theta}((r_{0},x_{0}'')))
=-\mbox{deg}(\nabla_{\overline{r},\overline{x}''}(K_{1}(\overline{r},\overline{x}'')+K_{2}(\overline{r},\overline{x}'')),B_{\frac{1}{\lambda^{1-\theta}}}((r_{0},x_{0}'')))\neq0.
$$
Finally,  \eqref{d21}, \eqref{d22} and \eqref{d23} have a solution $t_{m}\in[L_0, L_1]$, $(\overline{r}_{m},\overline{x}_{m}'')\in B_{\frac{1}{\lambda^{1-\theta}}}((r_{0},x_{0}''))$.
$\hfill{} \Box$

\vspace{0.5cm}

\noindent \textbf{Data Availability Statement.} Data sharing is not applicable to this article as no datasets were generated or analyzed during the current study.

\vspace{0.5cm}

\noindent\textbf{Conflict Of Interest Statement.} The authors have no competing interests to declare that are relevant to the content of this article.

\end{document}